\documentclass[11pt,oneside]{amsart}

\usepackage{amsmath}

\usepackage{enumitem}
\setlist[enumerate]{font=\normalfont}

\usepackage{tikz-cd}

\usepackage{fix-cm}

\DeclareMathAlphabet{\mathcal}{OMS}{cmsy}{m}{n}

\usepackage[arrow,curve,matrix,tips,2cell]{xy}

\usepackage[hmargin=2cm,vmargin=1.5cm]{geometry}

\usepackage{comment}

\usepackage{anyfontsize}
\usepackage{mathrsfs}

\usepackage{bm}

\usepackage{bbm}

\usepackage{fancyhdr}
\setlength{\footskip}{13.pt}

\usepackage[obeyspaces]{url}

\usepackage{tensor}

\usepackage{mathtools}

\usepackage{accents}

\pagestyle{fancy}

\lhead{}
\chead{}
\rhead{}
\lfoot{}
\cfoot{\thepage}
\rfoot{}

\setlength{\headheight}{13pt}

\usepackage{amscd}
\usepackage{amssymb}
\usepackage{latexsym}
\usepackage{url}

\usepackage{graphicx}

\usepackage[draft=false]{hyperref}
\usepackage[noabbrev,capitalise,nameinlink]{cleveref}
\usepackage{bookmark}
\hypersetup{                                                         
  pdfauthor={},
  pdftitle={},
  colorlinks=true, 
  linkcolor={},
    citecolor={},
    urlcolor={},
    linktocpage=true                                                      
}

\setcounter{tocdepth}{3}

\newtheorem{theorem}{Theorem}[section]
\newtheorem*{theorem*}{Theorem}
\newtheorem{lemma}[theorem]{Lemma}
\newtheorem*{lemma*}{Lemma}
\newtheorem{corollary}[theorem]{Corollary}
\newtheorem{proposition}[theorem]{Proposition}

\newtheorem{remark}[theorem]{Remark}
\newtheorem{definition}[theorem]{Definition}
\newtheorem*{definition*}{Definition}


\newtheorem{question}[theorem]{Question}
\newtheorem*{question*}{Question}

\newtheorem{example}[theorem]{Example}
\newtheorem{examples}[theorem]{Examples}

\newtheorem{observation}[theorem]{Observation}
\newtheorem{observations}[theorem]{Observations}

%
%
%
\makeatletter
\def\revddots{\mathinner{\mkern1mu\raise\p@
\vbox{\kern7\p@\hbox{.}}\mkern2mu
\raise4\p@\hbox{.}\mkern2mu\raise7\p@\hbox{.}\mkern1mu}}
\makeatother 
\newcommand{\bgl}{\begin{equation}} 
\newcommand{\egl}{\end{equation}}
\newcommand{\bgloz}{\begin{equation*}} 
\newcommand{\egloz}{\end{equation*}}
\newcommand{\bgln}{\begin{eqnarray}} 
\newcommand{\egln}{\end{eqnarray}}
\newcommand{\bglnoz}{\begin{eqnarray*}} 
\newcommand{\eglnoz}{\end{eqnarray*}}
\newcommand{\btheo}{\begin{theorem}}
\newcommand{\etheo}{\end{theorem}}
\newcommand{\btheooz}{\begin{theorem*}}
\newcommand{\etheooz}{\end{theorem*}}

\newcommand{\blemma}{\begin{lemma}}
\newcommand{\elemma}{\end{lemma}}
\newcommand{\blemmaoz}{\begin{lemma*}}
\newcommand{\elemmaoz}{\end{lemma*}}
\newcommand{\bproof}{\begin{proof}}
\newcommand{\eproof}{\end{proof}}
\newcommand{\bbew}{\begin{beweis}}
\newcommand{\ebew}{\end{beweis}}
\newcommand{\bremark}{\begin{remark}}
\newcommand{\eremark}{\end{remark}}
\newcommand{\bdefin}{\begin{definition}}
\newcommand{\edefin}{\end{definition}}
\newcommand{\bdefinoz}{\begin{definition*}}
\newcommand{\edefinoz}{\end{definition*}}
\newcommand{\bex}{\begin{example}}
\newcommand{\eex}{\end{example}}
\newcommand{\bexs}{\begin{examples}\em}
\newcommand{\eexs}{\end{examples}}

\newcommand{\bobs}{\begin{observation}\em}
\newcommand{\eobs}{\end{observation}}
\newcommand{\bobss}{\begin{observations}\em}
\newcommand{\eobss}{\end{observations}}

\newcommand{\bprop}{\begin{proposition}}
\newcommand{\eprop}{\end{proposition}}
\newcommand{\bcor}{\begin{corollary}}
\newcommand{\ecor}{\end{corollary}}
\newcommand{\bfa}{\begin{cases}} 
\newcommand{\efa}{\end{cases}}

\newcommand{\bquestion}{\begin{question}}
\newcommand{\equestion}{\end{question}}
\newcommand{\bquestionoz}{\begin{question*}}
\newcommand{\equestionoz}{\end{question*}}
\newtheorem{introtheorem}{Theorem}

\newtheorem{introcor}[introtheorem]{Corollary}

%
%
%
%
%

%
%

%

\newcommand{\cB}{\mathcal B}
\newcommand{\cC}{\mathcal C}
\newcommand{\cD}{\mathcal D}
\newcommand{\cE}{\mathcal E}
\newcommand{\cF}{\mathcal F}

\newcommand{\cI}{\mathcal I}

\newcommand{\cL}{\mathcal L}
\newcommand{\cM}{\mathcal M}

\newcommand{\cR}{\mathcal R}

\newcommand{\cT}{\mathcal T}
\newcommand{\cU}{\mathcal U}
\newcommand{\cV}{\mathcal V}
\newcommand{\cW}{\mathcal W}
\newcommand{\cX}{\mathcal X}
\newcommand{\cY}{\mathcal Y}
\newcommand{\cZ}{\mathcal Z}

%
%

\def\Cz{\mathbb{C}}

\def\Nz{\mathbb{N}}

\def\Vz{\mathbb{V}}

\def\Zz{\mathbb{Z}}
%
%

\newcommand{\fX}{\mathfrak X}

\newcommand{\mfa}{\mathfrak a}

\newcommand{\mfr}{\mathfrak r}
\newcommand{\mfs}{\mathfrak s}

\newcommand{\mfz}{\mathfrak z}
%
%
\newcommand{\an}[1]{``#1''} 
\newcommand{\ti}{\tilde}

\newcommand{\ma}{\mapsto} 
\newcommand{\onto}{\twoheadrightarrow} 
\newcommand{\Rarr}{\Rightarrow} 

\def\SEMI{\mbox{$\times\kern-2pt\vrule height5pt width.6pt \kern3pt $}}

\newcommand{\Spec}{{\rm Spec\,}} 

\newcommand{\id}{{\rm id}}
\newcommand{\alg}{{\rm alg}}

\newcommand{\ev}{\operatorname{ev}} 
\newcommand{\lspan}{{\rm span}} 

\newcommand{\falls}{\text{ if }} 
\newcommand{\plim}{\varprojlim} 
\newcommand{\supp}{{\rm supp}} 

%
%
%

%
%
\newcommand{\lge}{\left\{} 
\newcommand{\rge}{\right\}} 
\newcommand{\gekl}[1]{\lge #1 \rge} 
%
%
\newcommand{\menge}[2]{\gekl{ #1 \colon #2 }} 
%
%
%
%
%

%
%
%

%
%
%

%
%
%

%
%
%

%
%
%

%
%
%

%
%
%

%
%
%

%
%
%

\newcommand{\bmU}{\bm{U}}
\newcommand{\bmV}{\bm{V}}

\newcommand{\oset}[2]{%
  \mathop{#2}\limits^{\vbox to -1.66ex{%
  \kern -1.4ex\hbox{$#1$}\vss}}}

\newcommand{\crL}{\mathscr{L}}

\newcommand{\crX}{\mathscr{X}}
\newcommand{\crY}{\mathscr{Y}}
\newcommand{\crZ}{\mathscr{Z}}

\newcommand{\inte}{{\rm int}}

\newcommand{\bd}{{\rm bd}}

\newcommand{\ess}{{\rm ess}}
\newcommand{\sync}{{\rm sync}}

%

%
%
\DeclareFontFamily{U}{mathb}{\hyphenchar\font45}
\DeclareFontShape{U}{mathb}{m}{n}{
      <5> <6> <7> <8> <9> <10>
      <10.95> <12> <14.4> <17.28> <20.74> <24.88>
      mathb10
      }{}
\DeclareSymbolFont{mathb}{U}{mathb}{m}{n}

\DeclareMathSymbol{\sqbullet}{1}{mathb}{"0D}
\usepackage{newtxtext}
\usepackage{newtxmath}

%
%
%

\begin{document}
\title{Minimal covers with continuity-preserving transfer operators for topological dynamical systems}

\thispagestyle{fancy}

\author[K. A. Brix]{Kevin Aguyar Brix}
\author[J. B. Hume]{Jeremy B. Hume}
\author[X. Li]{Xin Li}

\address[K. A. Brix]{Department of Mathematics and Computer Science, University of Southern Denmark, 5230 Odense, Denmark}
\email{kabrix@imada.sdu.dk}

\address[J. B. Hume]{School of Mathematics and Statistics, University of Glasgow, University Place, Glasgow G12 8QQ, United Kingdom}
\email{jeremybhume@gmail.com}

\address[X. Li]{School of Mathematics and Statistics, University of Glasgow, University Place, Glasgow G12 8QQ, United Kingdom}
\email{Xin.Li@glasgow.ac.uk}

\subjclass[2020]{Primary 37C30; Secondary 22A22, 37A55}

\keywords{Transfer operator; dynamical system; minimal cover; groupoid; topological graph;}

\thanks{Brix was supported by a DFF-international postdoc (case number 1025--00004B) and a Starting grant from the Swedish Research Council (2023-03315).
This project has received funding from the European Research Council (ERC) under the European Union's Horizon 2020 research and innovation programme (grant agreement No. 817597).}

\begin{abstract}
Given a non-invertible dynamical system with a transfer operator, we show there is a minimal cover with a transfer operator that preserves continuous functions. 
We also introduce an essential cover with even stronger continuity properties.
For one-sided sofic subshifts, this generalizes the Krieger and Fischer covers, respectively.
Our construction is functorial in the sense that certain equivariant maps between dynamical systems lift to equivariant maps between their covers, and these maps also satisfy better regularity properties. 
As applications, we identify finiteness conditions which ensure that the thermodynamic formalism is valid for the covers.
This establishes the thermodynamic formalism for a large class of non-invertible dynamical systems, e.g. certain piecewise invertible maps.
When applied to semi-\'etale groupoids, our minimal covers produce \'etale groupoids which are models for $C^*$-algebras constructed by Thomsen.
The dynamical covers and groupoid covers are unified under the common framework of topological graphs.
\end{abstract}

\maketitle


\setlength{\parindent}{0cm} \setlength{\parskip}{0.5cm}

\section{Introduction}

Single transformations, i.e., continuous maps $T \colon X \to X$ on a topological space $X$, are classical objects of study in topological dynamics. 
Such transformations can be viewed as discrete-time dynamical systems, where an application of $T$ describes how the system evolves after one discrete time step. 
If $T$ is invertible, then the inverse of $T$ allows us to go back in time, but, in general, transfer operators allow us to at least go back in time on average. 
Roughly speaking, the idea is to attach to every point $x \in X$ a measure $\lambda_x$ on $X$ that is supported on the pre-image $T^{-1}(x)$ of $x$ under $T$. 
Given a function $f \colon X \to \Cz$, the transfer operator $\Lambda$ attached to $(\lambda_x)_{x \in X}$ produces (under mild assumptions on $f$ such as integrability) 
a new function $\Lambda(f)$ by averaging over pre-images, i.e., by sending $x \in X$ to $\Lambda(f)(x) \coloneq \int_{T^{-1}(x)} f {\rm d} \lambda_x$. 
If $T$ is invertible with inverse $T^{-1}$, then pre-images of points $x \in X$ are given by singletons $\gekl{T^{-1}(x)}$, and the canonical choice for $\lambda_x$ is the point mass at $T^{-1}(x)$.
In this case, the transfer operator is simply given by pre-composition with $T^{-1}$ (i.e., $\Lambda(f) = f \circ T^{-1}$). 
The reader may consult \cite{Bal}, for example, for more information about transfer operators.

In this paper, we are interested in the property of transfer operators --- sometimes called the Feller property --- that they preserve continuity, 
i.e., for every continuous (and bounded) function $f \colon X \to \Cz$, $\Lambda(f)$ is again continuous. 
In general, transfer operators need not have this property. 
However, it turns out that in various different contexts, this property is desirable for several reasons.

For example, in the context of the thermodynamic formalism (see for instance \cite{Rue,PP}), if the transfer operator preserves continuous functions, then this is the starting point for restricting the transfer operator to subspaces of more regular (such as H{\"o}lder continuous) functions where properties of the transfer operator such as quasi-compactness allow a further detailed analysis of its spectrum. In addition, for compact Hausdorff spaces $X$, positive linear functionals on the space $C(X)$ of continuous functions $X \to \Cz$ are in one-to-one correspondence with Radon measures on $X$. Hence, if the transfer operator $\Lambda$ preserves $C(X)$, its dual operator $\Lambda^*$ acts on Radon measures on $X$. This is the starting point for studying invariant measures on $X$ by analyzing eigenmeasures of $\Lambda^*$, for example equilibrium measures or Gibbs measures.

The property that transfer operators preserve continuous functions is also relevant in operator algebras. 
There is a long history of fruitful interactions between topological dynamics and $C^*$-algebras, 
and many example classes of $C^*$-algebras arising from dynamics (such as crossed products or Cuntz--Krieger algebras \cite{Cuntz-Krieger1980}) have played an important role in the development of $C^*$-algebra theory. 
At the same time, a $C^*$-algebraic perspective was instrumental in the classification of Cantor minimal systems \cite{GPS}.
Given a continuous map $T \colon X \to X$ on a locally compact Hausdorff space $X$, a transfer operator $\Lambda$ for $X$ that preserves continuous functions gives rise to a $C_0(X)$-$C_0(X)$-correspondence, 
which in turn allows one to apply Pimsner's machinery \cite{Pim} (see also \cite{Katsura2004}) to construct a $C^*$-algebra encoding dynamical properties of the original transformation $T$. 
Related to this, a very general class of $C^*$-algebras of dynamical origin is given by groupoid $C^*$-algebras \cite{Ren}. 
Given a locally compact Hausdorff groupoid $G$, its groupoid $C^*$-algebra is constructed by defining a convolution product on $C_c(G)$, the space of continuous functions $G \to \Cz$ with compact support. 
This works particularly well for {\'e}tale groupoids, where the range and source maps are local homeomorphisms. 
That property ensures that the transfer operator given by summation over pre-images of the range map preserves continuous functions.
This in turn  is the key reason why $C_c(G)$ is closed under convolution. 
For general locally compact Hausdorff groupoids, the construction of the convolution product requires the existence of a Haar system, 
which by its very definition again ensures that the corresponding transfer operator preserves continuous functions on our groupoid. 
This means that the property of preserving continuity is at the heart of many constructions of $C^*$-algebras from dynamical data.

The goal of the present paper is to describe a systematic and explicit way of constructing transfer operators with the desirable property of preserving continuity out of general transfer operators which might lack this property. 
More precisely, given a continuous map $T \colon X \to X$, where $X$ is locally compact and Hausdorff, and a transfer operator $\Lambda$ for $T$, we construct a cover (i.e., an extension) $T_{\min} \colon X_{\min} \to X_{\min}$ of $T$ together with a transfer operator $\bm{\lambda}_{\min}$ for $T_{\min}$ which extends $\Lambda$ and satisfies $\bm{\lambda}_{\min}(C_c(X_{\min})) \subseteq C_c(X_{\min})$. Moreover, $T_{\min}$ is the minimal cover with these properties, in the sense that it has a universal property as we will make precise. In addition, our cover construction is functorial, and most importantly, it is accessible, in the sense that it is possible to describe it explicitly for concrete examples, as we demonstrate in Sections \ref{sec:ExSubshifts} and \ref{sec:PiecewInv}. 
We also construct a modification called the minimal essential cover that often has better properties because it is closer to the original map.
\setlength{\parindent}{0.5cm} \setlength{\parskip}{0cm}

When applied to one-sided subshifts, our constructions reproduce and generalize the covers considered by Fischer \cite{Fischer1975}, Krieger \cite{Krieger1984}, Matsumoto \cite{Matsumoto1999} and Carlsen \cite{CarlsenPhD} (see also \cite{Lind-Marcus2021} and the references therein).  
We are therefore able to prove these classical covers have (previously unknown) functorial and universal properties, as well as expand the class of subshifts they may be considered for (especially for the Fischer covers).
\setlength{\parindent}{0cm} \setlength{\parskip}{0.5cm}


We identify finiteness conditions which ensure that our minimal cover $T_{\min}$ is positively expansive if our original map $T$ is.  Since the thermodynamic formalism has been established for factors of  open positively expansive maps that are topologically transitive, our cover construction (which is always open) allows us to establish the thermodynamic formalism for the original transformation whose transfer operator might be badly behaved. In addition, the analysis of our minimal cover for particular examples such as the tent map has led us to the construction of another, related cover for general piecewise invertible maps, and --- again under a finiteness assumption --- produces positively expansive covers which again allow us to establish a thermodynamic formalism for piecewise invertible maps.

In a different setting, for locally compact Hausdorff groupoids which are not {\'e}tale but whose range and source maps are only locally injective (Thomsen calls these semi-{\'e}tale), our construction produces a cover groupoid which is {\'e}tale and whose reduced groupoid $C^*$-algebra coincides with Thomsen's construction in \cite{Tho}. This provides a conceptual explanation for the work in \cite{Tho,Tho2011}. Whenever given an {\'e}tale groupoid $G$ and an arbitrary closed (but not necessarily invariant) subspace $C$ of the unit space of $G$, the restricted groupoid $G_C^C$ has locally injective range and source maps (but it will not be {\'e}tale in general because $C$ is not assumed to be invariant). Hence our construction applies to all these examples and produces {\'e}tale groupoids for each of them.

Let us now explain our construction and main results in more detail. 
We work in the general setting of \emph{topological graphs}, which encompasses single transformations as well as topological groupoids. 
A topological graph $Z$ is a pair of locally compact Hausdorff spaces $Y$ and $X$ with continuous maps $r,s\colon Y \to X$. 
We regard $Y$ as the set of edges, $X$ as the set of vertices, and $r$, $s$ are the range and source maps. 
However, we may also think of a topological graph as defining a generalized discrete-time dynamical system on $X$, where $s(y)$ is sent to $r(y)$ for each $y\in Y$.
\setlength{\parindent}{0.5cm} \setlength{\parskip}{0cm}

In its most abstract form, a transfer operator for a topological graph $Z$ is a positive linear operator $\Lambda \colon \mathcal{D}\to\mathcal{R}$ 
defined on a sub-algebra $\mathcal{D}\subseteq \ell^{\infty}_{c}(Y)$ of compactly supported $\Cz$-valued functions on the edges $Y$ into a sub-algebra $\mathcal{R}\subseteq \ell^{\infty}_{c}(X)$ 
(subject to compatibility conditions) 
such that each linear functional $f\mapsto \Lambda(f)(x)$, $x\in X$, is supported on $r^{-1}(x)$.
When the fibres $r^{-1}(x)$ are discrete, we can induce a transfer operator from a weight function $w\colon Y \to [0,\infty)$ by the formula $\Lambda(f)(x) \coloneq \sum_{y \in r^{-1}(x)} w(y) f(y)$, 
where $f\in \ell^{\infty}_{c}(Y)$.
If the fibres are not discrete, then a typical example of a transfer operator comes from Borel measures $\lambda = (\lambda)_{x\in X}$ with $\text{supp}(\lambda_{x})\subseteq r^{-1}(x)$ for $x\in X$ 
such that the function $x\mapsto \int f {\rm d} \lambda_x$ is Borel for every Borel function $f \colon Y\to \mathbb{C}$ with compact support. 
\setlength{\parindent}{0cm} \setlength{\parskip}{0.5cm}

Given a topological graph $Z = (r,s \colon Y\to X)$ and a transfer operator $\Lambda$, 
its \emph{minimal cover} is a new topological graph $\hat{Z} = (\hat{r}, \hat{s} \colon \hat{Y}\to\hat{X})$ equipped with a transfer operator $\hat{\bm{\lambda}}$ that is continuous 
in the sense that $\hat{\bm{\lambda}}(C_{c}(\hat{Y}))\subseteq C_{c}(\hat{X})$. 
There are continuous proper surjections $\pi_{\hat{Y}} \colon \hat{Y} \to Y$, $\pi_{\hat{X}} \colon \hat{X} \to X$, 
injective (in general non-continuous) maps $\iota_Y \colon Y \to \hat{Y}$, $\iota_X \colon X \to \hat{X}$ with dense images such that $(\pi_{\hat{Y}},\pi_{\hat{X}})$ and $(\iota_Y,\iota_X)$ are graph morphisms 
with $(\pi_{\hat{Y}}, \pi_{\hat{X}}) \circ (\iota_Y, \iota_X) = (\id_Y, \id_X)$, and in addition, 
we have $\Lambda(f \circ \iota_Y) = \hat{\bm{\lambda}}(f) \circ \iota_X$ for all $f \in C_c(\hat{Y})$. 
The minimal cover is universal for such maps (see Section \ref{ss:cover_top_graph} for details).
We also construct a modification we call the \emph{minimal essential cover}.
It is given by a sub-graph $\tilde{Z} = (\ti{r}, \ti{s} \colon \ti{Y}\to\ti{X})$ of $\hat{Z}$ with transfer operator 
$\tilde{\bm{\lambda}} \colon  C_{c}(\tilde{Y})\to C_{c}(\tilde{X})$ given by the restriction of $\hat{\bm{\lambda}}$ to $C_{c}(\tilde{Y})$,
and it remedies the discontinuity of the embeddings $\iota_Y$, $\iota_X$ (at the expense that they are only defined on a dense sub-graph).
The essential cover is in fact closer to the original topological graph and is therefore often a better tool for the analysis of the original system.
This construction does however require a mild condition we call post-critical sparsity (see Section \ref{ss:min_ess}).
\setlength{\parindent}{0.5cm} \setlength{\parskip}{0cm}

By viewing a single transformation with a transfer operator as a topological graph (every continuous map $T \colon X \to X$ gives rise to the topological graph $Y = X$, $r = T$, $s = \id_X$),
we obtain minimal dynamical covers $(T_{\min} \colon X_{\min} \to X_{\min} , \bm{\lambda}_{\min})$ and minimal essential covers $(T_{\ess} \colon X_{\ess} \to X_{\ess},\bm{\lambda}_{\ess})$ of single transformations.
However, to ensure that the cover is again a single transformation, we insist that $\hat{Y} = \hat{X}$ and $\hat{s} = \id_{\hat{X}}$. 
It turns out that the minimal dynamical cover is the shift on the (one-sided) infinite path space of the minimal topological graph cover (see Section \ref{s:SingleTrafo}).
\setlength{\parindent}{0cm} \setlength{\parskip}{0.5cm}


Let us now present our main results. In this paper, we especially focus on graphs whose range maps $r \colon Y\to X$ are locally injective (every edge $y\in Y$ has an open neighbourhood $U$ such that $r|_{U}$ is injective). 
The natural transfer operator is induced from the weight function $w=1$, i.e., $\Lambda(f)(x) = \sum_{y\in r^{-1}(x)}f(y)$. 
In this case, the range map of the minimal cover $\hat{r} \colon \hat{Y}\to \hat{X}$ is a local homeomorphism (open and locally injective), and the transfer operator is also induced from the weight $w=1$. 
Similar statements hold for the minimal dynamical cover as well as for the minimal essential cover.
\setlength{\parindent}{0.5cm} \setlength{\parskip}{0cm}

A map $T$ is \emph{open at $x$} if $T(x)$ is an interior point of $T(U)$ for every open neighbourhood $U$ of $x$.
The \emph{critical point set of $T$} is given as $C_{T} = \menge{x\in X}{T\text{ is not open at }x }$. 
Recall also that a subset $C\subseteq X$ is \emph{meagre} if it is a countable union of sets whose closures have empty interior.
\setlength{\parindent}{0cm} \setlength{\parskip}{0.5cm}

\begin{introtheorem}[See Theorem~\ref{thm:up_loc_inj}]
\label{thm:thefirstone}
Suppose $T \colon X\to X$ is locally injective and $X$ is a second countable locally compact Hausdorff space. If the critical point set $C_{T}$ is meagre, then the minimal essential cover $T_{\ess} \colon X_{\ess} \to X_{\ess}$ exists and is a local homeomorphism defined on a second countable locally compact Hausdorff space. The factor map $\pi_{\ess} \colon (T_{\ess},X_{\ess})\to (T, X)$ is continuous, almost one-to-one and proper.

If $(\dot{T}, \dot{X})$ is another local homeomorphism defined on a second countable locally compact Hausdorff space with continuous, almost one-to-one and proper factor map $\dot{\pi} \colon (\dot{T}, \dot{X})\to (T,X)$, then there is a unique continuous factor map $\chi \colon (\dot{T}, \dot{X})\to (T_{\ess}, X_{\ess})$ such that $\dot{\pi} = \pi_{\ess} \circ \chi$. Moreover, $\chi$ is proper, almost one-to-one and $\chi$ restricts to bijections $(\dot{T})^{-1}(\dot{x})\to (T_{\ess})^{-1}(\chi(\dot{x}))$, for all $\dot{x} \in \dot{X}$.
\end{introtheorem}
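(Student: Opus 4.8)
The plan is to build $T_{\ess}$ concretely as (an appropriate subspace of) the inverse-limit-type construction already promised by the paper's general machinery: take $\hat{X}$ to be the space of "itineraries" recording, for a point $x \in X$, which open pre-images are seen along backward orbits, topologised so that $\hat{r}$ becomes a local homeomorphism, and then pass to the essential part $X_{\ess} \subseteq \hat{X}$ that remedies the discontinuity of the embedding. I would first quote the general existence result (the cover construction in Section~\ref{ss:cover_top_graph} applied to the topological graph $Y = X$, $r = T$, $s = \id_X$, with weight $w \equiv 1$), which gives $\hat{Z}$ with $\hat{r}$ a local homeomorphism; the hypothesis that $C_T$ is meagre is precisely the post-critical sparsity condition needed so that the minimal essential cover exists, so $X_{\ess}$ and $T_{\ess}$ are well-defined. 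Second countability and local compactness of $X_{\ess}$ should be inherited from $X$ because $\hat{X}$ sits inside a countable product / path space built from $X$; properness and almost-injectivity of $\pi_{\ess}$ should likewise follow from the general functoriality statement together with the density of $\iota_X(X)$ in $X_{\ess}$ (the set where $\pi_{\ess}$ is injective contains $\iota_X(X)$, hence is dense, and meagreness of $C_T$ is what forces it to be comeagre).

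For the universal property, suppose $\dot{\pi} \colon (\dot{T}, \dot{X}) \to (T, X)$ is a continuous, almost one-to-one, proper factor map with $\dot{T}$ a local homeomorphism. I would construct $\chi$ by sending $\dot{x} \in \dot{X}$ to the itinerary it determines: since $\dot{T}$ is a local homeomorphism, each point $\dot{x}$ and each choice of backward orbit of $\dot{x}$ selects, via $\dot{\pi}$, a compatible sequence of local pre-image branches of $T$ downstairs, and this is exactly the data encoded by a point of $X_{\ess}$. The key point is that because $\dot{\pi}$ is almost one-to-one, a point $\dot{x}$ is determined by its image together with this branch data, so $\chi$ is well-defined and automatically satisfies $\dot{\pi} = \pi_{\ess} \circ \chi$. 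Continuity of $\chi$ follows because near a point where things are "generic" the branch data varies continuously, and one extends to all of $\dot{X}$ using that $\dot{X}$ is the closure of the generic locus (comeagreness again) and that $X_{\ess}$ has the universal / minimal topology among such covers. Uniqueness of $\chi$ is forced by $\dot{\pi} = \pi_{\ess} \circ \chi$ on the dense set where $\pi_{\ess}$ is injective.

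For the remaining assertions about $\chi$: properness follows from $\dot{\pi} = \pi_{\ess} \circ \chi$ with $\dot{\pi}$ proper and $\pi_{\ess}$ separated (a standard cancellation: if $\dot\pi$ is proper and $\pi_{\ess}$ is separated then $\chi$ is proper). Almost-injectivity of $\chi$: the set where $\pi_{\ess}$ is injective pulls back under $\chi$ (and maps under $\chi$ from the set where $\dot\pi$ is injective) to a dense $G_\delta$ on which $\chi$ is injective. The statement that $\chi$ restricts to bijections $(\dot{T})^{-1}(\dot{x}) \to (T_{\ess})^{-1}(\chi(\dot{x}))$ is where the local-homeomorphism hypothesis on $\dot T$ really bites: surjectivity onto the fibre holds because $\chi$ intertwines $\dot T$ and $T_{\ess}$ and every pre-image branch of $T_{\ess}$ at $\chi(\dot x)$ corresponds, via $\dot\pi$, to a pre-image branch of $T$ at $\dot\pi(\dot x)$ which lifts to $\dot T$ since $\dot T$ is a local homeomorphism and $\dot\pi$ is an almost one-to-one factor map; injectivity on the fibre follows because two points of $(\dot T)^{-1}(\dot x)$ with the same $\chi$-value would have the same image under $\dot\pi$ and the same backward itinerary, contradicting almost-injectivity of $\dot\pi$ (after propagating forward one step). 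I expect the main obstacle to be the continuity of $\chi$ at non-generic points — i.e. showing the branch-data map extends continuously from the comeagre locus to all of $\dot X$ — since this is where one must use the precise definition of the topology on $X_{\ess}$ and the interaction of meagreness of $C_T$ with the path-space construction, rather than a soft diagram-chase; the fibrewise bijection claim is the second most delicate point, requiring careful bookkeeping of local inverse branches.
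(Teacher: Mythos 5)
Your overall shape is right—existence via the general machinery plus a universal property argument—but there is a genuine gap at the step that carries the whole second half of the theorem. The paper does \emph{not} build $\chi$ by hand from itineraries; it first proves (Corollary~\ref{cor:a1to1}) that \emph{any} local homeomorphism $(\dot T,\dot X)$ with a continuous, proper, almost one-to-one factor map $\dot\pi$ onto a locally injective $(T,X)$ is automatically an essential cover in the technical sense, i.e.\ there is a \emph{fully invariant} co-meagre subset of $\dot X$ on which $\dot\pi$ restricts to a bijection onto a fully invariant co-meagre subset of $X$. Upgrading "the injectivity locus of $\dot\pi$ is dense" to "there is a fully $T$-invariant co-meagre bijectivity locus" is not automatic: it needs \cite[Lemma~2.2]{BOT06} to see the injectivity locus is co-meagre, Lemma~\ref{lem:meagre_to_meagre} (locally injective maps send meagre sets to meagre sets) to saturate it under forward and backward iteration without destroying co-meagreness, and a weak-openness argument to transport this between $\dot X$ and $X$. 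Your proposal skips this entirely, yet it is exactly what licenses applying the already-established universal property (Theorem~\ref{thm:universal_ess_dyn}, with the measure condition discharged by Remark~\ref{rem:redundant_dyn_measure} since $w=1$), which then hands you the continuous $\chi$, its uniqueness, and the fibrewise bijections (the latter being precisely the statement that $\chi$ is a weighted morphism for $w=1$) with no further work. The same remark applies to the first paragraph: "$C_T$ meagre" is not by definition the sparsity condition; Corollary~\ref{cor:loc_inj_meagre} has to prove that meagreness of $C_T$ forces the saturated critical set $G_\Lambda$ to be meagre, again via Lemma~\ref{lem:meagre_to_meagre}.

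The second gap is one you flag yourself: continuity of $\chi$ at non-generic points. Your plan is to define $\chi$ on a co-meagre "generic" locus and extend by density, but a map defined and continuous on a dense subset of $\dot X$ need not admit any continuous extension, and you give no mechanism for producing one. In the paper this is resolved at the level of function algebras: the universal property is proved by showing, by induction on the generating $^*$-subalgebras, that every function in the minimal domain restricted to the regular sub-graph pulls back under $\dot\pi$ to (the restriction of) a genuinely continuous compactly supported function on $\dot X$; Gelfand duality then yields the continuous $\chi$ globally, with no pointwise extension step. Without either that algebraic argument or the reduction through Corollary~\ref{cor:a1to1}, your construction of $\chi$ does not close.
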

\setlength{\parindent}{0cm} \setlength{\parskip}{0cm}

For a subshift $\sigma \colon X\to X$, its critical point set $C_{\sigma}$ is meagre if and only if the interior of $\sigma(X)$ is dense in $\sigma(X)$ (see Corollary \ref{cor:loc_inj_meagre} and Lemma \ref{weakly_open}).
\setlength{\parindent}{0cm} \setlength{\parskip}{0.5cm}

The last mapping property in Theorem \ref{thm:thefirstone} is an analogue of $s$-bijective factor maps between Smale spaces. 
Indeed, if $\chi \colon (T, X)\to (T',X')$ is a factor map between two pre-Wieler solenoids (systems satisfying Axiom 1 and 2 of \cite{Wie}), 
then the induced factor map between their natural extensions (which are Smale spaces by \cite{Wie}) is $s$-bijective if and only if $\chi$ restricts to bijections $T^{-1}(x) \to (T')^{-1}(\chi(x))$, for all $x\in X$. 
In light of this, the following lifting result for almost one-to-one factor maps between local injections is an analogue of Putnam's lifting result in \cite[Theorem~1.1]{Put}.

\begin{introtheorem}[See Theorem~\ref{thm:loc_inj_lifting_thm}]
    Suppose $(T, X)$ and $(T', X')$ are locally injective dynamical systems defined on second countable locally compact Hausdorff spaces with meagre critical point sets, 
    and suppose $\chi \colon (T, X)\to (T', X')$ is a continuous, almost one-to-one and proper factor map. 
    Then there is a unique continuous factor map $\chi_{\ess} \colon (T_{\ess}, X_{\ess}) \to (T'_{\ess}, X'_{\ess})$ such that $\pi_{\ess} \circ \chi_{\ess} = \chi \circ \pi'_{\ess}$. 
    Moreover, $\chi_{\ess}$ is almost one-to-one, proper and $\chi_{\ess}$ restricts to bijections $T_{\ess}^{-1}(\tilde{x}) \to (T'_{\ess})^{-1}(\chi_{\ess}(\tilde{x}))$, for all $\tilde{x} \in X_{\ess}$.
\end{introtheorem}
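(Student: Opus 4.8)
The plan is to deduce this lifting statement \emph{formally} from the universal property of the minimal essential cover established in Theorem~\ref{thm:thefirstone}, applied to the base system $(T',X')$. First I would note that Theorem~\ref{thm:thefirstone} applies verbatim to both $(T,X)$ and $(T',X')$: each is locally injective on a second countable locally compact Hausdorff space with meagre critical point set. Hence $(T_{\ess},X_{\ess})$ and $(T'_{\ess},X'_{\ess})$ exist, are local homeomorphisms on second countable locally compact Hausdorff spaces, and the factor maps $\pi_{\ess}$, $\pi'_{\ess}$ are continuous, almost one-to-one and proper.

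Next I would form the composite $\psi \defeq \chi \circ \pi_{\ess} \colon X_{\ess}\to X'$ and verify that it is a continuous, almost one-to-one and proper factor map from the local homeomorphism $(T_{\ess},X_{\ess})$ onto $(T',X')$. Continuity is clear, and equivariance is the diagram chase $\psi \circ T_{\ess} = \chi\circ(\pi_{\ess}\circ T_{\ess}) = \chi\circ(T\circ\pi_{\ess}) = (T'\circ\chi)\circ\pi_{\ess} = T'\circ\psi$, using that $\pi_{\ess}$ and $\chi$ are factor maps; surjectivity and properness of $\psi$ follow since $\pi_{\ess}$ and $\chi$ are both surjective and proper. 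The one point that is not purely formal is that $\psi$ is almost one-to-one, i.e.\ that the class of continuous, almost one-to-one, proper factor maps between second countable locally compact Hausdorff spaces is stable under composition. I would prove this by a Baire category argument: for such a surjection $g$ with injectivity locus a dense $G_\delta$ set $C_1$ in the target, the set $g^{-1}(C_1)$ is again a dense $G_\delta$ in the source (the $G_\delta$ property is immediate from continuity, and density uses properness together with local compactness to rule out a nonempty open set mapping into a closed nowhere dense set), after which the injectivity locus of the composite is identified, via the continuous section of the outer map, with an intersection of two dense $G_\delta$ sets. Granting this, the universal property in Theorem~\ref{thm:thefirstone} applied to the base $(T',X')$ and to $(\dot T,\dot X)=(T_{\ess},X_{\ess})$ with factor map $\dot\pi=\psi$ produces a unique continuous factor map $\chi_{\ess}\colon(T_{\ess},X_{\ess})\to(T'_{\ess},X'_{\ess})$ with $\pi'_{\ess}\circ\chi_{\ess}=\psi=\chi\circ\pi_{\ess}$, and the same theorem guarantees that this $\chi_{\ess}$ is automatically proper, almost one-to-one, and restricts to bijections $T_{\ess}^{-1}(\tilde x)\to(T'_{\ess})^{-1}(\chi_{\ess}(\tilde x))$ for every $\tilde x\in X_{\ess}$, which is exactly the assertion (and uniqueness of $\chi_{\ess}$ is the uniqueness clause of Theorem~\ref{thm:thefirstone}).

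I expect the main obstacle to be precisely that composition lemma: showing that the class of continuous, almost one-to-one, proper factor maps is closed under composition, where the essential input is the local compactness hypothesis (so that the Baire argument goes through and preimages of injectivity loci stay dense). Everything else is a formal consequence of Theorem~\ref{thm:thefirstone}. As an alternative route — probably closer to the paper's internal organisation — one could instead lift $\chi$ to an equivariant morphism between the \emph{minimal} covers at the level of topological graphs using the functoriality of the topological-graph cover, and then check this morphism restricts to the essential subgraphs; but the argument above shows that no work beyond the composition lemma is needed once Theorem~\ref{thm:thefirstone} is in hand.
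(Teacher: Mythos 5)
Your argument is correct, but it is organised differently from the paper's. The paper proves Theorem~\ref{thm:loc_inj_lifting_thm} by verifying the hypotheses of its general lifting theorem for essential covers (Theorem~\ref{thm:dyn_lifting_res_ess}): using Lemma~\ref{lem:meagre_to_meagre} and the argument of Corollary~\ref{cor:a1to1}, it produces a co-meagre fully invariant set on which $\chi$ restricts to a bijection, whence $\chi$ is fibre-bijective on a dense set (the measure condition for $w=1$) and weakly open, and then the lifting theorem applies. You instead compose $\chi$ with $\pi_{\ess}$ and feed the composite into the universal property of Theorem~\ref{thm:up_loc_inj} for the base $(T',X')$, so that the only non-formal input is your composition lemma: continuous, proper, almost one-to-one factor maps between second countable locally compact Hausdorff spaces are closed under composition. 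That lemma is true and your Baire-category sketch is essentially the right one --- properness into a locally compact Hausdorff space makes an almost one-to-one proper surjection weakly open, and then the intersection $S_{\pi_{\ess}}\cap\pi_{\ess}^{-1}(S_{\chi})$ of injectivity loci is co-meagre, hence dense --- though I would caution that ``the injectivity locus is a dense $G_\delta$'' is not immediate from continuity alone; the upgrade from dense to co-meagre is exactly what \cite[Lemma~2.2]{BOT06} supplies, and the paper already invokes it for this purpose in Corollary~\ref{cor:a1to1}. In the end the two routes rest on the same Baire-category content, but yours has the advantage that the ``moreover'' clauses (properness, almost one-to-one, fibre-bijectivity of $\chi_{\ess}$) come prepackaged in the statement of Theorem~\ref{thm:up_loc_inj}, whereas the paper's route leaves them to be read off from $\chi_{\ess}$ being a proper $w=1$ morphism. (Your intertwining relation $\pi'_{\ess}\circ\chi_{\ess}=\chi\circ\pi_{\ess}$ is the type-correct reading of the identity displayed in the theorem statement.)
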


The construction of the minimal topological graph cover proceeds in steps. 
We first construct a sequence $Z_n$ of topological graphs with a system of maps between the underlying spaces, and then the minimal cover $\hat{Z}$ arises as the projective limit of these $Z_n$ (see Section \ref{ss:DescCovers}). 
With applications to the thermodynamic formalism in mind, we identify a finiteness condition which ensures that positive expansivity is inherited by the minimal dynamical cover $T_{\min}$ from the original transformation $T$. First of all, a continuous map $T\colon X\to X$ on a compact metric space $(X,d)$ is \emph{positively expansive} if there is a constant $c>0$ such that for every two distinct $x, z\in X$ there is $n\in \Nz$ such that $d(T^n x, T^n z) \geq c$. Note that a positively expansive map must be locally injective, and we assume throughout that the transfer operator is induced from the function $w=1$. We establish the following relation between positive expansivity of $T$ and $T_{\min}$.

\begin{introtheorem}[see Theorem~\ref{thm:hatTExpansive-check}]
Let $T\colon X\to X$ be a continuous and positively expansive map on a compact metric space $X$. If our construction of the minimal topological graph cover terminates after finitely many steps, i.e. $\hat{Z} = Z_n$ for some $n \in \Nz$, then the minimal dynamical cover $T_{\min} \colon X_{\min} \to X_{\min}$ is continuous, open, and positively expansive.
\end{introtheorem}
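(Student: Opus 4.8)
The plan is to deduce positive expansivity of $T_{\min}$ from three ingredients: that $T_{\min}$ is the shift on the infinite path space of the minimal topological graph cover $\hat Z$ (Section~\ref{s:SingleTrafo}), that the finite-termination hypothesis $\hat Z = Z_n$ makes $\hat X$ and $\hat Y$ ``close enough'' to $X$ and $Y$ that metric estimates transfer, and that the covering maps $\pi_{\hat X}$, $\pi_{\hat Y}$ are proper continuous surjections intertwining $\hat r$ with $r = T$. Continuity of $T_{\min}$ should follow from general properties of the cover (the range map $\hat r$ of the minimal cover of a locally injective graph is a local homeomorphism, hence open, and the shift on the path space of a topological graph whose range map is a local homeomorphism is continuous and open); openness of $T_{\min}$ likewise. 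So the real content is positive expansivity.

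First I would fix a compatible metric. Since $\hat Z = Z_n$ for finite $n$, the space $\hat X$ is built from $X$ by finitely many steps, each of which (by the construction in Section~\ref{ss:DescCovers}) attaches to a point of $X$ a bounded amount of extra ``local branch'' data; in particular $\pi_{\hat X} \colon \hat X \to X$ is a proper continuous surjection with uniformly bounded fibres, and one can choose a compatible metric $\hat d$ on $\hat X$ for which $\pi_{\hat X}$ is Lipschitz (or at least uniformly continuous) and which ``refines'' $d$ in the sense that points of $\hat X$ over the same or nearby points of $X$ but carrying different branch data are separated by a definite amount. The key point is that the extra coordinates recording branch data are \emph{locally constant} off the (discontinuity) loci, so they are detected after finitely many applications of $T_{\min}$.

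Next, the core estimate. Let $\hat x, \hat z \in X_{\min}$ be distinct points of the path space. Either they already differ in the $X$-coordinate of some entry, i.e. there is $k$ with $\pi_X(\hat r^{\,k}\hat x) \ne \pi_X(\hat r^{\,k}\hat z)$ — in which case positive expansivity of $T$ on $X$ with constant $c$ gives $m$ with $d\big(T^{m}\pi_X(\hat r^{\,k}\hat x), T^{m}\pi_X(\hat r^{\,k}\hat z)\big) \ge c$, and pushing back through $\pi_{\hat X}$ (uniform continuity) and through the path-space shift gives a uniform lower bound on $\hat d(T_{\min}^{\,j}\hat x, T_{\min}^{\,j}\hat z)$ for a suitable $j$. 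Or they agree in every $X$-coordinate but differ in some branch-data coordinate; here I use that $\hat Z = Z_n$ means all branch data is determined by iterating at most $n$ times, together with local injectivity of $r$, to conclude that the shifted sequences $T_{\min}^{\,j}\hat x$ and $T_{\min}^{\,j}\hat z$ land in a region where this coordinate is locally constant and the two points are at $\hat d$-distance bounded below by a constant depending only on $n$ and the chosen metric. Taking $c_{\min}$ to be the minimum of the two resulting constants gives a single positive-expansivity constant for $T_{\min}$.

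The main obstacle I anticipate is the second case: making precise that the finite-termination hypothesis forces all the ``extra'' coordinates of $\hat X$ to be detectable within a \emph{uniformly bounded} number of dynamical steps, and then choosing the metric $\hat d$ so that distinct branch-data values over a common base point are uniformly separated while $\pi_{\hat X}$ stays uniformly continuous — one has to ensure these two requirements are compatible, which is exactly where compactness of $X$ (hence of $X_{\min}$, via properness of the covering map) and the finiteness $\hat Z = Z_n$ are both used. A secondary technical point is that points of the path space that agree in all finitely many ``visible'' coordinates but differ only ``at infinity'' must still be separated; this is handled by the fact that the path-space topology is generated by finitely-determined cylinder data together with the refined coordinates, so distinctness of two paths is always witnessed at some finite stage, and the uniformity over that stage is again supplied by compactness.
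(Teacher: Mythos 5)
Your architecture matches the paper's: identify $T_{\min}$ with the shift on $\hat{Z}^{-\infty}$ (Corollary~\ref{cor:cover_inf_paths}), get continuity and openness from the fact that $\hat{r}$ is a local homeomorphism, and then prove expansivity by splitting a pair of distinct paths into the case where their downstairs forward $T$-orbits differ (handled by expansivity of $T$ plus uniform continuity of the projection) and the case where they differ only in the ``branch data'' carried by some entry. The paper does exactly this, except that it works with the finite-open-cover characterization of positive expansivity (Lemma~\ref{lem:expansive}) instead of constructing an adapted metric $\hat{d}$: the cover of $\hat{Z}^{-\infty}$ by cylinder sets over $\pi_{\hat{Y}}^{-1}(U_i)\cap V_j$, where $\{U_i\}$ witnesses expansivity of $T$ and $\pi_{\hat{Y}}$ is injective on each $V_j$, does both of your cases at once and spares you all the metric bookkeeping (Proposition~\ref{prop:expansive}).

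The genuine gap is the step you yourself flag as ``the main obstacle'': the assertion that distinct branch-data values over a common base point are uniformly separated. This is exactly the statement that $\pi_{\hat{Y}}\colon \hat{Y}\to Y$ is locally injective (uniform separation then follows from compactness via a Lebesgue-number argument), and it is the actual technical content of the theorem --- the paper's Lemma~\ref{lem:PreimagesFarApart-check}. It does not follow formally from ``$\hat{Z}=Z_n$ plus local injectivity of $T$''; it requires an induction on the stages $Y_k\to Y_{k-1}$ in which one shows that every measure $\lambda_{k-1,x}$ arising in the limit is a sum of \emph{unit} point masses supported on a fibre of the locally injective map $T\circ\pi_{Y_{k-1}}$, and that two such measures which are weak-$*$ close and sit over nearby base points must eventually coincide (this is where one chooses disjoint injectivity neighbourhoods $V_j\subseteq U_j$ and test functions $\phi_j$). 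Without this your second case has no lower bound to quote. A secondary point: your requirement that the extra coordinates be ``detectable within a \emph{uniformly bounded} number of dynamical steps'' is a red herring --- positive expansivity allows the separating time to depend on the pair of points, and indeed a discrepancy at entry $-1-k$ is only seen after $k$ shifts with $k$ unbounded; what must be uniform is the \emph{amount} of separation once the discrepant entry reaches position $-1$, which is again precisely local injectivity of $\pi_{\hat{Y}}$ plus compactness. Finally, note that the finiteness hypothesis $\hat{Z}=Z_n$ is used only to guarantee that $\pi_{\hat{Y}}$ coincides with some finite-stage projection $\pi_{Y_N}$; local injectivity can and does fail for the infinite projective limit (e.g.\ for non-sofic subshifts), so this is where the hypothesis enters and it cannot be dropped from your separation claim.
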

\setlength{\parindent}{0cm} \setlength{\parskip}{0cm}

Among subshifts, it is precisely the sofic systems for which our construction terminates after finitely many steps. 
Interestingly, the minimal \emph{dynamical} cover rarely terminates in finitely many steps, even for sofic systems, and this emphasizes the importance of the graph perspective.
The above theorem leads to the following result, which has applications for the thermodynamic formalism.

\begin{introcor}[see Corollary~\ref{cor:Prep-TDF}]
\label{cor:intro}
Let $T\colon X\to X$ be a continuous and positively expansive map on a compact metric space $(X,d)$. Assume that $T$ is topologically transitive, that $T$ has meagre critical set, and that our construction of the minimal topological graph cover from Section \ref{ss:DescCovers} terminates after finitely many steps. Then the minimal essential dynamical cover $T_{\ess}$ is open, topologically transitive and positively expansive.
\end{introcor}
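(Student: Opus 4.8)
The plan is to verify the three conclusions for $T_\ess$ in turn, building on Theorem~\ref{thm:thefirstone}, Theorem~\ref{thm:hatTExpansive-check}, and the description of the covers.

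Since $T$ is positively expansive it is locally injective, and a compact metric space is second countable, locally compact and Hausdorff; as $C_T$ is meagre by hypothesis, Theorem~\ref{thm:thefirstone} applies and yields $T_\ess \colon X_\ess \to X_\ess$ as a local homeomorphism on a second countable locally compact Hausdorff space, together with a continuous, almost one-to-one, proper factor map $\pi_\ess \colon (T_\ess, X_\ess) \to (T, X)$. A local homeomorphism is an open map, so $T_\ess$ is open; and since $\pi_\ess$ is proper and $X$ is compact, $X_\ess = \pi_\ess^{-1}(X)$ is compact, hence compact metrizable, and I fix a compatible metric. For positive expansivity I would use that, by the construction of the minimal essential cover as a sub-graph of $\hat{Z}$ and of the dynamical covers as infinite path spaces (Sections~\ref{ss:min_ess} and \ref{s:SingleTrafo}), $X_\ess$ sits inside $X_\min$ as a $T_\min$-invariant subspace with $T_\ess = T_\min|_{X_\ess}$. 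The hypothesis that the minimal topological graph cover terminates after finitely many steps lets me invoke Theorem~\ref{thm:hatTExpansive-check}, so $T_\min$ is positively expansive on the compact metric space $X_\min$; an expansivity constant for $(X_\min, T_\min)$ serves equally for the invariant subset $X_\ess$ with the restricted metric, so $T_\ess$ is positively expansive.

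The substantial part is topological transitivity, which I would establish in the form: for all non-empty open $U, V \subseteq X_\ess$ there is $n \ge 0$ with $T_\ess^n(U) \cap V \neq \emptyset$. Write $X_0 = \menge{x \in X}{\pi_\ess^{-1}(x) \text{ is a singleton}}$ and $Y_0 = \pi_\ess^{-1}(X_0)$; since $\pi_\ess$ is proper and almost one-to-one and $X_\ess$ is compact metric, $X_0$ and $Y_0$ are dense $G_\delta$ sets. Two observations feed the argument. (i) If $u \in Y_0$ and $W$ is an open neighbourhood of $u$ in $X_\ess$, then $\pi_\ess(W)$ is a neighbourhood of $\pi_\ess(u)$ in $X$: this is a short sequential-compactness argument using that the fibre over $\pi_\ess(u)$ is exactly $\{u\}$, and it is here that almost injectivity of $\pi_\ess$ — not mere surjectivity — is used. (ii) For each $n \ge 0$ the set $T^{-n}(X_0)$ is dense in $X$: since $T_\ess$ is open and continuous, $T_\ess^{-n}(Y_0)$ is dense $G_\delta$ in $X_\ess$, hence so is $Y_0 \cap T_\ess^{-n}(Y_0)$ by Baire; its image under the continuous surjection $\pi_\ess$ is dense in $X$ and, by the intertwining identity $\pi_\ess \circ T_\ess = T \circ \pi_\ess$, is contained in $X_0 \cap T^{-n}(X_0)$. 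By Baire once more, $X_0^\ast := \bigcap_{n \ge 0} T^{-n}(X_0)$, the set of points whose forward $T$-orbit stays in $X_0$, is dense $G_\delta$ in $X$. Now, given $U, V$, I pick $u \in U \cap Y_0$ and $v \in V \cap Y_0$ and use (i) to get non-empty open $\tilde U \subseteq \pi_\ess(U)$ and $\tilde V \subseteq \pi_\ess(V)$ in $X$; by topological transitivity of $T$ the set $\bigcup_{n \ge 0} T^{-n}(\tilde V)$ is dense, so $\tilde U \cap X_0^\ast \cap \bigcup_{n \ge 0} T^{-n}(\tilde V)$ is non-empty; choosing $x$ there and $n$ with $T^n x \in \tilde V$, both $x$ and $T^n x$ lie in $X_0$, so $\pi_\ess^{-1}(x)$ and $\pi_\ess^{-1}(T^n x)$ are single points, forced into $U$ and $V$ respectively, and the intertwining identity gives $T_\ess^n(\pi_\ess^{-1}(x)) = \pi_\ess^{-1}(T^n x) \subseteq V$, producing the desired point of $T_\ess^n(U) \cap V$.

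The main obstacle is concentrated in the transitivity step: the singleton-fibre locus $X_0$ is comeagre but not forward invariant, so one cannot simply lift a transitive point of $T$ to one of $T_\ess$. The device that makes the argument go through is to transport comeagreness up to $X_\ess$ along $T_\ess$ — legitimate precisely because $T_\ess$ is open, i.e.\ a local homeomorphism — and then back down along $\pi_\ess$, together with observation (i), which lets one replace the possibly non-open images $\pi_\ess(U)$, $\pi_\ess(V)$ by genuine open subsets. Everything else is routine once Theorems~\ref{thm:thefirstone} and \ref{thm:hatTExpansive-check} and the identification of $X_\ess$ as an invariant subsystem of $X_\min$ are in hand.
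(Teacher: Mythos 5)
Your proposal is correct in substance, and for two of the three conclusions it coincides with the paper's route: openness because the minimal essential cover of a locally injective system weighted by $w=1$ is a local homeomorphism (Lemma~\ref{lem:full}, Theorem~\ref{thm:up_loc_inj}), and positive expansivity from Theorem~\ref{thm:hatTExpansive-check} (your extra step of restricting an expansivity constant from $X_{\min}$ to the closed invariant subset $X_{\ess}$ is harmless, but note the theorem already covers $T_{\ess}$ directly). Where you genuinely diverge is topological transitivity. The paper handles it in one line via Remark~\ref{rem:TessTopTrans}: by \cite[Theorem~5.9~(iv)]{Wal82} the points with dense forward orbit form a dense $G_\delta$ in $X$, so by Baire one of them lies in the fully invariant co-meagre set $\crX$ on which $\pi_{\ess}$ is a bijection; full invariance of $\crX$ and continuity of $\iota\vert_{\crX}$ then force the forward orbit of the unique lift to be dense in $X_{\ess}$. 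Your argument instead verifies the two-open-sets formulation, manufacturing the comeagre forward-invariant locus $X_0^{\ast}=\bigcap_{n}T^{-n}(X_0)$ by pushing comeagreness up along the open map $T_{\ess}$ and back down along $\pi_{\ess}$, and using your observation (i) to replace $\pi_{\ess}(U)$ by a genuine open set. This is correct and self-contained (it does not invoke Walters), but it rebuilds by hand exactly what full invariance of the good set $\crX$ gives for free; the paper's single transitive point replaces your entire intersection argument.

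One small wrinkle to be aware of: in the step ``by topological transitivity of $T$ the set $\bigcup_{n\geq 0}T^{-n}(\tilde V)$ is dense'' you are using the open-sets form of transitivity, whereas the paper's stated hypothesis (and its definition throughout) is the existence of a dense forward orbit. The two formulations agree when $X$ has no isolated points, but not for arbitrary compact systems, so as written your proof starts from a formally different hypothesis. This is precisely the subtlety that the paper's argument via the dense $G_\delta$ of transitive points sidesteps; if you want to keep your route, you should either justify the equivalence in the present setting or switch to lifting a single transitive point as in Remark~\ref{rem:TessTopTrans}.
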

The point is that the original transformation $T$ will not be open in general (see for example \cite{Hir,Ros}). As we explain in Remark~\ref{rem:TDF-FiniteSteps}, it follows that the thermodynamic formalism holds for transformations $T \colon X \to X$ satisfying the conditions in Corollary~\ref{cor:intro}.
\setlength{\parindent}{0cm} \setlength{\parskip}{0.5cm}

Moreover, the analysis of our minimal cover leads to the following application to piecewise invertible maps (which we introduce in \S~\ref{sec:PiecewInv}, but compare \cite{Hof,Tsu} and the references therein for similar notions).
\begin{introtheorem}[see Corollary~\ref{cor:PiecewInvNiceCover}]
\label{thm:intro-PiecewInv}
Let $T$ be an expansive piecewise invertible map in the sense of \S~\ref{sec:PiecewInv} which is topologically transitive. Assume that $T$ satisfies the finiteness condition from Proposition~\ref{prop:PiecewInv-SFT}. Then $T$ admits a cover consisting of a topologically transitive shift of finite type.
\end{introtheorem}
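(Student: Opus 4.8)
The plan is to obtain the cover from the piecewise-invertible construction of \S~\ref{sec:PiecewInv}, and then to promote topological transitivity from the base to the cover by the argument already used for the essential dynamical cover in Corollary~\ref{cor:intro}. The first, and main, step is Proposition~\ref{prop:PiecewInv-SFT}: the finiteness hypothesis is set up so that the combinatorial data underlying the piecewise-invertible cover of $T$ --- the itineraries of points with respect to the finitely many pieces on which $T$ is invertible, refined by the forward orbit of the critical set $C_{T}$ --- stabilise after finitely many steps, so that the cover is the one-sided path space $E^{\infty}$ of a \emph{finite} directed graph $E$, equipped with the shift $\sigma_{E}$. Since $T$ is a total map every vertex of $E$ emits an edge, so $E^{\infty}$ is compact and $(\sigma_{E}, E^{\infty})$ is a (one-sided) shift of finite type; expansivity of $T$ lets the pieces be chosen fine enough that the itinerary coding separates points, which is what makes the associated factor map $\pi \colon E^{\infty} \to X$ --- continuous, proper, surjective, with $\pi \circ \sigma_{E} = T \circ \pi$ --- almost one-to-one (one also uses here that $C_{T}$ is contained in the union of the boundaries of the pieces, hence nowhere dense, so the reasoning behind Theorem~\ref{thm:thefirstone} applies). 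This already exhibits $(\sigma_{E}, E^{\infty})$ as a cover of $(T, X)$.

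It remains to show that $(\sigma_{E}, E^{\infty})$ is topologically transitive, and for this I would reproduce the argument of Corollary~\ref{cor:intro}. Since $\pi$ is an almost one-to-one factor map between compact metric systems, the set $D = \menge{\xi \in E^{\infty}}{\pi^{-1}(\pi(\xi)) = \gekl{\xi}}$ is dense, and $D = \pi^{-1}(X_{0})$ where $X_{0} = \pi(D)$ is a dense $G_{\delta}$, hence residual, subset of $X$ (by the usual upper semicontinuity of the map sending $x$ to the diameter of $\pi^{-1}(x)$). As $(T, X)$ is topologically transitive, its set of forward-transitive points is residual, so there is $x_{0} \in X_{0}$ with dense forward $T$-orbit; let $\xi_{0}$ be its unique $\pi$-preimage. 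Then $K = \overline{\menge{\sigma_{E}^{n} \xi_{0}}{n \geq 0}}$ is a closed $\sigma_{E}$-invariant set with $\pi(K) \supseteq \overline{\menge{T^{n} x_{0}}{n \geq 0}} = X$, so $\pi(K) = X$; hence for every $\xi \in D$ the single point of $\pi^{-1}(\pi(\xi))$ belongs to $K$, i.e.\ $\xi \in K$, and since $D$ is dense, $K = E^{\infty}$. Thus $\xi_{0}$ is forward-transitive and $(\sigma_{E}, E^{\infty})$ is topologically transitive, so it is a topologically transitive shift of finite type covering $T$, as claimed.

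The hard part is Proposition~\ref{prop:PiecewInv-SFT}: converting the dynamical finiteness hypothesis into actual combinatorial finiteness of the graph $E$, and --- crucially --- checking that admissibility of paths in $E$ is governed by a bounded window, so that $E^{\infty}$ is of \emph{finite} type rather than merely sofic. Concretely one must show that iterating the piecewise structure produces only finitely many distinct local configurations --- for instance that the partition of $X$ generated by the closed pieces together with the relevant forward images of $C_{T}$ is finite --- and that expansivity prevents the cover from failing to separate points; this is where the piecewise-invertible geometry of $T$, and not merely its positive expansivity, genuinely enters. Granting that proposition, the identification of the cover with the finite edge shift $E^{\infty}$ and the transitivity lift above are both routine.
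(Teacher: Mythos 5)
Your proposal is correct and follows essentially the same route as the paper: the cover is the partition/itinerary cover $T'$ of \S~\ref{sec:PiecewInv}, identified as a shift of finite type via Proposition~\ref{prop:PiecewInv-SFT} (together with the local homeomorphism property), and topological transitivity is lifted exactly as in the paper's Remark~\ref{rem:TessTopTrans}, used via Lemma~\ref{lem:ess_cover_ex} and Corollary~\ref{cor:TopTrans}, by choosing a transitive point in the residual set over which the almost one-to-one factor map is injective. Your explicit closure-of-orbit argument is just the unpacked version of the paper's terse remark, and your deferral of the combinatorial finiteness to Proposition~\ref{prop:PiecewInv-SFT} matches how the paper's Corollary~\ref{cor:PiecewInvNiceCover} is actually derived.
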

\setlength{\parindent}{0cm} \setlength{\parskip}{0cm}

Note that $T$ does not need to be open nor positively expansive in the sense explained above. Again, Theorem~\ref{thm:intro-PiecewInv} has applications to the thermodynamic formalism because the formalism has been established for topologically transitive shifts of finite type (see for instance \cite{PP,PU}).
\setlength{\parindent}{0cm} \setlength{\parskip}{0.5cm}

Let us now present the main application of our construction in the setting of groupoids and their $C^*$-algebras. 
Let $G$ be a locally compact Hausdorff groupoid with locally injective range and source maps. 
Given two functions $f, g \in C_c(G)$, the usual convolution $(f * g)(\gamma) \coloneq \sum_{\alpha \beta = \gamma} f(\alpha) g(\beta)$ is bounded with compact support though it need not be continuous.
This observation led Thomsen to construct a $C^*$-algebra for $G$ using the analogue of the left regular representation for {\'e}tale groupoids (see \cite{Tho}). 
Our cover construction yields a conceptual explanation for Thomsen's work.
In the special case of an action by a local injection, Thomsen already established groupoid models for this construction in \cite{Tho2011}.

\begin{introtheorem}[see Proposition~\ref{prop:B=C*G}]
View $r, s \colon G \to G^{(0)}$ as a topological graph in the sense above, with transfer operator induced from the function $w=1$. Let $\hat{G}$ be our minimal cover for this weighted topological graph.
Then $\hat{G}$ carries the structure of an {\'e}tale groupoid, and the groupoid structure is uniquely determined by the property that it extends the groupoid structure on $G$. Moreover, the reduced groupoid $C^*$-algebra $C^*_r(\hat{G})$ is canonically isomorphic to the $C^*$-algebra of $G$ which Thomsen constructed in \cite{Tho}.
\end{introtheorem}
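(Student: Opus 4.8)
The plan is to unpack the minimal-cover construction for the weighted topological graph $(r,s\colon G\to G^{(0)},\,w=1)$ and show that all the relevant structure maps of $G$—multiplication, inversion, the unit inclusion—are compatible with taking the projective limit $\hat G = \varprojlim Z_n$, so that they descend to $\hat G$. First I would recall that since $r$ (and $s$) are locally injective, the general theory established earlier in the paper (the locally-injective case, with weight $w=1$) gives that $\hat r\colon \hat G \to \hat{G^{(0)}}$ is a local homeomorphism and that $\hat{\bm\lambda}$ is again induced from the weight $w=1$; moreover $\hat G$ is locally compact Hausdorff and the proper surjection $\pi_{\hat G}\colon \hat G\to G$ together with the dense embedding $\iota_G\colon G\to\hat G$ intertwine the transfer operators. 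The key structural input is that the unit space cover $\hat{G^{(0)}}$ and the edge-space cover $\hat G$ are built from the \emph{same} projective system of spaces tracking how the fibres of $r$ split, and one should observe that $s\colon G\to G^{(0)}$, being also locally injective, is automatically a ``morphism of weighted topological graphs'' in the appropriate sense, hence lifts to a continuous map $\hat s\colon \hat G\to \hat{G^{(0)}}$; combined with $\hat r$ this makes $\hat G$ a topological graph over $\hat{G^{(0)}}$ with \emph{both} structure maps local homeomorphisms, i.e. an étale groupoid-shaped object.

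The heart of the argument is defining the partial multiplication. I would let $\hat G^{(2)} \subseteq \hat G\times\hat G$ be the composable pairs, i.e. $\{(\hat\alpha,\hat\beta): \hat s(\hat\alpha)=\hat r(\hat\beta)\}$, and construct $\hat m\colon \hat G^{(2)}\to\hat G$ as follows: on the dense subset $\iota_G(G)\times\iota_G(G)\cap \hat G^{(2)}$ we are forced to set $\hat m(\iota_G(\alpha),\iota_G(\beta)) = \iota_G(\alpha\beta)$, and the claim is that this extends (uniquely) to a continuous map on all of $\hat G^{(2)}$. To see this I would use the universal property of the minimal cover: the composition $\hat G^{(2)} \xrightarrow{\pi_{\hat G}\times\pi_{\hat G}} G^{(2)} \xrightarrow{m} G$ is a continuous map to $G$, and one checks that the weighted-graph structure pulled back along it, together with the transfer operator, satisfies the hypotheses of the universal property characterizing $\hat G$ (continuity of $\hat{\bm\lambda}$, compatibility with $\Lambda$, the density/properness conditions)—this forces a continuous lift $\hat G^{(2)}\to\hat G$, and uniqueness of the lift pins it down to be $\hat m$ on the dense set. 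Associativity, the unit laws, and existence of inverses then follow from the corresponding identities on the dense image $\iota_G(G)$ by continuity: e.g. inversion is the lift of $\hat G\xrightarrow{\pi_{\hat G}} G\xrightarrow{\gamma\mapsto\gamma^{-1}}G$, and $\hat m\circ(\id\times\hat{\mathrm{inv}})$ agrees with $\hat r$ on $\iota_G(G)$, hence everywhere. Uniqueness of the groupoid structure extending that of $G$ is immediate because $\iota_G(G)$ is dense and all the structure maps are continuous, so any two such structures agree on a dense set.

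Once $\hat G$ is an étale groupoid, the final claim is that $C^*_r(\hat G)\cong B$, where $B$ is Thomsen's $C^*$-algebra of $G$ from \cite{Tho}. The plan is to compare the two constructions on the level of the dense $*$-algebras: $C_c(\hat G)$ with its convolution product sits inside $C^*_r(\hat G)$, while Thomsen's algebra $B$ is the closure of $\ell^\infty_c(G)$ (bounded compactly supported functions, not necessarily continuous) in the operator norm coming from the analogue of the left regular representation $\pi$ on $\bigoplus_{x\in G^{(0)}}\ell^2(r^{-1}(x))$. The map $f\mapsto f\circ\iota_G$ sends $C_c(\hat G)$ into $\ell^\infty_c(G)$, is a $*$-algebra homomorphism (because $\iota_G$ and $\pi_{\hat G}$ intertwine the relevant structure and $\hat m$ restricts correctly over $\iota_G(G)$), and has dense range in $B$ since $\iota_G(G)$ is dense in $\hat G$ and the regular representations are compatible fibrewise via the bijections $r^{-1}(x)\leftrightarrow \hat r^{-1}(\hat x)$ for $\hat x\in\iota_{G^{(0)}}(G^{(0)})$. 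The crucial point is that the two operator norms agree under this map: both are computed as suprema over the fibrewise regular representations indexed by units, and the identification $\hat r^{-1}(\iota_{G^{(0)}}(x)) = r^{-1}(x)$ (the cover does not split fibres over genuine points, only adds boundary points) shows the representation of $C_c(\hat G)$ on $\ell^2(\hat r^{-1}(\hat x))$ for $\hat x = \iota_{G^{(0)}}(x)$ is unitarily equivalent to Thomsen's representation of the corresponding function in $\ell^\infty_c(G)$; density of such $\hat x$ in $\hat{G^{(0)}}$ (together with continuity) gives that no norm is lost on the $\hat G$ side either. Hence the map extends to an isometric isomorphism $C^*_r(\hat G)\xrightarrow{\ \sim\ }B$.

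\emph{Main obstacle.} The technically delicate step is establishing that $\hat G^{(2)}$ is itself (a space admitting) the right kind of weighted-graph-over-$\hat{G^{(0)}}$ structure so that the universal property genuinely produces a \emph{continuous} multiplication $\hat m$—i.e. checking that the projective system defining $\hat G$ is functorial under restriction to composable pairs and that fibrewise the splitting behaviour of $r$ is multiplicative in the appropriate sense. Closely related, one must verify the fibre identification $\hat r^{-1}(\iota_{G^{(0)}}(x)) = r^{-1}(x)$ (no extra points over honest units), which is what makes the two regular representations—and hence the two norms—match; this should follow from almost-one-to-oneness / the density properties recorded in the excerpt, but it is the crux on which the isomorphism of $C^*$-algebras rests.
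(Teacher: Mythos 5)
Your overall architecture is reasonable, but there are two genuine gaps, both at the places you yourself flag as delicate, and in both cases the paper resolves them by a more concrete route that your proposal does not supply. First, the multiplication. You propose to define $\hat m$ on the dense set $\iota_G(G)\times\iota_G(G)$ and extend by continuity via the universal property applied to $\hat G^{(2)}\to G^{(2)}\to G$. But the universal property of the minimal cover characterizes maps out of \emph{covers of the graph $G$}, and $\hat G^{(2)}$ equipped with the composition map is not obviously such a cover: you would need to produce a continuously measured structure on $\hat G^{(2)}$, a proper surjection onto $G$, a dense measured section, and the transfer-operator intertwining, none of which is done; density alone does not produce a continuous extension. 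The paper instead proves (Lemmas \ref{lem:gtogANDlrgtomu} and \ref{lem:terminates-single-step}) that the cover construction terminates after a single step and that $\hat G$ is homeomorphic to the closure of $\menge{(g,\lambda_{r(g)})}{g\in G}$ in $G\times\cM(G)$, the key point being that the source-fibre measure is determined by the range-fibre measure through the groupoid action, $\hat s(g,\mu)=(s(g),g^{-1}.\mu)$. With this explicit model the product $(g,\mu)(h,\nu)=(gh,\mu)$, defined when $g,h$ are composable and $\nu=g^{-1}.\mu$, is manifestly well defined and continuous. (Your fibre identification $\hat r^{-1}(\iota(x))=\iota(r^{-1}(x))$ is correct but is already part of the general theory, since $\iota$ is a measured morphism for the weight $w=1$; it is not the crux.)

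Second, the $C^*$-identification. Your norm-matching argument (density of $X$ in $\hat X$, so the reduced norm is computed from representations induced at points of $X$) is fine and is exactly what gives $B\subseteq C^*_r(\hat G)$. What is missing is the reverse containment: one must show that the image of $C_c(\hat G)$ under $f\mapsto f\circ\iota_G$ --- equivalently the algebra $\tilde{\cF}$ generated by $C_c(G)$ and $C_c(G)\,r^*\Lambda(C_c(G))$ inside $\ell^\infty_c(G)$ --- lies in Thomsen's algebra $\cB$, the convolution $^*$-algebra generated by $C_c(G)$. Asserting that the map "has dense range in $B$" only gives that the image \emph{contains} $\cB$; it does not show the image is contained in $B$. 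The substantive point is that $\Lambda(f)$, for $f$ supported in an open set $U$ on which $r$ and $s$ are injective, equals a convolution $f*g$ with $g\in C_c(U^{-1})$ equal to $1$ on a neighbourhood of $(\supp f)^{-1}$; this is Proposition \ref{prop:B=C*G}(i) and is the mathematical heart of the isomorphism, from which parts (ii)--(iv) deduce $\tilde{\cF}\subseteq\cB$. Without it you obtain only one inclusion, not $B=C^*_r(\hat G)$.
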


\section{Preliminaries}\label{prelim}

We will use the following notations: We let $\Nz$ denote the non-negative integers, including zero. In a topological space, $\inte(\cdot)$ stands for interior, $\overline{(\cdot)}$ for closure and $\bd(\cdot)$ for boundary. Throughout this paper, if $f\colon Y \to X$ is a map we let $\supp^\circ(f) \coloneq \menge{ y\in Y }{ f(y) \neq 0 }$ be the open support (which is open when $f$ is continuous, but need not be open in general),
and the support $\supp(f)$ of $f$ is defined as the closure of the open support in $Y$, $\supp(f) \coloneqq \overline{(\supp^\circ(f))}$.

Given a locally compact Hausdorff space $Y$, we denote by $C_c(Y)$ the set of continuous complex-valued functions $Y \to \Cz$ with compact support,
and we let $C_0(Y)$ denote the set of continuous complex-valued functions $Y \to \Cz$ which vanish at infinity.
In other words, $C_0(Y)$ is the closure of $C_c(Y)$ with respect to the uniform norm $\Vert \cdot \Vert$. 
We let $C_b(Y)$ denote the set of continuous bounded complex-valued functions $Y \to \Cz$. 
We will also use the set $\ell^{\infty}_c(Y)$ of bounded complex-valued functions $Y \to \Cz$ with compact support. 

The \emph{Baire $\sigma$-algebra} is the smallest $\sigma$-algebra turning all $f\in C_{c}(Y)$ measurable, where $\mathbb{C}$ is equipped with the Borel $\sigma$-algebra. Alternatively, it is the $\sigma$-algebra generated by all compact $G_{\delta}$ sets \cite{Cohn}. When $Y$ is second countable, the Baire $\sigma$-algebra coincides with the Borel $\sigma$-algebra. In general, it is strictly smaller. We shall denote the bounded Baire measurable functions with compact support by $B_{c}(Y)$. The reason we work with $B_c(Y)$ is because (non-negative) Baire measures are in one-to-one correspondence with positive linear functionals $I \colon B_c(Y) \to \Cz$ which satisfy $\lim_n I(f_n) = 0$ whenever $f_n \in B_c(Y)$ is a monotonic sequence converging pointwise to $0$, and moreover such functionals are uniquely determined by their restrictions to $C_c(Y)$. Alternatively, the reader may restrict to second countable spaces and replace Baire measures by Radon measures throughout.

Recall that a net $(f_{\lambda})_{\lambda}$ in $\ell^{\infty}_c(Y)$ converges in the inductive limit topology to $f$ if there is a compact set $K$ such that $\text{supp}(f_{\lambda})\subseteq K$ eventually and $(f_{\lambda})_{\lambda}$ converges uniformly on $K$ to $f$. This turns $\ell_{c}^{\infty}(Y)$ into a topological $^*$-algebra. For a topological $^*$-algebra $A$, its \emph{multiplier algebra} \cite[Section~3]{Joh} $M(A)$ consists of pairs $(R,L)$ of continuous $A$-linear operators satisfying $aL(b) = R(a)b$, for all $a,b\in A$.  The multiplier algebra of $\ell^{\infty}_{c}(Y)$ consists of $(R,L) = (f,f)$, where $f \colon X\to\mathbb{C}$ is a function (no assumptions), i.e., $R$ and $L$ are given by pointwise multiplication by arbitrary $\Cz$-valued functions.

Let $r\colon Y\to X$ be a map between topological spaces. Let $\mathcal{D}$ be a $^*$-sub-algebra of $\ell^{\infty}_{c}(Y)$ that is closed in the inductive limit topology and contains $C_{c}(Y)$. We shall call such an algebra a \emph{domain}. A $^*$-sub-algebra $\mathcal{R}\subseteq \ell_{c}^{\infty}(X)$ that is closed in the inductive limit topology of uniform convergence and contains $C_{c}(X)$ will be called a \emph{range}.

We will consider positive linear operators of the form $\Lambda \colon \mathcal{D}\to \mathcal{R}$, where $\mathcal{D}$ is a domain and $\mathcal{R}$ is a range satisfying $\mathcal{D}r^{*}(\mathcal{R})\subseteq \mathcal{D}$ and $\Lambda(f)(x) = 0$ whenever $f|_{r^{-1}(x)}  = 0$. 
We call such an operator a \emph{transfer operator}, or, alternatively, an \emph{$r$-operator} when we want to emphasize the original map $r$.
The last condition implies
\begin{equation}\label{transfer_equation}
    \Lambda(f(g
\circ r)) = \Lambda(f)g,\text{ for all }f\in \mathcal{D}\text{ and }g\in \mathcal{R}.
\end{equation}

By positivity and $C_{c}(Y)\subseteq \mathcal{D}$, we immediately have the property that $\Lambda$ is 
\setlength{\parindent}{0cm} \setlength{\parskip}{0cm}

\begin{enumerate}
    \item[(BA):]\emph{bounded above} in the sense that for every compact $K\subseteq Y$, there is $M_{K} > 0$ such that $\|\Lambda(f)\|\leq M_{K}\|f\|$ 
      for every positive $f\in \mathcal{D}$ with $\text{supp}(f)\subseteq K$.
\end{enumerate}
Let us consider some other properties of $\Lambda$ that will be important to us later. We will say $\Lambda$ is
\begin{enumerate}
    \item[(C):] \emph{continuous} if $\mathcal{D} = C_{c}(Y)$ and $\mathcal{R}= C_{c}(X)$,
    \item[(BB):] \emph{bounded below} if for every compact $K\subseteq Y$, there is $N_{K} > 0$ such that $\Lambda(f)\circ r\geq N_{K}f$ for every positive $f\in C_{c}(Y)$ with $\text{supp}(f)\subseteq K$, and
    \item[(F):] \emph{full} if for every $y\in Y$, we have $\Lambda(f)(r(y)) > 0$ for every positive $f\in C_{c}(Y)$ with $f(y) > 0$.
\end{enumerate}
\setlength{\parindent}{0cm} \setlength{\parskip}{0.5cm}

We now study these conditions.

\subsection{Continuity}
We describe some consequences of continuity of an $r$-operator and an equivalent formulation in terms of Radon measures.

The support of a measure $\mu$ on a locally compact Hausdorff space $Y$ is defined as follows: 
first let $N$ be the biggest open subset of $Y$ such that $\mu(N) = 0$. 
The support $\supp(\mu)$ of $\mu$ is defined as the complement of $N$ in $Y$, $\supp(\mu) \coloneq Y \setminus N$. 
We will need the following characterization of points in the support: 
a point $y \in Y$ lies in $\supp(\mu)$ if and only if $\int f {\rm d}\mu > 0$ for every $f \in C_c(Y)$ with $0 \leq f \leq 1$ and $f(y) > 0$ (see for instance \cite[Exercise~7.2]{Fol}).

\begin{lemma}\label{charecterization}
A map $\Lambda\colon C_{c}(Y)\to C_{c}(X)$ is an $r$-operator if and only if $\Lambda(f(g\circ r)) = \Lambda(f)g$ for all $f\in C_{c}(Y)$ and $g\in C_{c}(X)$.
\end{lemma}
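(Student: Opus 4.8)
The statement is a clean ``if and only if'' characterization of $r$-operators among linear maps $\Lambda\colon C_c(Y)\to C_c(X)$, where the extra data (positivity, the support condition $\Lambda(f)(x)=0$ whenever $f|_{r^{-1}(x)}=0$, the module condition $\mathcal D r^*(\mathcal R)\subseteq\mathcal D$) is to be recovered from the single identity $\Lambda(f\cdot(g\circ r))=\Lambda(f)\cdot g$ for all $f\in C_c(Y)$, $g\in C_c(X)$. The forward direction is essentially \eqref{transfer_equation} already noted in the text, so the substance is the converse. However, there is a subtlety: the definition of $r$-operator also demands \emph{positivity} of $\Lambda$, which is \emph{not} implied by the transfer identity alone (e.g.\ $\Lambda=-\mathrm{id}$ on $C_c(X)$ with $r=\mathrm{id}_X$ satisfies it). So I expect the intended reading is that ``$r$-operator'' in this lemma tacitly retains positivity, or equivalently the lemma is really asserting: a \emph{positive} linear $\Lambda\colon C_c(Y)\to C_c(X)$ is an $r$-operator iff the transfer identity holds. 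I would state that reading explicitly at the start of the proof and then prove the equivalence of the support condition with the transfer identity, which is the genuine content.

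\textbf{Forward direction.} Assume $\Lambda$ is an $r$-operator with $\mathcal D=C_c(Y)$, $\mathcal R=C_c(X)$. The condition $\mathcal D r^*(\mathcal R)\subseteq\mathcal D$ is automatic here since $g\circ r$ is bounded and $f(g\circ r)$ has support inside $\supp(f)$, hence lies in $C_c(Y)$. Then \eqref{transfer_equation} is exactly the transfer identity, so this direction is immediate from the discussion preceding the lemma; I would just cite \eqref{transfer_equation}.

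\textbf{Converse direction.} Assume $\Lambda\colon C_c(Y)\to C_c(X)$ is linear (and positive) and satisfies $\Lambda(f(g\circ r))=\Lambda(f)g$ for all $f\in C_c(Y)$, $g\in C_c(X)$. I must show $\Lambda(f)(x)=0$ whenever $f|_{r^{-1}(x)}=0$. Fix $x\in X$ and $f\in C_c(Y)$ with $f|_{r^{-1}(x)}=0$; it suffices to show $\Lambda(f^+)(x)=\Lambda(f^-)(x)=\Lambda((\operatorname{Im}f)^\pm)(x)=0$, so by splitting into real and imaginary, positive and negative parts (each of which still vanishes on $r^{-1}(x)$ and lies in $C_c(Y)$) I reduce to $0\le f\in C_c(Y)$ with $f|_{r^{-1}(x)}=0$. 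Let $K=\supp(f)$, a compact set disjoint from the closed set $r^{-1}(x)$. The key point is an approximation: I claim there is a sequence $g_n\in C_c(X)$ with $0\le g_n\le 1$, $g_n(x)=1$, and $f\cdot(g_n\circ r)\to 0$ uniformly (indeed $f\cdot(g_n\circ r)\to 0$ in the inductive-limit topology, with supports in the fixed compact $K$). Granting this, positivity of $\Lambda$ gives $|\Lambda(h)|\le\Lambda(|h|)$ and boundedness-above (BA) relative to $K$ yields $\|\Lambda(f(g_n\circ r))\|\le M_K\|f(g_n\circ r)\|\to 0$; but $\Lambda(f(g_n\circ r))=\Lambda(f)g_n$, whose value at $x$ is $\Lambda(f)(x)g_n(x)=\Lambda(f)(x)$, forcing $\Lambda(f)(x)=0$.

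\textbf{The main obstacle: the approximation step.} Constructing the $g_n$ is where care is needed. Since $K\cap r^{-1}(x)=\emptyset$, the compact set $r(K)\subseteq X$ does not contain $x$; by local compactness and the Hausdorff property, choose a compact neighbourhood $L$ of $x$ with $L\cap r(K)=\emptyset$, and by Urysohn (locally compact Hausdorff version) pick $g\in C_c(X)$ with $0\le g\le 1$, $g\equiv 1$ on a neighbourhood of $x$, $\supp(g)\subseteq L$. Then $g\circ r$ vanishes on $K\supseteq\supp(f)$, so $f\cdot(g\circ r)=0$ identically, and already $\Lambda(f)(x)=\Lambda(f)(x)g(x)=\Lambda(f(g\circ r))(x)=\Lambda(0)(x)=0$ by linearity. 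So in fact a \emph{single} such $g$ suffices — no sequence is needed, and (BA) is not even invoked — once one observes that $f|_{r^{-1}(x)}=0$ together with compactness of $\supp(f)$ gives $x\notin r(\supp(f))$, which is the only genuinely topological input. I would also remark that the reduction from complex $f$ to nonnegative $f$ is unnecessary with this argument: $g\circ r$ vanishes on $\supp(f)$ directly, so $f(g\circ r)=0$ for arbitrary complex $f\in C_c(Y)$ vanishing on $r^{-1}(x)$. Thus the whole converse collapses to: (i) $\supp(f)$ compact and $f|_{r^{-1}(x)}=0$ $\Rightarrow$ $x\notin r(\supp(f))$; (ii) Urysohn to separate $x$ from the compact set $r(\supp(f))$; (iii) evaluate the transfer identity at $x$. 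I expect step (i) — specifically, checking that $f|_{r^{-1}(x)}=0$ really does imply $r(\supp(f))$ misses $x$, using that $r^{-1}(x)$ is closed, $f^{-1}(\Cz\setminus\{0\})$ is open with $\supp(f)$ its closure, and $r$ is merely continuous — to be the only place requiring a moment's thought, and it is routine.
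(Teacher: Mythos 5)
Your reading of the statement (positivity of $\Lambda$ is tacitly retained, and the real content is the equivalence of the support condition with the transfer identity) and your forward direction are fine. But the converse, as you finally present it, rests on a false topological claim: from $f|_{r^{-1}(x)}=0$ you cannot conclude that $\supp(f)$ is disjoint from $r^{-1}(x)$, hence you cannot conclude $x\notin r(\supp(f))$. The support is by definition the \emph{closure} of $\{f\neq0\}$; while $\{f\neq0\}$ is disjoint from $r^{-1}(x)$, its closure need not be. Take $Y=X=[0,1]$, $r=\mathrm{id}$, $x=\tfrac12$ and $f(y)=|y-\tfrac12|$: then $f$ vanishes on $r^{-1}(x)=\{\tfrac12\}$ but $\supp(f)=[0,1]\ni \tfrac12$, so there is no $g\in C_c(X)$ with $g(x)=1$ and $g\circ r$ vanishing on $\supp(f)$; in fact $f\cdot(g\circ r)\neq0$ for \emph{every} continuous $g$ with $g(x)=1$. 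So the ``single $g$ suffices'' shortcut, and with it your entire converse as written, collapses.

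The repair is precisely the $\varepsilon$-approximation you sketched under ``the main obstacle'' and then discarded --- and it is the paper's proof. After reducing to $0\le f$, for each $\varepsilon>0$ the set $K_\varepsilon=\{f\ge\varepsilon\}$ is compact and genuinely disjoint from the closed set $r^{-1}(x)$, so $x\notin r(K_\varepsilon)$ and Urysohn gives $0\le\phi\le1$ in $C_c(X)$ with $\phi(x)=1$ and $\phi=0$ on $r(K_\varepsilon)$; then $f\cdot(\phi\circ r)\le\varepsilon\psi$ for a fixed $0\le\psi\in C_c(Y)$ with $\psi\ge1$ on $\supp(f)$. Positivity yields $\Lambda(f)(x)=\Lambda(f(\phi\circ r))(x)\le\varepsilon\Lambda(\psi)(x)$, and letting $\varepsilon\to0$ finishes. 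Note two consequences for your write-up: the approximation step cannot be bypassed, and the reduction to nonnegative $f$ (or an appeal to $|\Lambda(h)|\le\Lambda(|h|)$) is genuinely needed, contrary to your closing remark that both are unnecessary.
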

\setlength{\parindent}{0cm} \setlength{\parskip}{0cm}

\begin{proof}
The \an{only if} part is trivial. If $f|_{r^{-1}(x)} = 0$, then by compactness of the support of $f$ and continuity there is a positive function $\psi\in C_{c}(Y)$ such that for every $\varepsilon > 0 $, there is $0\leq \phi\leq 1$ in $C_{c}(X)$ with $\phi(x) = 1$ and $f(\phi\circ r)\leq \varepsilon\psi$. Hence, $\Lambda(f)(x) = \Lambda(f(\phi\circ r))(x)\leq \varepsilon\Lambda(\psi)(x)$. As $\varepsilon > 0$ is arbitrary, it follows that $\Lambda(f)(x) = 0$.
\end{proof}

\begin{lemma}\label{full}
If $\Lambda$ is full, then for every positive $f\in C_{c}(Y)$, we have $\supp^\circ(\Lambda(f)) = r(\supp^\circ(f))$.
\end{lemma}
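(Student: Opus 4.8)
The statement is an equality of subsets of $X$, so the plan is to prove the two inclusions $r(\supp^\circ(f)) \subseteq \supp^\circ(\Lambda(f))$ and $\supp^\circ(\Lambda(f)) \subseteq r(\supp^\circ(f))$ separately, each by directly unwinding the relevant definition. Throughout, $f \geq 0$, and since $\Lambda$ is positive we have $\Lambda(f) \geq 0$, so on the range all nonzero values are strictly positive; this lets us pass freely between \an{nonzero} and \an{positive}.

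For the inclusion $r(\supp^\circ(f)) \subseteq \supp^\circ(\Lambda(f))$, I would take a point $x = r(y)$ with $y \in \supp^\circ(f)$, i.e. $f(y) > 0$. Applying the fullness hypothesis to the positive function $f \in C_c(Y)$ at the edge $y$ gives $\Lambda(f)(r(y)) > 0$, that is $\Lambda(f)(x) \neq 0$, so $x \in \supp^\circ(\Lambda(f))$. This direction is exactly where fullness is used, and nothing more is needed.

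For the reverse inclusion $\supp^\circ(\Lambda(f)) \subseteq r(\supp^\circ(f))$, I would argue by contraposition: if $x \notin r(\supp^\circ(f))$, then no $y \in r^{-1}(x)$ satisfies $f(y) \neq 0$, i.e. $f|_{r^{-1}(x)} = 0$. By the defining property of a transfer ($r$-)operator ($\Lambda(g)(x) = 0$ whenever $g|_{r^{-1}(x)} = 0$), this forces $\Lambda(f)(x) = 0$, so $x \notin \supp^\circ(\Lambda(f))$. Combining the two inclusions yields the claimed equality.

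There is no real obstacle here; the statement is a direct consequence of the definitions of \emph{full} and of an \emph{$r$-operator} together with positivity of $\Lambda$. The only thing to be careful about is invoking the correct direction of the support condition in each inclusion: fullness gives the \an{$\supseteq$} part (nonvanishing of $\Lambda(f)$ from nonvanishing of $f$ on a fibre), while the $r$-operator vanishing condition gives the \an{$\subseteq$} part (vanishing of $\Lambda(f)$ from vanishing of $f$ on the whole fibre). One might also remark that this lemma says $\supp(\Lambda(f)) = \overline{r(\supp^\circ(f))}$ upon taking closures, but that is not needed for the statement as posed.
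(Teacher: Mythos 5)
Your proof is correct and follows essentially the same route as the paper: fullness gives the inclusion $r(\supp^\circ(f))\subseteq\supp^\circ(\Lambda(f))$, and the defining vanishing property of an $r$-operator gives the reverse inclusion. You have merely spelled out the details that the paper leaves implicit.
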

\begin{proof}
$\Lambda$ being full is equivalent to $r(\supp^\circ(f))\subseteq \supp^\circ(\Lambda(f))$. The reverse direction follows from $\Lambda(f)(x) = 0$ whenever $f|_{r^{-1}(x)} = 0.$ 
\end{proof}
\begin{lemma}\label{lem:full}
If $\Lambda$ is full and continuous, then $r$ is open.
\end{lemma}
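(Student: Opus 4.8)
The plan is to show directly that $r$ maps open sets to open sets, using Lemma~\ref{full} as the main engine. So fix an open set $U \subseteq Y$ and a point $y_0 \in U$; I want to produce an open neighbourhood of $r(y_0)$ contained in $r(U)$. Since $Y$ is locally compact Hausdorff, I would first pick, by a standard Urysohn-type argument, a positive function $f \in C_c(Y)$ with $f(y_0) > 0$ and $\supp^\circ(f) \subseteq U$ (for instance, choose a compact neighbourhood of $y_0$ inside $U$ and a bump function supported in its interior).

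Next I would feed $f$ into the two hypotheses. Because $\Lambda$ is continuous, $\Lambda(f) \in C_c(X)$, so $\supp^\circ(\Lambda(f))$ is an \emph{open} subset of $X$. Because $\Lambda$ is full, $f(y_0) > 0$ forces $\Lambda(f)(r(y_0)) > 0$, i.e.\ $r(y_0) \in \supp^\circ(\Lambda(f))$. Finally, Lemma~\ref{full} (which applies since $\Lambda$ is full) identifies $\supp^\circ(\Lambda(f)) = r(\supp^\circ(f))$, and the latter is contained in $r(U)$ by our choice $\supp^\circ(f) \subseteq U$. Chaining these, $r(y_0) \in \supp^\circ(\Lambda(f)) \subseteq r(U)$ with $\supp^\circ(\Lambda(f))$ open, so $r(y_0)$ is an interior point of $r(U)$. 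As $y_0 \in U$ was arbitrary, $r(U)$ is open, and as $U$ was arbitrary, $r$ is open.

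I do not expect a genuine obstacle here: the argument is essentially a repackaging of Lemma~\ref{full} together with the observation that continuity of $\Lambda$ makes the open support of $\Lambda(f)$ genuinely open. The only point requiring a little care is the very first step — arranging a test function $f$ whose \emph{open} support (not merely its closure) sits inside $U$ while still being nonzero at $y_0$ — but this is routine given local compactness and Hausdorffness of $Y$. One should also be slightly careful that the relevant notion is $\supp^\circ$ rather than $\supp$ throughout, since Lemma~\ref{full} is phrased in terms of open supports.
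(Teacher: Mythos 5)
Your argument is correct and is exactly the paper's proof, merely spelled out pointwise: both rest on Lemma~\ref{full}, the openness of $\supp^\circ(\Lambda(f))$ coming from continuity of $\Lambda$, and the fact that the sets $\supp^\circ(f)$ for $0\leq f\in C_c(Y)$ form a basis for the topology of $Y$ (which is your Urysohn step). No gaps.
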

\begin{proof}
This follows from Lemma \ref{full}, continuity of $\Lambda(f)$, for $f\geq 0$ in $C_{c}(Y)$, and the fact that a basis for the topology on $Y$ is given by $\menge{\supp^\circ(f)}{f\geq 0 \text{ and }f\in C_{c}(Y) }$.
\end{proof}
\begin{remark}
  Conversely, if $Y$ is second countable and $r \colon Y \to X$ is a continuous open surjection, then Blanchard showed that there exists a continuous and full $r$-operator (see e.g. \cite[Corollary~B.18]{Wil}).
\end{remark}
\setlength{\parindent}{0cm} \setlength{\parskip}{0.5cm}

Recall from \cite[Section 3.2]{Wil} that an \emph{$r$-system} is a family of Radon measures $(\lambda_{x})_{x\in X}$ on $Y$ such that $\text{supp}(\lambda_{x})\subseteq r^{-1}(x)$ for all $x\in X$, and the function  $x\mapsto\int f {\rm d} \lambda_x$ is in $C_{c}(X)$ for all $f\in C_{c}(Y)$. 
We may define a continuous $r$-operator $\Lambda\colon C_{c}(Y)\to C_{c}(X)$ via $\Lambda(f)(x) = \int f {\rm d} \lambda_x$.
Conversely, if $\Lambda\colon C_{c}(Y)\to C_{c}(X)$ is a continuous $r$-operator, then the positive linear functional $f\mapsto \Lambda(f)(x)$ induces, by the Riesz-representation theorem, a Radon measure $\lambda_{x}$, and $(\lambda_{x})_{x\in X}$ is an $r$-system.

Using Gelfand duality, there is a way to view every $r$-operator in our sense as continuous (and therefore induced from an $r$-system) on a \emph{covering} of the original space, as we will explain in \S~\ref{ss:cover_top_graph}. The following lemma is the first ingredient.
Below, $\Spec(A)$ denotes the space of characters (i.e., non-zero homomorphisms $A \to \Cz$) endowed with the topology of pointwise convergence. 
There is a canonical homeomorphism $\Spec(A) \cong \Spec(\overline{A})$, where $\overline{A}$ is the closure of $A$ in the uniform norm.

\begin{lemma}\label{cc}
Let $W$ be a locally compact Hausdorff space and $A\subseteq \ell_{c}^{\infty}(W)$ be a $^*$-sub-algebra that is closed in the inductive limit topology and contains $C_{c}(W)$. Then $\hat{A} = C_{c}(\Spec(A))$, where $\hat{(-)}\colon A \to C_{0}(\Spec(A))$ is the Gelfand transform. Moreover, $\hat{(-)}\colon A\to C_{c}(\Spec(A))$ is a homeomorphism relative to the inductive limit topologies on $A$ and $C_{c}(\Spec(A))$.
\end{lemma}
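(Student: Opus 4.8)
The plan is to prove Lemma~\ref{cc} by identifying $\Spec(A)$ with $\Spec(C_c(W)) = W$ and then tracking what the Gelfand transform does to compactly supported functions. First I would note that since $A$ contains $C_c(W)$ and is contained in $\ell^\infty_c(W)$, every character $\chi \in \Spec(A)$ restricts to a character of $C_c(W)$ (or rather of its closure $C_0(W)$), which by classical Gelfand duality is evaluation $\ev_w$ at a unique point $w \in W$; moreover, since $C_c(W) \cdot A \subseteq C_c(W)$ (as everything has compact support and $A \subseteq \ell^\infty_c(W)$, though one should be a little careful — actually $f \cdot a \in \ell^\infty_c(W)$ for $f \in C_c(W)$, $a \in A$, and lies in $A$; the point is that $C_c(W)$ is an ideal in $A$), the character $\chi$ is determined by its restriction: for $a \in A$ and any $f \in C_c(W)$ with $f(w) \neq 0$ we have $\chi(f)\chi(a) = \chi(fa) = \ev_w(fa) = f(w) a(w)$, so $\chi(a) = a(w)$. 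This shows $\chi = \ev_w$ as a functional on $A$, hence the canonical map $W \to \Spec(A)$, $w \mapsto \ev_w$, is a bijection, and it is a homeomorphism for the pointwise-convergence topologies (continuity in one direction is clear; properness/continuity of the inverse follows because $C_c(W)$ already separates points and the topology on $W$ is recovered from $C_c(W)$).

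Next, under this identification $W \cong \Spec(A)$, the Gelfand transform $\hat{a}$ of $a \in A$ is exactly the function $w \mapsto \ev_w(a) = a(w)$, i.e. $\hat{a} = a$ as a function on $W$. So the claim $\hat{A} = C_c(\Spec(A))$ becomes: viewed as functions on $W$, the elements of $A$ that arise as Gelfand transforms are precisely $C_c(W)$. But this is a tautology after the above: $\hat{(-)}$ is the identity on underlying functions, so $\hat{A} = A$ as a set of functions on $W$... which is \emph{not} $C_c(W)$ in general since $A$ may contain discontinuous functions. The resolution is that the Gelfand transform of an element of $A$ is by definition a \emph{continuous} function on $\Spec(A)$ — so the content is that $A$, regarded inside $C(\Spec(A))$ via $\hat{(-)}$, lands in and fills out $C_c(\Spec(A))$. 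Concretely: every $a \in A$ gives a continuous function $\hat a$ on $\Spec(A)$ supported on the image of $\supp(a)$ (compact), so $\hat A \subseteq C_c(\Spec(A))$; conversely $C_c(W) \subseteq A$ and its image under $\hat{(-)}$ is all of $C_c(\Spec(A))$ because under the homeomorphism $W \cong \Spec(A)$ these two algebras of continuous compactly supported functions are visibly identified (the Gelfand transform restricted to $C_c(W) \subseteq C_0(W)$ is the standard Gelfand isomorphism $C_0(W) \cong C_0(\Spec(C_0(W)))$ composed with $\Spec(A) \hookrightarrow \Spec(C_0(W))$, which is a homeomorphism here). Hence $\hat A = C_c(\Spec(A))$.

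For the final assertion — that $\hat{(-)} \colon A \to C_c(\Spec(A))$ is a homeomorphism for the inductive limit topologies — I would argue as follows. A net $a_\lambda \to a$ in the inductive limit topology on $A$ means: supports eventually lie in a fixed compact $K \subseteq W$, and $a_\lambda \to a$ uniformly. Under $\hat{(-)}$, the supports of $\hat{a}_\lambda$ eventually lie in the compact image of $K$ in $\Spec(A)$, and since the Gelfand transform is isometric for the uniform norm (as $\|\hat a\|_\infty = \|a\|_{\sup}$ when $\chi = \ev_w$ ranges over all of $W$), uniform convergence is preserved; so $\hat a_\lambda \to \hat a$ in the inductive limit topology on $C_c(\Spec(A))$. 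The inverse map is handled symmetrically using that $\hat{(-)}$ is a bijection preserving the sup-norm and carrying the compact $\supp(a)$ homeomorphically to $\supp(\hat a)$.

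The main obstacle, and the step deserving the most care, is the first one: verifying cleanly that every character of $A$ is an evaluation at a point of $W$. One must make sure $C_c(W)$ really is a (dense) ideal in $A$ — using that the product of a compactly supported continuous function with a compactly supported bounded function is again in $A$ (because $A$ is a $^*$-subalgebra of $\ell^\infty_c(W)$ containing $C_c(W)$, and $A$ is closed in the inductive limit topology) — and then that a character cannot vanish on this ideal (if $\chi|_{C_c(W)} \equiv 0$ then $\chi(f a) = 0$ for all $f \in C_c(W)$, $a \in A$; picking $f$ with $f \equiv 1$ on a large compact set and $a$ with $\chi(a) \neq 0$, and exploiting that elements of $A$ have compact support so $fa = a$ for suitable $f$, forces $\chi(a) = 0$, a contradiction). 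Once the character space is pinned down as $W$ with the right topology, everything else is routine bookkeeping with the Gelfand transform and the inductive limit topology.
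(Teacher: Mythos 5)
Your proof breaks down at its very first step: the claim that every character of $A$ is evaluation at a point of $W$, i.e.\ that $W \to \Spec(A)$, $w \mapsto \ev_w$, is a bijection, is false. The gap is the assertion that $C_c(W)$ is an ideal in $A$: for $f \in C_c(W)$ and $a \in A$ the product $fa$ lies in $A$ but is in general a \emph{discontinuous} bounded function, so $fa \notin C_c(W)$, and the identity $\chi(fa) = \ev_w(fa)$ (which you derive from the restriction $\chi|_{C_c(W)} = \ev_w$) is unjustified. Concretely, take $W = [0,1]$ and let $A$ be the closed $^*$-algebra generated by $C[0,1]$ and the indicator $1_{[0,1/2]}$; then $a \mapsto \lim_{t \downarrow 1/2} a(t)$ is a character of $A$ whose restriction to $C[0,1]$ is $\ev_{1/2}$ but which differs from $\ev_{1/2}$ on $1_{[0,1/2]}$. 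So $\Spec(A)$ is in general strictly larger than $W$ --- indeed the entire point of this lemma in the paper is that $\Spec(A)$ is a nontrivial cover of $W$ (compare the doubled points $x_\pm$ in the tent-map example). You actually notice the resulting contradiction yourself (``$\hat A = A$ as functions on $W$, yet $A$ contains discontinuous functions''), and the proposed resolution does not work: if $W \cong \Spec(A)$ and $\widehat{C_c(W)}$ were all of $C_c(\Spec(A)) \supseteq \hat A$, then injectivity of the Gelfand transform (evaluations at points of $W$ already separate elements of $A$) would force $A = C_c(W)$, which is false in general.

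The correct setup, which the paper uses, keeps the two spaces distinct: the inclusion $C_c(W) \subseteq A$ dualizes to a proper continuous \emph{surjection} $\pi \colon \Spec(A) \to W$, while $\iota \colon w \mapsto \ev_w$ is only a (generally discontinuous) section of $\pi$ with dense image. From $\pi \circ \iota = \id_W$ and $\hat a \circ \iota = a$ one obtains the support estimates $\supp(\hat f) \subseteq \pi^{-1}(\supp f)$ and $\supp(f) \subseteq \pi(\supp \hat f)$; these show that $\hat{(-)}$ carries $\overline{A} \cap \ell^\infty_c(W)$ onto $C_c(\Spec(A))$, where $\overline{A}$ is the uniform closure, and one then verifies $\overline{A} \cap \ell^\infty_c(W) = A$ using closedness of $A$ in the inductive limit topology. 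The same two support estimates also give the homeomorphism statement for the inductive limit topologies. Your handling of that last step would be fine once the support estimates are in place, but everything preceding it needs to be redone along these lines.
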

\setlength{\parindent}{0cm} \setlength{\parskip}{0cm}

\begin{proof}
Set $\cW \coloneq \Spec(A)$ and let $\overline{A}$ be the closure of $A$ as above. Let $\iota \colon W \to \cW$ be defined as $\iota(z) = \text{ev}_{z} \colon A\to\mathbb{C}$. The dual of the inclusion $C_{c}(W)\subseteq A$ is a proper and continuous map $\pi \colon \cW \to W$. It follows from $\pi\circ\iota = \text{id}_{W}$ and $\iota^* \circ \hat{(-)} = \text{id}_{A}$ (i.e., $\hat{a} \circ \iota = a$ for all $a \in \overline{A}$) that for $f\in \overline{A}$, we have $\text{supp}(\hat{f})\subseteq \pi^{-1}(\text{supp}(f))$ and $\text{supp}(f)= \text{supp}(\hat{f}\circ\iota)\subseteq \pi(\text{supp}(\hat{f}))$. Therefore, the image of $\overline{A}\cap\ell^{\infty}_{c}(Z)$ under $\hat{(-)}$ is $C_{c}(\cW)$. 
\setlength{\parindent}{0.5cm} \setlength{\parskip}{0cm}

To finish proving $\hat{A} = C_{c}(\Spec(A))$, it suffices to prove $\overline{A}\cap\ell^{\infty}_{c}(W) = A$. Since $A$ is dense in $\overline{A}$, for every $a\in \overline{A}\cap\ell^{\infty}_{c}(W)$, we can find a sequence $a_{n}\in A$ such that $a_{n}\to a$ uniformly. Let $\phi\in C_{c}(W)$ be a function such that $a\phi = a$. Then $a_{n}\phi\in A$ and $a\phi_{n}\to a$ in the inductive limit topology. Hence, $a\in A$.
From $\text{supp}(\hat{f})\subseteq \pi^{-1}(\text{supp}(f))$ and $\text{supp}(f)= \text{supp}(\hat{f}\circ\iota)\subseteq \pi(\text{supp}(\hat{f}))$ for $f\in A$, it follows that $\hat{(-)}$ is a homeomorphism relative to the inductive limit topologies.
\end{proof}

\begin{remark}\label{rmk:multiplier}
If $A$ is an algebra satisfying the hypothesis of Lemma \ref{cc}, then $M(A)$ is canonically isomorphic to the algebra of all functions $f \colon W \to \Cz$ which satisfy the condition that $f \phi \in A$ for all $\phi\in C_{c}(W)$.
\end{remark}
\setlength{\parindent}{0cm} \setlength{\parskip}{0.5cm}

\subsection{The bounded below condition}
Let us first consider some examples satisfying the bounded below condition.

\bdefin
We will say that $r \colon Y \to X$ is \emph{locally finite degree} if for every compact set $K\subseteq Y$, $\text{deg}_{K}(r) \coloneq \sup_{x\in X}|r^{-1}(x)\cap K| <\infty$. We shall call a function $w \colon Y \to (0,\infty)$ a \emph{strictly positive weight} if for every compact set $K\subseteq Y$, there are $C_{K}, D_{K} > 0$ such that $C_{K}\geq w(y)\geq D_{K}$ for all $y\in K$. 
\edefin
\setlength{\parindent}{0cm} \setlength{\parskip}{0cm}

Note that a function $w$ is a strictly positive weight if and only if $(w^{1/n})_{n\in\mathbb{N}}$ is an approximate unit for $\ell_{c}^{\infty}(Y)$ equipped with the inductive limit topology, and this property is where the name is derived from.
\setlength{\parindent}{0cm} \setlength{\parskip}{0.5cm}

The following observation is easy to see.
\blemma
When $r$ is locally finite degree, a function $w \colon Y \to (0,\infty)$ determines a transfer operator $\Lambda \colon \ell_{c}^{\infty}(Y)\mapsto \ell^{\infty}_{c}(X)$ by the formula
\[\Lambda(f)(x) = \sum_{y\in r^{-1}(x)}w(y)f(y)\]
which is bounded above. If $w$ is a strictly positive weight, then $\Lambda$ is also bounded below.
\elemma

Moreover, the converse holds, as we now explain.
\begin{proposition}
If $\Lambda$ is bounded below, then $r$ is locally finite degree. Furthermore, there is a strictly positive weight $w$ such that $\Lambda(f)(x) = \sum_{y\in r^{-1}(x)}w(y)f(y)$, for all $f\in \mathcal{D}$.
\end{proposition}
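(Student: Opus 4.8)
The plan is to deduce local finite degree from the bounded below condition first, and then to construct the weight $w$ as a Radon--Nikodym-type derivative. Suppose first, for contradiction, that $r$ is not locally finite degree: there is a compact $K \subseteq Y$ with $\sup_{x} |r^{-1}(x) \cap K| = \infty$. Then I can find points $x_n \in X$ and, for each $n$, distinct edges $y_1^{(n)}, \dots, y_n^{(n)} \in r^{-1}(x_n) \cap K$. Using local compactness and Hausdorffness, one separates these finitely many points by disjoint open sets with compact closures, and chooses a positive $f_n \in C_c(Y)$ supported in $K$ (a suitable larger compact set) with $f_n(y_i^{(n)}) = 1$ for all $i$ and $\norm{f_n} = 1$. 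Applying (BB) with the single compact set $K$ gives a constant $N_K > 0$ with $\Lambda(f_n)(x_n) = \Lambda(f_n)(r(y_1^{(n)})) \geq N_K f_n(y_1^{(n)}) = N_K$; but one wants the opposite growth. The right way is to instead build one fixed positive $f \in C_c(Y)$ supported in a compact neighbourhood of $K$ that is $\geq 1$ on all the $y_i^{(n)}$, together with the observation that $\Lambda(f)(x_n) \geq \sum_{i} N_K \cdot (\text{something})$: more carefully, fix disjoint bump functions $g_i^{(n)}$ at the $y_i^{(n)}$, and note $\Lambda(\sum_i g_i^{(n)})(x_n) = \sum_i \Lambda(g_i^{(n)})(x_n) \geq n N_K$ by (BB), while $\sum_i g_i^{(n)}$ has sup-norm $1$ and support in $K$, contradicting (BA) which bounds $\Lambda$ on functions supported in $K$ by $M_K$. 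This forces $\deg_K(r) < \infty$ for every compact $K$.

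Next, with $r$ locally finite degree, I want to produce the weight. For each $y \in Y$, I will define $w(y)$ as follows: choose a compact neighbourhood and a positive $g \in C_c(Y)$ with $g(y) = 1$ and $g$ vanishing on $r^{-1}(r(y)) \setminus \{y\}$ — this is possible because $r^{-1}(r(y))$ is finite (by the previous paragraph) and $Y$ is Hausdorff, so the other preimages of $r(y)$ in any fixed compact set form a finite set separated from $y$. Then set $w(y) \coloneqq \Lambda(g)(r(y))$. The first task is to check this is independent of the choice of $g$: if $g, g'$ are two such functions, then $g - g'$ vanishes on all of $r^{-1}(r(y))$, so by the defining property of a transfer operator $\Lambda(g - g')(r(y)) = 0$, giving $w(y) = \Lambda(g)(r(y)) = \Lambda(g')(r(y))$. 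Strict positivity $w(y) > 0$ follows from... well, actually the proposition does not assume $\Lambda$ is full, so $w(y) > 0$ is not automatic; one should check whether the intended hypothesis includes fullness, or whether (BB) alone suffices. In fact (BB) does suffice: taking $g$ as above with $g(y) = 1$ and support in some compact $K$, we get $w(y) = \Lambda(g)(r(y)) \geq N_K g(y) = N_K > 0$.

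The remaining step is to verify the summation formula $\Lambda(f)(x) = \sum_{y \in r^{-1}(x)} w(y) f(y)$ for all $f \in \mathcal{D}$ and the strict-positive-weight property (local two-sided bounds $C_K \geq w(y) \geq D_K$ on compacts). For the formula: fix $x \in X$ and $f \in \mathcal{D}$; the fibre $r^{-1}(x)$ need not be finite globally, but $\supp(f)$ is compact so only finitely many $y \in r^{-1}(x)$, say $y_1, \dots, y_k$, have $f(y_i) \neq 0$ — wait, that is not quite right either, since $f$ need not be supported where it is nonzero only finitely; rather, $r^{-1}(x) \cap \supp(f)$ is finite by local finite degree, and $\Lambda(f)(x)$ depends only on $f|_{r^{-1}(x)}$. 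Pick disjoint $g_i \in C_c(Y)$ with $g_i \equiv 1$ near $y_i$, $g_i$ vanishing on $r^{-1}(x) \setminus \{y_i\}$, and supported in a small enough compact set; then $f - \sum_i f(y_i) g_i$ vanishes on $r^{-1}(x)$ — more precisely one approximates and uses the "$\Lambda(f)(x) = 0$ when $f|_{r^{-1}(x)} = 0$" property together with boundedness (BA) to pass to the limit — yielding $\Lambda(f)(x) = \sum_i f(y_i) \Lambda(g_i)(x) = \sum_i f(y_i) w(y_i)$. For the two-sided bound on a compact $K$: the lower bound $w(y) \geq D_K \coloneqq N_{K'}$ comes from (BB) applied to a fixed compact neighbourhood $K'$ of $K$ as above; the upper bound $w(y) \leq C_K$ comes from (BA), since $w(y) = \Lambda(g)(r(y)) \leq M_{K'} \norm{g} = M_{K'}$ for the chosen normalized $g$. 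The main obstacle I anticipate is the bookkeeping in the contradiction argument for local finiteness — carefully arranging the disjoint bump functions so that their sum stays norm-bounded and compactly supported in a fixed $K$ while $\Lambda$ of the sum blows up — and, more subtly, the approximation argument needed to reduce a general $f \in \mathcal{D}$ (merely bounded Baire, not continuous) to the continuous bump functions when establishing the summation formula; this requires invoking boundedness (BA) to control the error terms rather than pure continuity.
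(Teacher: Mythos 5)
Your proof is correct and follows essentially the same route as the paper: disjoint bump functions combined with (BA) and (BB) give $\deg_K(r) \leq M_L/N_L$ for every compact $K$, the weight is defined by $w(y) = \Lambda(\phi_y)(r(y))$ for a bump whose support meets $r^{-1}(r(y))$ only in $\{y\}$, and the summation formula follows from the exact decomposition $f = \sum_y f(y)\phi_y + g$ with $g|_{r^{-1}(x)} = 0$. Two small corrections: the full fibre $r^{-1}(x)$ need only be discrete, not finite (local finite degree only gives $|r^{-1}(x)\cap K| < \infty$ for compact $K$), which is in fact all your bump construction uses; and the approximation step you anticipate as the main obstacle is unnecessary, since the defining property $\Lambda(h)(x) = 0$ whenever $h|_{r^{-1}(x)} = 0$ applies to every $h \in \mathcal{D}$ (not just continuous $h$), so the identity $\Lambda(f)(x) = \sum_i f(y_i) w(y_i)$ holds exactly with no limiting argument.
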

\setlength{\parindent}{0cm} \setlength{\parskip}{0cm}

\begin{proof}
Let $K\subseteq Y$ be a compact set. Choose an open set $U$ such that $K\subseteq U$ and $\overline{U} \eqcolon L$ is compact, and let $M_{L}, N_{L} > 0$ be the numbers in the bounded above and bounded below conditions. If $\{y_{i}\}^{m}_{i=1}\subseteq r^{-1}(x)\cap K$ is a collection of distinct points, then let $\{\phi_{i}\}^{m}_{i=1}\subseteq C_{c}(U)$ be a collection of positive functions such that $0\leq \phi_{i}\leq 1$, $\phi_{i}(y_{i}) = 1$ and $\phi_{i}\phi_{j} = 0$, for all $i\neq j\leq m$. Hence, $0\leq \phi = \sum^{m}_{i=1}\phi_{i}\leq 1$. By the bounded above condition, we have $M_{L}\geq\Lambda(\phi)$ and by  the bounded below condition, we have  $\Lambda(\phi_{i})(x)\geq N_{L}$ for all $1 \leq i \leq m$. Hence, $M_{L}\geq \Lambda(\phi)(x)\geq mN_{L}$. Therefore, $\frac{M_{L}}{N_{L}}\geq m $. It follows that $\text{deg}_{K}(r)\leq \frac{M_{L}}{N_{L}} < \infty$.
\setlength{\parindent}{0cm} \setlength{\parskip}{0.5cm}

Since $r^{-1}(x)$ is discrete, for every $y\in r^{-1}(x)$ we may choose some positive $\phi_{y}\in C_{c}(Y)$ such that $\supp(\phi_y)\cap r^{-1}(x) = \{y\}$ and $\phi_y(y) = 1$. Note that $w(y) \coloneq \Lambda(\phi_{y})$ does not depend on the choice of $\phi_{y}$. For every $f\in \mathcal{D}$, we have $\text{supp}(f)\cap r^{-1}(x)$ is finite. Hence, $g = f - \sum_{y\in r^{-1}(x)}\phi_{y}f(y)$ is in $\mathcal{D}$ and satisfies $g|_{r^{-1}(x)} = 0$. Therefore, 
\[\Lambda(f)(x) = \Lambda \Big( \sum_{y\in r^{-1}(x)}\phi_{y}f(y) \Big) (x) = \sum_{y\in r^{-1}(x)}w(y)f(y).\]
Since for $y\in K$, we may choose $\phi_{y}\in C_{c}(U)$, it follows from the bounded above and bounded below conditions that $M_{L}\geq w(y) = \Lambda(\phi_{y})(x)\geq N_{L}\phi_{y}(y) = N_{L}$ so we can choose $C_{K} = M_{L}$ and $D_{K} = N_{L}.$
\end{proof}
Hence every $r$-operator that is bounded below has a unique extension to an $r$-operator that is bounded below with $\mathcal{D} = \ell^{\infty}_{c}(Y)$ and $\mathcal{R}= \ell_{c}^{\infty}(X)$, and is completely determined by a strictly positive weight $w$.
\setlength{\parindent}{0cm} \setlength{\parskip}{0.5cm}

\bex
\label{weights}
The $r$-operators that are bounded below are not the only examples determined by a function $w \colon Y\to [0,\infty)$. Indeed, $\Lambda(f)(x) = \sum_{y\in r^{-1}(x)}w(y)f(y)$, $f\in\ell^{\infty}_{c}(Y)$, $x\in X$, defines an $r$-operator $\Lambda \colon \ell_{c}^{\infty}(Y)\to \ell_{c}^{\infty}(X)$ if and only if for every compact set $K\subseteq Y$, there is $M_{K} > 0$ such that
$M_{K} \geq \sum_{y\in r^{-1}(x)\cap K}w(y)$ for all $x\in X$. We shall call a function $w$ satisfying this property a \emph{weight}.

\eex

\bremark
The last example of $r$-operators were of a special type with $\mathcal{D} = \ell^{\infty}_{c}(Y)$ and $\mathcal{R}= \ell^{\infty}_{c}(X)$. An $r$-operator with maximal domain and range can always be thought of as an extension of the Baire measures $(\lambda_{x})_{x\in X}$ determined by the functionals $(\Lambda(-)(x)|_{C_{c}(Y)})_{x\in X}$. Conversely, suppose $(\lambda_{x})_{x\in X}$ is a system of Baire measures such that $\text{sup}_{x\in X}\lambda_{x}(K) < \infty$ for every compact $G_{\delta}$ set $K$ and suppose, for every $x\in X$, we choose an extension of $\int (-){\rm d} \lambda_x$ to a positive linear functional $\Lambda(-)(x) \colon \ell^{\infty}_{c}(r^{-1}(x))\to \mathbb{C}$. Such a choice always exists by the Hahn--Banach theorem and the Axiom of Choice. By positivity of the extensions $\Lambda(-)(x)$ and the fact that every element in $\ell^{\infty}_{c}(Y)$ is dominated by a function in $C_{c}(Y)$ with the same norm and the support depending only on the support of the element, it follows that $\Lambda \colon \ell^{\infty}_{c}(Y)\to \ell^{\infty}_{c}(X)$ is an $r$-operator.
\eremark

\section{Topological graphs and their covers}
\label{s:top_graphs}
\subsection{Topological graphs}
A \emph{topological graph} is a pair of locally compact Hausdorff spaces $Y$ and $X$ (edges and vertices) with continuous maps $r,s\colon Y \to X$
that assign the range and source, respectively, to each edge.  We shall use the notation $Z = (r,s \colon Y\to X)$ to refer to a topological graph. When we refer to $Z$ satisfying a topological property, it shall always mean \emph{both} $Y$ and $X$ satisfy it, unless otherwise stated. 
\bremark
A topological graph equipped with a full and continuous $r$-operator is a topological quiver in the sense of Muhly and Tomforde \cite{Muhly-Tomforde2005}.
If $r$ is a local homeomorphism, then $r,s\colon Y \to X$ is a topological graph in the sense of Katsura \cite{Katsura2004}.
\eremark
\setlength{\parindent}{0cm} \setlength{\parskip}{0cm}

Given two topological graphs $Z = (r, s \colon Y \to X)$ and $Z' = (r', s' \colon Y' \to X')$, a \emph{graph morphism} from $Z$ to $Z'$ is given by a pair $\zeta = (\zeta_{Y},\zeta_{X})$ of maps $\zeta_{Y} \colon Y \to Y'$, $\zeta_{X} \colon X \to X'$ such that $r' \circ \zeta_{Y} = \zeta_{X} \circ r$ and $s' \circ \zeta_{Y} = \zeta_{X} \circ s$. We will use the notation $\zeta \colon Z\to Z'$ to refer to a graph morphism. Graph morphisms are composed component-wise. If we refer to $\zeta$ satisfying some mapping property (e.g. continuous), it shall always mean \emph{both} $\zeta_{Y}$ and $\zeta_{X}$ satisfy it, unless otherwise stated.
\setlength{\parindent}{0cm} \setlength{\parskip}{0.5cm}

\subsection{Measured topological graphs}
\label{measured_graphs}
A \emph{measured topological graph} is a topological graph $Z = (r, s \colon Y \to X)$ together with an $r$-operator $\Lambda \colon \mathcal{D}\to\mathcal{R}$ satisfying the additional property $\mathcal{D}s^{*}(\mathcal{R})\subseteq \mathcal{D}$. We shall denote a measured topological graph by $(Z, \Lambda)$. 

\blemma
\label{lem:MinDef}
Given a measured topological graph $(Z,\Lambda)$, there always exist the smallest domain and range satisfying $\Lambda(\mathcal{D})\subseteq\mathcal{R}$, $\mathcal{D}r^{*}(\mathcal{R})\subseteq \mathcal{D}$ and $\mathcal{D}s^{*}(\mathcal{R})\subseteq \mathcal{D}$. We denote them by $\cD_{\min}$ and $\cR_{\min}$ and call them the domain and range of minimal definition of $(Z,\Lambda)$. We write $\Lambda_{\min} \colon \cD_{\min} \to \cR_{\min}$ for the restriction of $\Lambda$ to $\cD_{\min}$.
\elemma
\setlength{\parindent}{0cm} \setlength{\parskip}{0cm}

\bproof
We construct $\cD_{\min}$ and $\cR_{\min}$ recursively. Let $\cF_0 \coloneq C_c(Y)$ and $\cE_0 \coloneq C_c(X)$, and for $n\geq 1$ we define 
\begin{align*}
  \cF_n &\coloneqq \overline{{}^*\text{-}\alg}\big(C_c(Y), C_c(Y) r^* \Lambda (\cF_{n-1}), C_c(Y) s^* \Lambda (\cF_{n-1})\big) \subseteq \mathcal{D};\\
  \cE_n &\coloneqq \overline{{}^*\text{-}\alg}\big(C_c(X), \Lambda(\cF_{n-1})\big) \subseteq\mathcal{R},
\end{align*}
where the closures are in the inductive limit topology. Then $\mathcal{D}_{\text{min}}$ and $\mathcal{R}_{\text{min}}$ are the closures of $\bigcup_{n}\mathcal{F}_{n}$ and $\bigcup_{n}\mathcal{E}_{n}$ in the inductive limit topologies.
\eproof
\setlength{\parindent}{0cm} \setlength{\parskip}{0.5cm}

We will say a measured topological graph is
\setlength{\parindent}{0cm} \setlength{\parskip}{0cm}

\begin{enumerate}
\item \emph{minimal} if $\mathcal{D} = \mathcal{D}_{\text{min}}$ and $\mathcal{R}= \mathcal{R}_{\text{min}}$,
\item \emph{maximal} if $\mathcal{D} = \ell^{\infty}_{c}(Y)$ and $\mathcal{R}= \ell_{c}^{\infty}(X)$,
\item \emph{continuous} if $\mathcal{D} = C_{c}(Y)$ and $\mathcal{R}= C_{c}(X)$,
\item \emph{Baire} if $\mathcal{D} = B_{c}(Y)$, $\mathcal{R}= B_{c}(X)$ and the $r$-operator is defined by a system of Baire measures $\{\lambda_{x}\}_{x\in X}$ on $Y$ such that $\text{supp}(\lambda_{x})\subseteq r^{-1}(x)$ and $\Lambda(f)(x) = \int f {\rm d} \lambda_x$ for all $x\in X$, 
\item \emph{weighted} if $\mathcal{D} = \ell^{\infty}_{c}(Y)$, $\mathcal{R}= \ell_{c}^{\infty}(X)$ and the $r$-operator is defined by a weight $w \colon Y\to[0,\infty)$ as in Example~\ref{weights}, and
\item \emph{strictly positively weighted} if it is weighted and the corresponding weight is strictly positive.
\end{enumerate}
We shall denote a weighted topological graph by $(Z,w)$. We will often consider the case where $w$ takes the constant value $1$, i.e., $w=1$.
\setlength{\parindent}{0cm} \setlength{\parskip}{0.5cm}

Let $(Z,\Lambda)$ and $(Z',\Lambda')$ be measured topological graphs. A \emph{measured} graph morphism from $(Z,\Lambda)$ to $(Z',\Lambda')$ is a graph morphism $\zeta \colon Z \to Z'$ such that $\zeta^{*}_{Y}(\mathcal{D}')\mathcal{D}\subseteq \mathcal{D}$, $\zeta_{X}^{*}(\mathcal{R}')\mathcal{R}\subseteq\mathcal{R}$ and $\Lambda\circ \zeta^{*}_{Y} = \zeta^{*}_{X}\circ\Lambda'$. The left hand side here is defined as the limit, in $M(\cR)$, of $( \Lambda((f \circ \zeta_Y) e_i) )_i$ for positive $f\in \cR$, where $(e_{i})\subseteq C_{c}(Y)$ is an approximate unit (i.e., a net such that $\lim_i e_i g = g$ holds in $C_c(Y)$ for all $g \in C_c(Y)$). We shall denote a measured graph morphism by $\zeta \colon (Z,\Lambda)\to (Z',\Lambda')$. To aid in brevity, we shall refer to a measured graph morphism as simply a ``morphism'' if the notation $\zeta \colon (Z,\Lambda) \to (Z',\Lambda')$ is displayed. Let us see what it means to be a measured graph morphism in some special cases.

Let $(Z,\Lambda)$ and $(Z', \Lambda)$ be two $\sigma$-compact Baire measured topological graphs, and suppose $\zeta \colon Z\to Z'$ is a graph morphism. Then $\zeta$ is a morphism  $\zeta \colon (Z,\Lambda) \to (Z',\Lambda')$ if and only if $\zeta$ is Baire measureable, and $(\zeta_{Y})_{*}\lambda_{x} = \lambda'_{\zeta_{X}(x)}$, for all $x\in X$. This characterization of morphisms holds true if, instead of assuming $\sigma$-compactness of the graphs, we assume that the pre-images of pre-compact sets under $\zeta_{Y}$ and $\zeta_{X}$ are pre-compact.

We consider the general case. A set $B\subseteq Y$ is \emph{locally} Baire measurable if $B\cap K$ is Baire measurable, for every compact $G_{\delta}$ set $K$. For a measure $\mu$, and $B$ a measurable set, denote by $\mu|_{B}$ the measure $A\mapsto \mu(A\cap B)$. In general, a graph morphism $\zeta \colon Z \to Z'$ between two Baire measured topological graphs $(Z,\Lambda)$ and $(Z',\Lambda')$ is measured if and only if it is locally Baire measurable and $\lim_{K}(\zeta_{Y})_{*}(\lambda_{x}|_{K}) = \lambda'_{\zeta_{X}(x)}$, for all $x\in X$, where $K$ ranges over all compact $G_{\delta}$ sets.

A graph morphism $\zeta \colon Z\to Z'$ between two weighted topological graphs $(Z,w)$ and $(Z',w')$ is measured if and only if 
\[\sum_{y\in r^{-1}(x) \colon \zeta_{Y}(y) = y'}w(y) = w'(y')\text{ for all }y'\in Y'\text{ and }x\in X\text{ satisfying }\zeta_{X}(x) = r(y').\]
In the special case where $Z$ and $Z'$ are weighted by $w=1$, the above equation simply says that $\zeta_{Y} \colon  r^{-1}(x)\to r^{-1}(\zeta_{X}(x))$ is a bijection, for all $x\in X$.

If we take any measured topological graph $(Z,\Lambda)$, then $\text{id}_{Z} \colon (Z,\Lambda)\to (Z,\Lambda_{\text{min}})$ is a morphism.

\begin{lemma}\label{morphism_restriction}
Suppose $(Z,\Lambda)$, $(Z',\Lambda')$ are measured topological graphs and $\zeta \colon (Z,\Lambda)\to (Z',\Lambda')$ is a morphism. If $\zeta$ is continuous, then the morphism we denote by $\zeta_{\min} \colon  (Z,\Lambda_{\min})\to (Z',\Lambda'_{\min})$ induced by $\zeta$ is a continuous morphism.
\end{lemma}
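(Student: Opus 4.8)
The statement is essentially a bookkeeping lemma: I want to show that a continuous measured graph morphism $\zeta$ restricts to a morphism between the minimal measured topological graphs, and that this restriction is again continuous. The key observation is that in the recursive construction of $\cD_{\min}$ and $\cR_{\min}$ from Lemma~\ref{lem:MinDef}, the building blocks $\cF_n, \cE_n$ (resp. $\cF'_n, \cE'_n$ for $Z'$) are generated by iterated applications of $\Lambda$ together with pullbacks along $r,s$ and multiplication by $C_c$-functions. Since $\zeta$ is a morphism, it intertwines $\Lambda$ and $\Lambda'$ via $\Lambda \circ \zeta_Y^* = \zeta_X^* \circ \Lambda'$, and since $\zeta$ is a graph morphism it intertwines the pullbacks $r^*, s^*$ with $r'^*, s'^*$; finally, since $\zeta$ is continuous, $\zeta_Y^*$ and $\zeta_X^*$ send $C_c(Y')$ into $C_b(Y)$, and more usefully $\zeta_Y^*(C_c(Y')) \cdot C_c(Y) \subseteq C_c(Y)$ (this is the content of $\zeta_Y^*(\mathcal{D}')\mathcal{D} \subseteq \mathcal{D}$ specialized to the continuous case, and follows because continuity makes $\zeta_Y^* g$ continuous so its product with a compactly supported continuous function is in $C_c(Y)$).

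The core of the proof is therefore an induction on $n$ showing that $\zeta_Y^*(\cF'_n) \cdot C_c(Y) \subseteq \cF_n$ and $\zeta_X^*(\cE'_n) \cdot C_c(X) \subseteq \cE_n$, where $\cF_n, \cE_n$ are the stages for $(Z, \Lambda_{\min})$ built from $C_c(Y), C_c(X)$ exactly as in Lemma~\ref{lem:MinDef}. The base case $n=0$ is the continuity remark just made. For the inductive step, take a generator of $\cF'_n$, which (up to $^*$-algebra operations and inductive-limit closure, both of which $\zeta_Y^*$ respects since it is a continuous $^*$-homomorphism) is of the form $g' \cdot r'^*\Lambda'_{\min}(h')$ or $g' \cdot s'^*\Lambda'_{\min}(h')$ with $g' \in C_c(Y')$ and $h' \in \cF'_{n-1}$. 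Pulling back: $\zeta_Y^*(g' \cdot r'^*\Lambda'_{\min}(h')) = \zeta_Y^*(g') \cdot (r^* \zeta_X^* \Lambda'_{\min}(h'))$ using $r' \circ \zeta_Y = \zeta_X \circ r$, and then $\zeta_X^*\Lambda'_{\min}(h') = \Lambda_{\min}(\zeta_Y^* h')$ by the morphism property (here one must be slightly careful: the morphism condition $\Lambda \circ \zeta_Y^* = \zeta_X^* \circ \Lambda'$ holds on the ambient $\Lambda$, and one checks via Lemma~\ref{morphism_restriction}'s own hypotheses and the approximate-unit description in the text that it descends to $\Lambda_{\min}$ — multiply by an approximate unit $(e_i) \subseteq C_c(Y)$ and pass to the limit). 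Multiplying by a suitable $\phi \in C_c(Y)$ with $\phi \zeta_Y^*(g') = \zeta_Y^*(g')$ on the support, one lands inside $C_c(Y) \cdot r^*\Lambda_{\min}(\zeta_Y^* h') \subseteq \cF_n$, using the inductive hypothesis $\zeta_Y^* h' \in \cF_{n-1}$ (after multiplying $h'$ by an approximate unit to stay in $C_c$, which is harmless by closedness in the inductive limit topology). Taking unions over $n$ and closing up gives $\zeta_Y^*(\cD'_{\min}) \cdot C_c(Y) \subseteq \cD_{\min}$ and likewise for the ranges.

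Once the inclusions $\zeta_Y^*(\cD'_{\min})\cdot C_c(Y) \subseteq \cD_{\min}$ and $\zeta_X^*(\cR'_{\min})\cdot C_c(X) \subseteq \cR_{\min}$ are established, I need the slightly stronger $\zeta_Y^*(\cD'_{\min})\cdot \cD_{\min} \subseteq \cD_{\min}$: but every element of $\cD_{\min}$ is (by the inductive-limit construction) a limit of elements each of which is dominated in support by some $\phi \in C_c(Y)$, so multiplying $\zeta_Y^*(g') \cdot d$ for $d \in \cD_{\min}$ by writing $d = \phi d$ reduces it to $(\zeta_Y^*(g')\phi) \cdot d \in \cD_{\min} \cdot \cD_{\min} \subseteq \cD_{\min}$. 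The intertwining identity $\Lambda_{\min} \circ \zeta_Y^* = \zeta_X^* \circ \Lambda'_{\min}$ on $\cD_{\min}$ then follows by restricting the corresponding identity already known for $\Lambda, \Lambda'$ (using the approximate-unit definition given in the text to make sense of $\Lambda_{\min}\circ\zeta_Y^*$). This verifies $\zeta_{\min}$ is a morphism. It is continuous because its underlying maps $\zeta_Y, \zeta_X$ are the same continuous maps as for $\zeta$, and by construction $\Lambda_{\min}$ restricted to $C_c(Y)$ still maps into $C_c(X)$ only when the original $\Lambda$ is continuous — wait, that is not automatic; rather, "continuous morphism" here just means the morphism $\zeta_{\min}$ has continuous component maps, which is inherited verbatim.

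\textbf{Main obstacle.} The delicate point is the bookkeeping around the approximate units: the morphism condition $\Lambda \circ \zeta_Y^* = \zeta_X^* \circ \Lambda'$ is, as the text carefully notes, only defined as a limit in $M(\cR)$ of $\Lambda((f\circ\zeta_Y)e_i)$ for $f \in \cR'$, and one must check this limiting identity is compatible with restricting $\Lambda$ to $\cD_{\min}$ and with the iterated pullback-and-push construction — i.e. that at each stage the relevant products land in $C_c$ (not just $\ell^\infty_c$) so that the approximate-unit limits behave, and that the multiplier-algebra limits commute with the finite algebraic operations defining $\cF_n, \cE_n$. This requires threading the "multiply by a compactly supported bump with the right support" trick (as in the proof of Lemma~\ref{cc} and the Proposition on bounded-below operators) through every inductive step. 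Everything else is formal.
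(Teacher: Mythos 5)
Your overall strategy --- induction on the generating subalgebras $\cF'_n$, $\cE'_n$, pulling back generators via the graph-morphism relations $r'\circ\zeta_Y = \zeta_X\circ r$, $s'\circ\zeta_Y=\zeta_X\circ s$ and the measured-morphism identity $\Lambda\circ\zeta_Y^* = \zeta_X^*\circ\Lambda'$ --- is the same as the paper's, and the algebraic bookkeeping is correct as far as it goes. The continuity of $\zeta_{\min}$ itself is indeed inherited verbatim from $\zeta$, as you conclude after your self-correction.

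The problem is the step you defer to your ``Main obstacle'' paragraph: it is the entire analytic content of the lemma, and the bump-function trick you offer does not close it. The inductive step needs $(\Lambda'(f)\circ\zeta_X)\phi\in\cE_{n+1}$ for $f\in\cF'_n$ and $\phi\in\cE_{n+1}$. By definition of a measured morphism, $\Lambda(f\circ\zeta_Y)=\zeta_X^*\Lambda'(f)$ is only a limit in $M(\cR)$ of the net $\bigl(\Lambda((f\circ\zeta_Y)e_i)\bigr)_i$; each term $\Lambda((f\circ\zeta_Y)e_i)\phi$ lies in $\cE_{n+1}$ by the inductive hypothesis, but $\cE_{n+1}$ is closed only in the inductive limit topology, i.e.\ under uniform convergence with uniformly compact supports. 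Multiplying by compactly supported bumps controls the supports but says nothing about the mode of convergence, and your phrase ``harmless by closedness in the inductive limit topology'' presupposes exactly the uniform convergence that has to be proved --- pointwise convergence of the net would not place the limit in $\cE_{n+1}$. The paper closes this by noting that for $0\le f$ and a suitable approximate unit the net $\Lambda((f\circ\zeta_Y)e_i)\phi$ is monotone, and then invoking Dini's lemma (for nets) to upgrade pointwise to uniform convergence; this is the one genuinely non-formal ingredient and it is missing from your argument. A smaller point: your parenthetical that ``the relevant products land in $C_c$, not just $\ell^\infty_c$'' is off the mark --- for $n\ge 1$ the elements of $\cF_n$ and $\cE_n$ are typically discontinuous, and what matters is membership in these inductively closed subalgebras, not continuity.
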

\setlength{\parindent}{0cm} \setlength{\parskip}{0cm}

\begin{proof}
This follows from induction on the $^*$-sub-algebras $\mathcal{F}'_{n}$, $\mathcal{E}'_{n}$. For the induction step, observe that we have $\Lambda(f\circ\zeta_{Y}) = \Lambda'(f)\circ\zeta_{X}\in M(\mathcal{R})$ for all $0\leq f\in \mathcal{F}'_{n}$. Recall that the left hand side here is defined as the limit, in $M(\cR)$, of $( \Lambda((f \circ \zeta_Y) e_i) )_i$, where $(e_{i})\subseteq C_{c}(Y)$ is an approximate unit. Now Dini's lemma (for nets) implies that, for an approximate unit $(e_{i})$ as before, the net $\Lambda((f\circ\zeta_{Y}) e_{i})\phi\in\mathcal{E}_{n+1}$ converges uniformly to $(\Lambda'(f)\circ\zeta_{X})\phi$ for every $\phi\in \mathcal{E}_{n+1}$ and hence $(\Lambda'(f)\circ\zeta_{X})\phi\in\mathcal{E}_{n+1}$. 
\end{proof}
\setlength{\parindent}{0cm} \setlength{\parskip}{0.5cm}

\begin{remark}\label{rmk:functorial}
The assignment $(Z,\Lambda)\mapsto (Z,\Lambda_{\text{min}})$, $\zeta \mapsto \zeta_{\min}$ is a functor. 
In fact, it is a retraction of the category of measured topological graphs with continuous morphisms onto the sub-category of minimally measured topological graphs with continuous morphisms.
\end{remark}

\subsection{Covers of topological graphs}
\label{ss:cover_top_graph}
Let us now explain how to describe a measured topological graph equivalently in terms of continuous data, by which we mean the notion of covers as in the following definition. 
\bdefin
Let $Z$ be a topological graph. 
A \emph{continuously measured cover} (or simply a \emph{cover}) of a topological graph $Z$ is a continuously measured topological graph $(\mathcal{Z},\bm{\lambda})$, say $\cZ = (\mfr, \mfs \colon \cY \to \cX)$ and $\bm{\lambda}$ is given by a system of Radon measures $\gekl{\lambda_x}_{x \in \cX}$ on $\cY$, together with a continuous and proper morphism $\pi \colon \mathcal{Z}\to Z$ and a (not necessarily continuous) morphism $\iota \colon Z\to \mathcal{Z}$ with dense image such that $\pi\circ\iota = \id_{Z}$ and $\text{supp}(\lambda_{\iota_{X}(x)})\subseteq \overline{\iota_{Y}(r^{-1}(x))}$ for all $x\in X$. It follows that $\iota$ is an injection and $\pi$ is a surjection. We will denote a cover by $(\mathcal{Z},\bm{\lambda})_{Z}$.
\edefin

\begin{remark}
We denote the $r$-operator in the cover with a lower script Greek letter to emphasize its connections with the associated system of Radon measures. Also, we think of it as having a reduced role in the structure compared to a measured topological graph.
\end{remark}

A \emph{cover morphism} $\mfz_{\zeta} \colon (\mathcal{Z},\bm{\lambda})_{Z}\to (\mathcal{Z}',\bm{\lambda}')_{Z'}$ is a continuous morphism $\mfz \colon (\mathcal{Z},\bm{\lambda})\to (\mathcal{Z}',\bm{\lambda}')$ together with a (not necessarily continuous) morphism $\zeta \colon Z\to Z'$ satisfying $\mfz \circ \iota = \iota' \circ \zeta$.
If $\zeta \colon Z\to Z'$ is continuous, we additionally have $\pi' \circ \mfz = \zeta \circ \pi$. 
In this case, we shall call $\mfz \colon (\mathcal{Z},\bm{\lambda})_{Z} \to (\mathcal{Z}',\bm{\lambda}')_{Z'}$ \emph{continuous}.

We now set out to describe an equivalence of categories between measured topological graphs and covers in the sense above. 
Given a cover $(\mathcal{Z},\bm{\lambda})_{Z}$, let $\mathcal{D} \coloneq \iota_{Y}^{*}(C_{c}(Y))$, $\mathcal{R} \coloneq \iota_{X}^{*}(C_{c}(X))$ and set $\Lambda \coloneq  (\iota_{X})^{*}\circ \lambda\circ (\iota^{*}_{Y})^{-1}$. Conversely, given a measured topological graph $(Z,\Lambda)$, let $\cY \coloneq \Spec(\mathcal{D}) $, $\cX \coloneq \Spec(\mathcal{R})$, let the Gelfand dual of $r^{*} \colon \mathcal{D}\to M(\mathcal{R})$ be $\mfr \colon \cY \to \cX$, the Gelfand dual of $s^{*} \colon \mathcal{D}\to M(\mathcal{R})$ be $\mfs \colon \cY \to \cX$ and set $\bm{\lambda} \coloneq \Lambda \colon C_{c}(\cY) \to C_{c}(\cX)$. The Gelfand duals of the inclusions $C_{c}(Y)\subseteq D$ and $C_{c}(X)\subseteq\mathcal{R}$ yield $\pi_{\cY} \colon \cY \to Y$ and $\pi_{\cX} \colon \cX \to X$. Moreover, there are (not necessarily continuous) mappings $\iota_{X} \colon X\to \cX$, $\iota_{Y} \colon Y \to \cY$ given by sending $x\in X$ or $y\in Y$ to the character $\text{ev}_{x} \colon D\to \mathbb{C}$ or $\text{ev}_{y} \colon R\to\mathbb{C}$. Note that we are using Lemma~\ref{cc} to identify $\mathcal{D}$ with $C_{c}(\cY)$ and $\mathcal{R}$ with $C_{c}(\cX)$.

At the level of morphisms, a morphism $\zeta \colon (Z,\Lambda)\to (Z',\Lambda')$ induces a morphism $\mfz_{\zeta} \colon (\mathcal{Z},\lambda)_{Z}\to (\mathcal{Z}',\lambda')_{Z'}$ by Gelfand duality applied to $\zeta_{Y}^{*} \colon \mathcal{D}'\to M(\bar{D})$ and $\zeta^{*}_{X} \colon \mathcal{R}'\to M(\bar{R})$. If $\mfz_{\zeta} \colon (\mathcal{Z},\lambda)_{Z}\to (\mathcal{Z}',\lambda')_{Z'}$ is a morphism of covers, then $\zeta \colon (Z,\Lambda)\to (Z',\Lambda')$ is a morphism.

\begin{theorem}\label{thm:correspondence}
The correspondence $(Z,\Lambda)\leftrightarrow (\mathcal{Z}, \lambda)_{Z}$ and $\zeta \leftrightarrow \mfz_{\zeta}$ is an equivalence of categories. Continuous morphisms are sent to continuous morphisms. Moreover, $(Z,\Lambda)$ is bounded below if and only if $(\mathcal{Z},\lambda)$ is bounded below.
\end{theorem}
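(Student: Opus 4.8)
The plan is to show the two assignments above are well-defined functors whose composites are the identity up to natural isomorphism, and then to read off the two supplementary claims; Gelfand duality in the form of Lemma~\ref{cc}, together with Remark~\ref{rmk:multiplier}, does all the heavy lifting. \emph{Objects.} Given a cover $(\cZ,\bm{\lambda})_Z$: density of $\iota_Y(Y)$ makes $\iota_Y^*\colon C_c(\cY)\to\ell^\infty_c(Y)$ an injective $^*$-homomorphism, isometric for the uniform norm; from $\pi_Y\circ\iota_Y=\id_Y$ one gets $\overline{\iota_Y(K)}\subseteq\pi_Y^{-1}(K)$ for compact $K\subseteq Y$, which is compact since $\pi_Y$ is proper, so $\iota_Y^*$ is a homeomorphism for the inductive-limit topologies onto a closed $^*$-subalgebra $\cD\defeq\iota_Y^*(C_c(\cY))$, containing $C_c(Y)$ via $g=\iota_Y^*(g\circ\pi_Y)$. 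Thus $\cD$ is a domain, $\cR\defeq\iota_X^*(C_c(\cX))$ a range, and $\Lambda\defeq\iota_X^*\circ\bm{\lambda}\circ(\iota_Y^*)^{-1}$ is an $r$-operator (the vanishing $\Lambda(f)(x)=0$ when $f|_{r^{-1}(x)}=0$ comes from $\supp(\lambda_{\iota_X(x)})\subseteq\overline{\iota_Y(r^{-1}(x))}$) satisfying $\cD\,s^*(\cR)\subseteq\cD$ by continuity of $\mfs$. Conversely, given $(Z,\Lambda)$: Lemma~\ref{cc} identifies $\cD\cong C_c(\cY)$ and $\cR\cong C_c(\cX)$; the conditions $\cD\,r^*(\cR)\subseteq\cD$, $\cD\,s^*(\cR)\subseteq\cD$ say exactly that $r^*,s^*$ take values in $M(\cD)$, with Gelfand duals $\mfr,\mfs\colon\cY\to\cX$; the inclusions $C_c(Y)\hookrightarrow\cD$, $C_c(X)\hookrightarrow\cR$ dualize to the proper continuous surjections $\pi_Y,\pi_X$ and the evaluation maps give the (possibly discontinuous) dense embeddings $\iota_Y,\iota_X$ with $\pi\circ\iota=\id_Z$, exactly as in the proof of Lemma~\ref{cc}; and $\bm{\lambda}\defeq\Lambda$, a continuous $\mfr$-operator, is given by an $\mfr$-system whose support condition is the vanishing property of $\Lambda$.

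\emph{Round trips and morphisms.} Going $(Z,\Lambda)\to(\cZ,\bm{\lambda})_Z\to(Z,\Lambda')$ one has $\widehat a\circ\iota_Y=a$ for $a\in\cD$ (Lemma~\ref{cc}), so $\iota_Y^*\circ\widehat{(-)}=\id_\cD$, giving $\cD'=\cD$, $\cR'=\cR$ and $\Lambda'=\Lambda$; the analogous computation on morphisms shows this composite is the identity functor. Going $(\cZ,\bm{\lambda})_Z\to(Z,\Lambda)\to(\cZ',\bm{\lambda}')_Z$, the isomorphism $\iota_Y^*\colon C_c(\cY)\to\cD$ of topological $^*$-algebras induces a homeomorphism $\cY\cong\Spec\cD=\cY'$, similarly $\cX\cong\cX'$, and (again via $\widehat a\circ\iota_Y=a$) this pair intertwines $\mfr,\mfs,\bm{\lambda},\pi,\iota$ with $\mfr',\mfs',\bm{\lambda}',\pi',\iota'$, hence is an isomorphism of covers, natural in $(\cZ,\bm{\lambda})_Z$. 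On morphisms, $\zeta\colon(Z,\Lambda)\to(Z',\Lambda')$ has $\zeta_Y^*(\cD')\subseteq M(\cD)$, $\zeta_X^*(\cR')\subseteq M(\cR)$ by Remark~\ref{rmk:multiplier} and the product conditions, so has Gelfand duals $\mfz_Y,\mfz_X$ forming a continuous graph morphism, measured because $\Lambda\circ\zeta_Y^*=\zeta_X^*\circ\Lambda'$ dualizes to $\bm{\lambda}\circ\mfz_Y^*=\mfz_X^*\circ\bm{\lambda}'$, and a cover morphism because $\mfz_Y\circ\iota_Y=\iota'_Y\circ\zeta_Y$ (and likewise in the $X$-coordinate) at the level of evaluation characters; conversely a cover morphism $\mfz_\zeta$ yields, by rewriting $\zeta_Y^*(f'\circ\iota'_Y)=(\mfz_Y^*f')\circ\iota_Y$, a measured graph morphism $\zeta$, unique with $\mfz\circ\iota=\iota'\circ\zeta$ by injectivity of $\iota'$. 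Composition is respected, so $\zeta\leftrightarrow\mfz_\zeta$ is a natural bijection on Hom-sets; and since a cover morphism is by definition continuous iff its $\zeta$ is continuous, the bijection restricts to the continuous morphisms, which is the second claim.

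\emph{Bounded below, and the main obstacle.} For the last claim, the inequalities $\bm{\lambda}(g)\circ\mfr\geq Ng$ on $\cY$ and $\Lambda(f)\circ r\geq Nf$ on $Y$ defining (BB) are between continuous functions on $\cY$ (resp.\ on $Y$), so via $\mfr\circ\iota_Y=\iota_X\circ r$, $\pi_Y\circ\iota_Y=\id_Y$, $\iota_X^*\circ\bm{\lambda}=\Lambda\circ\iota_Y^*$, and propriety of $\pi_Y$ (to transport compact sets), each implication reduces to checking the inequality on the dense set $\iota_Y(Y)$, where it becomes the corresponding inequality for the other operator; the one non-formal point is that (BB) is needed not merely on $C_c$ but on the full domain $\cD\cong C_c(\cY)$, which follows from the earlier description of bounded-below transfer operators by strictly positive weights. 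Beyond this, the work is bookkeeping --- tracking multiplier algebras and the non-continuous $\iota$ through Gelfand duality --- and the genuinely delicate step I expect is reconciling the approximate-unit/multiplier definition of $\Lambda\circ\zeta_Y^*$ for discontinuous $\zeta$ with the assertion that $\mfz_\zeta$ is a measured cover morphism (with the $C_c$-to-$\cD$ extension of (BB) the next most fiddly).
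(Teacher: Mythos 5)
Your proposal is correct and follows essentially the same route as the paper: both directions of the object correspondence are set up exactly as in the text, and Lemma~\ref{cc} (Gelfand duality for inductive-limit-closed subalgebras) is the engine that makes the two constructions mutually inverse and handles morphisms via duals of $\zeta_Y^*\colon\mathcal{D}'\to M(\mathcal{D})$ and $\zeta_X^*\colon\mathcal{R}'\to M(\mathcal{R})$. You in fact supply more detail than the paper's proof, which leaves the ``bounded below'' equivalence and the morphism bookkeeping implicit; your reduction of (BB) to the dense set $\iota_Y(Y)$ via the strictly-positive-weight description is exactly the right way to fill that gap.
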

\setlength{\parindent}{0cm} \setlength{\parskip}{0cm}

\bproof
Given a cover $(\mathcal{Z},\lambda)_{Z}$, it is easy to see that $\mathcal{D}$ as defined above is a domain, $\mathcal{R}$ as defined above is a range, and that $\mathcal{D}r^{*}(\mathcal{R})\subseteq \mathcal{D}$, $\mathcal{D}s^{*}(\mathcal{R})\subseteq \mathcal{D}$. From $\text{supp}(\lambda_{\iota_{X}(x)})\subseteq \overline{\iota_{Y}(r^{-1}(x))}$ for all $x\in X$, it follows that the operator $\Lambda = (\iota_{X})^{*}\circ \lambda\circ (\iota^{*}_{Y})^{-1}$ is an $r$-operator $\Lambda \colon \mathcal{D}\to\mathcal{R}$ and hence $(Z,\Lambda)$ is a measured topological graph.
\setlength{\parindent}{0cm} \setlength{\parskip}{0.5cm}

Conversely, given a measured topological graph $(Z,\Lambda)$, it is easy to see that $\pi_{\cX}$ and $\pi_{\cY}$ as defined above are continuous surjections satisfying $\pi_{\cX}\circ \mfr = r \circ \pi_{\cY}$ as well as $\pi_{\cX} \circ \mfs = s \circ \pi_{\cY}$. Moreover, $\bm{\lambda}$ as defined above is a positive linear operator $C_{c}(\cY) \to C_{c}(\cX)$ satisfying $\bm{\lambda}(f(g\circ \tilde{r})) = \bm{\lambda}(f)g$ for all $f\in C_{c}(\cY)$ and $g\in C_{c}(\cX)$. It thus follows from Lemma~\ref{charecterization} that $\bm{\lambda}$ is an $\mfr$-system. In addition, it is easy to see that $\iota_X$ and $\iota_Y$ as constructed above have dense images and satisfy the relations $\pi_{\cY}\circ\iota_{Y} = \text{id}_{Y}$, $\pi_{\cX}\circ \iota_{X} = \text{id}_{X}$ and $\iota_{X}^{*}\circ \bm{\lambda} = \Lambda \circ \iota_{Y}^{*}$.

Lemma~\ref{cc} implies that, up to canonical isomorphisms, our constructions are inverse to each other.
\eproof
\setlength{\parindent}{0cm} \setlength{\parskip}{0.5cm}

\subsection{Applications}
\label{sec:app_min_cover}
The forgetful functor $(\mathcal{Z},\bm{\lambda})_{Z}\mapsto (\mathcal{Z},\bm{\lambda})$, $\mfz_{\zeta} \mapsto \zeta$, goes from the category of measured topological graphs (with continuous morphisms) to the category of continuously measured topological graphs (also with continuous morphisms).
We will specifically be interested in the functor 
\[
  (Z,\Lambda) \mapsto (Z,\Lambda_{\min}) \mapsto (\mathcal{Z},\bm{\lambda})_Z \mapsto (\mathcal{Z},\bm{\lambda}) \eqcolon (\hat{Z}, \hat{\bm{\lambda}}),
\]
where the first arrow is given by restriction to domain and range of minimal definition (see Lemmas~\ref{lem:MinDef} and \ref{morphism_restriction} as well as Remark~\ref{rmk:functorial}), the second arrow is given by the correspondence in Theorem~\ref{thm:correspondence} and the last arrow is the forgetful functor. 
\bdefin
We call $(\hat{Z}, \hat{\bm{\lambda}})$ the \emph{minimal cover} of $(Z,\Lambda)$. We write $\hat{Z} = (\hat{r}, \hat{s} \colon \hat{Y} \to \hat{X})$ and denote the structure maps by $\pi \colon \hat{Z} \to Z$ the continuous, proper morphism and by $\iota \colon Z \to \hat{Z}$ the (not necessarily continuous) morphism.
\edefin
This cover satisfies a universal property which we explain now. First, observe that by (the proof of) the category correspondence in Theorem~\ref{thm:correspondence}, $\iota \colon Z\to\hat{Z}$ is actually a morphism $\iota \colon (Z,\Lambda_{\text{min}})\to (\hat{Z},\hat{\bm{\lambda}})$. Therefore, $\iota \colon (Z,\Lambda)\to (\hat{Z},\hat{\bm{\lambda}})$ is a morphism.
\begin{theorem}
\label{thm:mincover}
Suppose that $(\dot{Z},\dot{\bm{\lambda}})$ is another continuously measured topological graph that covers $(Z,\Lambda)$ in the sense that there is a continuous proper morphism $\dot{\pi} \colon \dot{Z}\to Z$ and a morphism $\dot{\iota} \colon (Z,\Lambda)\to (\dot{Z},\dot{\bm{\lambda}})$ with dense image such that $\dot{\pi}\circ\dot{\iota} = \text{id}_{Z}.$ Then there is a unique continuous proper morphism $\zeta \colon (\dot{Z},\dot{\bm{\lambda}})\to (\hat{Z},\hat{\bm{\lambda}})$ such that $\pi\circ \zeta = \dot{\pi}$ and $\zeta \circ \dot{\iota} = \iota$.
\end{theorem}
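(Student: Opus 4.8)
The plan is to extract from the hypotheses a continuous morphism $(\dot Z, \dot{\bm\lambda}) \to (Z, \Lambda_{\min})$ at the level of function algebras, and then invoke the equivalence of categories in Theorem~\ref{thm:correspondence} together with the universal property of $\Spec$ to produce $\zeta$. Concretely: since $\dot\iota \colon (Z,\Lambda) \to (\dot Z, \dot{\bm\lambda})$ is a morphism, we have $\dot\iota_Y^*(C_c(\dot Y)) \subseteq \ell^\infty_c(Y)$ (a sub-$^*$-algebra closed in the inductive limit topology, containing $C_c(Y)$ because $\dot\pi_Y \circ \dot\iota_Y = \id_Y$ forces $\dot\iota_Y^*$ to be surjective onto a set containing $\pi_Y^* C_c(Y) = C_c(Y)$), and similarly on the vertex side; moreover the intertwining relation $\Lambda \circ \dot\iota_Y^* = \dot\iota_X^* \circ \dot{\bm\lambda}$ holds, and the $r^*$- and $s^*$-stability conditions are inherited. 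Hence the image algebras $\dot\iota_Y^*(C_c(\dot Y))$ and $\dot\iota_X^*(C_c(\dot X))$ form a domain and a range over $Z$ supporting the restriction of $\Lambda$, and therefore contain $\cD_{\min}$ and $\cR_{\min}$ respectively (by the minimality in Lemma~\ref{lem:MinDef}). Dualizing the inclusions $\cD_{\min} \hookrightarrow \dot\iota_Y^*(C_c(\dot Y)) \cong C_c(\dot Y)$ and $\cR_{\min} \hookrightarrow \dot\iota_X^*(C_c(\dot X)) \cong C_c(\dot X)$ via $\Spec$ gives continuous proper maps $\zeta_Y \colon \dot Y \to \hat Y = \Spec(\cD_{\min})$ and $\zeta_X \colon \dot X \to \hat X = \Spec(\cR_{\min})$, using Lemma~\ref{cc} to identify the spectra and to see properness/continuity.

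Next I would verify that $\zeta = (\zeta_Y, \zeta_X)$ is a morphism of topological graphs, i.e. $\hat r \circ \zeta_Y = \zeta_X \circ \dot{\mfr}$ and likewise for $s$: this is the Gelfand dual of the identity $r^* \circ (\text{inclusion}) = (\text{inclusion}) \circ r^*$ on the relevant algebras, which holds because the inclusions are algebra maps intertwining $r^*$ (as $\cD_{\min} r^*(\cR_{\min}) \subseteq \cD_{\min}$ and the ambient operator is a fixed $r$-operator). That $\zeta$ is a \emph{measured} morphism $(\dot Z, \dot{\bm\lambda}) \to (\hat Z, \hat{\bm\lambda})$, with $\hat{\bm\lambda} \circ \zeta_Y^* = \zeta_X^* \circ \dot{\bm\lambda}$, follows because under the Gelfand identifications $\zeta_Y^*$ is just the inclusion $\cD_{\min} \hookrightarrow C_c(\dot Y)$ and $\hat{\bm\lambda} = \Lambda_{\min}$, $\dot{\bm\lambda}$ restricts $\Lambda$, so both sides compute $\Lambda_{\min}$ applied to an element of $\cD_{\min}$; by Lemma~\ref{morphism_restriction} (or directly) continuity is preserved. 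The relation $\pi \circ \zeta = \dot\pi$ and $\zeta \circ \dot\iota = \iota$ are then read off as Gelfand duals of the obvious algebra inclusions: $\pi_Y \circ \zeta_Y$ dualizes to $C_c(Y) \hookrightarrow \cD_{\min} \hookrightarrow C_c(\dot Y)$, whose composite is $\dot\pi_Y^*$; and $\zeta_Y \circ \dot\iota_Y$ on characters sends $\ev_y$ to the character $a \mapsto a(\dot\iota_Y(y))$ on $\cD_{\min}$, which is exactly $\iota_Y(y)$ by the construction of $\iota$ in Theorem~\ref{thm:correspondence}. Uniqueness of $\zeta$ follows because $\zeta \circ \dot\iota = \iota$ pins down $\zeta$ on the dense set $\dot\iota(Z) \subseteq \dot Z$ (density is part of the cover hypothesis) and $\zeta$ is continuous, while $\hat Z$ is Hausdorff; properness then comes for free since $\pi \circ \zeta = \dot\pi$ is proper and $\pi$ is separated, or one argues directly from the inclusion of algebras being nondegenerate.

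The main obstacle I anticipate is the careful bookkeeping around multiplier algebras: the morphism conditions involve $r^*, s^*$ landing in $M(\cR)$ rather than $\cR$, and the defining relation $\Lambda \circ \zeta_Y^* = \zeta_X^* \circ \Lambda'$ for a possibly discontinuous $\dot\iota$ is stated via approximate-unit limits in $M(\cR)$, so one must check that restricting to the minimal domain and range is compatible with these limits — essentially re-running the argument of Lemma~\ref{morphism_restriction} in the present situation, invoking Dini's lemma for nets on the algebras $\cF_n, \cE_n$ to see the relevant nets converge in the inductive limit topology and hence stay inside $\cD_{\min}, \cR_{\min}$. Everything else is a formal consequence of Gelfand duality (Lemma~\ref{cc}) and the minimality characterization (Lemma~\ref{lem:MinDef}); the content is entirely in showing $\cD_{\min} \subseteq \dot\iota_Y^*(C_c(\dot Y))$ and $\cR_{\min} \subseteq \dot\iota_X^*(C_c(\dot X))$, which is an induction on $n$ mirroring the recursive construction of $\cF_n$ and $\cE_n$.
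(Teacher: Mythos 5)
Your proposal is correct and follows essentially the same route as the paper: both arguments identify $\dot\iota_Y^*(C_c(\dot Y))$ and $\dot\iota_X^*(C_c(\dot X))$ as a domain and range over $Z$ carrying a restriction of $\Lambda$ (this is exactly the measured graph corresponding to the cover $(\dot Z,\dot{\bm\lambda})_Z$ under Theorem~\ref{thm:correspondence}), deduce from Lemma~\ref{lem:MinDef} that they contain $\cD_{\min}$ and $\cR_{\min}$, and then dualize the resulting inclusion via the Gelfand correspondence to obtain $\zeta$ with the required compatibilities. The paper phrases this as applying the cover functor to the morphism $\id_Z\colon (Z,\Lambda')\to(Z,\Lambda_{\min})$, which packages the verifications you carry out by hand; your explicit uniqueness argument via density of $\dot\iota(Z)$ is a sound addition that the paper leaves implicit.
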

\setlength{\parindent}{0cm} \setlength{\parskip}{0cm}

\begin{proof}
Since $\dot{\iota} \colon (Z,\Lambda)\to (\dot{Z},\dot{\bm{\lambda}})$ is a morphism, it follows that $(\dot{Z},\dot{\bm{\lambda}})_{Z}$ is a cover of $Z$. Since $\dot{\iota}^{*}_{X}\circ \dot{\bm{\lambda}} = \Lambda\circ\dot{\iota}^{*}_{Y}$, its corresponding measured topological graph is of the form $\Lambda' \colon \mathcal{D}'\to \mathcal{R}'$ given by the restriction of $\Lambda$, where $\mathcal{R}'\subseteq\mathcal{R}$ and $\mathcal{D}'\subseteq \mathcal{D}$. Hence $\mathcal{D}_{\text{min}}\subseteq \mathcal{D}'$ and $\mathcal{R}_{\text{min}}\subseteq\mathcal{R}'$, so that there is a morphism $\text{id}_{Z} \colon  (Z,\Lambda')\to (Z,\Lambda_{\text{min}})$. Apply the cover functor from Theorem~\ref{thm:correspondence} and the forgetful functor to this morphism to obtain a morphism $\zeta \colon (\dot{Z},\dot{\bm{\lambda}})\to (\hat{Z},\hat{\bm{\lambda}})$ satisfying $\pi\circ \zeta = \text{id}_{Z}\circ\dot{\pi} = \dot{\pi}$ and $\zeta \circ \dot{\iota} = \text{id}_{Z}\circ \iota = \iota$.
\end{proof}
\setlength{\parindent}{0cm} \setlength{\parskip}{0.5cm}

Let us consider some special cases of this theorem. 

We first consider Baire measured topological graphs. Suppose $\dot{\iota} \colon Z\to \dot{Z}$ is a morphism which satisfies $\dot{\pi}\circ\dot{\iota} = \text{id}_{Z}$ for some continuous morphism $\dot{\pi} \colon \dot{Z}\to Z$. Then the inverse image of pre-compact sets by $\dot{\iota}$ are pre-compact. It follows that $\dot{\iota} \colon (Z,\Lambda)\to (\dot{Z},\dot{\bm{\lambda}})$ from a Baire measured graph into a continuously measured topological graph is a morphism if and only if $\dot{\iota}$ is Baire measurable and the measures $(\dot{\iota}_{Y})_{*}\lambda_{x}$, $\dot{\lambda}_{\iota_{X}(x)}$ agree on $C_{c}(\dot{Y})$, for all $x\in X$. Since every characteristic function of a compact $G_{\delta}$ set is a sequential limit of continuous and compactly supported functions, it follows that $(\dot{\iota}_{Y})_{*}\lambda_{x}=\dot{\lambda}_{\iota_{X}(x)}$ for all $x\in X$.

\begin{corollary}
Suppose $(\dot{Z},\dot{\bm{\lambda}})$ is a continuously measured topological graph that covers the Baire measured graph $(Z,\Lambda)$ in the sense that there is a continuous and proper morphism $\dot{\pi} \colon \dot{Z}\to Z$ and a Baire measurable morphism $\dot{\iota} \colon Z\to \dot{Z}$ with dense image satisfying $\dot{\pi}\circ\dot{\iota} = \text{id}_{Z}$ and $ (\dot{\iota}_{Y})_{*}\lambda_{x}=\dot{\lambda}_{\dot{\iota}_{X}(x)}$ for all $x\in X$. Then there is a unique continuous and proper morphism $\zeta \colon \dot{Z}\to \hat{Z}$ satisfying $\pi\circ\zeta = \dot{\pi}$, $\zeta\circ\dot{\iota} = \iota$ and $(\zeta_{Y})_{*}\dot{\lambda}_{\dot{x}} = \hat{\lambda}_{\zeta_{X}(\dot{x})}$ for all $\dot{x}\in \dot{X}$.
\end{corollary}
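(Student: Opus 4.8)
The plan is to reduce the corollary to the general universal property in Theorem~\ref{thm:mincover} by verifying that the given Baire-measurable data actually assembles into a cover in the abstract sense, and then to upgrade the conclusion of Theorem~\ref{thm:mincover} back into the language of pushed-forward measures. First I would observe, exactly as in the paragraph preceding the corollary, that since $\dot\pi \circ \dot\iota = \id_Z$ with $\dot\pi$ continuous and proper, the preimages under $\dot\iota_Y$ and $\dot\iota_X$ of pre-compact sets are pre-compact; this is the hypothesis that makes the $\sigma$-compact-style characterization of measured morphisms applicable without assuming $\sigma$-compactness of the graphs. Combining this with the assumption that $\dot\iota$ is Baire measurable and that $(\dot\iota_Y)_*\lambda_x = \dot\lambda_{\dot\iota_X(x)}$ for all $x \in X$, the characterization of measured graph morphisms between Baire measured graphs (stated in \S\ref{measured_graphs}) shows that $\dot\iota \colon (Z,\Lambda) \to (\dot Z, \dot{\bm\lambda})$ is a morphism. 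Note the support condition $\supp(\dot\lambda_{\dot\iota_X(x)}) \subseteq \overline{\dot\iota_Y(r^{-1}(x))}$ is automatic: $(\dot\iota_Y)_*\lambda_x$ is supported on $\dot\iota_Y(\supp(\lambda_x)) \subseteq \dot\iota_Y(r^{-1}(x))$ and hence on its closure. Together with $\dot\pi \circ \dot\iota = \id_Z$ and density of $\dot\iota$'s image, this says precisely that $(\dot Z, \dot{\bm\lambda})$ covers $(Z,\Lambda)$ in the sense of the hypothesis of Theorem~\ref{thm:mincover}.

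Next I would invoke Theorem~\ref{thm:mincover} directly: it yields a unique continuous proper morphism $\zeta \colon (\dot Z, \dot{\bm\lambda}) \to (\hat Z, \hat{\bm\lambda})$ with $\pi \circ \zeta = \dot\pi$ and $\zeta \circ \dot\iota = \iota$. It remains to translate the statement ``$\zeta$ is a (continuous) morphism of continuously measured topological graphs'' into the pushed-forward-measure equality $(\zeta_Y)_*\dot\lambda_{\dot x} = \hat\lambda_{\zeta_X(\dot x)}$ for all $\dot x \in \dot X$. Since both $(\dot Z,\dot{\bm\lambda})$ and $(\hat Z,\hat{\bm\lambda})$ are continuously measured, their $r$-operators are given by genuine $\mfr$-systems of Radon measures (by the discussion of $r$-systems in \S\ref{prelim}), and the defining condition $\hat{\bm\lambda} \circ \zeta_Y^* = \zeta_X^* \circ \hat{\bm\lambda}$ for a continuous measured morphism reads, when evaluated at a character $\dot x \in \dot X = \cX'$ and an $f \in C_c(\hat Y)$, as $\int_{\hat Y} (f \circ \zeta_Y)\, \mathrm d \dot\lambda_{\dot x} = \int_{\hat Y} f\, \mathrm d \hat\lambda_{\zeta_X(\dot x)}$; this is exactly the pushforward identity on $C_c(\hat Y)$, which determines the Radon measures and hence gives $(\zeta_Y)_*\dot\lambda_{\dot x} = \hat\lambda_{\zeta_X(\dot x)}$. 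Here I am using that $\zeta_X$ at the level of spectra sends $\ev_{\dot x}$ to $\ev_{\dot x} \circ \zeta_Y^*$-type evaluation, i.e. that $\zeta_X^*$ of the evaluation functional is the evaluation functional, which follows from the construction of $\zeta$ via Gelfand duality.

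For uniqueness of $\zeta$ with the additional pushforward property: any continuous proper morphism $\zeta'$ of continuously measured graphs with $\pi \circ \zeta' = \dot\pi$ and $\zeta' \circ \dot\iota = \iota$ is in particular a morphism as in Theorem~\ref{thm:mincover}, so $\zeta' = \zeta$ by that theorem's uniqueness clause; the pushforward identity then is not an extra constraint but an automatic consequence as just explained. So no separate uniqueness argument is needed beyond citing Theorem~\ref{thm:mincover}.

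The main obstacle I anticipate is bookkeeping rather than mathematical depth: one must be careful that the characterization of measured morphisms between Baire measured graphs which was stated under a $\sigma$-compactness (or ``pre-images of pre-compact sets are pre-compact'') proviso genuinely applies here, and that the identity of measures deduced only on $C_c(\hat Y)$ really does pin down the Radon measures $\hat\lambda_{\zeta_X(\dot x)}$ and $(\zeta_Y)_*\dot\lambda_{\dot x}$ — this uses that $\zeta_Y$ is proper, so that $(\zeta_Y)_*\dot\lambda_{\dot x}$ is again a Radon (not merely Borel) measure, and then the uniqueness part of the Riesz representation theorem. Everything else is a direct application of Theorem~\ref{thm:mincover} and the dictionary of \S\ref{measured_graphs} and \S\ref{ss:cover_top_graph}.
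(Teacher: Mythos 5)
Your proposal is correct and follows essentially the same route as the paper: the corollary is obtained by using the characterization of Baire-measurable morphisms (together with the observation that $\dot\pi\circ\dot\iota=\id_Z$ forces $\dot\iota$-preimages of pre-compact sets to be pre-compact) to place the data under the hypotheses of Theorem~\ref{thm:mincover}, and then translating the resulting continuous measured morphism back into the pushforward identity via the $r$-system/Riesz dictionary. Aside from a harmless typo (the intertwining relation should read $\dot{\bm\lambda}\circ\zeta_Y^*=\zeta_X^*\circ\hat{\bm\lambda}$, as your subsequent integral identity correctly reflects), there is nothing to add.
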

Let us see what the universal property corresponds to for strictly positively weighted topological graphs. Note that by Theorem~\ref{thm:correspondence}, any cover of a strictly positively weighted topological graph is also strictly positively weighted.
\begin{corollary}        Suppose $(\dot{Z},\dot{w})$ is a continuously weighted topological graph that covers the strictly positively weighted graph $(Z,w)$ in the sense that there is a continuous and proper morphism $\dot{\pi} \colon \dot{Z}\to Z$ and a morphism $\dot{\iota} \colon Z\to \dot{Z}$ with dense image satisfying $\dot{\pi}\circ\dot{\iota} = \text{id}_{Z}$ and
    \[\sum_{y\in r^{-1}(x) \colon \dot{\iota}_{Y}(y) = \dot{y}}w(y) = \dot{w}(\dot{y})\text{ for all }\dot{y}\in \dot{Y}\text{ and }x\in X\text{ satisfying }\dot{\iota}_{X}(x) = r(\dot{y}).\]
    Then there is a unique continuous and proper morphism $\zeta \colon \dot{Z}\to \hat{Z}$ satisfying $\pi \circ \zeta = \dot{\pi}$, $\zeta \circ\dot{\iota} = \iota$ and 
    \[\sum_{\dot{y}\in r^{-1}(\dot{x}) \colon \zeta_{\dot{Y}}(\dot{y}) = \hat{y}}\dot{w}(y) = \hat{w}(\hat{y})\text{ for all }\hat{y}\in \hat{Y}\text{ and }x\in \dot{X}\text{ satisfying } \zeta_{X}(\dot{x}) = r(\hat{y}).\]
\end{corollary}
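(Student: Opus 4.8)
The plan is to derive this statement as a direct corollary of the universal property in Theorem~\ref{thm:mincover} by translating the weighted-graph data into the language of measured topological graphs and invoking the characterization of measured graph morphisms between weighted graphs given in Section~\ref{measured_graphs}. First I would observe that a strictly positively weighted topological graph $(Z,w)$ is a particular measured topological graph $(Z,\Lambda)$ with $\Lambda(f)(x) = \sum_{y \in r^{-1}(x)} w(y) f(y)$; since $w$ is strictly positive, the $r$-operator is bounded below, so by Theorem~\ref{thm:correspondence} every cover of $(Z,w)$ is again strictly positively weighted, and in particular the minimal cover $(\hat Z,\hat{\bm\lambda})$ carries a strictly positive weight $\hat w$. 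The hypothesis on $(\dot Z,\dot w)$ says precisely that $\dot\iota \colon Z \to \dot Z$ is a graph morphism and that the displayed weight-summation identity holds; by the characterization recalled in Section~\ref{measured_graphs} (``a graph morphism between two weighted topological graphs is measured if and only if $\sum_{y\in r^{-1}(x)\colon\zeta_Y(y)=y'}w(y) = w'(y')$''), this identity is exactly the condition that $\dot\iota \colon (Z,\Lambda)\to(\dot Z,\dot{\bm\lambda})$ is a measured graph morphism. Together with continuity and properness of $\dot\pi$ and $\dot\pi \circ \dot\iota = \id_Z$, this places us exactly in the hypotheses of Theorem~\ref{thm:mincover}.

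Next I would apply Theorem~\ref{thm:mincover} to obtain the unique continuous proper morphism $\zeta \colon (\dot Z,\dot{\bm\lambda}) \to (\hat Z,\hat{\bm\lambda})$ with $\pi\circ\zeta = \dot\pi$ and $\zeta\circ\dot\iota = \iota$. It remains only to transcribe the statement ``$\zeta$ is a measured morphism of the underlying continuously measured graphs'' back into the weighted language. Since both $(\dot Z,\dot w)$ and $(\hat Z,\hat w)$ are weighted topological graphs, the same characterization from Section~\ref{measured_graphs} applies verbatim: $\zeta$ being a measured graph morphism is equivalent to
\[
  \sum_{\dot y \in r^{-1}(\dot x) \colon \zeta_{\dot Y}(\dot y) = \hat y} \dot w(\dot y) = \hat w(\hat y)
  \quad\text{for all }\hat y \in \hat Y \text{ and } \dot x \in \dot X \text{ with } \zeta_X(\dot x) = r(\hat y),
\]
which is the displayed identity in the statement. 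Uniqueness of $\zeta$ as a continuous proper morphism is inherited directly from the uniqueness clause of Theorem~\ref{thm:mincover}, since the extra weight-compatibility condition is automatic once $\zeta$ is known to be a morphism of the corresponding measured graphs.

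There is essentially no hard analytic content here; the work is bookkeeping. The one point that needs a little care — and which I would expect to be the only real obstacle — is confirming that the characterization of measured morphisms between weighted graphs stated in Section~\ref{measured_graphs} genuinely applies to the morphism $\zeta$ produced by Theorem~\ref{thm:mincover}, i.e. that $\zeta$ is a \emph{continuous} morphism of weighted graphs in the precise sense used there (not merely of the continuously measured graphs obtained by forgetting structure). This is immediate because $\hat{\bm\lambda}$, being bounded below, has maximal domain and range by the Proposition following Example~\ref{weights}, so the notions of ``continuous morphism of covers'' and ``measured morphism of weighted graphs'' coincide on $(\dot Z, \dot w)$ and $(\hat Z, \hat w)$; one simply notes that the defining relation $\bm\lambda \circ \zeta_Y^* = \zeta_X^* \circ \bm\lambda'$ unwinds, on indicator-type functions supported near a single preimage point, to the stated summation identity. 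I would phrase the proof as: observe the weighted data is measured data, invoke Theorem~\ref{thm:correspondence} to see that strict positive weighting is preserved and a weight $\hat w$ exists, invoke Theorem~\ref{thm:mincover} for existence and uniqueness of $\zeta$, and finally translate the conclusion using the Section~\ref{measured_graphs} characterization.
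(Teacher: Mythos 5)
Your proposal is correct and follows exactly the route the paper intends: the corollary is stated as a direct specialization of Theorem~\ref{thm:mincover}, using the remark preceding it (that by Theorem~\ref{thm:correspondence} covers of strictly positively weighted graphs are again strictly positively weighted) together with the characterization from Section~\ref{measured_graphs} of measured morphisms between weighted graphs as the displayed weight-summation identity. The paper gives no separate argument beyond this translation, so your bookkeeping is precisely its proof.
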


\begin{example}\label{example:local_injections}
  The range map of a strictly positively weighted topological graph is locally injective and is weighted by $w =1$ if and only if the $r$-operator satisfies the following property: 
  for every $f\in C_c(Y)$, there is a closed ideal $\mathcal{I}$ such that $f\cdot\mathcal{I}\neq 0$ and $\Lambda \colon \mathcal{I} \to \ell^{\infty}_c(X)$ is a $^*$-homomorphism. It follows from this characterization, Lemma \ref{cc}, and Lemma \ref{lem:full} that the range map of the cover associated to such a topological graph is a local homeomorphism, and is also weighted by $w =1$. Such graphs and their cover form an important example class for this paper.
\end{example}
If $(Z,w)$ is a weighted topological graph that is not strictly positively weighted, then the $r$-system from the minimal cover is not necessarily induced by a weight.
Therefore, we consider them in the category of maximally measured topological graphs. Let us investigate the universal property in this category. For $x\in X$, let $\Lambda_{x}  \coloneq  \Lambda(-)(x)$.

\begin{corollary}
Suppose $(\dot{Z},\dot{\bm{\lambda}})$ is a continuously measured topological graph that covers the maximally measured graph $(Z,\Lambda)$ in the sense that there is a continuous and proper morphism $\dot{\pi} \colon \dot{Z}\to Z$ and a morphism $\dot{\iota} \colon Z\to \dot{Z}$ with dense image such that $\dot{\pi}\circ\dot{\iota} = \text{id}_{Z}$ and $ (\dot{\iota}_{Y})_{*}\Lambda_{x}$ is an extension of $\dot{\lambda}_{\dot{\iota}_{X}(x)}$, for all $x\in X$. Then there is a unique continuous and proper morphism $\zeta \colon \dot{Z}\to \hat{Z}$ satisfying $\pi\circ\zeta = \dot{\pi}$, $\zeta\circ\dot{\iota} = \iota$ and $(\zeta_{Y})_{*}\dot{\lambda}_{\dot{x}} = \hat{\lambda}_{\zeta_{X}(\dot{x})}$ for all $\dot{x}\in \dot{X}$.
\end{corollary}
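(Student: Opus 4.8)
The plan is to reduce the statement to the universal property established in Theorem~\ref{thm:mincover}, exactly as in the Baire and (strictly positively) weighted cases treated above: one first checks that the given data exhibit $(\dot{Z},\dot{\bm{\lambda}})$ as a cover of the \emph{maximally} measured graph $(Z,\Lambda)$ in the sense of that theorem, then applies it, and finally rephrases the resulting measured morphism $\zeta$ in terms of push-forwards of the fibre measures.

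The first step is to upgrade the given graph morphism $\dot{\iota}$ to a morphism $\dot{\iota}\colon (Z,\Lambda)\to (\dot{Z},\dot{\bm{\lambda}})$ of measured topological graphs. Since $\dot{\pi}$ is continuous and $\dot{\pi}\circ\dot{\iota}=\mathrm{id}_{Z}$, for any compact $K\subseteq\dot{Y}$ we have $\dot{\iota}_{Y}^{-1}(K)\subseteq\dot{\pi}_{Y}(K)$, which is compact, and similarly on the vertex spaces; hence $\dot{\iota}_{Y}^{*}(C_{c}(\dot{Y}))\subseteq\ell^{\infty}_{c}(Y)$ and $\dot{\iota}_{X}^{*}(C_{c}(\dot{X}))\subseteq\ell^{\infty}_{c}(X)$. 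As $(Z,\Lambda)$ is maximally measured, $\mathcal{D}=\ell^{\infty}_{c}(Y)$ and $\mathcal{R}=\ell^{\infty}_{c}(X)$, so the algebra conditions $\dot{\iota}_{Y}^{*}(C_{c}(\dot{Y}))\,\mathcal{D}\subseteq\mathcal{D}$ and $\dot{\iota}_{X}^{*}(C_{c}(\dot{X}))\,\mathcal{R}\subseteq\mathcal{R}$ hold automatically, and $\Lambda(f\circ\dot{\iota}_{Y})$ is directly defined (no approximate-unit limit in a multiplier algebra is needed). For the compatibility, fix $f\in C_{c}(\dot{Y})$ and $x\in X$ and compute, with $\Lambda_{x}=\Lambda(-)(x)$ as defined just before the statement,
\[
  \Lambda(f\circ\dot{\iota}_{Y})(x)=\Lambda_{x}(f\circ\dot{\iota}_{Y})=\big((\dot{\iota}_{Y})_{*}\Lambda_{x}\big)(f)=\int f\,{\rm d}\dot{\lambda}_{\dot{\iota}_{X}(x)}=\dot{\bm{\lambda}}(f)(\dot{\iota}_{X}(x)),
\]
where the penultimate equality is the hypothesis that $(\dot{\iota}_{Y})_{*}\Lambda_{x}$ extends the Radon measure $\dot{\lambda}_{\dot{\iota}_{X}(x)}$, used only through the agreement of the two functionals on $C_{c}(\dot{Y})$. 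Thus $\Lambda\circ\dot{\iota}_{Y}^{*}=\dot{\iota}_{X}^{*}\circ\dot{\bm{\lambda}}$, so $\dot{\iota}$ is a measured morphism; it has dense image and $\dot{\pi}\circ\dot{\iota}=\mathrm{id}_{Z}$ by hypothesis, and the remaining support condition $\supp(\dot{\lambda}_{\dot{\iota}_{X}(x)})\subseteq\overline{\dot{\iota}_{Y}(r^{-1}(x))}$ from the definition of a cover follows as in the proof of Theorem~\ref{thm:mincover}: if $f\in C_{c}(\dot{Y})$ vanishes on $\overline{\dot{\iota}_{Y}(r^{-1}(x))}$, then $f\circ\dot{\iota}_{Y}$ vanishes on $r^{-1}(x)$, so $\Lambda(f\circ\dot{\iota}_{Y})(x)=0$, whence $\int f\,{\rm d}\dot{\lambda}_{\dot{\iota}_{X}(x)}=0$, and the support characterization recalled in Section~\ref{prelim} gives the containment.

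The second step is to invoke Theorem~\ref{thm:mincover} (its hypotheses --- $\dot{\pi}$ continuous and proper, $\dot{\iota}\colon(Z,\Lambda)\to(\dot{Z},\dot{\bm{\lambda}})$ a measured morphism with dense image, $\dot{\pi}\circ\dot{\iota}=\mathrm{id}_{Z}$ --- are now all verified), giving a unique continuous and proper morphism $\zeta\colon(\dot{Z},\dot{\bm{\lambda}})\to(\hat{Z},\hat{\bm{\lambda}})$ with $\pi\circ\zeta=\dot{\pi}$ and $\zeta\circ\dot{\iota}=\iota$, and then to translate the fact that $\zeta$ is a measured morphism of two \emph{continuously} measured graphs. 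Since $\dot{\bm{\lambda}}$ and $\hat{\bm{\lambda}}$ are given by $r$-systems of Radon measures, the identity $\dot{\bm{\lambda}}\circ\zeta_{Y}^{*}=\zeta_{X}^{*}\circ\hat{\bm{\lambda}}$ evaluated at $\dot{x}\in\dot{X}$ against $f\in C_{c}(\hat{Y})$ reads $\int f\,{\rm d}\big((\zeta_{Y})_{*}\dot{\lambda}_{\dot{x}}\big)=\int f\,{\rm d}\hat{\lambda}_{\zeta_{X}(\dot{x})}$; as $\zeta_{Y}$ is continuous and proper, $(\zeta_{Y})_{*}\dot{\lambda}_{\dot{x}}$ is Radon, so uniqueness in the Riesz representation theorem yields $(\zeta_{Y})_{*}\dot{\lambda}_{\dot{x}}=\hat{\lambda}_{\zeta_{X}(\dot{x})}$ for all $\dot{x}\in\dot{X}$. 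Conversely, for a continuous proper graph morphism this push-forward identity is equivalent to $\zeta$ being a measured morphism, so uniqueness of $\zeta$ with the three stated properties is precisely the uniqueness part of Theorem~\ref{thm:mincover}.

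I expect the only mildly delicate point to be the displayed computation in the first step: on the maximal domain $\Lambda_{x}$ need not come from a measure, so $(\dot{\iota}_{Y})_{*}\Lambda_{x}$ \emph{a priori} makes sense only as a functional --- it is well defined on all of $C_{c}(\dot{Y})$ precisely because properness of $\dot{\pi}$ forces $f\circ\dot{\iota}_{Y}\in\ell^{\infty}_{c}(Y)$ --- and the word \an{extension} in the hypothesis is exploited solely through its restriction to $C_{c}(\dot{Y})$. Everything else is the same bookkeeping as in the preceding corollaries, combined with an application of the Riesz representation theorem.
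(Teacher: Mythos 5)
Your proof is correct and follows exactly the route the paper intends: the paper states this corollary (like the Baire and weighted ones) as a special case of Theorem~\ref{thm:mincover} without a separate proof, and your argument supplies precisely the missing translation — checking that the extension hypothesis on $(\dot{\iota}_{Y})_{*}\Lambda_{x}$ makes $\dot{\iota}$ a measured morphism into the continuously measured graph (here using that $\mathcal{D}=\ell^{\infty}_{c}(Y)$, $\mathcal{R}=\ell^{\infty}_{c}(X)$ absorb everything and that properness of $\dot{\pi}$ keeps $f\circ\dot{\iota}_{Y}$ compactly supported), applying the universal property, and converting the measured-morphism identity for $\zeta$ into the push-forward statement via the Riesz representation theorem. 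Your remark that the word \an{extension} is exploited only through agreement on $C_{c}(\dot{Y})$ is exactly the right reading of the hypothesis.
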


\begin{remark}
The difference in these three corollaries highlights the importance of the choice of ambient domain and range $^*$-sub-algebras.
\end{remark}

\subsection{Description of the covers}
\label{ss:DescCovers}

Recall the terminology from the proof of Lemma \ref{lem:MinDef}.
Given a measured topological graph $(Z,\Lambda)$, let $X_{n} = \Spec(\mathcal{E}_{n})$ and $Y_{n} = \Spec(\mathcal{F}_{n})$. By Gelfand duality, there are maps $X_{n}\to X_{n-1}$, $Y_{n}\to Y_{n-1}$ such that $\hat{X}$ and $\hat{Y}$ are naturally identified with the projective limits $\plim_{n} X_{n}$ and $\plim_{n}Y_{n}$. Let us now describe $X_{n}$ and $Y_{n}$ in terms of $X_{n-1}$ and $Y_{n-1}$. Let $\mathcal{M}(Y_{n-1})$ be the space of Radon measures on $Y_{n-1}$, which we identify with the positive linear functionals on $C_{c}(Y_{n-1})$ equipped with the weak* topology.

Let $\iota_{n, X} \colon X\to X_{n}$ and $\iota_{n,Y} \colon Y\to Y_{n}$ be the canonical inclusions as characters of $\mathcal{E}_{n}$, $\mathcal{F}_{n}$. For $x\in X$, let $x_{n}  \coloneq  \iota_{n,X}(x)$, and $\lambda_{n-1, x_{n}}  \coloneq  \Lambda(-)(x) \colon \mathcal{F}_{n-1}\to \mathbb{C}$. 

\bprop
\label{prop:Xn=XxM}
$X_n$ is (homeomorphic to) the closure of the image of the map
\[
 X \to X \times \mathcal{M}(Y_{n-1}), \, x \ma (x, \lambda_{n-1,x_n}).
\]
\eprop
\setlength{\parindent}{0cm} \setlength{\parskip}{0cm}

\bproof
For the purpose of the proof, we call the closure of the image $\bar{X}$. We start by defining a map $\bar{X} \to X_n$: given $\bar{x} = (x,\bar{\lambda}) = \lim_{i} (x_i,\lambda_{n-1,(x_{i})_n})$, define a character $\chi \colon \cE_n \to \Cz$ by sending $f \in C_c(X)$ to $f(x) = \lim f(x_i)$ and $g = \Lambda(h) \in \Lambda(\cF_{n-1})$, for $h \in \cF_{n-1}$, to $\lim g((x_{i})_n) = \lim \lambda_{n-1,(x_i)_n}(h) = \bar{\lambda}(h)$. By construction, $\chi = \lim \ev_{\iota_{n,X}(x_i)}$, so that our character extends continuously to a character (again denoted by $\chi$) on $\mathcal{E}_n$. By construction, the map sending $\bar{x}\mapsto \chi(g)$ is continuous for any generator $g\in C_{c}(X)\cup \Lambda(\cF_{n-1})$. Therefore, our map $\bar{x}\mapsto \chi$ is continuous.
\setlength{\parindent}{0cm} \setlength{\parskip}{0.5cm}

Now we construct a map $X_n \to \bar{X}$: given $\chi \in X_n = \Spec(\mathcal{E}_n)$, the restriction $\chi \vert_{C_c(X)}$ is non-zero as $\Lambda(\cF_{n-1}) \subseteq C_c(X) \Lambda(\cF_{n-1})$. Hence $\chi \vert_{C_c(X)} = \ev_x$ for some $x \in X$. Moreover, the mapping $\chi\mapsto (x, \bar{\lambda})$, where $\bar{\lambda}  \coloneq  \chi\circ\Lambda|_{\mathcal{F}_{n-1}}\in\mathcal{M}(Y_{n-1})$ is continuous, and its image lies in $\bar{X}$ because $X$ is dense in $X_n$ and our map sends $x \in X$ to $(x,\lambda_{n-1,x_n})$.

It is easy to see these two maps are inverse to each other. 
\eproof
\setlength{\parindent}{0cm} \setlength{\parskip}{0.5cm}

As before, in the following, we use the notation $y_n \coloneq \iota_{n,Y}(y)$ for $y \in Y$.
\bprop
\label{prop:Yn=YxMxM}
$Y_n$ is (homeomorphic to) the closure of the image of the map
\[
 Y \to Y \times \mathcal{M}(Y_{n-1}) \times \mathcal{M}(Y_{n-1}), \, y \ma (y, \lambda_{r(y)_n}, \lambda_{s(y)_n}).
\]
\eprop
\setlength{\parindent}{0cm} \setlength{\parskip}{0cm}

\bproof
For the purpose of the proof, let us call the closure of the image $\bar{Y}$. 
We start by defining a map $\bar{Y} \to Y_n$: given $\bar{y} = (y,\bar{\lambda}_r,\bar{\lambda}_s) = \lim_{i} (y_i,\lambda_{r(y_{i})_{n}},\lambda_{s(y_i)_{n}})$, define a character $\chi \colon \cF_n \to \Cz$ by sending $f \in C_c(Y)$ to $f(y) = \lim_{i} f(y_i)$, $g = a r^*\Lambda(b) \in C_c(Y) r^*\Lambda(\cF_{n-1})$, for $a \in C_c(Y)$ and $b \in \cF_{n-1}$, to \[\text{lim}_{i} g((y_i)_{n}) = \text{lim}_{i} a(y_i) \lambda_{r(y_{i})_{n}}(b) = a(y) \bar{\lambda}_r(b)\] and $g' = a' s^*\Lambda(b') \in C_c(Y) s^*\Lambda(\cF_{n-1})$, for $a' \in C_c(Y)$ and $b' \in \cF_{n-1}$, to \[\text{lim}_{i} g'((y_i)_{n}) = \text{lim}_{i} a'(y_i) \lambda_{s(y_i)_{n}}(b') = a'(y) \bar{\lambda}_s(b').\] By construction, $\chi = \lim \ev_{\iota_{n,Y}(y_i)}$, so that our character extends continuously to a character (again denoted by $\chi$) on $\mathcal{F}_n$. It is easy to see our map sending $\bar{y}$ to $\chi$ is continuous.
\setlength{\parindent}{0cm} \setlength{\parskip}{0.5cm}

Now we construct a map $Y_n \to \bar{Y}$: given $\chi \in Y_n = \Spec(\mathcal{F}_n)$, the restriction $\chi \vert_{C_c(Y)}$ is non-zero as $\mathcal{F}_{n} = C_c(Y)\mathcal{F}_{n}$. Hence $\chi \vert_{C_c(Y)} = \ev_y$ for some $y \in Y $. Moreover, the composition
\[
 \mathcal{F}_{n-1} \to \mathcal{E}_n \to M(\mathcal{F}_{n}) \cong C(Y_n) \to \Cz,
\]
where $M(\mathcal{F}_n)$ is the multiplier algebra of $\mathcal{F}_n$, the first map is given by $\Lambda$, the second map is given by $r^*$, and the last map is given by $\chi$, defines $\bar{\lambda}_r \in \mathcal{M}(Y_{n-1})$. Similarly, the composition
\[
 \mathcal{F}_{n-1} \to \mathcal{E}_n \to M(\mathcal{F}_n) \cong C(Y_n) \to \Cz,
\]
where the first map is given by $\Lambda$, the second map is given by $s^*$, and the last map is given by $\chi$, defines $\bar{\lambda}_s \in \mathcal{M}(Y_{n-1})$. Define our map $Y_n \to \bar{Y}$ by setting $\chi \ma (y,\bar{\lambda}_r,\bar{\lambda}_s)$. This is continuous. The image lies in $\bar{Y}$ because $Y$ is dense in $Y_n$ and our map sends $y \in Y$ to $(y,\lambda_{r(y)_n},\lambda_{s(y)_n})$.

By construction, these two maps are inverse to each other. 
\eproof
\setlength{\parindent}{0cm} \setlength{\parskip}{0.5cm}


\subsection{Essential covers}
\label{ss:Ess}
We now define covers of topological graphs with better regularity properties and that are easier to compute.

\bdefin
Let $Z = (r,s \colon Y\to X)$ be a topological graph. We say a subset $\crX \subseteq X$ is \emph{backwards invariant} if $s(y)\in \crX$ whenever $r(y)\in \crX$ for all $y \in Y$, i.e., $s(r^{-1}(\crX)) \subseteq \crX$. Then $\crY = r^{-1}(\crX)$ and $Z|_{\crX}  \coloneq  (r,s \colon \crY \to \crX)$ defines a sub-graph of $Z$. We call such a sub-graph arising in this way \emph{invariant}. 
\edefin
Recall that a subset $\crX$ of a topological space $X$ is \emph{co-meagre} if its complement is meagre, i.e., a countable union of sets whose closure have empty interior. In case $X$ is locally compact and Hausdorff (more generally a Baire space), $\crX \subseteq X$ is co-meagre if and only if it contains a dense $G_{\delta}$ set.

\bdefin
An \emph{essential cover} of $Z$ is a continuously measured graph $(\mathcal{Z},\bm{\lambda})$ together with a proper and continuous surjection $\pi \colon \mathcal{Z}\to Z$ with the property that there is an invariant sub-graph $Z \vert_{\crX}$ of $Z$ such that $Z \vert_{\crX}$ and the restriction $\mathcal{Z} \vert_{\pi^{-1}(\crX)}$ to the (automatically invariant) set $\pi^{-1}(\crX)$ are co-meagre, and the restriction of $\pi$ gives rise to a bijection $\mathcal{Z} \vert_{\pi^{-1}(\crX)} \to Z \vert_{\crX}$.
\edefin

Our first example of essential covers comes from a restriction of the cover $(\cZ,\bm{\lambda})_Z$ of a measured topological graph $(Z,\Lambda)$ from the correspondence of Theorem~\ref{thm:correspondence}. 

\bdefin
We call a measured topological graph $(Z,\Lambda)$ \emph{essentially continuous} if there is a co-meagre invariant sub-graph $Z \vert_{\crX} \subseteq Z$ such that $\iota  \colon Z \vert_{\crX}\to\mathcal{Z}$ is continuous. In that case, let $\mathcal{Z}_{\ess} \coloneq \overline{\iota(Z \vert_{\crX})}$ (with edge space $\cY_{\ess}$ and vertex space $\cX_{\ess}$), let $\pi_{\ess} \colon \mathcal{Z}_{\ess}\to Z$ be the restriction of $\pi$ to $\cZ_{\ess}$, and let $\bm{\lambda}_{\ess} \colon C_{c}(\mathcal{Y}_{\ess})\to C_{c}(\mathcal{X}_{\ess})$ be the restriction of $\bm{\lambda}$ to $C_{c}(\mathcal{Y}_{\ess})$.
\edefin

We need the following characterisation of the continuity points of $\iota \colon Z\to\mathcal{Z}$.

\begin{proposition}\label{continuity_points}
    Suppose that $\pi \colon \cW \to W$ is a continuous, proper surjection between locally compact Hausdorff spaces, $C$ is a subset of $W$, $\iota \colon C \to \cW$ is a map satisfying $\pi\circ\iota = \text{id}_{C}$ and that $\iota(C)$ is dense in $\cW$. Then $\iota \colon C\to \cW$ is continuous at $w \in C$ if and only if $\pi^{-1}(w) = \{\iota(w)\}$.
\end{proposition}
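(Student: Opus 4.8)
The plan is to prove the two implications separately. The forward implication (if $\iota$ is continuous at $w$, then $\pi^{-1}(w)=\{\iota(w)\}$) will use only the Hausdorffness of $\cW$ together with the density of $\iota(C)$; the reverse implication is where properness of $\pi$ enters.

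For the forward direction I would argue by contradiction. Suppose $\iota$ is continuous at $w$ and there is some $v\in\pi^{-1}(w)$ with $v\neq\iota(w)$. Since $\iota(C)$ is dense in $\cW$, choose a net $(\iota(c_\alpha))$ in $\iota(C)$ with $\iota(c_\alpha)\to v$. Applying the continuous map $\pi$ gives $c_\alpha=\pi(\iota(c_\alpha))\to\pi(v)=w$ in $C$, and then continuity of $\iota$ at $w$ yields $\iota(c_\alpha)\to\iota(w)$. As $\cW$ is Hausdorff, limits of nets are unique, so $v=\iota(w)$, a contradiction; the reverse inclusion $\iota(w)\in\pi^{-1}(w)$ is immediate from $\pi\circ\iota=\id_C$.

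For the reverse direction I would first recall that a continuous proper map into a locally compact Hausdorff space is closed: given a closed $A\subseteq\cW$ and $y\in\overline{\pi(A)}$, take a compact neighbourhood $V$ of $y$; then $A\cap\pi^{-1}(V)$ is compact, hence so is $\pi(A\cap\pi^{-1}(V))$, hence it is closed, and it contains $y$ since any net in $\pi(A)$ converging to $y$ eventually lies in $V$. Now assume $\pi^{-1}(w)=\{\iota(w)\}$ and let $U$ be any open neighbourhood of $\iota(w)$ in $\cW$. Then $\pi(\cW\setminus U)$ is closed, and it misses $w$: if $w=\pi(v)$ with $v\notin U$, then $v\in\pi^{-1}(w)=\{\iota(w)\}\subseteq U$, a contradiction. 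Hence $N = W\setminus\pi(\cW\setminus U)$ is an open neighbourhood of $w$, and for every $c\in N\cap C$ we have $\pi(\iota(c))=c\notin\pi(\cW\setminus U)$, forcing $\iota(c)\in U$. Thus $\iota(N\cap C)\subseteq U$, which is exactly continuity of $\iota$ at $w$.

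I do not anticipate a serious obstacle here: both directions are short net-theoretic or neighbourhood arguments, and the only point requiring care is recognizing that the role of properness is precisely to make $\pi$ a closed map (this is where local compactness and Hausdorffness of $W$ are used) rather than any more delicate topological input.
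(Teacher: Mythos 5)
Your proof is correct and follows essentially the same route as the paper: the direction from continuity to a singleton fibre is the same density-of-$\iota(C)$ net argument (stated contrapositively in the paper), and the converse direction makes explicit the standard fact, which the paper invokes simply as ``by properness of $\pi$'', that a proper map into a locally compact Hausdorff space is closed and hence saturated neighbourhoods $\pi^{-1}(N)$ of the fibre $\pi^{-1}(w)$ can be squeezed inside any given open $U \supseteq \pi^{-1}(w)$. No gaps; you have merely written out a step the paper leaves implicit.
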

\setlength{\parindent}{0cm} \setlength{\parskip}{0cm}

\begin{proof}
    Suppose $|\pi^{-1}(w)| = 1$. By properness of $\pi$, for every open neighbourhood $\mathcal{U}\subseteq \cW$ of $\pi^{-1}(w) = \{\iota(w)\}$, there is an open neighbourhood $U\subseteq W$ of $w$ such that $\iota(U\cap C)\subseteq \pi^{-1}(U)\subseteq \mathcal{U}$. Hence, $\iota$ is continuous at $w$.
\setlength{\parindent}{0.5cm} \setlength{\parskip}{0cm}

    If $|\pi^{-1}(w)| > 1$, then let $\iota(w)\neq \omega \in \pi^{-1}(w)$. By density of $\iota(C)$, there is a net $(w_{i})$ in $C$ such that $(\iota(w_{i}))$ converges to $\omega$. Therefore, $(w_{i} = \pi(\iota(w_{i})))$ converges to $w$ but $\iota(w)\neq \omega$. Hence, $\iota$ is not continuous at $w$.
\end{proof}
\setlength{\parindent}{0cm} \setlength{\parskip}{0.5cm}

\bcor
Given a measured topological graph $(Z,\Lambda)$ which is essentially continuous,  $(\mathcal{Z}_{\ess},\bm{\lambda}_{\ess})$ is an essential cover. 
\ecor
\setlength{\parindent}{0cm} \setlength{\parskip}{0cm}

\bproof
Since $Z \vert_{\crX}$ is an invariant sub-graph and $\iota^{*}_{X}\circ \bm{\lambda} = \Lambda\circ \iota^{*}_{Y}$, $\mathcal{Z}_{\ess}$ is a closed sub-graph of $\mathcal{Z}$ such that $\supp(\lambda_{\chi})\subseteq r^{-1}(\chi)\cap \mathcal{Y}_{\ess}$, for all $\chi\in \mathcal{X}_{\ess}$. Hence the restriction $\bm{\lambda}_{\ess}$ of $\bm{\lambda}$ is an $r$-operator $C_{c}(\mathcal{Y}_{\ess})\to C_{c}(\mathcal{X}_{\ess})$. Moreover, density of $Z \vert_{\crX}$ implies that the restriction $\pi_{\ess} \colon \mathcal{Z}_{\ess}\to Z$ of $\pi$ remains a surjection. Since $\iota \colon  Z\to \mathcal{Z}$ is a section of $\pi$, Proposition~\ref{continuity_points} implies that $\iota(Z \vert_{\crX}) = \pi_{\ess}^{-1}(Z \vert_{\crX})$. This proves that $(\mathcal{Z}_{\ess}, \bm{\lambda}_{\ess})$ is an essential cover.
\eproof
\setlength{\parindent}{0cm} \setlength{\parskip}{0.5cm}

\bdefin
Given a measured topological graph $(Z,\Lambda)$ which is essentially continuous, we call $(\mathcal{Z}_{\ess},\bm{\lambda}_{\ess})$ \emph{the essential cover of} $(Z,\Lambda)$. 
\edefin

We consider an alternative construction of the essential cover through the Gelfand correspondence which explains the name and shows that the essential cover is independent of the choice of invariant sub-graph $Z|_{\crX}$.
\setlength{\parindent}{0.5cm} \setlength{\parskip}{0cm}

A topological measured graph $(Z,\Lambda)$ is essentially continuous if and only if there is a co-meagre invariant sub-graph $Z \vert_{\crX} = (r,s \colon \crY \to \crX)$ such that $f \vert_{\crX}$ is continuous for all $f \in \mathcal{R}$ and and $g \vert_{\crY}$ is continuous for all $g \in \mathcal{D}$. Given an essentially continuous measured graph $(Z,\Lambda)$, we define the \emph{essential ideals} $\mathcal{I}_{\mathcal{D}}\subseteq \mathcal{D}$, $\mathcal{I}_{\mathcal{R}}\subseteq \mathcal{R}$ to be the collection of functions that are zero on a co-meagre set. These ideals are closed by the Baire category theorem. We call the quotients $\mathcal{D}_{\ess} \coloneq  \mathcal{D}/\mathcal{I}_{\mathcal{D}}$ and $\mathcal{R}_{\ess} \coloneq  \mathcal{D}/\mathcal{I}_{\mathcal{D}}$ the \emph{essential domain} and the \emph{essential range}.

Using the Baire category theorem and continuity of functions in $\mathcal{D}$ on $\crY$, it follows that $f\in \mathcal{I}_{\mathcal{D}}$ if and only if $f \vert_{\crY} = 0$. Similarly, we obtain that $g\in\mathcal{I}_{\mathcal{R}}$ if and only if $g \vert_{\crX} = 0$. So, for $f\in\mathcal{I}_{\mathcal{D}}$, by invariance $(r^{-1}(\crX) = \crY)$, we have $\Lambda(f) \vert_{\crX} = \Lambda(f \vert_{\crY}) = 0.$ Hence, $\Lambda(\mathcal{I}_{\mathcal{D}})\subseteq \mathcal{I}_{\mathcal{R}}$. Therefore, $\Lambda$ induces a positive operator $\Lambda_{\ess} \colon \mathcal{D}_{\ess}\to \mathcal{R}_{\ess}$ with the property that $\Lambda_{\ess}(f)(x) = 0$ whenever $x\in \crX$ and $f\in\mathcal{D}_{\ess}$ satisfy $f \vert_{r^{-1}(x)} = 0$. The properties $g \circ r \vert_{\crY} = 0$ and $g \circ s \vert_{\crY} = 0$ for any $g\in\mathcal{I}_{\mathcal{R}}$ and the ideal characterization above implies the $^*$-homomorphisms $r^{*},s^{*} \colon \mathcal{R}\to M(\mathcal{D})$ descend to $^*$-homomorphisms $r^{*}, s^{*} \colon \mathcal{R}_{\ess}\to M(\mathcal{D}_{\ess})$.

Using the facts that, by Lemma~\ref{cc}, the Gelfand transforms $\hat{(-)} \colon \mathcal{D}\to C_{c}(\mathcal{Y})$ and $\hat{(-)} \colon \mathcal{R}\to C_{c}(\mathcal{X})$ are homeomorphisms (see also Theorem~\ref{thm:correspondence}, where $\cY$ and $\cX$ are defined), every closed ideal in $C_{c}(W)$ --- for $W$ a locally compact Hausdorff space --- is of the form $C_{c}(U)$ for some open subspace $U$ in $W$, and the above descriptions of the ideals $\mathcal{I}_{\mathcal{D}}$, $\mathcal{I}_{\mathcal{R}}$, it follows that $\hat{\mathcal{I}}_{\mathcal{D}} = C_{c}(\mathcal{X}\setminus \mathcal{X}_{\ess})$ and $\hat{\mathcal{I}}_{\mathcal{R}} = C_{c}(\mathcal{Y}\setminus \mathcal{Y}_{\ess})$, where $\mathcal{Y}_{\ess} =  \overline{\iota_{Y}(\crY)}$ and $\mathcal{X}_{\ess}  =  \overline{\iota_{X}(\crX)}$ as above, and we may identify $\Lambda_{\ess}$ with the restriction $\bm{\lambda}_{\ess} \colon C_{c}(\mathcal{Y}_{\ess})\to C_{c}(\mathcal{X}_{\ess})$ of $\bm{\lambda}$.

Since the construction of the ideals $\cI_\cD$ and $\cI_\cR$ does not involve a choice of invariant sub-graph $Z \vert_{\crX}$, it follows that the essential cover is independent of such a choice.
\setlength{\parindent}{0cm} \setlength{\parskip}{0cm}

\begin{remark}
Since essential covers are only defined from the essential domain and range, a morphism of covers is not necessarily uniquely determined by a morphism of the associated measured graphs.
\end{remark}
\setlength{\parindent}{0cm} \setlength{\parskip}{0.5cm}

We introduce a property of functions that is relevant for essential covers. 
\bdefin
A continuous map $f \colon Y \to X$ is called \emph{weakly open} if $f^{-1}(U)$ is dense whenever $U$ is open and dense. 
\edefin
\setlength{\parindent}{0cm} \setlength{\parskip}{0cm}

Note that this is equivalent to the statement that $\inte(f(V))\neq\emptyset$ for every open set $V\subseteq Y$ when $X$ and $Y$ are locally compact and Hausdorff. This last property is sometimes called \emph{quasi-open} in the literature (see for instance \cite{BOT06}). For Baire spaces $X$ and $Y$, $f \colon Y \to X$ is weakly open if and only if $f^{-1}(U)$ is co-meagre whenever $U$ is co-meagre.
\setlength{\parindent}{0cm} \setlength{\parskip}{0.5cm}

\begin{proposition}\label{pi_i_weakly_open}
     Suppose $\pi \colon \cW \to W$ is a continuous and proper surjection between locally compact Hausdorff spaces, $C$ is a co-meagre subset of $W$ and $\iota \colon C\to \cW$ is a continuous section (in the sense that $\pi\circ\iota = \text{id}_{C}$) such that $\iota(C)$ is dense in $\cW$. Then $\pi$ and $\iota$ are weakly open.
\end{proposition}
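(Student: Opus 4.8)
The plan is to verify the defining property of weak openness directly for each map: recall that a continuous map $f$ is weakly open exactly when $f^{-1}(U)$ is dense in its domain whenever $U$ is open and dense in its codomain. I would treat the two maps separately.

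First I would dispatch $\iota \colon C \to \cW$, which in fact needs only that $\pi$ is continuous, that $\pi\circ\iota = \id_C$, and that $\iota(C)$ is dense in $\cW$. Let $\mathcal{U} \subseteq \cW$ be open and dense, and let $V \subseteq C$ be a nonempty open set, say $V = U \cap C$ with $U$ open in $W$. Fix $c \in V$; then $\pi(\iota(c)) = c \in U$, so $\pi^{-1}(U)$ is a nonempty open subset of $\cW$. Since $\mathcal{U}$ is dense, $\pi^{-1}(U) \cap \mathcal{U}$ is nonempty and open, and density of $\iota(C)$ produces $c' \in C$ with $\iota(c') \in \pi^{-1}(U) \cap \mathcal{U}$. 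Then $c' = \pi(\iota(c')) \in U$, so $c' \in U \cap C = V$, while $\iota(c') \in \mathcal{U}$ gives $c' \in \iota^{-1}(\mathcal{U})$. Hence $\iota^{-1}(\mathcal{U})$ meets every nonempty open subset of $C$ and is therefore dense.

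Next I would treat $\pi \colon \cW \to W$. Let $U \subseteq W$ be open and dense and let $\mathcal{V} \subseteq \cW$ be nonempty open; the goal is to exhibit a point of $\mathcal{V}\cap\pi^{-1}(U)$. By density of $\iota(C)$ there is $c \in C$ with $\iota(c) \in \mathcal{V}$. Since $\iota$ is continuous and $C$ carries the subspace topology, $\iota^{-1}(\mathcal{V})$ is open in $C$ and contains $c$, so it equals $U' \cap C$ for some open $U' \subseteq W$ with $c \in U'$; in particular $\iota(U' \cap C) \subseteq \mathcal{V}$. Now $C$ is co-meagre in the Baire space $W$, hence dense, so $U' \cap C$ is dense in $U'$; as $U \cap U'$ is a nonempty open subset of $U'$, we may pick $c'' \in U' \cap C \cap U$. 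Then $\iota(c'') \in \iota(U' \cap C) \subseteq \mathcal{V}$ and $\pi(\iota(c'')) = c'' \in U$, so $\iota(c'') \in \mathcal{V}\cap\pi^{-1}(U)$. Thus $\pi^{-1}(U)$ is dense and $\pi$ is weakly open.

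I do not anticipate any serious obstacle; the only points requiring care are the bookkeeping with the subspace topology on $C$ (so that continuity of $\iota$ at a point yields a genuine open subset of $W$) together with the elementary facts that a co-meagre subset of a Baire space is dense and that the trace of a dense set on an open set is dense therein. It is worth noting that properness of $\pi$, surjectivity of $\pi$, and co-meagreness of $C$ beyond mere density are not actually used in this particular statement, although they belong to the standing setup.
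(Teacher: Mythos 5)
Your proof is correct, and it takes a genuinely more elementary route than the paper's. The paper leans on Proposition~\ref{continuity_points}: since $\iota$ is continuous at every point of $C$, properness of $\pi$ forces $\pi^{-1}(w)=\{\iota(w)\}$ for all $w\in C$, whence the set identities $\pi^{-1}(C\cap U)=\iota(C\cap U)$ and $\iota(C)=\pi^{-1}(C)$; weak openness of $\pi$ then follows from density of $\iota(C\cap U)$, and weak openness of $\iota$ from the fact that $\iota(C)=\pi^{-1}(C)$ is co-meagre in $\cW$ (using the first part) together with the Baire category theorem and surjectivity of $\pi$. You instead chase points directly through the density hypotheses: for $\pi$ you intersect $\iota^{-1}(\mathcal{V})=U'\cap C$ with the dense open $U$ and the dense set $C$, and for $\iota$ you pull a nonempty open set of $C$ back through $\pi^{-1}$ and use density of $\mathcal{U}$ and of $\iota(C)$. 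The payoff of your argument is that it is self-contained and, as you observe, uses neither properness nor surjectivity of $\pi$, and only density (rather than co-meagreness) of $C$; the payoff of the paper's argument is that the identity $\iota(C)=\pi^{-1}(C)$ it establishes en route is reused elsewhere (e.g.\ in Corollary~\ref{cor:wkly_open}). Both proofs are valid.
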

\setlength{\parindent}{0cm} \setlength{\parskip}{0cm}

\begin{proof}
If $U$ is a dense open subset of $W$, then by continuity of $\iota$ and Proposition \ref{continuity_points}, we have $\pi^{-1}(C\cap U) = \iota(C\cap U)$. By the Baire category theorem, $C\cap U$ is dense in $W$, so continuity of $\iota$ and density of $C\cap U$, $\iota(C)$ imply that $\iota(C\cap U)$ is dense in $\cW$. Therefore, $\pi^{-1}(U)$ is dense.
\setlength{\parindent}{0cm} \setlength{\parskip}{0.5cm}

Note that $\iota(C) = \pi^{-1}(C)$, and therefore $\iota(C)$ is a co-meagre subset of $\cW$. So, given a dense open set $\mathcal{U}\subseteq \cW$, $\mathcal{U}\cap \iota(C)$ is dense in $\cW$ by the Baire category theorem. Since $\pi$ is a continuous surjection, $\pi(\mathcal{U}\cap \iota(C)) = \iota^{-1}(\mathcal{U})$ is dense in $W$ and therefore dense in $C$.
\end{proof}
\setlength{\parindent}{0cm} \setlength{\parskip}{0.5cm}

Just as in the case for continuously measured topological graphs, there is a topological obstruction for a graph $Z$ to be essentially continuously measured.
\begin{corollary}\label{cor:wkly_open}
    If $(Z,\Lambda)$ is full and essentially continuously measured, where $Z = (r, s \colon Y \to X)$, then $r \colon Y\to X$ is weakly open.
\end{corollary}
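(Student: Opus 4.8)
The plan is to obtain the weak openness of $r$ by transporting it through the essential cover. Since $(Z,\Lambda)$ is essentially continuous, fix a co-meagre invariant sub-graph $Z\vert_{\crX}$ witnessing this (so $\crY = r^{-1}(\crX)$), and form the essential cover $(\mathcal{Z}_{\ess},\bm{\lambda}_{\ess})$, writing $\mathcal{Z}_{\ess} = (\mfr_{\ess},\mfs_{\ess}\colon\mathcal{Y}_{\ess}\to\mathcal{X}_{\ess})$ for its range and source maps and $\pi_{\ess} = (\pi_{\ess,Y},\pi_{\ess,X})$, $\iota = (\iota_Y,\iota_X)$ for the structure maps; by the corollary asserting that $(\mathcal{Z}_{\ess},\bm{\lambda}_{\ess})$ is an essential cover, $\pi_{\ess}$ is proper, continuous and surjective, $\iota\colon Z\vert_{\crX}\to\mathcal{Z}_{\ess}$ is continuous with dense image, $\pi_{\ess}\circ\iota = \id$, and $\iota(Z\vert_{\crX}) = \pi_{\ess}^{-1}(Z\vert_{\crX})$. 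Two standing facts are worth recording. First, since $\crX$ is co-meagre in the locally compact Hausdorff space $X$, Proposition~\ref{pi_i_weakly_open} applies to $\pi_{\ess,X}$ and shows $\pi_{\ess,X}$ is weakly open; this is the only input needed from the vertex side. Second, as a continuous $\mfr_{\ess}$-operator $C_c(\mathcal{Y}_{\ess})\to C_c(\mathcal{X}_{\ess})$, $\bm{\lambda}_{\ess}$ is induced by an $\mfr_{\ess}$-system $(\lambda_\chi)_{\chi\in\mathcal{X}_{\ess}}$ with $\supp(\lambda_\chi)\subseteq\mfr_{\ess}^{-1}(\chi)$ (and, as in the proof that $\mathcal{Z}_{\ess}$ is a cover, $\supp(\lambda_\chi)\subseteq\mathcal{Y}_{\ess}$).

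The crux is the claim that for every $y\in\crY$ one has $\iota_Y(y)\in\supp\big(\lambda_{\iota_X(r(y))}\big)$; here $r(y)\in\crX$ and $\mfr_{\ess}(\iota_Y(y)) = \iota_X(r(y))$ since $\iota$ is a graph morphism. To prove it, by the characterisation of the support recalled in the preliminaries it suffices, given $g\in C_c(\mathcal{Y}_{\ess})$ with $0\le g\le 1$ and $g(\iota_Y(y))>0$, to show $\int g\,{\rm d}\lambda_{\iota_X(r(y))}>0$. As $y\in\crY$, the section $\iota_Y$ is continuous at $y$, so Proposition~\ref{continuity_points} gives $\pi_{\ess,Y}^{-1}(y)=\{\iota_Y(y)\}$; since $\pi_{\ess,Y}$ is proper, hence closed, the set $N\defeq Y\setminus\pi_{\ess,Y}\big(\mathcal{Y}_{\ess}\setminus\supp^\circ(g)\big)$ is an open neighbourhood of $y$ with $\pi_{\ess,Y}^{-1}(N)\subseteq\supp^\circ(g)$. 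Pick $f\in C_c(Y)$ with $0\le f\le 1$, $f(y)>0$ and $\supp(f)\subseteq N$. Then $f\circ\pi_{\ess,Y}\in C_c(\mathcal{Y}_{\ess})$ has compact support contained in $\pi_{\ess,Y}^{-1}(N)\subseteq\supp^\circ(g)$, so $g$ is bounded below by some $\delta>0$ on that compact set and hence $g\ge\delta\,(f\circ\pi_{\ess,Y})$ pointwise (using $f\le 1$); therefore $\int g\,{\rm d}\lambda_{\iota_X(r(y))}\ge\delta\,\bm{\lambda}_{\ess}(f\circ\pi_{\ess,Y})(\iota_X(r(y)))$. Finally, unwinding the correspondence of Theorem~\ref{thm:correspondence} together with the construction of the essential cover, $\bm{\lambda}_{\ess}(f\circ\pi_{\ess,Y})$ is the restriction to $\mathcal{X}_{\ess}$ of the Gelfand transform of $\Lambda(f)$, so that $\bm{\lambda}_{\ess}(f\circ\pi_{\ess,Y})(\iota_X(r(y))) = \Lambda(f)(r(y))$, which is strictly positive because $\Lambda$ is full and $f(y)>0$. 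This proves the claim.

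With the claim available, I would finish the proof as follows. By the remark following the definition of weakly open it suffices to show $\inte(r(V))\neq\emptyset$ for an arbitrary non-empty open $V\subseteq Y$. Since $\crY$ is co-meagre, hence dense, choose $y\in V\cap\crY$; then $\mathcal{V}\defeq\pi_{\ess,Y}^{-1}(V)$ is a non-empty open subset of $\mathcal{Y}_{\ess}$ containing $\iota_Y(y)$, and we pick $g\in C_c(\mathcal{Y}_{\ess})$ with $\supp^\circ(g)\subseteq\mathcal{V}$ and $g(\iota_Y(y))>0$. The claim yields $\bm{\lambda}_{\ess}(g)(\mfr_{\ess}(\iota_Y(y))) = \int g\,{\rm d}\lambda_{\mfr_{\ess}(\iota_Y(y))}>0$, so $\mathcal{W}\defeq\supp^\circ(\bm{\lambda}_{\ess}(g))$ is a non-empty open subset of $\mathcal{X}_{\ess}$. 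On the other hand, since $\bm{\lambda}_{\ess}$ is an $\mfr_{\ess}$-operator, $\mathcal{W}\subseteq\mfr_{\ess}(\supp^\circ(g))\subseteq\mfr_{\ess}(\mathcal{V})$, so by $\pi_{\ess,X}\circ\mfr_{\ess} = r\circ\pi_{\ess,Y}$ and surjectivity of $\pi_{\ess,Y}$,
\[
 \pi_{\ess,X}(\mathcal{W})\subseteq\pi_{\ess,X}\big(\mfr_{\ess}(\pi_{\ess,Y}^{-1}(V))\big) = r\big(\pi_{\ess,Y}(\pi_{\ess,Y}^{-1}(V))\big) = r(V).
\]
Since $\pi_{\ess,X}$ is weakly open and $\mathcal{W}$ is non-empty and open, $\inte(\pi_{\ess,X}(\mathcal{W}))\neq\emptyset$, and therefore $\inte(r(V))\neq\emptyset$.

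The main obstacle is the claim $\iota_Y(y)\in\supp(\lambda_{\mfr_{\ess}(\iota_Y(y))})$. One cannot expect $\bm{\lambda}_{\ess}$ to be full outright --- fullness of $\Lambda$ is only a statement about $C_c(Y)$, and at a general point of $\mathcal{Y}_{\ess}$ transferring it to the cover runs into a boundary issue --- so the proof must exploit that precisely at the continuity points $\iota_Y(y)$, $y\in\crY$, the fibre of $\pi_{\ess,Y}$ is a single point, which makes the sets $\pi_{\ess,Y}^{-1}(N)$ (for $N$ a neighbourhood of $y$) a neighbourhood basis of $\iota_Y(y)$ and thereby reduces positivity of $\bm{\lambda}_{\ess}(g)$ to positivity of $\Lambda(f)$ for an honest $f\in C_c(Y)$. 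Everything else is bookkeeping with Theorem~\ref{thm:correspondence} and with the structure maps of the essential cover.
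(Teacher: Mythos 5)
Your proof is correct and follows the same basic route as the paper: pass to the essential cover, use the singleton-fibre characterisation of continuity points (Proposition~\ref{continuity_points}) to identify $\hat{r}^{-1}(\iota_X(x))$ with $\iota_Y(r^{-1}(x))$ for $x\in\crX$, transfer fullness of $\Lambda$ to positivity of the measures $\lambda_{\iota_X(x)}$ at the embedded points, and finish with weak openness of the projection from Proposition~\ref{pi_i_weakly_open}. Two points of difference are worth recording. First, the paper compresses the fullness transfer into one clause (``by fullness of $\Lambda$, it follows that $(\bm{\lambda}_{\ess})_{\chi}$ is faithful\dots''), whereas you spell out the genuine issue --- fullness only quantifies over $f\in C_c(Y)$, and $g\circ\iota_Y$ for $g\in C_c(\cY_{\ess})$ need not be continuous --- and resolve it with the domination $g\geq\delta\,(f\circ\pi_{\ess,Y})$ obtained from properness of $\pi_{\ess,Y}$ and the singleton fibre at $\iota_Y(y)$; this is exactly the right argument and fills a real gap in the paper's exposition. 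Second, for the endgame the paper factors $r\vert_{\crY}$ as $\pi_{\ess,X}\circ\mfr_{\ess}\circ\iota_Y\vert_{\crY}$, establishes weak openness of each factor separately, and then invokes the Baire category theorem to pass from $r\colon\crY\to\crX$ to $r\colon Y\to X$; your direct open-set chase ($V\mapsto\pi_{\ess,Y}^{-1}(V)\mapsto\supp^\circ(\bm{\lambda}_{\ess}(g))\mapsto\pi_{\ess,X}(\cdot)\subseteq r(V)$) needs only weak openness of $\pi_{\ess,X}$ and lands immediately in $\inte(r(V))\neq\emptyset$, which is slightly leaner bookkeeping at the cost of being less reusable (the paper's intermediate conclusion that $\mfr_{\ess}\colon\cY_{\ess}\to\cX_{\ess}$ is weakly open is of independent interest).
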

\setlength{\parindent}{0cm} \setlength{\parskip}{0cm}

\begin{proof}
Let $(\mathcal{Z}_{\ess},\bm{\lambda}_{\ess})$ be the associated essential cover and $\iota \colon Z \vert_{\crX}\to \mathcal{Z}_{\ess}$ a continuous section defined on the invariant co-meagre sub-graph $Z \vert_{\crX}$.
Then, by Proposition \ref{continuity_points} and invariance of $Z \vert_{\crX}$, for all $x\in \crX$, we have $r^{-1}(\iota_{X}(x)) = r^{-1}(\pi^{-1}(x)) = \pi^{-1}(r^{-1}(x)) = \iota_{Y}(r^{-1}(x))$. 
Hence, by fullness of $\Lambda$, it follows that $(\bm{\lambda}_{\ess})_{\chi} \colon C_{c}(r^{-1}(\chi)) \to \mathbb{C}$ is faithful for all $\chi$ in the co-meagre set $\iota_{X}(\crX)$. It then follows by continuity of $\bm{\lambda}_{\ess}$ and an argument similar to Lemma~\ref{lem:full} (using Lemma~\ref{full}) that $r \colon \mathcal{Y}_{\ess}\to \mathcal{X}_{\ess}$ satisfies $\emptyset \neq r(\mathcal{U}\cap \iota_{Y}(\crY))\subseteq \inte(r(\mathcal{U}))$ for $\mathcal{U}$ of the form $\mathcal{U} = \menge{y \in \cY_\ess}{f(y) > 0}$, where $0 \leq f\in C_{c}(\mathcal{Y}_{\ess})$. Hence $r \colon \mathcal{Y}_{\ess}\to \mathcal{X}_{\ess}$ is weakly open. Since $r \colon \crY \to \crX$ can be written as $(\pi_{X})_{\ess}\circ r\circ \iota_{Y} \vert_{\crY}$, it follows from Proposition \ref{pi_i_weakly_open} that $r \colon \crY \to \crX$ is weakly open and therefore (by the Baire category theorem) so is $r \colon Y\to X$.
\end{proof}
\setlength{\parindent}{0cm} \setlength{\parskip}{0.5cm}

\subsection{Minimal essential cover}
\label{ss:min_ess}

Given a measured topological graph $(Z,\Lambda)$, when is its minimal restriction $(Z,\Lambda_{\text{min}})$ essentially continuous?
Let us provide a sufficient condition called post-critical sparsity.

Given a measured topological graph $(Z,\Lambda)$, we consider the set of critial vertices 
\[C_{\Lambda} \coloneq \menge{x\in X}{\Lambda(f) \text{ is not continuous at } x, \text{ for some }f\in C_{c}(Y)}.\]
We also consider the collection of all forward orbits of $C_{\Lambda}$, which is the set
\[
P_{\Lambda} \coloneq \menge{x \in X}{\exists \ y_{n}, \dotsc, y_{1} \in Y, \ n\in\mathbb{N} \text{ with } s(y_{i+1}) = r(y_{i}) \ \forall \ 1 \leq i < n \text{ and } r(y_{n}) = x, \ s(y_{1}) \in C_{\Lambda}}.\]
We call $x\in P_{\Lambda}$ a \emph{post-critical vertex} for $(Z,\Lambda)$. 
Notice that $r(s^{-1}(P_{\Lambda}))\subseteq P_{\Lambda}$, so the set $X\setminus P_{\Lambda}$ satisfies $s(r^{-1}(X\setminus P_{\Lambda}))\subseteq X\setminus P_{\Lambda}$.
It follows that $R_{\Lambda} \coloneq (r,s \colon Y\setminus r^{-1}(P_{\Lambda})\to X\setminus P_{\Lambda})$ is an invariant sub-graph of $Z$. We will call $R_{\Lambda}$ the \emph{regular} sub-graph of $(Z,\Lambda)$. The following proposition justifies the name.

\begin{proposition}\label{continuous_on_regular}
Let $(Z,\Lambda)$ be a measured topological graph and $(\mathcal{Z},\lambda)$ its associated cover. Then $\iota \colon Z\to\mathcal{Z}$ is continuous at $z\in R_{\Lambda}$.
\end{proposition}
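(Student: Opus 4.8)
The plan is to reduce the statement to one regularity lemma about transfer operators, which is then fed into the recursive description of $\mathcal{D}_{\min}=\overline{\bigcup_n\mathcal{F}_n}$ and $\mathcal{R}_{\min}=\overline{\bigcup_n\mathcal{E}_n}$ from Lemma~\ref{lem:MinDef}. Here $(\mathcal{Z},\bm{\lambda})$ is the cover of the minimal restriction $(Z,\Lambda_{\min})$ (the case relevant for the minimal essential cover), so that $\mathcal{Y}=\Spec(\mathcal{D}_{\min})$, $\mathcal{X}=\Spec(\mathcal{R}_{\min})$, while $C_\Lambda$, $P_\Lambda$ and $R_\Lambda$ depend only on $\Lambda\vert_{C_c(Y)}$ and are unchanged. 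Since $\Spec(\mathcal{D}_{\min})$ and $\Spec(\mathcal{R}_{\min})$ carry the topology of pointwise convergence and $\iota_Y$, $\iota_X$ are the evaluation maps $y\mapsto\ev_y$, $x\mapsto\ev_x$, the map $\iota_Y$ is continuous at $y\in Y$ if and only if every $g\in\mathcal{D}_{\min}$, regarded via Lemma~\ref{cc} as an element of $\ell^\infty_c(Y)$, is continuous at $y$; likewise $\iota_X$ is continuous at $x$ iff every $h\in\mathcal{R}_{\min}$ is continuous at $x$. As $R_\Lambda=(r,s\colon Y\setminus r^{-1}(P_\Lambda)\to X\setminus P_\Lambda)$, it thus suffices to show that every $h\in\mathcal{R}_{\min}$ is continuous at every $x\notin P_\Lambda$ and that every $g\in\mathcal{D}_{\min}$ is continuous at every $y$ with $r(y)\notin P_\Lambda$. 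Continuity at a fixed point is preserved by products, sums, conjugation and uniform (hence inductive-limit) limits, so it is enough to check these claims on the generators of $\mathcal{E}_n$ and $\mathcal{F}_n$; for that I would prove by induction on $n$ the statement $(\ast)_n$: \emph{$\Lambda(g)$ is continuous at every $x\notin P_\Lambda$, for every $g\in\mathcal{F}_n$.}

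The heart of the argument is the following sub-lemma: if $x\notin C_\Lambda$ and $g\in\mathcal{D}$ is continuous at every point of $r^{-1}(x)$, then $\Lambda(g)$ is continuous at $x$. To prove it, fix $\varepsilon>0$ and first produce $g_c\in C_c(Y)$ with $|g-g_c|<\varepsilon$ on $r^{-1}(x)$: cover the compact set $\supp(g)\cap r^{-1}(x)$ by finitely many precompact open sets $V_1,\dots,V_m$ on each of which $g$ differs from a constant $g(y_j)$ (with $y_j\in V_j\cap\supp(g)\cap r^{-1}(x)$) by less than $\varepsilon$; note that the rest of $r^{-1}(x)$ lies in the open set $Y\setminus\supp(g)$; take a partition of unity $\rho_1,\dots,\rho_m\in C_c(Y)$ subordinate to the $V_j$ with $\sum_j\rho_j\equiv1$ near $\supp(g)\cap r^{-1}(x)$; and set $g_c\coloneq\sum_j g(y_j)\rho_j\in C_c(Y)$. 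A convexity estimate then gives $|g-g_c|<\varepsilon$ on all of $r^{-1}(x)$. Since $g-g_c$ is continuous at each point of $r^{-1}(x)$, the compact set $Q\coloneq\overline{\{|g-g_c|\ge\varepsilon\}}\subseteq\supp(g-g_c)$ is disjoint from $r^{-1}(x)$; choose $\phi_0\in C_c(X)$ with $0\le\phi_0\le1$, $\phi_0\equiv1$ on $r(Q)$ and $\phi_0\equiv0$ on an open neighbourhood $O$ of $x$, and $\psi\in C_c(Y)$ with $0\le\psi\le1$ and $\psi\equiv1$ on $\supp(g-g_c)$. One checks the pointwise inequality $|g-g_c|\le\varepsilon\psi+C\,\psi\,(\phi_0\circ r)$ on $Y$, where $C=\Vert g-g_c\Vert$. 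Using positivity of $\Lambda$, the transfer identity \eqref{transfer_equation} in the form $\Lambda(\psi(\phi_0\circ r))=\Lambda(\psi)\phi_0$ (which vanishes on $O$), and the bounded-above bound $\Vert\Lambda(\psi)\Vert\le M$, one obtains $|\Lambda(g)-\Lambda(g_c)|\le 2\varepsilon M$ on $O$. Since $\Lambda(g_c)$ is continuous at $x$ (because $x\notin C_\Lambda$) and $\varepsilon$ was arbitrary, $\Lambda(g)$ is continuous at $x$.

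With the sub-lemma in hand, the induction is routine. For $(\ast)_0$: if $x\notin P_\Lambda$ then $x\notin C_\Lambda$ (the forward orbit of a point contains the point, so $C_\Lambda\subseteq P_\Lambda$), hence $\Lambda(g)$ is continuous at $x$ for $g\in\mathcal{F}_0=C_c(Y)$ by definition of $C_\Lambda$. Assume $(\ast)_{n-1}$. I would first show that every $g\in\mathcal{F}_n$ is continuous at every point $y$ of $r^{-1}(x)$ when $x\notin P_\Lambda$: on the generators, functions in $C_c(Y)$ are continuous everywhere; $a\cdot(\Lambda(b)\circ r)$ with $a\in C_c(Y)$, $b\in\mathcal{F}_{n-1}$ is continuous at $y$ because $\Lambda(b)$ is continuous at $r(y)=x\notin P_\Lambda$ by $(\ast)_{n-1}$; and $a\cdot(\Lambda(b)\circ s)$ with $a\in C_c(Y)$, $b\in\mathcal{F}_{n-1}$ is continuous at $y$ because $s(y)\notin P_\Lambda$ (using $s(r^{-1}(X\setminus P_\Lambda))\subseteq X\setminus P_\Lambda$ from the text) and $\Lambda(b)$ is continuous at $s(y)$ by $(\ast)_{n-1}$; continuity at the fixed point $y$ then passes to the generated closed $^*$-algebra $\mathcal{F}_n$. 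Now the sub-lemma (applicable since $x\notin C_\Lambda$) gives $\Lambda(g)$ continuous at $x$ for all $g\in\mathcal{F}_n$, which is $(\ast)_n$. Finally, granting $(\ast)_n$ for all $n$: the generators of $\mathcal{E}_n$ ($n\ge1$) lie in $C_c(X)\cup\Lambda(\mathcal{F}_{n-1})$, hence are continuous at every $x\notin P_\Lambda$, so every $h\in\mathcal{R}_{\min}$ is; and the displayed claim about $\mathcal{F}_n$ (applied with $x=r(y)$) shows every $g\in\mathcal{D}_{\min}$ is continuous at every $y$ with $r(y)\notin P_\Lambda$. Hence $\iota$ is continuous at every $z\in R_\Lambda$.

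I expect the sub-lemma of the second paragraph to be the only real obstacle, and within it the construction of $g_c$ together with the domination $|g-g_c|\le\varepsilon\psi+C\psi(\phi_0\circ r)$: one must work around the fact that $r^{-1}(x)$ need not be compact (so $g$ restricted there is merely continuous with compact support), and must ensure the ``bad set'' $\{|g-g_c|\ge\varepsilon\}$ stays away from $r^{-1}(x)$, so that its image under $r$ misses a neighbourhood of $x$ and the transfer identity kills the corresponding term. Everything else is bookkeeping with the recursive definition of $\mathcal{D}_{\min}$, $\mathcal{R}_{\min}$ and the identification of characters of $C_c(W)$ with point evaluations.
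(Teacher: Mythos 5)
Your proposal is correct and follows the paper's own strategy: the same reduction of continuity of $\iota$ at a point to continuity at that point of every element of $\mathcal{D}_{\min}$ and $\mathcal{R}_{\min}$, the same key sub-lemma (this is exactly the paper's Lemma~\ref{continuity_criterion}), and the same induction on the algebras $\mathcal{F}_n$, $\mathcal{E}_n$ using the invariance $s(r^{-1}(X\setminus P_\Lambda))\subseteq X\setminus P_\Lambda$. The only deviation is your proof of the sub-lemma, which replaces the paper's Tietze extension of $g|_{r^{-1}(x)}$ by a partition-of-unity approximant $g_c$; this works, provided you fix in advance a compact set containing $\supp(g)$ in its interior and choose all the $V_j$ (hence $\supp(g_c)$ and $\psi$) inside it, so that the constant $M=\Vert\Lambda(\psi)\Vert$ is independent of $\varepsilon$ and the error bound $2\varepsilon M$ really does tend to $0$.
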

\setlength{\parindent}{0cm} \setlength{\parskip}{0cm}

Note that the conclusion we obtain is actually stronger than continuity of the restriction of $\iota$ to $R_{\Lambda}$, viewed as a map $R_{\Lambda} \to \cZ$. For the proof, we show that the map $\iota_{X}$ is continuous at $x\in X$ if and only if every $f\in\mathcal{R}$ is continuous at $x$, and a similar statement is true for $\iota_{Y}$. The following lemma is the key step.

\begin{lemma}\label{continuity_criterion}
    If $x\in X$ is not a critical vertex and $f\in\mathcal{D}$ is continuous at every point $y\in r^{-1}(x)$, then $\Lambda(f)$ is continuous at $x$.
\end{lemma}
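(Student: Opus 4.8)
The goal is to show that if $x \in X \setminus C_\Lambda$ (so $x$ is not a critical vertex, meaning every $g \in C_c(Y)$ has $\Lambda(g)$ continuous at $x$), and $f \in \mathcal{D}$ is continuous at every point of the fibre $r^{-1}(x)$, then $\Lambda(f)$ is continuous at $x$. The plan is to reduce from a general $f \in \mathcal{D}$ to functions in $C_c(Y)$, for which the hypothesis that $x \notin C_\Lambda$ applies directly. The key structural fact available is the bounded-above condition (BA): for every compact $K \subseteq Y$ there is $M_K > 0$ with $\norm{\Lambda(g)} \le M_K \norm{g}$ for positive $g \in \mathcal{D}$ supported in $K$. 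This lets us control tails by $\varepsilon$-approximation.

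\emph{Step 1 (localise the fibre).} Fix a compact neighbourhood $L$ of $x$ in $X$, so that $K \coloneq r^{-1}(L) \cap \supp(f)$ is compact and contains $r^{-1}(x) \cap \supp(f)$. Let $M = M_K$ be the constant from (BA) for this compact set (enlarging $L$ to a compact set if necessary). All functions that appear below will be supported in a fixed compact set near $K$, so (BA) applies uniformly.

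\emph{Step 2 (approximate $f$ near the fibre by a continuous function).} Since $f$ is continuous at each $y \in r^{-1}(x)$ and $r^{-1}(x) \cap \supp(f)$ is compact, given $\varepsilon > 0$ one can cover this set by finitely many open sets on each of which $f$ varies by less than $\varepsilon$; using a partition of unity (or Urysohn functions) subordinate to this cover, build $h \in C_c(Y)$ with $\abs{f(y) - h(y)} < \varepsilon$ for all $y$ in an open neighbourhood $V$ of $r^{-1}(x) \cap \supp(f)$, and with $\supp(h)$ contained in our fixed compact set. Then $f - h$ is a function in $\mathcal{D}$ which is (uniformly) small on $V$; more precisely, writing $f - h$ as a combination of its bounded modulus, one has $\abs{f - h} \le \varepsilon + \psi$ where $\psi \in C_c(Y)$ is positive, bounded by $2\norm{f}$, and vanishes on the smaller neighbourhood $V' \subseteq V$ of the fibre, chosen so that $r^{-1}(U) \subseteq V'$ for some open $U \ni x$ (here I use compactness of $r^{-1}(x) \cap \supp(f)$ together with properness arguments, exactly as in the proof of Lemma~\ref{charecterization}).

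\emph{Step 3 (estimate $\Lambda(f)$ on $U$).} On the neighbourhood $U$ of $x$, $\Lambda(\psi)$ vanishes at $x$ (since $\psi|_{r^{-1}(x)} = 0$) and is continuous there — this is where $x \notin C_\Lambda$ is used, as $\psi \in C_c(Y)$ — so after shrinking $U$ we may assume $\abs{\Lambda(\psi)} < \varepsilon$ on $U$. Combining, for $x' \in U$:
\[
  \abs{\Lambda(f)(x') - \Lambda(h)(x')} = \abs{\Lambda(f-h)(x')} \le \Lambda(\abs{f-h})(x') \le \varepsilon \Lambda(\chi)(x') + \Lambda(\psi)(x'),
\]
where $\chi \in C_c(Y)$ is positive with $\chi \equiv 1$ on $\supp(f) \cup \supp(h)$ near $L$; by (BA), $\Lambda(\chi)(x') \le M' \coloneq \norm{\Lambda(\chi)}$ is bounded, and $\abs{\Lambda(\psi)(x')} < \varepsilon$. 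Hence $\abs{\Lambda(f)(x') - \Lambda(h)(x')} \le (M' + 1)\varepsilon$ for all $x' \in U$. Since $h \in C_c(Y)$ and $x \notin C_\Lambda$, $\Lambda(h)$ is continuous at $x$, so $\abs{\Lambda(h)(x') - \Lambda(h)(x)} < \varepsilon$ after further shrinking $U$. A triangle-inequality argument then bounds $\abs{\Lambda(f)(x') - \Lambda(f)(x)}$ by $(2M' + 3)\varepsilon$ on $U$, and letting $\varepsilon \to 0$ gives continuity of $\Lambda(f)$ at $x$.

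\emph{Main obstacle.} The delicate point is Step 2: producing, from pointwise continuity of $f$ only at the points of the fibre, a global decomposition $\abs{f - h} \le \varepsilon \chi + \psi$ with $\psi \in C_c(Y)$ positive and \emph{vanishing on a saturated neighbourhood $r^{-1}(U)$ of the fibre}. This requires combining the local continuity control near the (compact) fibre with a properness/compactness argument to pass from a neighbourhood $V$ of $r^{-1}(x)$ in $Y$ to one of the form $r^{-1}(U)$ — the same maneuver used in the proof of Lemma~\ref{charecterization}. Once this decomposition is in hand, the rest is a routine $\varepsilon$-chase using (BA) and the single hypothesis $x \notin C_\Lambda$ applied to the two continuous functions $h$ and $\psi$.
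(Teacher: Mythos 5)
Your proof is correct and follows essentially the same strategy as the paper's: approximate $f$ by a function in $C_c(Y)$ agreeing with $f$ on (a neighbourhood of) the fibre $r^{-1}(x)$, dominate the error by auxiliary positive functions in $C_c(Y)$ vanishing on $r^{-1}(x)$, and invoke positivity of $\Lambda$ together with $x \notin C_\Lambda$; the paper streamlines your $\varepsilon$-chase by using the Tietze extension theorem to get \emph{exact} agreement on the fibre and the fibrewise supremum $g(\tilde{x}) = \sup_{\tilde{y}\in r^{-1}(\tilde{x})}|f(\tilde{y})-\tilde{f}(\tilde{y})|$, which is continuous at $x$ with value $0$, so that $|f-\tilde{f}|\leq (g\circ r)\phi$ does all the estimating in one line. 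One small remark: the step you single out as the main obstacle (arranging that $\psi$ vanish on a saturated neighbourhood $r^{-1}(U)$) is unnecessary --- it suffices that $\psi$ vanish on $r^{-1}(x)$ itself, since then $\Lambda(\psi)(x)=0$ and continuity of $\Lambda(\psi)$ at $x$ (which is exactly the hypothesis $x\notin C_\Lambda$ applied to $\psi\in C_c(Y)$) already forces $\Lambda(\psi)<\varepsilon$ on a neighbourhood of $x$.
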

\begin{proof}
    By the Tietze extension theorem and the hypothesis, we may choose $ \tilde{f}\in C_{c}(Y)$ such that $f \vert_{r^{-1}(x)} = \tilde{f} \vert_{r^{-1}(x)}$. Let $g(\tilde{x}) \coloneq \sup_{\tilde{y}\in r^{-1}(\ti{x})}|f(\tilde{y}) - \tilde{f}(\tilde{y})|$. By continuity at points $y\in r^{-1}(x)$ and compactness of the supports of $\tilde{f}$ and $f$, we have that $g$ is continuous at $x$. Let $0\leq \phi\in C_{c}(Y)$ be such that $|f-\tilde{f}|\leq (g\circ r)\phi$. Positivity of $\Lambda$ implies $\Lambda(|f-\tilde{f}|)\leq g\Lambda(\phi)$. So, by continuity of $g$ and $\Lambda(\phi)$ at $x$, it follows that $\Lambda(|f-\tilde{f}|)$ is continuous at $x$. In addition, our choice of $\ti{f}$ ensures that $\Lambda(|f-\tilde{f}|)(x) = 0$. Therefore, continuity of $\Lambda(\tilde{f})$ at $x$ and $|\Lambda(f - \tilde{f})|\leq \Lambda(|f - \tilde{f}|)$ imply that $\Lambda(f)$ is continuous at $x$.
\end{proof}

\begin{proof}[Proof of Proposition \ref{continuous_on_regular}]
The argument is by induction on the $^*$-sub-algebras $\mathcal{E}_{n}$, $\mathcal{F}_{n}$ appearing in Section \ref{measured_graphs}, using Lemma \ref{continuity_criterion} and the fact that $s^{-1}(P_{\Lambda})\subseteq r^{-1}(P_{\Lambda})$.
\end{proof}
\setlength{\parindent}{0cm} \setlength{\parskip}{0.5cm}

\bdefin
We call a measured topological graph $(Z,\Lambda)$ \emph{post-critically sparse} if $P_{\Lambda}$ and $r^{-1}(P_{\Lambda})$ are contained in meagre sets. 
\edefin
\setlength{\parindent}{0cm} \setlength{\parskip}{0cm}

If $(Z,\Lambda)$ is post-critically sparse, then Proposition~\ref{continuous_on_regular} implies that $(Z,\Lambda_{\text{min}})$ is essentially continuous, with continuous section $\iota \colon R_{\Lambda}\to \mathcal{Z}$ of the minimal cover. 
\bdefin
Given a measured topological graph $(Z,\Lambda)$ which is post-critically sparse, we define $\tilde{Z} \coloneq  \mathcal{Z}_{\ess}$, $\tilde{\bm{\lambda}} \coloneq  \bm{\lambda}_{\ess}$ and call the continuously measured graph $(\tilde{Z},\tilde{\bm{\lambda}})$ \emph{the minimal essential cover of} $(Z,\Lambda)$. We write $\ti{\pi}$ for the canonical projection $\tilde{Z} \to Z$.
\edefin
\setlength{\parindent}{0cm} \setlength{\parskip}{0.5cm}

Below, we describe the minimality property of $(\tilde{Z},\tilde{\bm{\lambda}})$.
If $\Lambda$ is an $r$-operator, we let $\lambda_{x}$ denote the restriction of the linear functional $\Lambda(-)(x)$ to $C_{c}(Y)$. 
Note that $\ti{\pi}$ satisfies $(\tilde{\pi}_{\tilde{Y}})_{*}(\tilde{\lambda}_{x}) = \lambda_{\tilde{\pi}_{\tilde{X}}(x)}$ on a dense subset of $x\in\tilde{X}$.

\begin{theorem}\label{universal}
    Let $(Z,\Lambda)$ be a post-critically sparse measured topological graph. Suppose that $(\dot{Z},\dot{\Lambda})$ along with $\dot{\pi} \colon \dot{Z}\to Z$ is an essential cover of $Z$ such that $(\dot{\pi}_{\dot{Y}})_{*}(\dot{\lambda}_{x}) = \lambda_{\dot{\pi}_{\dot{X}}(x)}$ for all $x$ in some dense subset of $\dot{X}$. Then there is a unique continuous proper morphism $\zeta \colon (\dot{Z},\dot{\Lambda})\to (\tilde{Z},\tilde{\Lambda})$ such that $\tilde{\pi} \circ \zeta = \dot{\pi}$.
\end{theorem}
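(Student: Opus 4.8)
The plan is to reduce to $\Lambda_{\min}$, to upgrade the measure‑compatibility hypothesis from a dense set to the co‑meagre regular locus, and then to build $\zeta$ as the Gelfand dual of a compatible pair of $^*$‑homomorphisms extending the pullbacks by $\dot\pi$. Since $\Lambda$ and $\Lambda_{\min}$ agree on $C_c(Y)$ and the minimal essential cover is built from $\Lambda_{\min}$, I may assume $\Lambda=\Lambda_{\min}$. Recall that $(\ti Z,\ti\Lambda)=(\cZ_{\ess},\bm\lambda_{\ess})$ is the closed sub‑graph $\ti Z=\overline{\iota(R_{\Lambda})}$ of the minimal cover $(\hat Z,\hat{\bm\lambda})$, that $\iota\colon R_{\Lambda}\to\ti Z$ is the continuous section of $\ti\pi$ provided by Proposition~\ref{continuous_on_regular} together with post‑critical sparsity, and that under Lemma~\ref{cc} we identify $C_c(\ti Y)=\cD_{\ess}=\cD_{\min}/\cI_{\cD}$ and $C_c(\ti X)=\cR_{\ess}=\cR_{\min}/\cI_{\cR}$. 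Write $\dot Z=(\dot r,\dot s\colon\dot Y\to\dot X)$, let $Z\vert_{\crX'}$ be an invariant co‑meagre sub‑graph witnessing that $\dot Z$ is an essential cover, put $\crY'=r^{-1}(\crX')$, and let $\dot\iota\colon Z\vert_{\crX'}\to\dot Z$ be the section inverse to the bijection $\dot\pi|_{\dot\pi^{-1}(\crX')}$; it is a graph morphism, it is continuous by Proposition~\ref{continuity_points}, and it is weakly open by Proposition~\ref{pi_i_weakly_open}.

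The key step is to show that $(\dot\pi_{\dot Y})_{*}(\dot\lambda_{\dot x})=\lambda_{\dot\pi_{\dot X}(\dot x)}$ holds for \emph{every} $\dot x\in\dot\pi_{\dot X}^{-1}(X\setminus P_{\Lambda})$. Indeed, fix $b\in C_c(Y)$: the map $\dot x\mapsto\dot\lambda_{\dot x}(b\circ\dot\pi_{\dot Y})=\dot\Lambda(b\circ\dot\pi_{\dot Y})(\dot x)$ is continuous on all of $\dot X$ (as $\dot\pi_{\dot Y}$ is proper and $\dot\Lambda$ is continuous), whereas $\Lambda(b)$ is continuous at every point of $X\setminus C_{\Lambda}\supseteq X\setminus P_{\Lambda}$ by the definition of $C_{\Lambda}$, so $\Lambda(b)\circ\dot\pi_{\dot X}$ is continuous at every point of $\dot\pi_{\dot X}^{-1}(X\setminus P_{\Lambda})$; their difference is therefore continuous there and vanishes on the dense set furnished by the hypothesis, hence vanishes on all of $\dot\pi_{\dot X}^{-1}(X\setminus P_{\Lambda})$, and letting $b$ vary gives the claim. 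Now set $\crX_{0}\coloneq\crX'\cap(X\setminus P_{\Lambda})$ and $\crY_{0}\coloneq r^{-1}(\crX_{0})=\crY'\cap(Y\setminus r^{-1}(P_{\Lambda}))$; by post‑critical sparsity and backwards invariance of $\crX'$ and of $X\setminus P_{\Lambda}$ these are co‑meagre backwards invariant sets, so $Z\vert_{\crX_{0}}$ is an invariant sub‑graph, $\dot\iota$ restricts to a continuous section on it with dense image $\dot\pi^{-1}(\crX_{0})$, and $\iota$ restricts to a continuous section $Z\vert_{\crX_{0}}\to\ti Z$ with dense image. For $x\in\crX_{0}$ one has $r^{-1}(x)\subseteq\crY_{0}$, the fibre identity $\dot r^{-1}(\dot\iota_{\dot X}(x))=\dot\iota_{\dot Y}(r^{-1}(x))$ (so $\supp(\dot\lambda_{\dot\iota_{\dot X}(x)})\subseteq\dot\iota_{\dot Y}(\crY_{0})$), the compatibility $(\dot\pi_{\dot Y})_{*}\dot\lambda_{\dot\iota_{\dot X}(x)}=\lambda_{x}$ just established, and, by the Tietze argument underlying Lemma~\ref{continuity_criterion}, $\Lambda(f)(x)=\int f\,\mathrm d\lambda_{x}$ for all $f\in\cD_{\min}$; the analogous statements hold for $\ti Z$.

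It then remains to construct $\Phi$ and $\Psi$. Using the notation of the proof of Lemma~\ref{lem:MinDef}, define recursively $^*$‑homomorphisms $\Psi_{n}\colon\cF_{n}\to C_c(\dot Y)$ and $\Phi_{n}\colon\cE_{n}\to C_c(\dot X)$ by $\Psi_{0}=\dot\pi_{\dot Y}^{*}$, $\Phi_{0}=\dot\pi_{\dot X}^{*}$, $\Phi_{n}(\Lambda(b))\coloneq\dot\Lambda(\Psi_{n-1}(b))$ on the generators $\Lambda(\cF_{n-1})$ of $\cE_{n}$, and $\Psi_{n}(a\,r^{*}\Lambda(b))\coloneq(a\circ\dot\pi_{\dot Y})\,\dot r^{*}\dot\Lambda(\Psi_{n-1}(b))$, $\Psi_{n}(a\,s^{*}\Lambda(b))\coloneq(a\circ\dot\pi_{\dot Y})\,\dot s^{*}\dot\Lambda(\Psi_{n-1}(b))$ on the generators of $\cF_{n}$. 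The crux is that these assignments are well defined, which I prove by induction on $n$ simultaneously with the value‑preservation statements $\Psi_{n}(g)(\dot\iota_{\dot Y}(y))=g(y)$ for $y\in\crY_{0}$, $g\in\cF_{n}$, and $\Phi_{n}(h)(\dot\iota_{\dot X}(x))=h(x)$ for $x\in\crX_{0}$, $h\in\cE_{n}$: the identity $\Phi_{n}(\Lambda(b))(\dot\iota_{\dot X}(x))=\int\Psi_{n-1}(b)\,\mathrm d\dot\lambda_{\dot\iota_{\dot X}(x)}=\int b\,\mathrm d\big((\dot\pi_{\dot Y})_{*}\dot\lambda_{\dot\iota_{\dot X}(x)}\big)=\int b\,\mathrm d\lambda_{x}=\Lambda(b)(x)$ — which uses the support property, the inductive value‑preservation and the strengthened compatibility of the previous paragraph — shows that the image of each generator at $\dot\iota_{\dot X}(x)$ (resp.\ $\dot\iota_{\dot Y}(y)$) agrees with its value at $x$ (resp.\ $y$), so a $^*$‑polynomial in generators that vanishes on $X$ (resp.\ $Y$) maps to a continuous function vanishing on the dense set $\dot\iota_{\dot X}(\crX_{0})$ (resp.\ $\dot\iota_{\dot Y}(\crY_{0})$), hence to $0$. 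Passing to closures and unions yields $^*$‑homomorphisms $\Phi\colon\cR_{\min}\to C_c(\dot X)$, $\Psi\colon\cD_{\min}\to C_c(\dot Y)$ with $\Phi\circ\Lambda_{\min}=\dot\Lambda\circ\Psi$, $\Psi\circ r^{*}=\dot r^{*}\circ\Phi$, $\Psi\circ s^{*}=\dot s^{*}\circ\Phi$, $\Phi|_{C_c(X)}=\dot\pi_{\dot X}^{*}$ and $\Psi|_{C_c(Y)}=\dot\pi_{\dot Y}^{*}$; the same value‑preservation forces $\Phi(\cI_{\cR})=0=\Psi(\cI_{\cD})$, so $\Phi,\Psi$ descend to $\cR_{\ess}=C_c(\ti X)$ and $\cD_{\ess}=C_c(\ti Y)$, and their Gelfand duals assemble into a continuous graph morphism $\zeta\colon\dot Z\to\ti Z$ with $\ti\pi\circ\zeta=\dot\pi$ and $\dot\Lambda\circ\zeta_{\dot Y}^{*}=\zeta_{\dot X}^{*}\circ\ti\Lambda$, i.e.\ a morphism of measured graphs; properness of $\zeta$ follows formally from $\ti\pi\circ\zeta=\dot\pi$, properness of $\dot\pi$ and continuity. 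Uniqueness is then immediate: any continuous $\zeta'$ with $\ti\pi\circ\zeta'=\dot\pi$ sends each point $\dot\iota(z)$ of the dense set $\dot\pi^{-1}(\crX_{0})$ into $\ti\pi^{-1}(z)$, which is the singleton $\{\iota(z)\}$ by Proposition~\ref{continuity_points} (as $\iota$ is continuous at $z\in R_{\Lambda}$ with dense image), so $\zeta'=\zeta$ on $\dot\pi^{-1}(\crX_{0})$ and hence everywhere. The main obstacle is the inductive well‑definedness of $\Phi$ and $\Psi$, and the upgraded compatibility of the second paragraph is precisely what makes the otherwise merely dense hypothesis usable there.
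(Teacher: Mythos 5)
Your proposal is correct and follows essentially the same route as the paper: you upgrade the measure compatibility from a dense set to $\dot{\pi}_{\dot{X}}^{-1}(X\setminus P_{\Lambda})$ (this is exactly the content of Lemma~\ref{bootstrap1} applied to $\dot{\pi}$, which you reprove inline), then build the pullback $^*$-homomorphisms by induction on the generating algebras $\cF_n,\cE_n$ and dualize, with the same density/singleton-fibre argument for uniqueness.
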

\setlength{\parindent}{0cm} \setlength{\parskip}{0cm}

Before proving this, we require a lemma.

\begin{lemma}\label{bootstrap1}
    Let $(Z,\Lambda)$ and $(Z', \Lambda')$ be post-critically sparse topological graphs. If $\zeta \colon Z\to Z'$ is a continuous and proper morphism and $(\zeta_{Y})_{*}\lambda_{x} = \lambda'_{\zeta_{X}(x)}$ for all $x$ in a dense subset of $X$, then $(\zeta_{Y})_{*}\lambda_{x} = \lambda'_{\zeta_{X}(x)}$ holds for all $x\in (X\setminus C_{\Lambda})\cap \zeta_{X}^{-1}(X'\setminus C_{\Lambda'})$. Moreover,
   \begin{equation}\label{intertwine}
     \Lambda(f\circ\zeta_{Y})(x) = \Lambda'(f)(\zeta_{X}(x)),
    \end{equation}
for all $f\in\mathcal{D}_{\min}'$ and $x\in (X\setminus P_{\Lambda}) \cap \zeta_{X}^{-1}(X'\setminus P_{\Lambda'})$.
\end{lemma}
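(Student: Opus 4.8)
The plan is to establish the two assertions in turn: the first by a continuity-and-density argument, and the second by reducing it --- via a Tietze argument in the spirit of Lemma~\ref{continuity_criterion} --- to the first.

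First I would prove the measure identity on $(X\setminus C_{\Lambda})\cap\zeta_{X}^{-1}(X'\setminus C_{\Lambda'})$. Fix $f\in C_{c}(Y')$. Since $\zeta$ is a continuous and proper graph morphism, $f\circ\zeta_{Y}\in C_{c}(Y)$ and $\zeta_{X}$ is continuous; hence $x\mapsto\Lambda(f\circ\zeta_{Y})(x)$ is continuous at every $x\notin C_{\Lambda}$ and $x\mapsto\Lambda'(f)(\zeta_{X}(x))$ is continuous at every $x$ with $\zeta_{X}(x)\notin C_{\Lambda'}$. On the given dense set $D\subseteq X$ these two functions agree, because there $\Lambda(f\circ\zeta_{Y})(x)=\int f\circ\zeta_{Y}\,{\rm d}\lambda_{x}=\int f\,{\rm d}(\zeta_{Y})_{*}\lambda_{x}=\int f\,{\rm d}\lambda'_{\zeta_{X}(x)}=\Lambda'(f)(\zeta_{X}(x))$. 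For $x\in(X\setminus C_{\Lambda})\cap\zeta_{X}^{-1}(X'\setminus C_{\Lambda'})$, choosing a net in $D$ converging to $x$ and evaluating both maps along it shows they agree at $x$. Letting $f$ range over $C_{c}(Y')$ and using that $(\zeta_{Y})_{*}\lambda_{x}$ and $\lambda'_{\zeta_{X}(x)}$ are Radon measures gives $(\zeta_{Y})_{*}\lambda_{x}=\lambda'_{\zeta_{X}(x)}$.

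The key auxiliary fact for the second assertion is that at a non-post-critical vertex the transfer operator on the minimal domain is integration against the associated measure: if $(Z,\Lambda)$ is post-critically sparse, $x\notin P_{\Lambda}$ and $h\in\mathcal{D}_{\min}$, then $r^{-1}(x)\subseteq Y\setminus r^{-1}(P_{\Lambda})$, so Proposition~\ref{continuous_on_regular} makes $h$ continuous at every point of $r^{-1}(x)$; the Tietze construction from Lemma~\ref{continuity_criterion} gives $\tilde{h}\in C_{c}(Y)$ with $\tilde{h}\vert_{r^{-1}(x)}=h\vert_{r^{-1}(x)}$, and since $(h-\tilde{h})\vert_{r^{-1}(x)}=0$ the defining property $\Lambda(g)(x)=0$ when $g\vert_{r^{-1}(x)}=0$ yields $\Lambda(h)(x)=\Lambda(\tilde{h})(x)=\int\tilde{h}\,{\rm d}\lambda_{x}=\int h\,{\rm d}\lambda_{x}$, using $\supp\lambda_{x}\subseteq r^{-1}(x)$ for the last equality. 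Applying this to $(Z',\Lambda')$ at $\zeta_{X}(x)$ gives $\Lambda'(f)(\zeta_{X}(x))=\int f\,{\rm d}\lambda'_{\zeta_{X}(x)}$ for $f\in\mathcal{D}'_{\min}$ with $\zeta_{X}(x)\notin P_{\Lambda'}$; applying it to $(Z,\Lambda)$ --- once we know $f\circ\zeta_{Y}\in\mathcal{D}_{\min}$ --- gives $\Lambda(f\circ\zeta_{Y})(x)=\int f\circ\zeta_{Y}\,{\rm d}\lambda_{x}=\int f\,{\rm d}(\zeta_{Y})_{*}\lambda_{x}$ for $x\notin P_{\Lambda}$. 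Since $C_{\Lambda}\subseteq P_{\Lambda}$ and $C_{\Lambda'}\subseteq P_{\Lambda'}$, the first assertion identifies $(\zeta_{Y})_{*}\lambda_{x}$ with $\lambda'_{\zeta_{X}(x)}$ on the relevant set, and \eqref{intertwine} follows.

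The step I expect to be the main obstacle is the membership bookkeeping just invoked: that $f\circ\zeta_{Y}\in\mathcal{D}_{\min}$ for every $f\in\mathcal{D}'_{\min}$ (and dually $g\circ\zeta_{X}\in\mathcal{R}_{\min}$ for $g\in\mathcal{R}'_{\min}$), needed both so that the left-hand side of \eqref{intertwine} is an honest value of $\Lambda_{\min}$ and so that Proposition~\ref{continuous_on_regular} applies to it. I would prove this by induction on the level $n$ in the construction $\mathcal{D}'_{\min}=\overline{\bigcup_{n}\mathcal{F}'_{n}}$, $\mathcal{R}'_{\min}=\overline{\bigcup_{n}\mathcal{E}'_{n}}$ of Lemma~\ref{lem:MinDef}, using that $\zeta_{Y}^{*}$ and $\zeta_{X}^{*}$ are $^*$-homomorphisms continuous for the inductive limit topologies (properness of $\zeta$ keeping supports compact) together with the graph relations $r'\circ\zeta_{Y}=\zeta_{X}\circ r$ and $s'\circ\zeta_{Y}=\zeta_{X}\circ s$. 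The only non-formal input is that $\Lambda'(b)\circ\zeta_{X}\in\mathcal{R}_{\min}$ for $b\in\mathcal{F}'_{n-1}$, which one extracts from the level-$(n-1)$ case of \eqref{intertwine}, identifying $\Lambda'(b)\circ\zeta_{X}$ with the $\mathcal{R}_{\min}$-function $\Lambda(b\circ\zeta_{Y})$ on $(X\setminus P_{\Lambda})\cap\zeta_{X}^{-1}(X'\setminus P_{\Lambda'})$ and then promoting this to an identity on all of $X$. That promotion is the delicate point --- functions agreeing off the (meagre) post-critical locus need not coincide --- and it is precisely here that post-critical sparsity and the continuity information carried by Proposition~\ref{continuous_on_regular} really do the work.
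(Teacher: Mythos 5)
Your first two paragraphs are, in substance, the paper's own proof. The measure identity is extended from the dense set exactly as you describe: $\Lambda(f\circ\zeta_Y)$ and $\Lambda'(f)\circ\zeta_X$ agree there for $f\in C_c(Y')$ and are both continuous at every $x\in(X\setminus C_\Lambda)\cap\zeta_X^{-1}(X'\setminus C_{\Lambda'})$, so they agree at such $x$, and letting $f$ vary identifies the two Radon measures. For \eqref{intertwine} the paper likewise uses Proposition~\ref{continuous_on_regular} to get continuity of $f\in\mathcal{D}'_{\min}$ on $(r')^{-1}(\zeta_X(x))$, picks $\tilde f\in C_c(Y')$ agreeing with $f$ there, and computes $\Lambda(f\circ\zeta_Y)(x)=\lambda_x(\tilde f\circ\zeta_Y)=\lambda'_{\zeta_X(x)}(\tilde f)=\Lambda'(f)(\zeta_X(x))$; your ``auxiliary fact'' is just this computation performed on each graph separately. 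Note that the continuity of $f\circ\zeta_Y$ on $r^{-1}(x)$ needed for the Tietze step comes for free from continuity of $f$ on $(r')^{-1}(\zeta_X(x))$ composed with the continuous $\zeta_Y$ (using $r'\circ\zeta_Y=\zeta_X\circ r$); it does not require $f\circ\zeta_Y\in\mathcal{D}_{\min}$.

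Your third paragraph manufactures an obstacle the lemma does not pose, and the route you sketch for clearing it does not work. The membership $\zeta_Y^*(\mathcal{D}'_{\min})\subseteq\mathcal{D}_{\min}$ is not needed: \eqref{intertwine} is a pointwise identity at points off the post-critical sets, where $\Lambda(f\circ\zeta_Y)(x)$ is evaluated through the continuous representative $\tilde f\circ\zeta_Y\in C_c(Y)$ and the defining property of an $r$-operator ($\Lambda(g)(x)=0$ whenever $g|_{r^{-1}(x)}=0$). The step you yourself flag as delicate --- knowing that $\Lambda'(b)\circ\zeta_X$ agrees with $\Lambda(b\circ\zeta_Y)$ on the co-meagre set $(X\setminus P_\Lambda)\cap\zeta_X^{-1}(X'\setminus P_{\Lambda'})$ and ``promoting this to an identity on all of $X$'' --- is a genuine gap: elements of $\mathcal{R}_{\min}$ are continuous only off $P_\Lambda$ and are not determined by their restriction to a co-meagre set, so agreement off the post-critical locus gives no control on $P_\Lambda$, and post-critical sparsity does not repair this. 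The paper never performs this promotion; in the downstream uses (Theorem~\ref{universal} and the lifting theorem) all inductions are carried out on the restrictions to $X\setminus P_\Lambda$, i.e.\ in the essential algebras, which is precisely the information \eqref{intertwine} supplies. Deleting your third paragraph leaves a correct proof identical to the paper's.
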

\begin{proof}
    Let $\crX$ be a dense subset of $X$ such that $(\zeta_{Y})_{*}\lambda_{x} = \lambda'_{\zeta_{X}(x)}$ for all $x\in \crX$. 
    By density of $\crX$ and continuity of $\Lambda(f\circ\zeta_{Y})$ and $\Lambda'(f) \circ \zeta_X$ on $(X\setminus C_{\Lambda})\cap \zeta_{X}^{-1}(X'\setminus C_{\Lambda'})$ (which is true by construction), 
    it follows that this equation holds for all $x\in (X\setminus C_{\Lambda})\cap \zeta_{X}^{-1}(X'\setminus C_{\Lambda'})$. 
    For $f\in \mathcal{D}'_{\min}$ and $x\in (X\setminus P_{\Lambda}) \cap \zeta_{X}^{-1}(X'\setminus P_{\Lambda'})$, $f$ is continuous on $r^{-1}(\zeta_X(x))$ by Proposition~\ref{continuous_on_regular}, 
    so there is $\tilde{f}\in C_{c}(Y')$ such that $\tilde{f} \vert_{r^{-1}(\zeta_{X}(x))} = f \vert_{r^{-1}(\zeta_{X}(x))}$. 
    Therefore
    \begin{equation}
      \Lambda(f\circ\zeta_{Y})(x) = \lambda_{x}(\tilde{f}\circ\zeta_{Y}) = \lambda'_{\zeta_{X}(x)}(\tilde{f}) = \Lambda'(f)(\zeta_{X}(x)),
    \end{equation}
    as wanted.
\end{proof}

\begin{proof}[Proof of Theorem \ref{universal}]
Let us prove uniqueness, assuming existence. Let $\crZ = (r,s \colon \crY \to \crX) \subseteq Z$  and $\dot{\pi}^{-1}(\crZ) \eqcolon \dot{\crZ} = (r,s \colon \dot{\crY} \to \dot{\crX})\subseteq \dot{Z}$ be co-meagre and invariant graphs such that $\dot{\pi} \colon \dot{\crZ} \to \crZ$ is a bijection. If $\zeta$ and $\zeta'$ both are continuous mappings satisfying $\tilde{\pi}\circ \zeta = \dot{\pi} = \tilde{\pi}\circ \zeta'$, then they are equal on the sub-graph $\dot{\pi}^{-1}(R_{\Lambda})$. Note that $\dot{\pi}$ is weakly open, as the restriction $\dot{\pi} \colon \dot{\crZ} \to \crZ$ is a homeomorphism (this follows from Proposition \ref{continuity_points}). So $\dot{\pi}^{-1}(R_{\Lambda})$ is dense, and hence $\zeta = \zeta'$. 
Moreover, by the weak openness of $\dot{\pi}$ and the Baire category theorem, we have that $C\cap R_{\Lambda}$ and $\dot{\pi}^{-1}(C\cap R_{\Lambda})$ are co-meagre sub-graphs. So, we may assume without loss of generality that $C\subseteq R_{\Lambda}$.

Now, we construct $\zeta$. 
By Lemma \ref{bootstrap1} applied to the morphism $\dot{\pi} \colon  \dot{Z}\to Z$, we have 
\begin{equation}\label{intertwine2}
  \dot{\Lambda}(f\circ\dot{\pi}_{\dot{Y}})(x) = \Lambda(f)(\dot{\pi}_{\dot{X}}(x)),
\end{equation}
for all $f\in\mathcal{D}_{\min}$ and $x\in E\subseteq \dot{\pi}^{-1}(X\setminus P_{\Lambda})$.
Identify $(\mathcal{D}_{\min})_{\ess}$ with $\mathcal{D}_{\min} \vert_{\crY}$ and $(\mathcal{R}_{\min})_{\ess}$ with $\mathcal{R}_{\min} \vert_{\crX}$.
To construct $\zeta_{Y}^{*} \colon (\mathcal{D}_{\min}) \vert_{\crY} \to C_{c}(\dot{Y})$ and $\zeta_{X}^{*} \colon (\mathcal{R}_{\min}) \vert_{\crX} \to C_{c}(\dot{X})$, 
it suffices to show for every $f\in \mathcal{D}_{\min}$ and $g\in \mathcal{R}_{\min}$, there are $\alpha\in C_{c}(\dot{Y})$, $\beta\in C_{c}(\dot{X})$ such that $f \vert_{\crY} \circ \dot{\pi}_{\dot{Y}} \vert_{\dot{\crY}} = \alpha \vert_{\dot{\crY}}$ and $g \vert_{\crX} \circ \dot{\pi}_{\dot{X}} \vert_{\dot{\crX}} = \beta \vert_{\dot{\crX}}$. We proceed by induction on the $^*$-sub-algebras $\mathcal{F}_{n}$, $\mathcal{E}_{n}$.
For $f\in \mathcal{F}_{0} = C_{c}(Y)$ and $g\in C_{c}(X)$, this is trivial. Suppose the result is true for $\mathcal{F}_{n}$ and $\mathcal{E}_{n}$. For $g = \Lambda(f)\in \Lambda(\mathcal{F}_{n})$, let $\alpha\in C_{c}(\dot{Y})$ be such that $f \vert_{\crY} \circ \dot{\pi}_{\dot{Y}} \vert_{\dot{\crY}} = \alpha \vert_{\dot{\crY}}$. By Equation~\eqref{intertwine2}, we have $g \vert_{\crX} \circ \dot{\pi}_{\dot{X}} \vert_{\dot{\crX}} = \Lambda(f \vert_{\crY}) \circ \dot{\pi}_{\dot{X}} \vert_{\dot{\crX}} = \dot{\Lambda}(f \vert_{\crY} \circ \dot{\pi}_{\dot{Y}} \vert_{\dot{\crY}}) = \dot{\Lambda}(\alpha) \vert_{\dot{\crX}}$. As $\mathcal{E}_{n+1}$ is generated by $\Lambda(\mathcal{F}_{n})$ and $\mathcal{E}_{n}$, the result is true for $\mathcal{E}_{n+1}$. A similar argument shows it is true for $\mathcal{F}_{n+1}$.
\end{proof}

\begin{remark}\label{redundant_measure}
    Note that the measure condition ``$(\dot{\pi}_{\dot{Y}})_{*}(\dot{\lambda}_{x}) = \lambda_{\dot{\pi}_{\dot{X}}(x)}$ for all $x$ in some dense subset of $\dot{X}$'' in Theorem \ref{universal} is redundant when the cover and the original graph are weighted by $w =1$. It follows from $\dot{\pi}$ being a bijection between dense and invariant sub-graphs of the domain and co-domain.
\end{remark}
\setlength{\parindent}{0cm} \setlength{\parskip}{0.5cm}

Let us prove a lifting result for morphisms. 
\begin{theorem}
Let $(Z,\Lambda)$ and $(Z',\Lambda')$ be post-critically sparse graphs. Suppose $\zeta \colon Z\to Z'$ is a continuous and proper graph morphism such that $(\zeta_{Y})_{*}\lambda_{x} = \lambda'_{\zeta_{X}(x)}$ for all $x$ in a dense subset and assume $R_{\Lambda}\cap 
        \zeta^{-1}(R_{\Lambda'})$ is dense. Then there is a unique morphism $\tilde{\zeta} \colon (\ti{Z},\tilde{\bm{\lambda}})\to (\tilde{Z}',\tilde{\bm{\lambda}}')$ such that $\tilde{\pi}\circ \tilde{\zeta} = \zeta\circ \tilde{\pi}$.
\end{theorem}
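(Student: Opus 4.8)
The plan is to run the argument of Theorem~\ref{universal} essentially verbatim, with the given morphism $\zeta\colon Z\to Z'$ playing the role that the covering map $\dot\pi$ played there. The point that makes this possible is that Lemma~\ref{bootstrap1} is already stated for an \emph{arbitrary} continuous and proper morphism between post-critically sparse graphs, and its hypotheses are exactly ours: $(\zeta_Y)_*\lambda_x=\lambda'_{\zeta_X(x)}$ on a dense set, and $R_\Lambda\cap\zeta^{-1}(R_{\Lambda'})$ dense. So Lemma~\ref{bootstrap1} supplies the key intertwining $\Lambda(f\circ\zeta_Y)(x)=\Lambda'(f)(\zeta_X(x))$ for all $f\in\cD'_{\min}$ and all $x\in(X\setminus P_\Lambda)\cap\zeta_X^{-1}(X'\setminus P_{\Lambda'})$. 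Recall that $\ti Z=\cZ_\ess$ has edge and vertex spaces $\ti Y=\Spec((\cD_{\min})_\ess)$ and $\ti X=\Spec((\cR_{\min})_\ess)$, and that by Lemma~\ref{cc} and the discussion in \S\ref{ss:Ess} we may identify $(\cD_{\min})_\ess$ with $\cD_{\min}\vert_{\crY}$ and $(\cR_{\min})_\ess$ with $\cR_{\min}\vert_{\crX}$, where $R_\Lambda=(r,s\colon\crY\to\crX)$ is the (co-meagre, by post-critical sparsity) regular sub-graph, and similarly for $(Z',\Lambda')$. Writing $\crZ_0\coloneq R_\Lambda\cap\zeta^{-1}(R_{\Lambda'})$, which is an invariant sub-graph contained in $R_\Lambda$ with $\zeta(\crZ_0)\subseteq R_{\Lambda'}$ and which is dense by hypothesis, the task reduces, via Gelfand duality, to constructing $^*$-homomorphisms $\ti\zeta^*_{\ti Y}\colon\cD'_{\min}\vert_{\crY'}\to\cD_{\min}\vert_{\crY}$ and $\ti\zeta^*_{\ti X}\colon\cR'_{\min}\vert_{\crX'}\to\cR_{\min}\vert_{\crX}$ which intertwine $r^*,s^*$ and $\ti{\bm\lambda},\ti{\bm\lambda}'$ and which, on the images of $C_c(Y')$ and $C_c(X')$, are given by $h\mapsto h\circ\zeta_Y$ and $h\mapsto h\circ\zeta_X$ (this last property encodes $\ti\pi\circ\ti\zeta=\zeta\circ\ti\pi$).

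To build these maps I would argue by induction on the $^*$-subalgebras $\cF'_n,\cE'_n$ of the proof of Lemma~\ref{lem:MinDef}, showing that for every $f\in\cD'_{\min}$ there is $\alpha\in\cD_{\min}$ with $\alpha\vert_{\crZ_0}=(f\circ\zeta_Y)\vert_{\crZ_0}$ (and symmetrically for $\cR'_{\min}$), and then setting $\ti\zeta^*_{\ti Y}(f\vert_{\crY'})\coloneq\alpha\vert_{\crY}$. The base case uses $\alpha=f\circ\zeta_Y\in C_c(Y)$ (compactly supported and continuous since $\zeta$ is proper and continuous). For the step, given $g=\Lambda'(f)$ with $f\in\cF'_n$ and an $\alpha$ as above, invariance of $R_\Lambda$ and of $\zeta^{-1}(R_{\Lambda'})$ gives $r^{-1}(x)\subseteq\crZ_0$ whenever $x$ lies in the vertex space of $\crZ_0$, so $\alpha$ and $f\circ\zeta_Y$ agree on $r^{-1}(x)$; combining this with the continuity-criterion argument in the proof of Lemma~\ref{continuity_criterion} and the intertwining from Lemma~\ref{bootstrap1} yields $\Lambda(\alpha)\vert_{\crZ_0}=(g\circ\zeta_X)\vert_{\crZ_0}$, so $\beta\coloneq\Lambda(\alpha)\in\cR_{\min}$ works; the remaining generators of $\cF'_{n+1}$ are handled identically using $\cD_{\min}r^*(\cR_{\min})\subseteq\cD_{\min}$ and $\cD_{\min}s^*(\cR_{\min})\subseteq\cD_{\min}$. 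That the resulting assignments are $^*$-homomorphisms is immediate because the construction respects sums, products and conjugates; that they intertwine the structure maps follows from the construction and Lemma~\ref{bootstrap1}. Dualizing produces the desired continuous morphism $\ti\zeta\colon(\ti Z,\ti{\bm\lambda})\to(\ti Z',\ti{\bm\lambda}')$ with $\ti\pi\circ\ti\zeta=\zeta\circ\ti\pi$. For uniqueness I would argue as in Theorem~\ref{universal}: any two such $\ti\zeta_1,\ti\zeta_2$ agree on the dense sub-graph $\iota(\crZ_0)$ of $\ti Z$, where $\iota\colon R_\Lambda\to\ti Z$ is the continuous section of $\ti\pi$ furnished by Proposition~\ref{continuous_on_regular}, since $\ti\pi(\ti\zeta_j(\iota(z)))=\zeta(z)\in R_{\Lambda'}$ forces $\ti\zeta_j(\iota(z))=\iota'(\zeta(z))$ by Proposition~\ref{continuity_points}; continuity then gives $\ti\zeta_1=\ti\zeta_2$.

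The step I expect to require the most care — the analogue of the weak-openness step in the proof of Theorem~\ref{universal} — is checking that the recipe $f\vert_{\crY'}\mapsto\alpha\vert_{\crY}$ is well defined, i.e.\ that if $\alpha\in\cD_{\min}$ vanishes on $\crZ_0$ then $\alpha\vert_{\crY}=0$ (so the class of $\alpha$ in $(\cD_{\min})_\ess$, equivalently in $\cD_{\min}\vert_{\crY}$, is zero). This is exactly where both density hypotheses enter: $\crZ_0$ is dense in $Z$ and contained in the subspace $R_\Lambda$, hence dense in $R_\Lambda$ (a subset of a subspace that is dense in the ambient space is dense in that subspace), and every element of $\cD_{\min}$ is continuous on $R_\Lambda$ by Proposition~\ref{continuous_on_regular}; so a function in $\cD_{\min}$ vanishing on $\crZ_0$ vanishes on all of $\crY$. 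The same observation, together with Remark~\ref{redundant_measure}, shows that the measure hypothesis may be dropped when the graphs are weighted by $w=1$. Apart from this bookkeeping, the proof is a routine transcription: the substantive content is already contained in Lemma~\ref{bootstrap1}, Lemma~\ref{continuity_criterion} and Proposition~\ref{continuous_on_regular}.
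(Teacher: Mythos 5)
Your proposal is correct and follows essentially the same route as the paper: the paper's own proof is a short sketch that says to repeat the argument of Theorem~\ref{universal}, using Equation~\eqref{intertwine} from Lemma~\ref{bootstrap1} to build the $^*$-homomorphisms by induction on $\cF'_n$, $\cE'_n$ and deducing uniqueness from density of $R_\Lambda\cap\zeta^{-1}(R_{\Lambda'})$, which is exactly what you do. Your additional well-definedness check (a function in $\cD_{\min}$ vanishing on the dense invariant sub-graph $\crZ_0$ vanishes on all of $R_\Lambda$ by Proposition~\ref{continuous_on_regular}) is a correct and worthwhile elaboration of a point the paper leaves implicit.
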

\setlength{\parindent}{0cm} \setlength{\parskip}{0cm}

\begin{proof}
The proof is similar to that of Theorem \ref{universal}. For instance, uniqueness follows from $\tilde{\zeta}$ agreeing with $\zeta$ on the dense sub-graph $R_{\Lambda}\cap \zeta^{-1}(R_{\Lambda'}) $, and Equation~\eqref{intertwine} allows us to construct, by induction on the $^*$-sub-algebras $\mathcal{E}'_{n}$ and $\mathcal{F}_{n}'$, the $^*$-homomorphisms $\zeta^{*}_{X} \colon \mathcal{R}_{\min}' \vert_{X'\setminus P_{\Lambda'}} \to \mathcal{R}_{\min} \vert_{(X\setminus P_{\Lambda}) \cap \zeta^{-1}_{X}(X'\setminus P_{\Lambda'})} $ and $\zeta_{Y}^{*}$, following the same strategy as in Theorem~\ref{universal}.
\end{proof}

\begin{corollary}
    The mapping sending $(Z,\Lambda)$ to $(\tilde{Z},\tilde{\Lambda})$ and $\zeta \colon Z\to Z'$ to $\tilde{\zeta} \colon (\tilde{Z},\tilde{\Lambda})\to (\tilde{Z}',\tilde{\Lambda}')$ is a functor from the category of post-critically sparse measured topological graphs with continuous, proper and weakly open morphisms $\zeta \colon Z\to Z'$ satisfying $(\zeta_{Y})_{*}\lambda_{x} = \lambda'_{\zeta_{X}(x)}$ for all $x$ in a dense subset of $X$ to the category of continuously measured topological graphs with continuous, proper and weakly open measured morphisms.
\end{corollary}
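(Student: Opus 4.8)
The plan is to verify the three requirements of a functor --- a well-defined object map, a well-defined arrow map landing in the target category, and compatibility with identities and composition --- singling out weak openness of a lifted morphism as the one step needing real care. First I would record the basic fact used throughout: post-critical sparsity of $(Z,\Lambda)$ means $P_\Lambda$ and $r^{-1}(P_\Lambda)$ lie in meagre sets, so the regular sub-graph $R_\Lambda$ is co-meagre (in both its vertex and its edge space); this is also exactly the hypothesis of Proposition~\ref{continuous_on_regular} that makes $(Z,\Lambda_{\min})$ essentially continuous, so the object assignment $(Z,\Lambda)\mapsto(\tilde Z,\tilde{\bm\lambda})$ is defined.

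Next, for an arrow $\zeta\colon Z\to Z'$ of the source category --- continuous, proper, weakly open, with $(\zeta_Y)_*\lambda_x=\lambda'_{\zeta_X(x)}$ on a dense set --- I would check the hypotheses of the lifting theorem proved just above. The only non-formal point is its density requirement on $R_\Lambda\cap\zeta^{-1}(R_{\Lambda'})$; here weak openness of $\zeta$ is used, since a weakly open map between Baire spaces pulls co-meagre sets back to co-meagre sets, so $\zeta^{-1}(R_{\Lambda'})$ is co-meagre and its intersection with the co-meagre $R_\Lambda$ is co-meagre, hence dense. The lifting theorem then produces the unique $\tilde\zeta\colon(\tilde Z,\tilde{\bm\lambda})\to(\tilde Z',\tilde{\bm\lambda}')$ with $\tilde\pi\circ\tilde\zeta=\zeta\circ\tilde\pi$; it is a measured morphism and (being built from $^*$-homomorphisms between function algebras, as in Theorem~\ref{universal}) is continuous and proper --- properness can also be re-derived directly from $\tilde\zeta^{-1}(K)\subseteq\tilde\pi^{-1}\big(\zeta^{-1}(\tilde\pi'(K))\big)$ and properness of $\tilde\pi'$, $\zeta$, $\tilde\pi$.

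The step I expect to be the main obstacle is showing $\tilde\zeta$ is \emph{weakly open}, which the target category requires. My plan: by Propositions~\ref{continuity_points} and \ref{continuous_on_regular}, $\tilde\pi$ restricts to a homeomorphism from the co-meagre set $\iota(R_\Lambda)=\tilde\pi^{-1}(R_\Lambda)$ onto $R_\Lambda$, and likewise for $Z'$; chasing $\tilde\pi'\circ\tilde\zeta=\zeta\circ\tilde\pi$ then gives $\tilde\zeta\circ\iota=\iota'\circ\zeta$ on $C:=R_\Lambda\cap\zeta^{-1}(R_{\Lambda'})$. Given a dense open $\mathcal{U}\subseteq\tilde Z'$, I would pull it back through the homeomorphism $\iota'|_{R_{\Lambda'}}$ to a dense open subset of $R_{\Lambda'}$, write the latter as $V'\cap R_{\Lambda'}$ for a dense open $V'\subseteq Z'$, use weak openness of $\zeta$ to make $\zeta^{-1}(V')$ co-meagre, intersect with the co-meagre $C$ (noting $\zeta(C)\subseteq R_{\Lambda'}$, so this equals $C$ intersected with the $\zeta$-preimage of the pulled-back set), and transport forward along the homeomorphism $\iota|_C$ to conclude $\tilde\zeta^{-1}(\mathcal{U})\supseteq\iota(C\cap\zeta^{-1}(V'))$ is dense in $\iota(C)$, hence in $\tilde Z$. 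The delicate part is managing the repeated passages between a co-meagre subspace and its ambient space via the Baire category theorem, and keeping track of which preimages agree on $C$.

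Finally, for functoriality: $\id_Z\mapsto\id_{\tilde Z}$ is forced by the uniqueness clause of the lifting theorem. For a composite $\zeta'\circ\zeta$, I would first check it is again continuous, proper and weakly open, and satisfies the pushforward condition on the dense set $R_\Lambda\cap\zeta^{-1}\big(R_{\Lambda'}\cap(\zeta')^{-1}(R_{\Lambda''})\big)$ --- invoking Lemma~\ref{bootstrap1} together with the fact, itself a consequence of Proposition~\ref{continuous_on_regular}, that $C_\Lambda\subseteq P_\Lambda$ --- so that $\zeta'\circ\zeta$ lies in the source category and $\widetilde{\zeta'\circ\zeta}$ is defined. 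Then $\tilde{\zeta'}\circ\tilde\zeta$ is a target-category morphism with $\tilde\pi''\circ(\tilde{\zeta'}\circ\tilde\zeta)=(\zeta'\circ\zeta)\circ\tilde\pi$, so uniqueness forces $\widetilde{\zeta'\circ\zeta}=\tilde{\zeta'}\circ\tilde\zeta$, establishing functoriality.
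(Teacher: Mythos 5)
Your proposal is correct and follows essentially the same route as the paper's (much terser) proof: density of $R_{\Lambda}\cap\zeta^{-1}(R_{\Lambda'})$ from weak openness, preservation of the pushforward condition under composition via Lemma~\ref{bootstrap1} together with weak openness and the Baire category theorem, and weak openness of $\tilde{\zeta}$ deduced from its agreement with $\zeta$ on the co-meagre sub-graph $R_{\Lambda}\cap\zeta^{-1}(R_{\Lambda'})$. Your write-up simply fills in the details (the $\iota$/$\iota'$ chase for weak openness and the uniqueness argument for identities and composites) that the paper leaves implicit.
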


\begin{proof}
    Density of $R_{\Lambda}\cap\zeta^{-1}(R_{\Lambda'})$ is automatic by weak openness. The fact that the composition of two such morphisms still satisfies $(\zeta_{Y})_{*}\lambda_{x} = \lambda'_{\zeta_{X}(x)}$ on a dense subset follows from Lemma~\ref{bootstrap1} (implying equality on a co-meagre set), weak openness and the Baire category theorem. Weak openness of $\tilde{\zeta}$ follows from this morphism agreeing with $\zeta$ on the (co-meagre) graph  $R_{\Lambda}\cap\zeta^{-1}(R_{\Lambda'})$.
\end{proof}
\begin{remark}
The properness of the morphism $\zeta$ in our results can be weakened to the following condition: 
a morphism $\zeta \colon Z\to Z'$ is \emph{$r$-proper} if for every pair of compact sets $K\subseteq Y'$ and $L\subseteq X$, we have that $\zeta_{Y}^{-1}(K) \cap r^{-1}(L)$ is compact. 
This will guarantee that $\Lambda'(f\circ \zeta_{Y})$ is defined for all $f\in C_{c}(Y)$.
\end{remark}
\setlength{\parindent}{0cm} \setlength{\parskip}{0.5cm}

\section{The case of a single transformation}
\label{s:SingleTrafo}
In this section, we study our constructions in the special case of dynamical systems, i.e, topological graphs with $Y = X$, $s = \text{id}_{X}$ and $r = T \colon X\to X$ continuous, where $X$ is a locally compact Hausdorff space. From the minimal cover as a topological graph, we construct the minimal cover as a dynamical system.

\bremark
As we explain, the minimal cover of $T \colon X \to X$ as a dynamical system can also be constructed directly, and this direct construction would not even require continuity of $T$.
\eremark

\subsection{Measured topological dynamical systems}
\label{ss:meas_top_dyn_sys}
 We say a topological dynamical system $T \colon X\to X$ is \emph{measured} if it is measured as a topological graph, with $\mathcal{B} \coloneq \mathcal{D} = \mathcal{R}$ and a $T$-operator $\Lambda \colon \mathcal{B}\to \mathcal{B}$. We denote a measured dynamical system by $(T,\Lambda)$ and call $\Lambda$ its transfer operator.

\bdefin
We let $\cB_{\min}$ be the smallest $^*$-sub-algebra of $\ell^{\infty}_{c}(X)$ which is closed in the inductive limit topology, contains $C_c(X)$ and satisfies $C_c(X) T^* \cB_{\min} \subseteq \cB_{\min}$ and $\Lambda(\cB_{\min}) \subseteq \cB_{\min}$. We write $\Lambda_{\min}$ for the restriction of $\Lambda$ to $\mathcal{B}_{\min}$ and call $(T, \Lambda_{\min})$ the \emph{minimal dynamical restriction}.
\edefin
\setlength{\parindent}{0cm} \setlength{\parskip}{0cm}

Just like the minimal restriction for measured topological graphs, $\mathcal{B}_{\text{min}}$ can be sequentially generated by $^*$-sub-algebras defined as $\cB_0 = C_c(X)$ and for all $n\geq 1$,
\[
  \cB_n \coloneq \overline{{}^*\text{-}\alg} \big(C_c(X), \Lambda(\cB_{n-1}), C_c(X) T^*(\cB_{n-1}) \big).
\]
Then $\cB_n \subseteq \cB_{n+1}$ for all $n\in \Nz$ by construction and it is easy to see that $\cB_{\min}$ is given by the closure of $\bigcup_{n}\mathcal{B}_{n}$ in the inductive limit topology.
\setlength{\parindent}{0cm} \setlength{\parskip}{0.5cm}

By an argument similar to the proof of Lemma \ref{morphism_restriction}, if $\chi \colon (T,\Lambda) \to (T',\Lambda')$ is a continuous morphism between measured dynamical systems, then so is the morphism $\chi_{\min} \colon (T,\Lambda_{\text{min}})\to (T',\Lambda'_{\text{min}})$ induced by $\chi$.

If we view $(T,\Lambda)$ as a measured topological graph, then its minimal restriction may not necessarily be another measured dynamical system. We have $\mathcal{D}_{\min}\subseteq \mathcal{R}_{\min}$ but equality does not necessarily hold. However, it turns out that $\cB_{\min}$ is generated by $C_{c}(X)(T^{n})^{*}\mathcal{R}_{\text{min}}$, $n\geq 0$.

From a topological graph $Z$, we can construct a dynamical system. 
Assuming $s \colon Y\to X$ is proper, the \emph{left infinite path space} $Z^{-\infty} \coloneq \menge{(\ldots, y_{-2}, y_{-1})\in \prod_{i < 0}Y}{s(y_{i}) = r(y_{i+1})\text{ }\forall i < 0}$, 
equipped with the relative topology from the product topology on $\prod_{i < 0}Y$, is locally compact, 
and there is a naturally defined dynamical system on $Z^{-\infty}$ given by  $\sigma(\ldots, y_{-2}, y_{-1}) = (\ldots, y_{-3}, y_{-2})$, called the \emph{shift}.
\setlength{\parindent}{0.5cm} \setlength{\parskip}{0cm}

If $(Z,\Lambda)$ is a measured topological graph with $s \colon Y\to X$ proper, we can induce a measured structure on $\sigma \colon Z^{-\infty}\to Z^{-\infty}$ as follows.
Let $\mathcal{B}^{-\infty}\subseteq\ell^{\infty}_{c}(Z^{-\infty})$ be the closed $^*$-algebra generated by functions of the form $\underline{g} = g_{-n}\otimes\cdots\otimes g_{-1}$ for $g_{i}\in \mathcal{D}$, 
where $\underline{g}(\underline{y}) = g_{-n}(y_{-n})\cdots g_{-1}(y_{-1})$ for all $\underline{y} = (...,y_{-2},y_{-1})\in Z^{-\infty}$.
Then $\underline{g}\circ\sigma = \underline{g}\otimes 1$, and from this it is easy to see $\sigma^{*}(\mathcal{B}^{-\infty})\subseteq M(\mathcal{B}^{-\infty})$. 
For $\underline{g}$ as above, let $\Lambda_{-\infty}(\underline{g}) = g_{-n}\otimes\cdots\otimes (g_{-2}(\Lambda(g_{-1})\circ s))$. 
This is well-defined and extends continuously to a $\sigma$-operator $\Lambda_{-\infty} \colon \mathcal{B}^{-\infty}\to \mathcal{B}^{-\infty}$. Furthermore, the operation $(-)^{-\infty}$ is a functor, sending a morphism $\upsilon \colon (Z,\Lambda)\to (Z', \Lambda')$ to the morphism $\upsilon^{-\infty} \colon (\sigma, \Lambda_{-\infty})\to (\sigma', \Lambda'_{-\infty})$ defined as $\upsilon^{-\infty}(\ldots, y_{-2},y_{-1}) = (\ldots, \upsilon_{Y}(y_{-2}), \upsilon_{Y}(y_{-1}))$. 
We call this the \emph{paths functor}. 
Note that if $(T,\Lambda)$ is a measured dynamical system, then $(T,\Lambda)^{-\infty}$ is naturally conjugate to $(T,\Lambda)$. 
We will always make this identification.
\setlength{\parindent}{0cm} \setlength{\parskip}{0.5cm}

Let us see how the paths functor interacts with the minimal restriction functors.
\begin{proposition}
\label{prop:commuting_functors}
 $(-)_{\text{min}}\circ (-)^{-\infty} = (-)^{-\infty}\circ (-)_{\text{min}}$ as functors defined on the category of measured topological graphs with proper source maps. The $(-)_{\text{min}}$ functor on the left hand side is the minimal dynamical restriction, while the $(-)_{\text{min}}$ functor on the right hand side is the minimal restriction as a topological graph.
\end{proposition}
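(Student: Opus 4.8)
The plan is to prove the identity of functors on objects first; agreement on morphisms will then be immediate. Fix a measured topological graph $(Z,\Lambda)$ with $s$ proper and let $\sigma\colon Z^{-\infty}\to Z^{-\infty}$ be its shift. Unwinding the two composites, both return the dynamical system $\sigma$ carrying a transfer operator that is a (co)restriction of $\Lambda_{-\infty}$ --- the left-hand composite carries $\Lambda_{-\infty}$ restricted to the minimal dynamical $^*$-algebra, and the right-hand composite carries $(\Lambda_{\min})_{-\infty}$ on the $^*$-algebra generated by $\cD_{\min}$-tensors --- so the crux is to show these two $^*$-algebras, call them $\cB$ and $\cB^{-\infty}$ respectively, coincide; equality of the operators will then follow at once. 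Here $\cB$ is the smallest inductive-limit-closed $^*$-subalgebra of $\ell^\infty_c(Z^{-\infty})$ containing $C_c(Z^{-\infty})$ and stable under $\Lambda_{-\infty}$ and under $\underline f\mapsto h\cdot(\underline f\circ\sigma)$ for $h\in C_c(Z^{-\infty})$, while $\cB^{-\infty}$ is the inductive-limit-closed $^*$-algebra generated by the elementary tensors $\underline g=g_{-n}\otimes\cdots\otimes g_{-1}$ with $g_i\in\cD_{\min}$. I would begin by recording the book-keeping facts used throughout: since $s$ is proper, every elementary tensor has compact support; on an elementary tensor $\sigma^*\underline g=\underline g\otimes 1$ and $\Lambda_{-\infty}\underline g=g_{-n}\otimes\cdots\otimes g_{-3}\otimes\big(g_{-2}(\Lambda(g_{-1})\circ s)\big)$ (the latter relying on $\cD s^*(\cR)\subseteq\cD$); and, by a Stone--Weierstrass argument followed by padding with cutoff functions, $C_c(Z^{-\infty})$ lies in the inductive-limit closure of the span of elementary tensors with all factors in $C_c(Y)$.

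For the inclusion $\cB\subseteq\cB^{-\infty}$ I would check that $\cB^{-\infty}$ itself contains $C_c(Z^{-\infty})$ (because $C_c(Y)\subseteq\cD_{\min}$) and is stable under $\Lambda_{-\infty}$ and under $\underline f\mapsto h(\underline f\circ\sigma)$; both stability statements are verified on elementary tensors from the formulas above, using $\cD_{\min}\,s^*(\cR_{\min})\subseteq\cD_{\min}$, and extended by $^*$-algebra operations and continuity. Minimality of $\cB$ then gives the inclusion.

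The reverse inclusion $\cB^{-\infty}\subseteq\cB$ is the heart of the proof. Since $\cD_{\min}=\overline{\bigcup_k\cF_k}$ with $\cF_k$ as in Lemma~\ref{lem:MinDef}, and forming an elementary tensor is multilinear and continuous, it suffices to put every elementary tensor with all factors in $\cF_k$ into $\cB$, which I would prove by induction on $k$. The case $k=0$ is immediate. For the step, multiplying by cutoff functions that are $\equiv 1$ on the (compact) supports of the factors reduces to a tensor with a single distinguished slot, which may be taken to be the last one, occupied by a single generator of $\cF_{k+1}$, i.e.\ by an element of $C_c(Y)$, of $C_c(Y)\cdot(\Lambda(\cF_k)\circ s)$, or of $C_c(Y)\cdot(\Lambda(\cF_k)\circ r)$; products of generators and inductive limits are recovered afterwards by closing up under products (inserting further cutoffs to kill cross terms), adjoints, and limits. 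A last-slot generator in $C_c(Y)$ is covered by the inductive hypothesis. A last-slot generator $\psi(\Lambda(h)\circ s)$ with $h\in\cF_k$ is exhibited as $\Lambda_{-\infty}\big(\phi_{-m}\otimes\cdots\otimes\phi_{-2}\otimes\psi\otimes h\big)$, whose argument lies in $\cB$ by the inductive hypothesis, so the tensor lies in $\Lambda_{-\infty}(\cB)\subseteq\cB$. For a last-slot generator $\psi(\Lambda(h)\circ r)$ the decisive point is the path relation $r(y_{-1})=s(y_{-2})$: it identifies $\phi_{-m}\otimes\cdots\otimes\phi_{-2}\otimes(\psi(\Lambda(h)\circ r))$ with $\sigma^*$ of $\phi_{-m}\otimes\cdots\otimes\phi_{-3}\otimes(\phi_{-2}(\Lambda(h)\circ s))$, multiplied by a $C_c(Z^{-\infty})$ cutoff, and the inner tensor is of the preceding type and hence already in $\cB$, so the original tensor lies in $C_c(Z^{-\infty})\cdot\sigma^*(\cB)\subseteq\cB$. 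This closes the induction.

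Once $\cB=\cB^{-\infty}$ is established, the two transfer operators agree because each is the continuous extension of $g_{-n}\otimes\cdots\otimes g_{-1}\mapsto g_{-n}\otimes\cdots\otimes(g_{-2}(\Lambda(g_{-1})\circ s))$ on elementary tensors with factors in $\cD_{\min}$ (using $\Lambda_{\min}=\Lambda$ on $\cD_{\min}$), and these span a dense subspace. On morphisms nothing more is needed: neither minimal-restriction functor changes the underlying spaces or maps, the paths functor depends only on them, and each composite sends a morphism $\upsilon$ to $\upsilon^{-\infty}$, a morphism for the now-common structure. I expect the main obstacle to be the reverse inclusion, and within it the realization of a ``$\circ r$''-generator inside $\cB$ --- using $r(y_{-1})=s(y_{-2})$ to migrate it into the preceding slot as a ``$\circ s$''-generator reachable via $\sigma^*$ and $\Lambda_{-\infty}$ --- together with the routine but fiddly support book-keeping required to handle tensors of varying length and to isolate one distinguished slot at a time.
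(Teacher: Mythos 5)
Your proposal is correct and follows essentially the same route as the paper: one inclusion by minimality of the dynamical restriction, the other by induction on the filtration $\cF_n$, with the same two key identities $g(\Lambda(h)\circ s)=g\,\Lambda_{-\infty}(h)$ and $g(\Lambda(h)\circ r)=g\,(\Lambda_{-\infty}(h)\circ\sigma)$ (the latter via the path relation $r(y_{-1})=s(y_{-2})$) driving the induction step. Your write-up just spells out the support/cutoff bookkeeping that the paper leaves implicit.
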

\setlength{\parindent}{0cm} \setlength{\parskip}{0cm}

\begin{proof}
    Let $(Z,\Lambda)$ be a topological graph with $s \colon Y\to X$ proper. Then $(\mathcal{B}_{\text{min}})^{-\infty}$ and $(\mathcal{B}^{-\infty})_{\text{min}}$ are both closed $^*$-sub-algebras of $\mathcal{B}^{-\infty}\subseteq \ell^{\infty}_{c}(Z^{-\infty})$ with $\sigma$-operator the restriction of $\Lambda_{-\infty}$. Hence, $(\mathcal{B}^{-\infty})_{\text{min}}\subseteq (\mathcal{B}_{\text{min}})^{-\infty}$. We need to show the reverse inclusion holds. Functions $\underline{g} = g_{-k}\otimes\cdots\otimes g_{-1}$, for $g_{i}\in \mathcal{F}_{n}$, $k,n\in\mathbb{N}$ generate $(\mathcal{B}_{\text{min}})^{-\infty}$ and $\underline{g} = h(g_{-k}\circ \sigma^{k-1}\cdot g_{-k+1}\circ \sigma^{k-2}\cdots g_{-1})$ for some $h\in C_{c}(Z^{-\infty})$, so by $\sigma^{*}((\mathcal{B}^{-\infty})_{\text{min}})\subseteq M((\mathcal{B}^{-\infty})_{\text{min}})$ it suffices to show $ \mathcal{F}_{n}\subseteq (\mathcal{B}^{-\infty})_{\text{min}}$ for all $n\in\mathbb{N}$. We proceed by induction on $n$, noting that $n = 0$ is trivial. Suppose $\mathcal{F}_{n-1}\subseteq (\mathcal{B}^{-\infty})_{\text{min}}$ for some $n-1\geq 0$. 
Then, for every $g,h\in\mathcal{F}_{n-1}$, we have $g(\Lambda(h)\circ s) = g\Lambda_{-\infty}(h)$ in $(\mathcal{B}^{-\infty})_{\text{min}}$ and $g(\Lambda(h)\circ r) = g(\Lambda_{-\infty}(h)\circ\sigma)$ in $(\mathcal{B}^{-\infty})_{\text{min}}$. 
Therefore, $\mathcal{F}_{n}\subseteq (\mathcal{B}^{-\infty})_{\text{min}}$.
\end{proof}
\setlength{\parindent}{0cm} \setlength{\parskip}{0.5cm}

\subsection{Dynamical covers}

By Theorem~\ref{thm:correspondence}, we can understand the minimal dynamical restriction functor in terms of continuous data.

\bdefin
We denote by $(T_{\min},\bm{\lambda}_{\min})$ the associated cover of $(T,\Lambda_{\min})$, which is a continuously measured dynamical system. We call this the \emph{minimal dynamical cover} of $(T,\Lambda)$ and denote its underlying topological space by $X_{\min}$ and its structure maps by $\pi \colon X_{\min} \to X$ and $\iota \colon X \to X_{\min}$.
\edefin
The minimal dynamical cover has the following universal property (the proof is analogous to the one of Theorem~\ref{thm:mincover}).
\begin{theorem}
\label{thm:min_dyn_cover}
    Suppose that $(\dot{T},\dot{\bm{\lambda}})$ is another continuously measured dynamical system that covers $(T,\Lambda)$ in the sense that there is a continuous proper surjection $\dot{\pi} \colon \dot{X}\to X$ and a morphism $\dot{\iota} \colon (T,\Lambda)\to (T,\dot{\bm{\lambda}})$ with dense image such that $\dot{\pi} \circ \dot{T} = T \circ \dot{\pi}$ and $\dot{\pi} \circ \dot{\iota} = \id_{Z}.$ Then there is a unique continuous proper morphism $\chi \colon (\dot{T},\dot{\bm{\lambda}})\to (T_{\min},\bm{\lambda}_{\min})$ such that $\pi\circ \chi = \dot{\pi}$ and $\chi \circ \dot{\iota} = \iota$.
\end{theorem}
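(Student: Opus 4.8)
The plan is to transcribe the proof of Theorem~\ref{thm:mincover} into the category of measured dynamical systems, using the single $^*$-algebra $\mathcal{B}_{\min}$ in place of a domain--range pair so that the resulting cover is again a dynamical system. First I would repackage the hypotheses: writing $Z = (T,\id_X \colon X\to X)$ for the topological graph associated with $T$, the data $(\dot{T},\dot{\bm{\lambda}})$ together with $\dot\pi$ and $\dot\iota$ form a cover $(\dot{Z},\dot{\bm{\lambda}})_Z$ of $Z$ in the sense of Section~\ref{ss:cover_top_graph} --- the only non-formal requirement, $\supp(\dot\lambda_{\dot\iota(x)})\subseteq\overline{\dot\iota(T^{-1}(x))}$, holds because $\dot\iota$ is a graph morphism which intertwines the transfer operators. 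Feeding this cover through the equivalence of Theorem~\ref{thm:correspondence} returns a measured dynamical system $(T,\Lambda')$ whose ambient algebra is $\mathcal{B}' \coloneq \dot\iota^*(C_c(\dot{X}))\subseteq\ell^{\infty}_{c}(X)$ and whose transfer operator is the restriction $\Lambda' = \Lambda\vert_{\mathcal{B}'}$ (using the identity $\dot\iota^*\circ\dot{\bm{\lambda}} = \Lambda\circ\dot\iota^*$, which is part of $\dot\iota$ being a morphism).

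The core step is to check that $\mathcal{B}'$ satisfies the three defining closure properties of the minimal dynamical restriction $\mathcal{B}_{\min}$. It is a closed $^*$-subalgebra of $\ell^{\infty}_{c}(X)$ (being a domain, by Theorem~\ref{thm:correspondence}) and it contains $C_c(X)$ because $\dot\iota^*\circ\dot\pi^* = \id_X$ while properness of $\dot\pi$ gives $\dot\pi^*(C_c(X))\subseteq C_c(\dot{X})$. It satisfies $\Lambda(\mathcal{B}')\subseteq\mathcal{B}'$, since $\Lambda(\dot\iota^* g) = \dot\iota^*(\dot{\bm{\lambda}} g)$ and $\dot{\bm{\lambda}}(C_c(\dot{X}))\subseteq C_c(\dot{X})$. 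And it satisfies $C_c(X)\,T^*(\mathcal{B}')\subseteq\mathcal{B}'$: for $\phi\in C_c(X)$ and $g\in C_c(\dot{X})$, the graph-morphism identity $\dot\iota\circ T = \dot{T}\circ\dot\iota$ gives $T^*(\dot\iota^* g) = \dot\iota^*(\dot{T}^* g)$, hence $\phi\cdot T^*(\dot\iota^* g) = \dot\iota^*\big((\dot\pi^*\phi)\cdot(\dot{T}^* g)\big)$, and $(\dot\pi^*\phi)(\dot{T}^* g)\in C_c(\dot{X})$ because $C_c(\dot{X})\,\dot{T}^*(C_c(\dot{X}))\subseteq C_c(\dot{X})$ holds in the continuously measured graph $(\dot{T},\dot{\bm{\lambda}})$. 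Minimality of $\mathcal{B}_{\min}$ now forces $\mathcal{B}_{\min}\subseteq\mathcal{B}'$, so $\id_X$ is a continuous morphism of measured dynamical systems $(T,\Lambda')\to(T,\Lambda_{\min})$.

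To finish, I would apply the cover functor of Theorem~\ref{thm:correspondence} followed by the forgetful functor to the morphism $\id_X\colon(T,\Lambda')\to(T,\Lambda_{\min})$; since the cover of $(T,\Lambda')$ is canonically $(\dot{T},\dot{\bm{\lambda}})$ and that of $(T,\Lambda_{\min})$ is $(T_{\min},\bm{\lambda}_{\min})$, this produces a continuous proper morphism $\chi\colon(\dot{T},\dot{\bm{\lambda}})\to(T_{\min},\bm{\lambda}_{\min})$ (continuity is preserved by Theorem~\ref{thm:correspondence}; properness of $\chi$, which is the Gelfand dual of the inclusion $\mathcal{B}_{\min}\subseteq\mathcal{B}'$, follows exactly as in the proof of Theorem~\ref{thm:mincover}), and the functorial bookkeeping yields $\pi\circ\chi = \id_X\circ\dot\pi = \dot\pi$ and $\chi\circ\dot\iota = \id_X\circ\iota = \iota$. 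Uniqueness is then automatic: two continuous morphisms $\chi,\chi'$ with $\chi\circ\dot\iota = \iota = \chi'\circ\dot\iota$ agree on the dense set $\dot\iota(X)\subseteq\dot{X}$ and hence everywhere. The only place requiring genuine care --- beyond a transcription of Theorem~\ref{thm:mincover} --- is the verification that $\mathcal{B}'$ is closed under $C_c(X)\,T^*(-)$; this is exactly the condition that separates the minimal \emph{dynamical} restriction from the (strictly coarser) minimal graph restriction of $T$ viewed as a topological graph, and it is where one must use that $(\dot{T},\dot{\bm{\lambda}})$ is a genuine dynamical cover (so that $\dot s = \id$ and the edge and vertex spectra coincide) rather than an arbitrary topological-graph cover.
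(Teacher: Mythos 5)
Your proposal is correct and follows exactly the route the paper intends: the paper gives no separate proof of this theorem but states that it is analogous to Theorem~\ref{thm:mincover}, and your argument is precisely that transcription, with the one genuinely new point (that $\mathcal{B}' = \dot\iota^*(C_c(\dot{X}))$ is closed under $C_c(X)\,T^*(-)$, so that minimality is invoked for the dynamical restriction $\mathcal{B}_{\min}$ rather than the graph restriction) correctly identified and verified.
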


We have the following corollary of Proposition~\ref{prop:commuting_functors}.

\begin{corollary}
\label{cor:cover_inf_paths}
If $(Z,\Lambda)$ is a measured topological graph with $s \colon Y\to X$ proper, then the minimal dynamical cover of $(\sigma, \Lambda_{-\infty})$ is isomorphic to $(\sigma, (\hat{\Lambda})_{-\infty})$ as covers, with structure maps given by $\pi^{-\infty} \colon (\hat{Z})^{-\infty}\to Z^{-\infty}$ and $\iota^{-\infty} \colon Z^{-\infty}\to \hat{Z}^{-\infty}$.
In particular, the minimal dynamical cover of a measured dynamical system $(T,\Lambda)$ is isomorphic as covers to the shift on the infinite path space of its minimal graph cover.
\end{corollary}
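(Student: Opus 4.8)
The plan is to derive Corollary~\ref{cor:cover_inf_paths} directly from Proposition~\ref{prop:commuting_functors} together with the categorical correspondence of Theorem~\ref{thm:correspondence} and the explicit form of the paths functor. First I would unwind the definitions: the minimal dynamical cover of $(\sigma, \Lambda_{-\infty})$ is, by definition, the continuously measured dynamical system associated (via Theorem~\ref{thm:correspondence}) to the minimal \emph{dynamical} restriction $(\sigma, (\Lambda_{-\infty})_{\min})$. By Proposition~\ref{prop:commuting_functors}, we have $(\Lambda_{-\infty})_{\min} = (\Lambda_{\min})_{-\infty}$, i.e.\ the minimal dynamical restriction of the path system coincides with the path system of the minimal topological-graph restriction $(Z,\Lambda_{\min})$. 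So it suffices to identify the cover associated to $(\sigma, (\Lambda_{\min})_{-\infty})$ with $(\sigma, (\hat\Lambda)_{-\infty})$, where $\hat\Lambda = \hat{\bm\lambda}$ is the continuous $r$-operator on the minimal graph cover $\hat Z$.

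The key step is then to show that the paths functor $(-)^{-\infty}$ is compatible with the cover correspondence of Theorem~\ref{thm:correspondence}: that is, applying $(-)^{-\infty}$ to the minimally-measured graph $(Z,\Lambda_{\min})$ and then passing to the associated cover gives the same thing as first passing to the cover $(\hat Z,\hat{\bm\lambda})$ and then applying $(-)^{-\infty}$. Concretely, the cover associated to $(Z,\Lambda_{\min})$ has edge/vertex spaces $\Spec(\cD_{\min})$, $\Spec(\cR_{\min})$, which by Lemma~\ref{cc} satisfy $C_c(\Spec(\cD_{\min})) \cong \cD_{\min}$ etc. I would observe that the algebra $\cB^{-\infty}$ built from $(Z,\Lambda_{\min})$ is generated by elementary tensors $g_{-n}\otimes\cdots\otimes g_{-1}$ with $g_i \in \cD_{\min} = C_c(\hat Y)$ (under the Gelfand identification and $\pi_{\hat Y}^*$), and these are exactly the functions generating $\cB^{-\infty}$ for the graph $\hat Z$, pulled back along the proper map $(\pi_{\hat Y})^{-\infty}\colon \hat Z^{-\infty}\to Z^{-\infty}$. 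Hence $\Spec$ of the path algebra of $(Z,\Lambda_{\min})$ is naturally homeomorphic to $\hat Z^{-\infty}$, the $\sigma$-operators match because $\Lambda_{-\infty}$ is defined by the same formula $g_{-n}\otimes\cdots\otimes(g_{-2}(\Lambda(g_{-1})\circ s))$ in both cases and $\hat\Lambda$ extends $\Lambda_{\min}$, and the structure maps $\iota^{-\infty}$, $\pi^{-\infty}$ are precisely the ones induced by applying $(-)^{-\infty}$ to $\iota\colon Z\to\hat Z$ and $\pi\colon\hat Z\to Z$ (functoriality of the paths functor). This gives the first, more general, assertion.

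For the ``in particular'' clause I would specialize $Z$ to the topological graph of a single transformation $(T,\Lambda)$, i.e.\ $Y=X$, $s=\id_X$, $r=T$. The source map is proper (it is the identity), so the paths functor applies, and as noted in the text $(T,\Lambda)^{-\infty}$ is naturally conjugate to $(T,\Lambda)$; under this identification the minimal dynamical restriction of $(T,\Lambda)$ is exactly $(\sigma,(\Lambda_{\min})_{-\infty})$ on $Z^{-\infty}$, where now $Z$ is the \emph{graph} of $T$ and $Z^{-\infty}$ is the inverse limit of $T$. Applying the general statement then says the minimal dynamical cover of $(T,\Lambda)$ is $(\sigma,(\hat\Lambda)_{-\infty})$ on $\hat Z^{-\infty}$, which is the shift on the infinite path space of the minimal graph cover $\hat Z$ of $T$, as claimed.

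The main obstacle I anticipate is the careful bookkeeping in identifying $\Spec$ of the path algebra $\cB^{-\infty}$ of $(Z,\Lambda_{\min})$ with $\hat Z^{-\infty} = (\hat Z)^{-\infty}$: one must check that the Gelfand spectrum of the closed $^*$-algebra generated by elementary tensors of functions from $\cD_{\min}$ is the inverse-limit path space built from $\hat Y = \Spec(\cD_{\min})$, rather than something larger, and that the inductive-limit topology / properness hypotheses are preserved (using that $\pi_{\hat Y}$ is proper so that $(\pi_{\hat Y})^{-\infty}$ is well-defined and proper, and that $\hat s = \id$ in the dynamical case ensures $Z^{-\infty}$ really is the inverse limit of $T_{\min}$). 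Everything else is a formal consequence of the already-established functoriality statements (Proposition~\ref{prop:commuting_functors}, Theorem~\ref{thm:correspondence}) and Lemma~\ref{cc}; there are no hard analytic estimates, only a diagram-chasing verification that two functor compositions agree on objects and structure maps.
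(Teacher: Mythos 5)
Your proposal is correct and follows essentially the same route as the paper, which derives the corollary directly from Proposition~\ref{prop:commuting_functors} together with the Gelfand/cover correspondence of Theorem~\ref{thm:correspondence}; the paper in fact offers no further argument beyond citing that proposition. The step you single out as the main obstacle --- identifying $\Spec$ of the path algebra $(\cB_{\min})^{-\infty}$ with $(\hat{Z})^{-\infty}$ together with its structure maps --- is exactly the verification the paper leaves implicit, and your sketch of it (characters restrict to points of $\hat{Y}=\Spec(\cD_{\min})$ in each coordinate, with the compatibility $\hat{s}(\hat{y}_i)=\hat{r}(\hat{y}_{i+1})$ forced by density of $Z^{-\infty}$) is the right way to fill it in.
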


\subsection{Essential dynamical covers}
\label{ss:ess_dyn_cover}
We now briefly introduce minimal essential dynamical covers in analogy to \S~\ref{ss:Ess}. 
\setlength{\parindent}{0cm} \setlength{\parskip}{0cm}

\bdefin
Let $T \colon X\to X$ be a topological dynamical system. We say a subset $\crX \subseteq X$ is \emph{fully invariant} if $T^{-1}(\crX) = \crX$.
\setlength{\parindent}{0.5cm} \setlength{\parskip}{0cm}

An \emph{essential dynamical cover} of $T \colon X\to X$ is a continuously measured dynamical system $(\mathcal{T},\bm{\lambda})$ together with a continuous proper factor map $\pi \colon \mathcal{X}\to X$ satisfying the property that there is a fully invariant set $\crX \subseteq X$ such that $\crX$ and the (automatically fully invariant) set $\pi^{-1}(\crX)$ are co-meagre and $\pi \colon \pi^{-1}(\crX) \to \crX$ is a bijection.
\edefin
\setlength{\parindent}{0cm} \setlength{\parskip}{0cm}

Note that every essential dynamical cover is an essential cover in the sense of Section \ref{ss:Ess}.
\setlength{\parindent}{0cm} \setlength{\parskip}{0.5cm}

Let us now outline the construction of the minimal essential dynamical cover of $T \colon X \to X$, which is analogous to the construction of minimal essential covers in Section \ref{ss:Ess}.

Let $(T,\Lambda)$ be a measured topological dynamical system. Recall the critical and post-critical sets $C_{\Lambda}$ and $P_{\Lambda}$ from Section \ref{ss:min_ess}. In the setting of dynamical systems, we have $P_{\Lambda} = \bigcup_{n\geq 0} T^{n}(C_{\Lambda})$. 
\bdefin
We define $G_{\Lambda} = \bigcup_{n\leq 0}T^{n}(P_{\Lambda})$ and call it the \emph{saturated critical set} of $(T,\Lambda)$.
\edefin
We have the following analogue of Proposition \ref{continuous_on_regular} (the proof is similar).
\begin{proposition}
    Let $(T,\Lambda)$ be a measured topological dynamical system. Then $\iota \colon X\to X_{\min}$ is continuous at $x\in X\setminus G_{\Lambda}$.
\end{proposition}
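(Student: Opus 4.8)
The plan is to imitate the proof of Proposition~\ref{continuous_on_regular}, with the regular sub-graph $R_\Lambda$ there replaced by the set $X\setminus G_\Lambda$. First I would recall that, by Theorem~\ref{thm:correspondence}, $X_{\min}=\Spec(\cB_{\min})$ and $\iota\colon X\to X_{\min}$ is the map $x\mapsto\ev_x$, and that by Lemma~\ref{cc} the Gelfand transform is a homeomorphism of $\cB_{\min}$ onto $C_c(X_{\min})$ for the inductive limit topologies. Consequently a neighbourhood basis of $\iota(x)$ in $X_{\min}$ is given by the sets $\{\chi\colon\abs{\chi(f_i)-f_i(x)}<\varepsilon,\ i=1,\dots,k\}$ with $f_i\in\cB_{\min}$, whose $\iota$-preimages are $\bigcap_i\{y\colon\abs{f_i(y)-f_i(x)}<\varepsilon\}$; hence $\iota$ is continuous at $x$ if and only if every $f\in\cB_{\min}$ is continuous at $x$ (this is the dynamical analogue of the observation used in the proof of Proposition~\ref{continuous_on_regular}). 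It therefore suffices to prove that every $f\in\cB_{\min}$ is continuous at $x$ whenever $x\notin G_\Lambda$.

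Next I would record the combinatorics of $G_\Lambda=\bigcup_{n\leq 0}T^n(P_\Lambda)=\bigcup_{m\geq 0}T^{-m}(P_\Lambda)$. From the definition, $T^{-1}(G_\Lambda)\subseteq G_\Lambda$, so $x\notin G_\Lambda\Rightarrow Tx\notin G_\Lambda$. Moreover, since $s=\id_X$ and $r=T$, the inclusion $s^{-1}(P_\Lambda)\subseteq r^{-1}(P_\Lambda)$ from Section~\ref{ss:min_ess} reads $T(P_\Lambda)\subseteq P_\Lambda$, and this gives $T(G_\Lambda)\subseteq G_\Lambda$; in particular $G_\Lambda$ is fully invariant, and if $x\notin G_\Lambda$ and $Ty=x$ then $y\notin G_\Lambda$. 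Finally, $x\notin G_\Lambda\Rightarrow x\notin P_\Lambda\supseteq C_\Lambda$, so such an $x$ is not a critical vertex.

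The core of the argument is then an induction on the $^*$-sub-algebras $\cB_n$ (with $\cB_0=C_c(X)$ and $\cB_n=\overline{{}^*\text{-}\alg}\big(C_c(X),\Lambda(\cB_{n-1}),C_c(X)T^*(\cB_{n-1})\big)$), proving that every $f\in\cB_n$ is continuous at every $x\notin G_\Lambda$. The functions in $\ell^\infty_c(X)$ that are continuous at a fixed $x$ form a $^*$-sub-algebra closed under inductive-limit convergence (a uniform limit on a fixed compact set of functions continuous at $x$ is continuous at $x$), so it is enough to treat the three kinds of generators of $\cB_n$. Elements of $C_c(X)$ are continuous everywhere. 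For $h\,(g\circ T)$ with $h\in C_c(X)$ and $g\in\cB_{n-1}$: since $Tx\notin G_\Lambda$, the induction hypothesis makes $g$ continuous at $Tx$, hence $g\circ T$ and $h\,(g\circ T)$ are continuous at $x$. For $\Lambda(g)$ with $g\in\cB_{n-1}\subseteq\cB_{\min}=\cD$: $x$ is not a critical vertex, and every $y\in T^{-1}(x)$ satisfies $y\notin G_\Lambda$ (by the second combinatorial fact), so by the induction hypothesis $g$ is continuous at every such $y$; Lemma~\ref{continuity_criterion} then gives that $\Lambda(g)$ is continuous at $x$. Passing to the inductive-limit closure $\cB_{\min}=\overline{\bigcup_n\cB_n}$ completes the proof.

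The one point that genuinely differs from Proposition~\ref{continuous_on_regular}, and which I expect to be the main (though routine) obstacle, is the choice of the good set. In the graph case it sufficed to delete a single layer ($P_\Lambda$ from the vertices) because $\Lambda(f)(x)$ only depends on $f\vert_{r^{-1}(x)}$, whereas here $\cB_n$ is built from $\Lambda$ (which pulls the relevant evaluation points back along $T$) together with $T^*$ (which pushes them forward along $T$); so the hypothesis on $x$ must be stable under the entire forward orbit and all backward steps along it. The set $X\setminus G_\Lambda$ is precisely the one for which both moves remain inside the hypothesis, and checking the two invariance facts above is exactly what makes the induction close up.
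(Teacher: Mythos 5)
Your proof is correct and is exactly the argument the paper intends: the paper only remarks that "the proof is similar" to Proposition~\ref{continuous_on_regular}, i.e.\ an induction on the algebras $\cB_n$ using Lemma~\ref{continuity_criterion} for the $\Lambda(\cB_{n-1})$ generators and the full invariance of $G_\Lambda$ (both $T(G_\Lambda)\subseteq G_\Lambda$ and $T^{-1}(G_\Lambda)\subseteq G_\Lambda$) to handle the $C_c(X)T^*(\cB_{n-1})$ generators, which is precisely what you carry out. The only cosmetic slip is writing $\cB_{\min}=\cD$ (one only has $\cB_{\min}\subseteq\cB=\cD$, which suffices, and $\cB_{\min}$ is itself a domain so Lemma~\ref{continuity_criterion} applies either way).
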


\bdefin
We call $(T,\Lambda)$ \emph{sparsely critically saturated} if $G_{\Lambda}$ is a meagre set. 
In that case, we let $X_{\ess}  \coloneq  \overline{\iota(X\setminus G_{\Lambda})}$ and define $T_{\ess}$ as the restriction of $T_{\min}$ to $X_{\ess}$. 
Now, $T_{\ess} \colon X_{\ess} \to X_{\ess}$ together with the restriction $\pi_{\ess} \colon X_{\ess} \to X$ of $\pi$ and the restriction $\bm{\lambda}_{\ess} \colon C_{c}(X_{\ess})\to C_{c}(X_{\ess})$ of $\bm{\lambda}_{\min}$ form an essential cover, which we call the \emph{minimal essential dynamical cover}. 
\edefin
\setlength{\parindent}{0cm} \setlength{\parskip}{0cm}

Note that if $T$ is weakly open, then $(T,\Lambda)$ is sparsely critically saturated if and only if it is post-critically sparse.
\setlength{\parindent}{0cm} \setlength{\parskip}{0.5cm}

Just as in Section~\ref{ss:min_ess}, there is an alternative construction of the minimal essential dynamical cover as the Gelfand spectrum of the topological $^*$-algebra $(\mathcal{B}_{\text{min}})_{\ess}$.
Using this alternate description we can describe the minimal essential dynamical cover in terms of the minimal essential graph cover.

\begin{corollary}
\label{cor:inf_path_ess}
    The minimal essential dynamical cover of a sparsely critically saturated measured dynamical system $(T,\Lambda)$ is isomorphic (as covers) to the shift on the infinite path space of the minimal essential graph cover.
\end{corollary}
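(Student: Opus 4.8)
The plan is to combine the alternative Gelfand description of the minimal essential dynamical cover (as $\Spec$ of $(\mathcal B_{\min})_{\ess}$, recorded just before the statement) with the path-space description of the minimal dynamical cover. Write $Z = (T,\id\colon X\to X)$ for the topological graph of $(T,\Lambda)$, let $(\hat Z,\hat{\bm{\lambda}})$ be its minimal graph cover and $\tilde Z = \mathcal Z_{\ess}\subseteq\hat Z$ the minimal essential graph cover (post-critical sparsity of $(Z,\Lambda)$ follows from $G_\Lambda$ meagre, so $\tilde Z$ exists). By Corollary~\ref{cor:cover_inf_paths} we identify $X_{\min} = \Spec(\mathcal B_{\min})$ with $\hat Z^{-\infty}$; under this identification $\iota(x)$ is the path whose $(-k)$-th coordinate is $\iota_Y(T^{k-1}x)$, and $\pi$ is $(\ldots,\hat y_{-2},\hat y_{-1})\mapsto\pi_Y(\hat y_{-1})$ via $Z^{-\infty}\cong X$. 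Since $\tilde Y,\tilde X$ are closed in $\hat Y,\hat X$, the set $\tilde Z^{-\infty} = \menge{(\ldots,\hat y_{-2},\hat y_{-1})\in\hat Z^{-\infty}}{\hat y_i\in\tilde Y\ \forall i}$ is a closed, shift-invariant subspace of $\hat Z^{-\infty}$, and the whole problem reduces to proving $\tilde Z^{-\infty} = X_{\ess}$ inside $\hat Z^{-\infty}$.

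First I would record the combinatorics of the critical sets: $P_\Lambda = \bigcup_{n\ge0}T^n(C_\Lambda)$ is forward invariant and $G_\Lambda = \bigcup_{n\ge0}T^{-n}(P_\Lambda)$ is fully invariant, so $x\notin G_\Lambda$ iff $T^n(x)\notin P_\Lambda$ for all $n\ge0$; hence $X\setminus G_\Lambda\subseteq\crX = X\setminus P_\Lambda$ and $X\setminus G_\Lambda\subseteq\crY = X\setminus T^{-1}(P_\Lambda)$, and $T^{k-1}(x)\in\crY$ for every $x\in X\setminus G_\Lambda$ and $k\ge1$. Since $\iota_Y$ is continuous on $\crY$ (Proposition~\ref{continuous_on_regular}) and $\iota_Y(\crY) = \tilde\pi_Y^{-1}(\crY)$ is the distinguished fibre on which $\tilde\pi$ restricts to the homeomorphism $\tilde\pi^{-1}(R_\Lambda)\to R_\Lambda$, one gets: (i) $\iota(x)\in\tilde Z^{-\infty}$ for $x\in X\setminus G_\Lambda$, whence $X_{\ess} = \overline{\iota(X\setminus G_\Lambda)}\subseteq\tilde Z^{-\infty}$; (ii) $\iota\colon X\setminus G_\Lambda\to\tilde Z^{-\infty}$ is continuous; and (iii) for $x\in X\setminus G_\Lambda$, any $\tilde\pi^{-\infty}$-preimage path of $x$ lies over $T^{k-1}(x)\in\crY$ at each coordinate and is therefore forced into the distinguished fibre, so $(\tilde\pi^{-\infty})^{-1}(x) = \{\iota(x)\}$ and $(\tilde\pi^{-\infty})^{-1}(X\setminus G_\Lambda) = \iota(X\setminus G_\Lambda)$.

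The reverse inclusion $\tilde Z^{-\infty}\subseteq X_{\ess}$ --- equivalently, density of $\iota(X\setminus G_\Lambda)$ in $\tilde Z^{-\infty}$ --- is the main obstacle. The clean route is to show $\tilde\pi^{-\infty}\colon\tilde Z^{-\infty}\to X$ is weakly open: then, as $X\setminus G_\Lambda$ is co-meagre (here one uses that $(T,\Lambda)$ is sparsely critically saturated), $(\tilde\pi^{-\infty})^{-1}(X\setminus G_\Lambda) = \iota(X\setminus G_\Lambda)$ is co-meagre, hence dense, in $\tilde Z^{-\infty}$, and with (i) this yields $X_{\ess} = \tilde Z^{-\infty}$. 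Weak openness of $\tilde\pi^{-\infty}$ must be bootstrapped from weak openness of $\tilde\pi$ (which holds by Proposition~\ref{pi_i_weakly_open}, via the continuous dense section $\iota\colon R_\Lambda\to\tilde Z$) through the presentation $\tilde Z^{-\infty} = \varprojlim_n(\text{length-}n\text{ paths in }\tilde Z)$, which in turn reduces to weak openness of the bonding maps ``forget the oldest edge'', and this follows once the range map $\hat r$ of the cover is open. The latter is automatic in the locally injective, $w=1$ setting that is our primary interest (Example~\ref{example:local_injections}), where $\hat r$ is even a local homeomorphism, and more generally it holds when $\Lambda$ is full by Lemma~\ref{lem:full} and Corollary~\ref{cor:wkly_open}. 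Alternatively one can argue density directly: a path of $\tilde Z^{-\infty}$ lying entirely over $R_\Lambda$ is already $\iota(x)$ with $x\in X\setminus G_\Lambda$ by (iii), and a general path is approximated by such ones, perturbing coordinate by coordinate using density of $\iota_Y(\crY)$ in $\tilde Y$ and openness of $\hat r$ to keep the path relation intact.

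Finally, granted $X_{\ess} = \tilde Z^{-\infty}$, the remaining identification is formal. In the Gelfand picture it says $C_c(\tilde Z^{-\infty}) = (\mathcal B_{\min})_{\ess}$: the restriction homomorphism $\mathcal B_{\min} = C_c(\hat Z^{-\infty})\to C_c(\tilde Z^{-\infty})$ carries the generators $g_{-n}\otimes\cdots\otimes g_{-1}$ of $\mathcal B_{\min} = (\mathcal B^{-\infty})_{\min}$ (Proposition~\ref{prop:commuting_functors}) to the generators of $C_c(\tilde Z^{-\infty})$, so it is onto, and by (i), (iii) and the density just established its kernel is $\menge{f\in\mathcal B_{\min}}{f\vert_{\tilde Z^{-\infty}} = 0} = \menge{f}{f\vert_{X\setminus G_\Lambda} = 0}$ (using $f\vert_{\iota(X\setminus G_\Lambda)}=0\Leftrightarrow f\vert_{X\setminus G_\Lambda}=0$), which is exactly the essential ideal. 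Taking spectra gives $\tilde Z^{-\infty} = \Spec((\mathcal B_{\min})_{\ess}) = X_{\ess}$. The shifts correspond to each other by construction, the structure maps correspond to $\tilde\pi^{-\infty}$ and $\iota^{-\infty}$ by the formulas above, and $(\tilde{\bm{\lambda}})_{-\infty}$ corresponds to $\bm{\lambda}_{\ess}$ because the latter is the restriction of $\bm{\lambda}_{\min} = (\hat{\bm{\lambda}})_{-\infty}$ to $C_c(X_{\ess})$ while $(\tilde{\bm{\lambda}})_{-\infty}$ is built from $\tilde{\bm{\lambda}} = \hat{\bm{\lambda}}\vert_{C_c(\tilde Y)}$, and these two restrictions agree on $\tilde Z^{-\infty}$.
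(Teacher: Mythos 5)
Your reduction to the equality $X_{\ess}=\tilde Z^{-\infty}$ inside $X_{\min}\cong\hat Z^{-\infty}$ is a reasonable point--set reformulation, and the forward half is solid: items (i)--(iii), the inclusion $X\setminus G_\Lambda\subseteq\crY$, and the identity $(\tilde\pi^{-\infty})^{-1}(X\setminus G_\Lambda)=\iota(X\setminus G_\Lambda)$ via Proposition~\ref{continuity_points} are all correct. The genuine gap is the reverse inclusion, i.e.\ the density of $\iota^{-\infty}(X\setminus G_\Lambda)$ in $\tilde Z^{-\infty}$, which you yourself flag as ``the main obstacle'' but do not close in the stated generality. Both routes you propose lean on openness of the cover's range map $\hat r$: the bootstrap of weak openness of $\tilde\pi$ to $\tilde\pi^{-\infty}$ through the inverse limit of finite path spaces needs the bonding maps (``forget the oldest edge'') to be at least weakly open, and the direct perturbation argument needs to modify a path coordinate-by-coordinate while preserving the relation $\tilde s(\tilde y_{-k-1})=\tilde r(\tilde y_{-k})$, which again requires lifting small perturbations through $\tilde r$. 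You justify openness of $\hat r$ only under extra hypotheses (local injectivity with $w=1$, or fullness of $\Lambda$ via Lemma~\ref{lem:full}), but the corollary is asserted for an arbitrary sparsely critically saturated measured dynamical system, where none of these is available. In addition, even granting openness of $\hat r$, the implication ``weakly open bonding maps $\Rightarrow$ weakly open projection from the inverse limit'' is not automatic and would need its own argument. As written, your proof establishes the corollary only for full (or locally injective, $w=1$) systems.

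The paper avoids this entirely by never leaving the algebra level: by Corollary~\ref{cor:cover_inf_paths}, $\mathcal B_{\min}$ is generated by $C_c(X)(T^n)^*\mathcal D$ for $n\geq 0$, where $\mathcal D\cong C_c(\hat Y)$ is the domain of the minimal graph cover; full invariance of $G_\Lambda$ gives $\big((T^n)^*\mathcal D\big)\big|_{X\setminus G_\Lambda}=(T^n)^*\big(\mathcal D|_{X\setminus G_\Lambda}\big)$, so $(\mathcal B_{\min})_{\ess}=\mathcal B_{\min}|_{X\setminus G_\Lambda}$ is generated by $(T^n)^*(\mathcal D|_{X\setminus G_\Lambda})$, and since $\mathcal D|_{X\setminus G_\Lambda}\cong\mathcal D_{\ess}\cong C_c(\tilde Y)$, Gelfand duality identifies $\Spec\big((\mathcal B_{\min})_{\ess}\big)$ with the path space of $\tilde Z$; the point--set density you are struggling with then comes out as a consequence of the isomorphism of algebras rather than being an input. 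I would recommend either adopting this generator computation, or, if you want to keep the topological argument, stating explicitly the hypotheses (fullness, or local injectivity with $w=1$) under which your density argument is actually valid and supplying the missing proof that the finite-path bonding maps are weakly open.
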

\setlength{\parindent}{0cm} \setlength{\parskip}{0cm}

\begin{proof}
    Let $\mathcal{D}$ be the domain of the minimal graph cover of $(T,\Lambda)$. By Corollary \ref{cor:cover_inf_paths}, $\mathcal{B}_{\text{min}}$ is generated by $C_{c}(X)(T^{n})^{*}\mathcal{D}$ for all $n\in\mathbb{N}$. We have (by invariance) $\big ( (T^{n})^{*}\mathcal{D}\big )|_{X\setminus G_{\Lambda}} = (T^{n})^{*}(\mathcal{D}|_{X\setminus G_{\Lambda}})$ for all $n\in\mathbb{N}$. Hence $(\mathcal{B}_{\text{min}})_{\ess} = (\mathcal{B}_{\text{min}})|_{X\setminus G_{\Lambda}}$ is generated by $(T^{n})^{*}(\mathcal{D}|_{X\setminus G_{\Lambda}})$ for $n\geq 0$. Applying the Gelfand correspondence finishes the proof.
\end{proof}
\setlength{\parindent}{0cm} \setlength{\parskip}{0.5cm}

We state the analogous universal property and lifting properties of morphisms for sparsely critically saturated dynamical systems. They are proved in a similar way to the results of Section \ref{ss:min_ess}.
\begin{theorem}\label{thm:universal_ess_dyn}
    Suppose $(\dot{T},\dot{\bm{\lambda}})$ along with $\dot{\pi} \colon \dot{X}\to X$ is an essential dynamical cover of $T$ such that $\dot{\pi}_{*}\dot{\lambda}_{\dot{x}} = \lambda_{\dot{\pi}(\dot{x})}$ for all $\dot{x}$ in a dense subset of $\dot{X}$. Then there is a unique continuous proper morphism $\chi \colon (\dot{T},\dot{\bm{\lambda}}) \to (T_{\ess},\bm{\lambda}_{\ess})$ such that $\pi \circ \chi = \dot{\pi}$.
\end{theorem}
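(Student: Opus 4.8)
The plan is to mimic the proof of Theorem~\ref{universal}, using the dynamical $^*$-sub-algebras $\cB_n$ in place of the pair $\cF_n,\cE_n$, the fully invariant co-meagre set $X\setminus G_{\Lambda}$ in place of the regular sub-graph $R_{\Lambda}$, and the dynamical analogue of Lemma~\ref{bootstrap1} in place of Equation~\eqref{intertwine}.

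First I would reduce to the case where the co-meagre fully invariant set $\crX$ on which $\dot\pi$ restricts to a bijection satisfies $\crX\subseteq X\setminus G_{\Lambda}$: the inverse of $\dot\pi\colon\dot\pi^{-1}(\crX)\to\crX$ is, by Proposition~\ref{continuity_points}, a continuous section with dense image, so $\dot\pi$ is weakly open by Proposition~\ref{pi_i_weakly_open}, and since $G_{\Lambda}$ is meagre and fully invariant, replacing $\crX$ by $\crX\cap(X\setminus G_{\Lambda})$ keeps all its required properties. Uniqueness then follows as in Theorem~\ref{universal}: since $(\dot T,\dot{\bm{\lambda}})$ is continuously measured we have $C_{\dot\Lambda}=\emptyset$, the dynamical analogue of Proposition~\ref{continuous_on_regular} shows $\iota\colon X\setminus G_{\Lambda}\to X_{\min}$ is continuous, and Proposition~\ref{continuity_points} then forces $\pi_{\ess}^{-1}(x)=\{\iota(x)\}$ for every $x\in X\setminus G_{\Lambda}$; hence any continuous morphism $\chi$ with $\pi_{\ess}\circ\chi=\dot\pi$ satisfies $\chi(\dot x)=\iota(\dot\pi(\dot x))$ on the dense (by weak openness of $\dot\pi$) set $\dot\pi^{-1}(X\setminus G_{\Lambda})$, so $\chi$ is unique.

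For existence I would first record the intertwining identity
\[
 \dot\Lambda(f\circ\dot\pi)(\dot x)=\Lambda(f)(\dot\pi(\dot x))\qquad\text{for all }f\in\cB_{\min}\text{ and }\dot x\in\dot\pi^{-1}(X\setminus G_{\Lambda}),
\]
which is the dynamical analogue of Lemma~\ref{bootstrap1} applied to $\dot\pi\colon(\dot T,\dot\Lambda)\to(T,\Lambda)$ --- both are post-critically sparse ($(T,\Lambda)$ because sparse critical saturation is the stronger condition, $(\dot T,\dot\Lambda)$ trivially as $P_{\dot\Lambda}=\emptyset$), and $G_{\dot\Lambda}=\emptyset$ lets one cover all of $\dot\pi^{-1}(X\setminus G_{\Lambda})$. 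Identifying $(\cB_{\min})_{\ess}$ with $\cB_{\min}|_{X\setminus G_{\Lambda}}$ and hence $C_c(X_{\ess})$ with $(\cB_{\min})_{\ess}$ via the Gelfand transform (the alternative description of $X_{\ess}$), I would build a $^*$-homomorphism $\chi^{*}\colon(\cB_{\min})_{\ess}\to C_c(\dot X)$ --- equivalently, by Gelfand duality, a continuous proper map $\chi\colon\dot X\to X_{\ess}$ --- by proving, inductively on $n$, that for each $f\in\cB_n$ there is $\alpha\in C_c(\dot X)$ with $(f\circ\dot\pi)|_{\dot\pi^{-1}(\crX)}=\alpha|_{\dot\pi^{-1}(\crX)}$. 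The base case $f\in C_c(X)$ takes $\alpha=f\circ\dot\pi$. For $f=\Lambda(h)$ with $h\in\cB_{n-1}$, take the $\alpha_h$ already produced for $h$ and set $\alpha=\dot\Lambda(\alpha_h)$: as $\dot\Lambda$ is a $\dot T$-operator and $\dot\pi^{-1}(\crX)$ is $\dot T$-invariant, $\dot\Lambda(\alpha_h)$ agrees with $\dot\Lambda(h\circ\dot\pi)$ on $\dot\pi^{-1}(\crX)$, and the displayed identity gives $\dot\Lambda(h\circ\dot\pi)=\Lambda(h)\circ\dot\pi=f\circ\dot\pi$ there. For $f=\phi\cdot(h\circ T)$ with $\phi\in C_c(X)$, $h\in\cB_{n-1}$, set $\alpha=(\phi\circ\dot\pi)\cdot(\alpha_h\circ\dot T)$, which lies in $C_c(\dot X)$ because $\phi\circ\dot\pi$ has compact support and which equals $f\circ\dot\pi=(\phi\circ\dot\pi)\cdot(h\circ\dot\pi\circ\dot T)$ on the $\dot T$-invariant set $\dot\pi^{-1}(\crX)$; products and adjoints are immediate, and one passes to $\cB_{\min}$ by inductive-limit continuity. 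Density of $\dot\pi^{-1}(\crX)$ makes $\alpha$ unique given the class of $f$, so $\chi^{*}$ is well defined on $(\cB_{\min})_{\ess}$ and kills the essential ideal; the base case yields $\pi_{\ess}\circ\chi=\dot\pi$, the $\Lambda(h)$-case that $\chi$ is a measured morphism, the $(h\circ T)$-case that $\chi$ intertwines $\dot T$ and $T_{\ess}$, and nondegeneracy --- hence properness of $\chi$ --- follows since $\img\chi^{*}\supseteq\dot\pi^{*}(C_c(X))$ and $\dot\pi$ is surjective.

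The main obstacle will be the bookkeeping in the $C_c(X)T^{*}(\cB_{n-1})$-step: one must check that passing to the fully invariant co-meagre set $\dot\pi^{-1}(\crX)$ genuinely converts pullback by $\dot T$ into pullback by $T$ --- this is exactly where \emph{full}, rather than merely backward, invariance of $\crX$ is needed --- and, in tandem, upgrade the intertwining identity from the dense set where $\dot\pi_{*}\dot\lambda=\lambda$ to all of $\dot\pi^{-1}(X\setminus G_{\Lambda})$, which rests on continuity of $\dot\Lambda$ (automatic) together with continuity of the functions in $\cB_{\min}$ off the meagre set $G_{\Lambda}$.
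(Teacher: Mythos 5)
Your proposal is correct and follows essentially the same route as the paper: the authors prove this theorem by the same adaptation of Theorem~\ref{universal} that you describe, substituting the algebras $\cB_n$ for $\cF_n,\cE_n$, the fully invariant co-meagre set $X\setminus G_{\Lambda}$ for $R_{\Lambda}$, and the dynamical form of Lemma~\ref{bootstrap1} for Equation~\eqref{intertwine}. Your explicit treatment of the $C_c(X)T^{*}(\cB_{n-1})$ generators, where full invariance of $\crX$ is used, correctly fills in the step the paper leaves as "a similar argument."
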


\begin{remark}\label{rem:redundant_dyn_measure}
    As in Remark \ref{redundant_measure}, the measure condition in Theorem \ref{thm:universal_ess_dyn} is redundant if the essential cover and the original dynamical system are weighted by $w=1$.
\end{remark}

\begin{theorem}\label{thm:dyn_lifting_res_ess}
    Let $(T,\Lambda)$ and $(T',\Lambda')$ be sparsely critically saturated topological dynamical systems. Suppose that $\chi \colon X\to X'$ is a continuous proper and equivariant map such that $\chi_{*}\lambda_{x} = \lambda'_{\chi(x)}$ for all $x$ in a dense subset and $(X\setminus G_{\Lambda})\cap\chi^{-1}(X'\setminus G_{\Lambda'})$ is dense. Then there is a unique morphism $\chi_{\ess} \colon (T_{\ess}, \bm{\lambda}_{\ess}) \to (T'_{\ess}, \bm{\lambda}'_{\ess})$ such that $\pi_{\ess}' \circ \chi_{\ess} = \chi \circ \pi_{\ess}$.
\end{theorem}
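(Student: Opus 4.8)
The plan is to reduce the statement to the graph-level lifting result of Section~\ref{ss:min_ess} (the theorem stated just after Theorem~\ref{universal}), together with the paths functor and Corollary~\ref{cor:inf_path_ess}. Viewing $(T,\Lambda)$ and $(T',\Lambda')$ as measured topological graphs with $Y = X$, $s = \id_X$ and $r = T$, I would first observe that being sparsely critically saturated forces them to be post-critically sparse as graphs: since $G_{\Lambda} = \bigcup_{n\geq 0}T^{-n}(P_{\Lambda}) \supseteq P_{\Lambda}\cup T^{-1}(P_{\Lambda})$, meagreness of $G_{\Lambda}$ gives meagreness of $P_{\Lambda}$ and of $r^{-1}(P_{\Lambda}) = T^{-1}(P_{\Lambda})$, and likewise for $\Lambda'$. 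The map $\chi$ is a continuous proper graph morphism with $\zeta_Y = \zeta_X = \chi$ (both graph-morphism identities reduce to $\chi\circ T = T'\circ\chi$), it satisfies $(\zeta_Y)_*\lambda_x = \lambda'_{\zeta_X(x)}$ on a dense set by hypothesis, and $R_{\Lambda}\cap\zeta^{-1}(R_{\Lambda'})$ is dense as a sub-graph because (using $P_{\Lambda}\cup T^{-1}(P_{\Lambda})\subseteq G_{\Lambda}$ and similarly for $\Lambda'$) both its vertex set $(X\setminus P_{\Lambda})\cap\chi^{-1}(X'\setminus P_{\Lambda'})$ and its edge set $(X\setminus T^{-1}(P_{\Lambda}))\cap\chi^{-1}(X'\setminus T'^{-1}(P_{\Lambda'}))$ contain the dense set $(X\setminus G_{\Lambda})\cap\chi^{-1}(X'\setminus G_{\Lambda'})$ from the hypothesis. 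Hence the graph-level lifting result applies and yields a unique morphism $\tilde{\chi}\colon (\tilde{Z},\tilde{\bm{\lambda}})\to(\tilde{Z}',\tilde{\bm{\lambda}}')$ with $\tilde{\pi}\circ\tilde{\chi} = \chi\circ\tilde{\pi}$.

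Next, since $s = \id_X$ is proper, I would apply the paths functor $(-)^{-\infty}$ to $\tilde{\chi}$ to obtain a morphism $\tilde{\chi}^{-\infty}$ from $(\sigma,(\tilde{\bm{\lambda}})_{-\infty})$ to $(\sigma',(\tilde{\bm{\lambda}}')_{-\infty})$. By Corollary~\ref{cor:inf_path_ess} these continuously measured dynamical systems are canonically isomorphic as covers to $(T_{\ess},\bm{\lambda}_{\ess})$ and $(T'_{\ess},\bm{\lambda}'_{\ess})$, and transporting $\tilde{\chi}^{-\infty}$ along these isomorphisms produces the required $\chi_{\ess}$. The identity $\pi'_{\ess}\circ\chi_{\ess} = \chi\circ\pi_{\ess}$ then follows by applying $(-)^{-\infty}$ to $\tilde{\pi}\circ\tilde{\chi} = \chi\circ\tilde{\pi}$, because the structure maps of the dynamical covers are the path-space images of those of the graph covers (compare Corollaries~\ref{cor:cover_inf_paths} and \ref{cor:inf_path_ess}). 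For uniqueness I would argue directly, exactly as in the proof of Theorem~\ref{universal}: by the dynamical analogue of Proposition~\ref{continuous_on_regular} the section $\iota$ is continuous on $X\setminus G_{\Lambda}$, so Proposition~\ref{continuity_points} forces $\pi_{\ess}^{-1}(x) = \{\iota(x)\}$ there, and likewise for the primed system; hence any continuous $\chi_{\ess}$ with $\pi'_{\ess}\circ\chi_{\ess} = \chi\circ\pi_{\ess}$ must satisfy $\chi_{\ess}(\iota(x)) = \iota'(\chi(x))$ for all $x$ in the dense set $(X\setminus G_{\Lambda})\cap\chi^{-1}(X'\setminus G_{\Lambda'})$, whose $\iota$-image is dense in $X_{\ess}$, and since $X'_{\ess}$ is Hausdorff this pins down $\chi_{\ess}$.

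An equivalent approach (the same computation unwound) is to imitate the proof of Theorem~\ref{universal} directly at the algebra level, using the identification $(\cB_{\min})_{\ess} = \cB_{\min}\vert_{X\setminus G_{\Lambda}}$ from the proof of Corollary~\ref{cor:inf_path_ess} and the full invariance of $X\setminus G_{\Lambda}$. One first establishes the dynamical analogue of Lemma~\ref{bootstrap1}: from $\chi_*\lambda_x = \lambda'_{\chi(x)}$ on a dense set, density and continuity of $\Lambda(f\circ\chi)$ and $\Lambda'(f)\circ\chi$ on the regular locus give this identity on $(X\setminus C_{\Lambda})\cap\chi^{-1}(X'\setminus C_{\Lambda'})$, whence the intertwining $\Lambda(f\circ\chi) = \Lambda'(f)\circ\chi$ on $(X\setminus G_{\Lambda})\cap\chi^{-1}(X'\setminus G_{\Lambda'})$ for every $f\in\cB'_{\min}$. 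Then one builds $\chi_{\ess}^{*}\colon (\cB'_{\min})_{\ess}\to(\cB_{\min})_{\ess}$ by induction on the generating $^*$-sub-algebras $\cB'_n$, sending $f$ to the unique element of $\cB_{\min}\vert_{X\setminus G_{\Lambda}}$ agreeing with $(f\circ\chi)$ on the dense set above; the inductive step uses $C_c(X)\,T^{*}\cB_{\min}\subseteq\cB_{\min}$, the equivariance identity $(f\circ T')\circ\chi = (f\circ\chi)\circ T$, and the intertwining of $\Lambda$ with $\Lambda'$. Taking Gelfand duals gives $\chi_{\ess}$, which is proper because $\chi_{\ess}^{*}$ maps $C_c(X'_{\ess})$ into $C_c(X_{\ess})$, and a morphism because the equivariance and $\bm{\lambda}$-compatibility relations hold on the dense regular set and hence, by continuity, everywhere.

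I expect the main obstacle to be bookkeeping rather than a genuinely new idea: because $\chi^{-1}$ need not preserve co-meagreness in the absence of weak openness, the regular loci here are only known to be dense, so every identification of functions must be justified by density plus continuity on the co-meagre set $X\setminus G_{\Lambda}$ rather than by restriction to a co-meagre subset, and one must keep careful track of the passage between the graph-level sets $P_{\Lambda}$, $r^{-1}(P_{\Lambda})$ and the dynamical saturated set $G_{\Lambda}$ (and of the full invariance of $X\setminus G_{\Lambda}$, which is what makes the inductive step for $\Lambda$ go through). Establishing the dynamical version of Lemma~\ref{bootstrap1} --- equivalently, that $\chi$ intertwines $\Lambda$ with $\Lambda'$ on the regular locus and that this survives the inductive construction --- is the technical heart; everything else is a routine adaptation of Section~\ref{ss:min_ess}.
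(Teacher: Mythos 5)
Your proposal is correct. The paper gives essentially no argument for this theorem beyond the remark that it is ``proved in a similar way to the results of Section~\ref{ss:min_ess}'', i.e.\ by repeating the induction of Theorem~\ref{universal} and Lemma~\ref{bootstrap1} at the level of the algebras $\cB_n$; your second, algebra-level argument is precisely that intended proof, and you correctly isolate its two delicate points (the dynamical analogue of Lemma~\ref{bootstrap1}, which needs full invariance of $X\setminus G_{\Lambda'}$ so that $f\in\cB'_{\min}$ is continuous on all of $(T')^{-1}(\chi(x))$, and the fact that identifications of restricted functions must be made by density plus continuity on $X\setminus G_{\Lambda}$ rather than by pulling back co-meagre sets along $\chi$). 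Your first route --- checking that sparse critical saturation implies post-critical sparsity of the underlying graphs, that $(X\setminus G_{\Lambda})\cap\chi^{-1}(X'\setminus G_{\Lambda'})$ is contained in both the vertex and edge sets of $R_{\Lambda}\cap\chi^{-1}(R_{\Lambda'})$, and then invoking the graph-level lifting theorem and transporting along the paths functor via Corollary~\ref{cor:inf_path_ess} --- is a genuinely different and arguably cleaner reduction: it obtains existence for free from results already proved, at the cost of having to re-derive uniqueness directly (which you do, correctly, via Proposition~\ref{continuity_points} and the density of $\iota\big((X\setminus G_{\Lambda})\cap\chi^{-1}(X'\setminus G_{\Lambda'})\big)$ in $X_{\ess}$), since a candidate $\chi_{\ess}$ need not a priori arise as some $\tilde{\zeta}^{-\infty}$. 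Either route is complete.
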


\begin{corollary}
    The mapping sending $(T,\Lambda)$ to $(T_{\ess},\bm{\lambda}_{\ess})$ and $\chi \colon X \to X'$ to $\chi_{\ess} \colon (T_{\ess},\bm{\lambda}_{\ess})\to (T'_{\ess},\bm{\lambda}'_{\ess})$ is a functor from the category of sparsely critically saturated topological dynamical systems with continuous, proper, weakly open and equivariant maps $\chi \colon X\to X'$ satisfying $\chi_{*}\lambda_{x} = \lambda'_{\chi(x)}$ for all $x$ in a dense subset of $X$ to the category of continuously measured topological dynamical systems with morphisms given by continuous, proper, weakly open morphisms.
\end{corollary}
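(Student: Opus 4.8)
The plan is to follow the argument for the analogous functoriality statement for topological graphs from Section~\ref{ss:min_ess}, transcribed to the dynamical setting, using the dynamical lifting result Theorem~\ref{thm:dyn_lifting_res_ess} and the fact that a sparsely critically saturated dynamical system $(T,\Lambda)$, viewed as a topological graph, is post-critically sparse: indeed $G_{\Lambda}$ contains both $P_{\Lambda}$ and $T^{-1}(P_{\Lambda})$, so meagreness of $G_{\Lambda}$ forces meagreness of these two sets, and hence Lemma~\ref{bootstrap1} applies to such systems.

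First I would check that the assignment is well defined. On objects this is just the construction of the minimal essential dynamical cover, which is available precisely under sparse critical saturation. For a morphism $\chi$ of the source category, I must verify the hypotheses of Theorem~\ref{thm:dyn_lifting_res_ess}: the set $(X \setminus G_{\Lambda}) \cap \chi^{-1}(X' \setminus G_{\Lambda'})$ is dense, because $X \setminus G_{\Lambda}$ is co-meagre ($G_{\Lambda}$ being meagre), $\chi^{-1}(X' \setminus G_{\Lambda'})$ is co-meagre ($\chi$ being weakly open and $G_{\Lambda'}$ meagre), and $X$ is a Baire space. This produces the continuous proper morphism $\chi_{\ess}$. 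It is moreover weakly open: by construction $\chi_{\ess}$ agrees with $\chi$ on the co-meagre fully invariant set $(X \setminus G_{\Lambda}) \cap \chi^{-1}(X' \setminus G_{\Lambda'})$, identified inside $X_{\ess}$ and $X'_{\ess}$ via the sections $\iota$, $\iota'$, which are continuous there by the dynamical analogue of Proposition~\ref{continuous_on_regular} and weakly open by Proposition~\ref{pi_i_weakly_open}; combining weak openness of $\chi$, $\iota$, $\iota'$ with the Baire category theorem shows that $\chi_{\ess}$ sends every nonempty open set to a set with nonempty interior, i.e., is weakly open. Thus $\chi_{\ess}$ lies in the target category.

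Next I would verify the functor axioms. Identities are preserved: $\id_{X_{\ess}}$ satisfies $\pi_{\ess} \circ \id_{X_{\ess}} = \id_X \circ \pi_{\ess}$, so by the uniqueness clause of Theorem~\ref{thm:dyn_lifting_res_ess} it equals $(\id_X)_{\ess}$. For composition, take composable $\chi \colon X \to X'$ and $\chi' \colon X' \to X''$ in the source category. First, $\chi' \circ \chi$ again lies in the source category: it is continuous, proper and equivariant; it is weakly open, since for a nonempty open set $V$ the set $\chi(V)$ has nonempty interior and $\chi'$ carries that to a set with nonempty interior; and it satisfies $(\chi'\circ\chi)_* \lambda_x = \lambda''_{(\chi'\circ\chi)(x)}$ on a dense set, because by Lemma~\ref{bootstrap1} the relations $\chi_*\lambda_x = \lambda'_{\chi(x)}$ and $\chi'_*\lambda'_{x'} = \lambda''_{\chi'(x')}$ hold on co-meagre subsets of $X$ and $X'$ respectively, so pulling the second back along the weakly open map $\chi$ and intersecting with the first gives a co-meagre, hence dense, subset of $X$ on which $(\chi'\circ\chi)_*\lambda_x = \chi'_*\bigl(\chi_*\lambda_x\bigr) = \chi'_*\lambda'_{\chi(x)} = \lambda''_{(\chi'\circ\chi)(x)}$. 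Consequently $(\chi'\circ\chi)_{\ess}$ satisfies $\pi''_{\ess}\circ(\chi'\circ\chi)_{\ess} = (\chi'\circ\chi)\circ\pi_{\ess}$, and so does $\chi'_{\ess}\circ\chi_{\ess}$, since $\pi''_{\ess}\circ\chi'_{\ess}\circ\chi_{\ess} = \chi'\circ\pi'_{\ess}\circ\chi_{\ess} = \chi'\circ\chi\circ\pi_{\ess}$; by the uniqueness in Theorem~\ref{thm:dyn_lifting_res_ess} the two coincide.

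The step I expect to be most delicate is the weak openness of $\chi_{\ess}$: this is the one point that genuinely requires passing through the (only densely defined, in general discontinuous) embeddings $\iota$, $\iota'$ and then merging weak openness of $\chi$, $\iota$ and $\iota'$ with the Baire category theorem, exactly as in the graph corollary. The remaining verifications are routine transcriptions of the graph-level argument.
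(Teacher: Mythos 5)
Your proposal is correct and follows essentially the same route as the paper: density of $(X\setminus G_{\Lambda})\cap\chi^{-1}(X'\setminus G_{\Lambda'})$ from weak openness and Baire, existence and uniqueness of the lift from Theorem~\ref{thm:dyn_lifting_res_ess}, weak openness of $\chi_{\ess}$ from its agreement with $\chi$ on a co-meagre fully invariant set, and closure of the source category under composition via Lemma~\ref{bootstrap1}, weak openness and the Baire category theorem. The extra observation that sparse critical saturation yields post-critical sparsity (so Lemma~\ref{bootstrap1} applies) is a correct and worthwhile detail that the paper leaves implicit.
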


\bremark
\label{rem:TessTopTrans}
Let $(T,\Lambda)$ be a sparsely critically saturated dynamical system. Assume that $X$ is a compact metric space and $T$ is topologically transitive (i.e., there is a dense forward orbit under $T$). Then any essential cover $(\ti{T},\ti{\bm{\lambda}})$ of $T$ is topologically transitive. Indeed, by \cite[Theorem~5.9~(iv)]{Wal82}, the set of points in $X$ with dense forward orbit under $T$ forms a dense $G_{\delta}$ subset. As the surjection $\pi \colon \ti{X} \to X$ restricts to a homeomorphism on a fully invariant co-meagre subset of the domain and co-domain, it follows that there exists a point $x$ in this restricted co-domain whose forward orbit under $T$ is dense in $X$. Hence the forward orbit under $\ti{T}$ of the unique element in $\pi^{-1}(x)$ is dense in $\ti{X}$, as desired.
\eremark

\subsection{Essential dynamical covers of local injections}

Let us apply our results in the locally injective case, where several reductions on the hypotheses can be made. 
We will assume throughout this section that our dynamical systems are weighted by $w=1$. 
Recall from Example \ref{example:local_injections} that in this case, the minimal graph cover is a local homeomorphism weighted also by $w = 1$, and therefore the same is true for the minimal dynamical cover.

\begin{lemma}
\label{lem:meagre_to_meagre}
    Let $T \colon X\to X$ be a continuous and locally injective map defined on a second countable locally compact Hausdorff space $X$. If $C\subseteq X$ is meagre, then so is $T(C)$.
\end{lemma}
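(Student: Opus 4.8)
The plan is to localize: on small enough pieces $T$ is not merely locally injective but a homeomorphism onto its image, and on such pieces meagreness is transported for free because it is a topological invariant. The only real work is then to glue countably many pieces back together while keeping careful track of \emph{where} ``meagre'' is being measured.

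Concretely, I would first combine local injectivity with local compactness and the Hausdorff property to choose, for each $x \in X$, an open neighbourhood $W_x$ with $\overline{W_x}$ compact and $T|_{\overline{W_x}}$ injective. Since a continuous injection from a compact space into a Hausdorff space is a homeomorphism onto its image, $T|_{\overline{W_x}} \colon \overline{W_x} \to T(\overline{W_x})$ is a homeomorphism; in particular its restriction $T|_{W_x} \colon W_x \to T(W_x)$ is a homeomorphism onto $T(W_x)$, which is open in $T(\overline{W_x})$, while $T(\overline{W_x})$ is compact, hence closed in $X$. As $X$ is second countable it is Lindelöf, so I can extract a countable subcover $\{W_n\}_{n\in\Nz}$ of $\{W_x\}_x$.

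Now $C = \bigcup_n (C \cap W_n)$, so it suffices to show each $T(C \cap W_n)$ is meagre in $X$. I would run the following chain: $C \cap W_n$ is meagre in $X$ and lies inside the open set $W_n$, hence is meagre in the subspace $W_n$; the homeomorphism $T|_{W_n}$ carries it to a set meagre in $T(W_n)$; meagreness passes from an open subspace to the ambient space, so this image is meagre in $T(\overline{W_n})$; and meagreness passes from a closed subspace to the ambient space (for closed $F$ and $N \subseteq F$ one has $\overline{N}^{\,F} = \overline{N}^{\,X}$, so nowhere dense in $F$ implies nowhere dense in $X$), so the image is meagre in $X$. A countable union of meagre sets is meagre, which completes the argument.

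The point that makes the statement non-trivial, and the one to get right, is that one cannot push meagreness forward along $T$ directly, because $T$ need not be open; the workaround is exactly the passage to compact neighbourhoods on which $T$ becomes a homeomorphism. The two permanence facts for meagreness (open subspace $\Rightarrow$ ambient, closed subspace $\Rightarrow$ ambient) are elementary manipulations of closures and interiors — the closed-subspace direction being the less obvious one and worth a line — and everything else is bookkeeping.
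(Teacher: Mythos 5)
Your argument is correct. The global reduction is the same as in the paper: cover $X$ by countably many open sets with compact closure on which $T$ is injective (using second countability/Lindel\"of), and treat the pieces of $C$ separately. The local step, however, is handled quite differently. The paper reduces further to $C$ compact and nowhere dense --- so that $T(C)$ is automatically closed and one only has to rule out interior points --- and then argues by contradiction with a net: if $T(x) \in \inte(T(C))$ with $x \in C \subseteq U$, approximate $x$ by $x_i \in X \setminus C$; eventually $T(x_i) \in T(C)$, hence $T(x_i) = T(c_i)$ with $c_i \in C$ and $x_i \in U$, and injectivity of $T|_U$ forces $x_i = c_i$, a contradiction. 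You instead invoke the standard fact that a continuous injection on a compact Hausdorff space is a homeomorphism onto its (closed) image, and then transport meagreness through the chart $T|_{W_n}$ via the two permanence statements (meagre in an open subspace $\Rightarrow$ meagre in the ambient space, and likewise for a closed subspace), each of which you correctly identify and which check out. Your route is softer and avoids the extra decomposition of $C$ into compact nowhere dense pieces; the paper's is more hands-on, using only the definition of nowhere dense and a net. In both proofs compactness plays the same essential role: it guarantees that the image of the local piece is closed in $X$, which is precisely what allows ``small in the image of the chart'' to be promoted to ``small in $X$.''
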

\setlength{\parindent}{0cm} \setlength{\parskip}{0cm}

\begin{proof}
    By second countability and local compactness, it suffices to prove the lemma for $C\subseteq X$ compact with empty interior, contained in some open neighbourhood $U$ such that $T|_{U}$ is injective. Suppose $T(C)$ does not have empty interior and let $T(x) = y\in \text{int}(T(C))$ for some $x\in U$. Let $(x_{i})$ be a net in $X\setminus C$ converging to $x$. Since $y\in \text{int}(T(C))$, there is $i'$ such that $T(x_{i}) = T(c_{i})$ for some $c_{i}\in C$ and $x_{i}\in U$, for all $i\geq i'$. By injectivity of $T|_{U}$ it follows that $x_{i} = c_{i}$ for all $i\geq i'$, a contradiction to $x_{i}\in X\setminus C$. 
\end{proof}
\setlength{\parindent}{0cm} \setlength{\parskip}{0.5cm}

We will say $T \colon X\to X$ is \emph{open at $x\in X$} if for every neighbourhood $U$ of $x$, we have $T(x)\in\inte(T(U))$. For a locally injective map $T \colon X\to X$, we will call $C = \menge{x\in X}{T\text{ is not open at }x}$ the \emph{critical point set}.

\begin{corollary}\label{cor:loc_inj_meagre}
   Let $T \colon X\to X$ be a continuous and locally injective map on a second countable locally compact Hausdorff space $X$. Then $T$ is sparsely critically saturated if and only if the critical point set $C$ is meagre.
\end{corollary}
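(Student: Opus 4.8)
The plan is to reduce the statement to a single identity: for a continuous locally injective $T$ with the $w=1$ transfer operator $\Lambda(f)(x)=\sum_{y\in T^{-1}(x)}f(y)$, the set of critical vertices and the critical point set $C$ satisfy $C_\Lambda=T(C)$. I would prove the two inclusions separately. For $T(C)\subseteq C_\Lambda$: given $y_0\in C$, use local injectivity together with non-openness of $T$ at $y_0$ to pick an open $U\ni y_0$ on which $T$ is injective and small enough that $T(y_0)\notin\inte(T(U))$ (intersect an injectivity neighbourhood with a neighbourhood witnessing non-openness), and choose $0\le f\in C_c(X)$ with $f(y_0)>0$ and $\supp^\circ(f)\subseteq U$. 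Since $\Lambda$ is full for $w=1$, Lemma~\ref{full} gives $\supp^\circ(\Lambda(f))=T(\supp^\circ(f))\subseteq T(U)$, while $\Lambda(f)(T(y_0))\ge f(y_0)>0$; if $\Lambda(f)$ were continuous at $T(y_0)$ a superlevel set $\{\Lambda(f)>c\}$ with $0<c<\Lambda(f)(T(y_0))$ would be an open neighbourhood of $T(y_0)$ contained in $T(U)$, contradicting $T(y_0)\notin\inte(T(U))$. Hence $T(y_0)\in C_\Lambda$.

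For the reverse inclusion I would show that if $T$ is open at every point of $T^{-1}(x)$ then $\Lambda(f)$ is continuous at $x$ for all $f\in C_c(X)$; by linearity it suffices to treat $0\le f$. The set $T^{-1}(x)\cap\supp(f)$ is closed inside the compact set $\supp(f)$ and discrete by local injectivity, hence finite; write it as $\{y_1,\dots,y_k\}$ and choose pairwise disjoint open $U_j\ni y_j$ on which $T$ is injective. As $T(\supp(f)\setminus\bigcup_jU_j)$ is compact and misses $x$, there is an open $V\ni x$ on which no contributing preimage lies outside $\bigcup_jU_j$, so $\Lambda(f)=\sum_{j=1}^k g_j$ there, where $g_j(x')=f((T|_{U_j})^{-1}(x'))$ when $x'\in T(U_j)$ and $g_j(x')=0$ otherwise. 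Each $g_j$ is continuous at $x$: given $\varepsilon>0$, shrink to $U_j'\ni y_j$ with $|f-f(y_j)|<\varepsilon$ on $U_j'$; openness of $T$ at $y_j$ gives an open $W_j\ni x$ with $W_j\subseteq T(U_j')$, and on $W_j$ injectivity of $T|_{U_j}$ forces $(T|_{U_j})^{-1}(x')\in U_j'$, so $|g_j(x')-f(y_j)|<\varepsilon$. Hence $\Lambda(f)$ is continuous at $x$, so $x\notin C_\Lambda$.

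The remaining ingredient is that \emph{$C$ meagre implies $T$ weakly open}. In the Baire space $X$ the set $X\setminus C$ is dense, so any nonempty open $V$ contains some $x\notin C$; since $V$ is then a neighbourhood of $x$ at which $T$ is open, $T(x)\in\inte(T(V))$, whence $\inte(T(V))\ne\emptyset$. By the characterisation of weak openness for Baire spaces recalled in \S\ref{ss:Ess}, this says exactly that $T$ pulls meagre sets back to meagre sets, and hence so does every iterate $T^k$ by composition.

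Finally I assemble the equivalence. If $G_\Lambda$ is meagre then $C\subseteq T^{-1}(T(C))=T^{-1}(C_\Lambda)\subseteq T^{-1}(P_\Lambda)\subseteq G_\Lambda$ is meagre. Conversely, if $C$ is meagre then $T^m(C)$ is meagre for every $m\ge 0$ by Lemma~\ref{lem:meagre_to_meagre} and induction, so $P_\Lambda=\bigcup_{m\ge 0}T^m(C_\Lambda)\subseteq\bigcup_{m\ge 1}T^m(C)$ is meagre; since each $T^k$ is weakly open, $(T^k)^{-1}(P_\Lambda)$ is meagre, and therefore $G_\Lambda=\bigcup_{k\ge 0}(T^k)^{-1}(P_\Lambda)$ is meagre, i.e.\ $T$ is sparsely critically saturated. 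I expect the delicate point to be the local patching argument for $C_\Lambda\subseteq T(C)$: one must ensure that no preimages outside $\supp(f)$ drift into it as the base point varies, which is precisely what the open set $V$ is arranged to prevent. The rest is routine manipulation of meagre sets via the Baire category theorem and Lemma~\ref{lem:meagre_to_meagre}.
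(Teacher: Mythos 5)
Your proof is correct, and its overall skeleton matches the paper's: both arguments hinge on the identity $T(C)=C_\Lambda$, on Lemma~\ref{lem:meagre_to_meagre} to push meagreness forward along $T$, and on the observation that a meagre critical set forces $T$ to be weakly open so that meagreness also pulls back along iterates of $T$. Two points of difference are worth noting. First, the paper dismisses $T(C)=C_\Lambda$ as ``easy to see,'' whereas you prove both inclusions in full; your local patching argument for $C_\Lambda\subseteq T(C)$ (finiteness of $T^{-1}(x)\cap\supp(f)$, the buffer neighbourhood $V$ excluding stray preimages, and continuity of each local branch $g_j$ at $x$ via openness of $T$ at $y_j$) is exactly the content that needs to be supplied, and it is supplied correctly. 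Second, your converse is more elementary than the paper's: instead of invoking Corollary~\ref{cor:wkly_open} (which routes through the essential-cover machinery) to get weak openness and then conclude $T^{-1}(C_\Lambda)$ is meagre, you simply observe $C\subseteq T^{-1}(C_\Lambda)\subseteq T^{-1}(P_\Lambda)\subseteq G_\Lambda$, which is meagre by hypothesis. One cosmetic remark: in the inclusion $T(C)\subseteq C_\Lambda$, continuity of $\Lambda(f)$ at the single point $T(y_0)$ does not make the superlevel set $\{\Lambda(f)>c\}$ open, only guarantees that it contains an open neighbourhood of $T(y_0)$; that neighbourhood still lies in $\supp^\circ(\Lambda(f))=T(\supp^\circ(f))\subseteq T(U)$, so your contradiction goes through unchanged.
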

\setlength{\parindent}{0cm} \setlength{\parskip}{0cm}

\begin{proof}
    Let us first assume $C$ is meagre. 
    It is easy to see that $T(C) = C_{\Lambda}$, so if $C$ is meagre, then $P_{\Lambda} = \bigcup_{n\geq 1}T^{n}(C)$ is meagre by Lemma \ref{lem:meagre_to_meagre} and the Baire category theorem.
    Since $C$ is meagre, we have $U\setminus C \neq\emptyset$ for every open set $U$. Therefore, we have $\emptyset\neq T(U\setminus C)\subseteq \text{int}(T(U))$, so that $T$ is weakly open. Hence, $G_{\Lambda} = \bigcup_{n\leq 0}T^{n}(P_{\Lambda})$ is meagre, proving that $T$ is sparsely critically saturated.
\setlength{\parindent}{0cm} \setlength{\parskip}{0.5cm}

For the converse assume $T$ is sparsely critically saturated.
Then the minimal essential cover exists and $C_{\Lambda}$ is meagre. 
Hence, by Corollary \ref{cor:wkly_open}, $T$ is weakly open. So $T(C) = C_{\Lambda}$ implies $C\subseteq T^{-1}(C_{\Lambda})$ is meagre.
\end{proof}
\setlength{\parindent}{0cm} \setlength{\parskip}{0.5cm}

\begin{corollary}\label{cor:a1to1}
    Let $X$ and $X'$ be second countable locally compact Hausdorff spaces, $T' \colon X'\to X'$ a local homeomorphism, $T \colon X\to X$ a continuous and locally injective map and $\pi \colon X'\to X$ a proper and continuous equivariant map. Then $\pi$ is an essential cover if and only if $X_0' \coloneq \menge{x'\in X'}{|\pi^{-1}(\pi(x'))| = 1}$ is dense.
\end{corollary}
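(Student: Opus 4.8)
The plan is to characterize when $\pi$ is an essential cover in terms of the density of the set $X_0'$ of points with singleton fibres, using Proposition~\ref{continuity_points} as the bridge. For the ``only if'' direction: if $\pi$ is an essential cover, then by definition there is a fully invariant co-meagre set $\crX \subseteq X$ with $\pi^{-1}(\crX)$ co-meagre and $\pi\colon\pi^{-1}(\crX)\to\crX$ a bijection. In particular every point of $\pi^{-1}(\crX)$ has a singleton fibre, so $\pi^{-1}(\crX)\subseteq X_0'$. Since $\pi^{-1}(\crX)$ is co-meagre in $X'$, which is a Baire space, it is dense, and hence $X_0'$ is dense.

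For the ``if'' direction, suppose $X_0'$ is dense. First I would show that $X_0'$ is automatically co-meagre: since $T'$ is a local homeomorphism, $\pi$ is continuous and proper, and we want to invoke Proposition~\ref{continuity_points}. To do this, note that $X'$ is second countable, so it suffices to realize $X'$ as the cover $\cW$ over $W \coloneq X$ with a dense section $\iota$ defined on a subset $C\subseteq X$. The natural choice is to take $\iota$ to be the inverse of the bijection $\pi|_{X_0'}$, with domain $C \coloneq \pi(X_0')$; density of $X_0'$ and continuity of $\pi$ give that $\iota(C)=X_0'$ is dense in $X'$ and $\pi\circ\iota=\id_C$. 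By Proposition~\ref{continuity_points}, $\iota$ is continuous at each $w\in C$, i.e.\ $X_0' = \iota(C) = \pi^{-1}(C)$. Now I need $C$ itself to be co-meagre in $X$; this should follow from the observation that the complement of $C$ in $X$ is precisely the image under $\pi$ of the set of $x'\in X'$ with $|\pi^{-1}(\pi(x'))|\geq 2$, together with a second-countability and properness argument showing this set is meagre — here is where I would use that $\pi$ is proper with second countable fibres to decompose the ``multi-valued locus'' into countably many pieces each of which maps to a nowhere dense set. Once $C$ is co-meagre, $X_0' = \pi^{-1}(C)$ is co-meagre in $X'$.

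It remains to produce a fully invariant co-meagre set. The set $C=\pi(X_0')$ need not be fully invariant, but I would pass to $\crX \coloneq \bigcap_{n\geq 0} T^{-n}(C) \cap \bigcap_{n\geq 0} T^{n}(\cdots)$ — more precisely, take the largest fully invariant subset of $C$, which since $T$ is continuous and (by Corollary~\ref{cor:loc_inj_meagre} or Lemma~\ref{lem:meagre_to_meagre}, using local injectivity) preserves meagreness under forward images and trivially under preimages, is still co-meagre. Then $\pi^{-1}(\crX)$ is fully invariant (by equivariance of $\pi$) and co-meagre, and $\pi$ restricts to a bijection $\pi^{-1}(\crX)\to\crX$ because $\pi^{-1}(\crX)\subseteq \pi^{-1}(C)=X_0'$ consists of singleton fibres. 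This exhibits $\pi$ as an essential dynamical cover, and since $T'$ is a local homeomorphism the measured structure (with $w=1$) is automatically continuous, so $(\mathcal{T}',\bm{\lambda}')$ is a continuously measured dynamical system as required.

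The main obstacle I anticipate is the step showing $C=\pi(X_0')$ is co-meagre, i.e.\ that the multi-valued locus $X'\setminus X_0'$ maps to a meagre set in $X$. Density of $X_0'$ only says $X'\setminus X_0'$ is co-nowhere-dense in $X'$, and one must leverage properness of $\pi$ (compact fibres), local injectivity of $T$, and second countability to upgrade this to meagreness of the image; I would handle it by covering $X'$ with countably many open sets on which suitable local sections of $\pi$ exist (using that $\pi$ is a proper continuous surjection onto a second countable space, so it is a quotient map with compact fibres) and arguing that on each such piece the relevant bad set is closed with empty interior, then applying the Baire category theorem together with Lemma~\ref{lem:meagre_to_meagre} to control forward images when intersecting to get full invariance.
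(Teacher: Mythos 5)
Your overall architecture (only-if trivial; if-direction via Proposition~\ref{continuity_points}, upgrade density to co-meagreness, then saturate to a fully invariant co-meagre set) matches the paper's, but there are two problems in the if-direction, one of which is an outright error rather than a gap.

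The error is the claim that $T$ preserves meagreness ``trivially under preimages.'' A continuous, even locally injective, map can pull a nowhere dense set back to a set with nonempty interior (take $X$ a disjoint union of $[0,1]$ and an isolated point $p$, with $T$ the identity on $[0,1]$ and $T(p)=\tfrac12$: then $T^{-1}(\{\tfrac12\})$ contains the open set $\{p\}$). Consequently your saturation $\bigcap_{n}T^{-n}(C)$ is only guaranteed co-meagre if $T$ is \emph{weakly open}, and weak openness of $T$ is not among the hypotheses --- it has to be derived. This is exactly why the paper's proof takes a detour: it first saturates \emph{upstairs}, forming $G_0'=\bigcup_{n,m}(T')^{-n}((T')^m(X'\setminus X_0'))$, which is meagre because $T'$ is a local homeomorphism (hence weakly open) and Lemma~\ref{lem:meagre_to_meagre} controls forward images; it then transports the resulting fully $T'$-invariant co-meagre set down through the homeomorphism $\pi|_{X_0'}\colon X_0'\to \pi(X_0')$ of Proposition~\ref{continuity_points}, concludes from this that $T\colon X\to X$ is weakly open, and only then runs the same saturation downstairs and pulls the result back up (using weak openness of $\pi$, from Proposition~\ref{pi_i_weakly_open}, to see that the preimage is co-meagre). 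Your proof needs this two-stage argument, or some substitute proof that $T$ is weakly open, before the intersection $\bigcap_n T^{-n}(C)$ can be used.

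The second issue is the step you yourself flag: showing that $X_0'$ and $X_0=\pi(X_0')$ are co-meagre rather than merely dense. Your proposed route via ``countably many open sets on which local sections of $\pi$ exist'' does not work as stated --- a proper continuous surjection need not admit local sections anywhere off the singleton-fibre locus, and the existence of such sections is essentially what you are trying to prove. The correct tool is upper semicontinuity of the fibres of a proper map: for a metric $d$ on the second countable space $X'$, each set $\menge{x\in X}{\mathrm{diam}\, \pi^{-1}(x)<1/n}$ is open and contains the dense set $X_0$, so $X_0$ is a dense $G_\delta$, and similarly for $X_0'=\pi^{-1}(X_0)$. This is precisely the content of the result the paper cites at this point, namely \cite[Lemma~2.2]{BOT06}.
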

\setlength{\parindent}{0cm} \setlength{\parskip}{0cm}

\begin{proof}
The ``only if'' direction is trivial, so we prove the ``if'' direction.
If $X'_{0}$ is dense, then \cite[Lemma~2.2]{BOT06} implies that $X'_{0}$ and $X_{0} \coloneq  \pi(X'_{0})$ are co-meagre. Set $C'_{0} = X'\setminus X'_{0}$. By Lemma \ref{lem:meagre_to_meagre}, weak openness of $T'$ and the Baire category theorem, it follows that $G'_{0} = \bigcup_{n,m\geq 0}(T')^{-n}((T')^{m}(C'_{0}))$ is meagre. Therefore $X'_{00} \coloneq X\setminus G'_{0}\subseteq X'_{0}$ is a co-meagre and fully $T'$-invariant subset of $X'$. Since $\pi \colon X'_{0}\to X_{0}$ is a homeomorphism (Proposition \ref{continuity_points}), it follows that $X_{00} \coloneq  \pi(X'_{00})$ is co-meagre in $X_{0}$ and therefore co-meagre in $X$. By $T'$-invariance of $X'_{00}$ we have $T(X_{00}) \subseteq X_{00}$. Hence $T \colon X_{00}\to X_{00}$ is weakly open, so that $T \colon X\to X$ is weakly open. The same procedure as above (now using weak openness of $T$ and Lemma \ref{lem:meagre_to_meagre}) produces a fully $T$-invariant co-meagre subspace $X_{000}\subseteq X_{00}$. Then $X'_{000} \coloneq  \pi^{-1}(X_{000})$ is a fully $T'$-invariant and co-meagre (by weak openness of $\pi$) subset of $X'$. It follows that $\pi$ is an essential cover.
\end{proof}
\setlength{\parindent}{0cm} \setlength{\parskip}{0.5cm}

Now we can state our theorem for locally injective maps. For a map $f \colon X\to Y$ between topological spaces, we will say $f$ is \emph{almost one-to-one} if $\menge{x\in X}{|f^{-1}(f(x))| = 1}$ is dense in $X$.

\begin{theorem}\label{thm:up_loc_inj}
   Suppose $T \colon X\to X$ is locally injective and $X$ is a second countable locally compact Hausdorff space. If the critical point set of $T$ is meagre, then the minimal essential cover $T_{\ess} \colon X_{\ess} \to X_{\ess}$ exists and is a local homeomorphism defined on a second countable locally compact Hausdorff space. The factor map $\pi_{\ess} \colon (T_{\ess},X_{\ess})\to (T, X)$ is continuous, almost one-to-one and proper.

If $(T', X')$ is another local homeomorphism defined on a second countable locally compact Hausdorff space with continuous, almost one-to-one and proper factor map $\pi' \colon (T', X')\to (T,X)$, then there is a unique continuous factor map $\chi \colon (T', X')\to (T_{\ess}, X_{\ess})$ such that $\pi' = \pi_{\ess} \circ \chi$. Moreover, $\chi$ is proper, almost one-to-one and the restriction $(T')^{-1}(x')\to T_{\ess}^{-1}(\chi(x'))$ of $\chi$ is a bijection, for all $x'\in X'$.
\end{theorem}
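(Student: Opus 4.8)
The plan is to assemble the statement from the general machinery already developed, specializing it to the locally injective, $w=1$ setting. First I would establish the existence and structure of $T_{\ess}$. By Corollary~\ref{cor:loc_inj_meagre}, the hypothesis that the critical point set $C$ is meagre is equivalent to $(T,\Lambda)$ (with $\Lambda$ induced from $w=1$) being sparsely critically saturated, so the minimal essential dynamical cover $T_{\ess} \colon X_{\ess}\to X_{\ess}$ exists. By Example~\ref{example:local_injections} and the remarks following it, the range map of the minimal graph cover is a local homeomorphism weighted by $w=1$, hence so is $T_{\min}$, and therefore so is its restriction $T_{\ess}$ to the closed invariant subspace $X_{\ess}$. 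Second countability of $X_{\ess}$ follows because $X_{\min}$ is a projective limit (Section~\ref{ss:DescCovers}) of the spectra $X_n$, each of which is second countable when $X$ is (the $X_n$ sit inside $X\times\mathcal{M}(Y_{n-1})$ by Proposition~\ref{prop:Xn=XxM}, and Radon-measure spaces on second countable spaces, with the weak* topology, are second countable), and a closed subspace of a second countable space is second countable; local compactness and Hausdorffness are inherited from $X_{\min}$. The map $\pi_{\ess}$ is continuous and proper by construction (restriction of the proper continuous $\pi$), and it is almost one-to-one because, by Proposition~\ref{continuity_points}, $\pi_{\ess}$ is injective precisely on the dense set $\iota(X\setminus G_\Lambda)$; equivariance of $\pi_{\ess}$ is part of its being a morphism of dynamical systems.

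Next I would prove the universal property. Given another local homeomorphism $(T',X')$ on a second countable locally compact Hausdorff space with a continuous, almost one-to-one, proper factor map $\pi'$, the key observation is that an almost one-to-one proper map onto a locally injective base, with local-homeomorphism total space, is automatically an \emph{essential cover} in the sense of Section~\ref{ss:Ess}: this is exactly Corollary~\ref{cor:a1to1}, whose hypothesis ``$X'_0$ dense'' is the definition of $\pi'$ being almost one-to-one. Moreover, since both $(T',X')$ and $(T,X)$ are weighted by $w=1$, the pushforward measure condition $\pi'_*\lambda'_{x'} = \lambda_{\pi'(x')}$ needed to invoke the universal property is automatic (Remark~\ref{rem:redundant_dyn_measure}, using that $\pi'$ restricts to a bijection between dense invariant subgraphs and hence, on those, to a bijection of fibres). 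Therefore Theorem~\ref{thm:universal_ess_dyn} applies and yields a unique continuous proper morphism $\chi \colon (T',X')\to(T_{\ess},X_{\ess})$ with $\pi_{\ess}\circ\chi = \pi'$.

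It remains to verify the finer properties of $\chi$: that it is almost one-to-one, proper, and restricts to bijections on fibres. Properness follows formally from properness of $\pi'$ and $\pi_{\ess}$ together with $\pi_{\ess}\circ\chi=\pi'$ (if $K\subseteq X_{\ess}$ is compact, $\chi^{-1}(K)\subseteq (\pi')^{-1}(\pi_{\ess}(K))$, which is compact). For the almost one-to-one property and the fibrewise bijectivity, I would work over the dense fully invariant co-meagre subsets where everything trivializes: on the locus where $\pi'$ and $\pi_{\ess}$ are both bijective onto their images, $\chi$ is forced to agree with the composite $(\pi_{\ess}|)^{-1}\circ\pi'$ of homeomorphisms, hence is a homeomorphism there and in particular injective on a dense set. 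For the fibre statement, since $\chi$ is a morphism of $w=1$ systems, by the characterization in Section~\ref{measured_graphs} $\chi$ automatically restricts to a \emph{bijection} $(T')^{-1}(x')\to T_{\ess}^{-1}(\chi(x'))$ for every $x'\in X'$ — this is precisely what a measured graph morphism between $w=1$ graphs means. So this last point is not an extra argument but a consequence of $\chi$ being a morphism in the relevant category.

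The main obstacle I anticipate is not any single hard step but rather the bookkeeping needed to confirm that the abstract universal property (Theorem~\ref{thm:universal_ess_dyn}), phrased for essential covers with a pushforward-measure compatibility, genuinely covers the present hypotheses: one must check carefully that ``almost one-to-one'' is exactly the density hypothesis of Corollary~\ref{cor:a1to1}, that the $w=1$ normalization makes the measure condition vacuous (Remarks~\ref{redundant_measure} and~\ref{rem:redundant_dyn_measure}), and that a local homeomorphism $T'$ really does give rise to the $w=1$ transfer operator with the right domain/range so that $\pi'$ is a morphism in the first place. Once these identifications are in place, the theorem is essentially a translation of Theorem~\ref{thm:universal_ess_dyn} and Corollary~\ref{cor:a1to1} into the locally injective language, with the fibre-bijectivity handed to us for free by the definition of morphism in the $w=1$ category.
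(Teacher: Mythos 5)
Your proposal is correct and follows essentially the same route as the paper: Corollary~\ref{cor:loc_inj_meagre} for existence, Corollary~\ref{cor:a1to1} to recognize $(T',X')$ as an essential cover, Remark~\ref{rem:redundant_dyn_measure} to discharge the measure condition, Theorem~\ref{thm:universal_ess_dyn} for the universal property, and the observation that fibrewise bijectivity is just the statement that $\chi$ is a $w=1$ measured morphism. The only point to tighten is your second-countability argument: the full space of Radon measures with the weak* topology is not second countable in general, so you should either restrict to the (locally uniformly bounded, hence weak*-metrizable) set of measures actually arising, or argue as the paper does via separability of $C_c(X_{\ess})$.
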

\setlength{\parindent}{0cm} \setlength{\parskip}{0cm}

\begin{proof}
    This follows immediately from our description of sparsely critically saturated dynamical systems (Corollary \ref{cor:loc_inj_meagre}), our description of essential covers (Corollary \ref{cor:a1to1}) and  the universal property of the minimal essential cover Theorem \ref{thm:universal_ess_dyn}. The bijection property is equivalent to $\chi$ being a weighted morphism (relative to weights $w=1$). Using the fact that a locally compact Hausdorff space is second countable if and only if $C_{c}(X)$ is separable (contains a countable dense set), it is easy to see that if $X$ is second countable, then so is $X_{\ess}$.
\end{proof}
\setlength{\parindent}{0cm} \setlength{\parskip}{0.5cm}

By the discussion in the introduction, we also obtain a lifting result that can be seen as a non-invertible analogue of a lifting result of Putnam for Smale spaces \cite[Theorem~1.1]{Put}.

\begin{theorem}\label{thm:loc_inj_lifting_thm}
    Suppose $(T, X)$ and $(T', X')$ are locally injective dynamical systems defined on second countable locally compact Hausdorff spaces with meagre critical point sets, and suppose $\chi \colon (T, X)\to (T', X')$ is a continuous, almost one-to-one and proper factor map. Then there is a unique continuous factor map $\chi_{\ess} \colon (T_{\ess}, X_{\ess}) \to (T'_{\ess}, X'_{\ess})$ such that $\pi_{\ess} \circ \chi_{\ess} = \chi \circ \pi'_{\ess}$. 
    Moreover, $\chi_{\ess}$ is almost one-to-one, proper and $\chi_{\ess}$ restricts to bijections $T_{\ess}^{-1}(\tilde{x}) \to (T'_{\ess})^{-1}(\chi_{\ess}(\tilde{x}))$, for all $\tilde{x} \in X_{\ess}$.
\end{theorem}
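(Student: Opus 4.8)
The plan is to derive this from the general lifting result Theorem~\ref{thm:dyn_lifting_res_ess}, applied to $(T,\Lambda)$ and $(T',\Lambda')$ with $\Lambda$, $\Lambda'$ induced by the constant weight $w=1$; recall that then, $T$ and $T'$ being locally injective, the minimal essential dynamical covers $T_{\ess}$, $T'_{\ess}$ are local homeomorphisms, again weighted by $w=1$. First I would verify the hypotheses of Theorem~\ref{thm:dyn_lifting_res_ess}: that $(T,X)$ and $(T',X')$ are sparsely critically saturated, that $\chi$ is continuous, proper and equivariant, that $\chi_{*}\lambda_{x}=\lambda'_{\chi(x)}$ for $x$ in a dense subset of $X$, and that $(X\setminus G_{\Lambda})\cap\chi^{-1}(X'\setminus G_{\Lambda'})$ is dense. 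The first is immediate from Corollary~\ref{cor:loc_inj_meagre}, since the critical point sets are meagre, and the second is assumed.

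For the measure condition, note that with $w=1$ the measure $\lambda_{x}$ is the counting measure on the discrete fibre $T^{-1}(x)$, so $\chi_{*}\lambda_{x}=\lambda'_{\chi(x)}$ amounts to $\chi$ restricting to a bijection $T^{-1}(x)\to (T')^{-1}(\chi(x))$ (equivariance already sends the first fibre into the second). This restriction is onto whenever $\chi^{-1}(\chi(x))=\gekl{x}$ --- lift $y'\in (T')^{-1}(\chi(x))$ to some $y$ with $\chi(y)=y'$; then $\chi(T(y))=T'(y')=\chi(x)$ forces $T(y)=x$ --- and it is injective whenever every $y\in T^{-1}(x)$ satisfies $\abs{\chi^{-1}(\chi(y))}=1$. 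Since $\chi$ is an almost one-to-one continuous proper surjection between second countable locally compact Hausdorff spaces, \cite[Lemma~2.2]{BOT06} (used already in the proof of Corollary~\ref{cor:a1to1}) tells us that $X_{>1}\coloneq\menge{y\in X}{\abs{\chi^{-1}(\chi(y))}>1}$ is meagre, whence $T(X_{>1})$ is meagre by Lemma~\ref{lem:meagre_to_meagre}; writing $X_{1}\coloneq X\setminus X_{>1}$, the fibre restriction is therefore a bijection for every $x$ in the co-meagre set $X_{1}\setminus T(X_{>1})$, which gives the measure condition. The same use of \cite[Lemma~2.2]{BOT06} shows $\chi$ is weakly open (as in the proof of Corollary~\ref{cor:a1to1}), so $\chi^{-1}(X'\setminus G_{\Lambda'})$ is co-meagre and its intersection with the co-meagre set $X\setminus G_{\Lambda}$ is dense. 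Theorem~\ref{thm:dyn_lifting_res_ess} then produces the unique morphism $\chi_{\ess}\colon (T_{\ess},\bm{\lambda}_{\ess})\to(T'_{\ess},\bm{\lambda}'_{\ess})$ with $\pi'_{\ess}\circ\chi_{\ess}=\chi\circ\pi_{\ess}$, and since $\chi$ has just been shown to lie in the source category of the functoriality corollary following Theorem~\ref{thm:dyn_lifting_res_ess}, $\chi_{\ess}$ is continuous, proper and weakly open.

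It then remains to read off the extra properties, all via the co-meagre sets $\crX\subseteq X$, $\crX'\subseteq X'$ over which $\pi_{\ess}$, $\pi'_{\ess}$ are injective (so that $\pi_{\ess}^{-1}(\crX)$ and $(\pi'_{\ess})^{-1}(\crX')$ are co-meagre by weak openness of the projections, Proposition~\ref{pi_i_weakly_open}). For surjectivity of $\chi_{\ess}$ --- hence that it is a genuine factor map --- take $y'\in\crX'$, lift it first along $\chi$ and then along $\pi_{\ess}$ to some $\tilde x$; then $\pi'_{\ess}(\chi_{\ess}(\tilde x))=\chi(\pi_{\ess}(\tilde x))=y'$ forces $\chi_{\ess}(\tilde x)$ to be the unique $\pi'_{\ess}$-preimage of $y'$, so $\chi_{\ess}(X_{\ess})$ contains the dense set $(\pi'_{\ess})^{-1}(\crX')$ and, being closed by properness, equals $X'_{\ess}$. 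For almost-one-to-oneness of $\chi_{\ess}$: for $\tilde x\in\pi_{\ess}^{-1}(\crX\cap X_{1})$ (co-meagre), writing $x=\pi_{\ess}(\tilde x)$, any $\tilde z$ with $\chi_{\ess}(\tilde z)=\chi_{\ess}(\tilde x)$ satisfies $\chi(\pi_{\ess}(\tilde z))=\chi(x)$, hence $\pi_{\ess}(\tilde z)\in\chi^{-1}(\chi(x))=\gekl{x}$ and so $\tilde z\in\pi_{\ess}^{-1}(x)=\gekl{\tilde x}$. Finally, the fibrewise-bijection clause is automatic: $\chi_{\ess}$ is a morphism of the continuously measured dynamical systems $(T_{\ess},\bm{\lambda}_{\ess})$, $(T'_{\ess},\bm{\lambda}'_{\ess})$, which are weighted by $w=1$, and for $w=1$ weighted graphs a measured graph morphism is precisely one restricting to bijections on the $r$-fibres (see \S~\ref{measured_graphs}), i.e.\ $\chi_{\ess}$ restricts to bijections $T_{\ess}^{-1}(\tilde x)\to (T'_{\ess})^{-1}(\chi_{\ess}(\tilde x))$.

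The step I expect to be the main obstacle is the measure condition $\chi_{*}\lambda_{x}=\lambda'_{\chi(x)}$ on a dense set, that is, turning ``$\chi$ is almost one-to-one'' into ``$\chi$ restricts to fibrewise bijections on a dense set''. The crux is the interplay between \cite[Lemma~2.2]{BOT06} (the multi-point set $X_{>1}$ is meagre) and Lemma~\ref{lem:meagre_to_meagre} (meagreness is preserved under the locally injective push-forward $T$); once this is in hand, everything else is bookkeeping with co-meagre sets and the Baire category theorem.
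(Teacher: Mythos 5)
Your proposal is correct and follows essentially the same route as the paper: both reduce to the general lifting result (Theorem~\ref{thm:dyn_lifting_res_ess}) by using \cite[Lemma~2.2]{BOT06} together with Lemma~\ref{lem:meagre_to_meagre} and the Baire category theorem to show that $\chi$ is weakly open and restricts to fibre bijections $T^{-1}(x)\to (T')^{-1}(\chi(x))$ on a co-meagre set, which with $w=1$ is exactly the measure condition $\chi_*\lambda_x=\lambda'_{\chi(x)}$. The only cosmetic difference is that the paper produces a fully invariant co-meagre set $X_0=\chi^{-1}(X_0')$ on which $\chi$ is bijective (as in Corollary~\ref{cor:a1to1}) whereas you work directly with the co-meagre set $X_1\setminus T(X_{>1})$, and you additionally write out the verification of the ``moreover'' clause (surjectivity, almost one-to-one, fibrewise bijections) that the paper leaves implicit.
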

\setlength{\parindent}{0cm} \setlength{\parskip}{0cm}

\begin{proof}
    Using the weak open property of $T$, $T'$ and Lemma \ref{lem:meagre_to_meagre}, we can find (as in Corollary \ref{cor:a1to1}) a co-meagre and fully $T'$-invariant subset $X'_{0}\subseteq X'$ such that $X_{0}  \coloneq  \chi^{-1}(X'_{0})$ is co-meagre (as well as fully $T$-invariant) and $\chi$ restricts to a bijection $X_{0}\to X'_{0}$. Therefore $\chi$ restricts to bijections $T^{-1}(x_{0})\to (T')^{-1}(\chi(x_{0}))$, for all $x_{0}\in X_{0}$, and $\chi$ is weakly open. We may therefore apply the lifting result of Theorem~\ref{thm:dyn_lifting_res_ess}.
\end{proof}
\setlength{\parindent}{0cm} \setlength{\parskip}{0.5cm}

\subsection{Positive expansivity} \label{sec:T-expansive}

Expansivity is a fundamental concept in dynamics with applications to symbolic dynamics and thermodynamic formalism (see for example \cite{PP}). Recall that a continuous map $T\colon X\to X$ on a compact metrizable space $X$ is \emph{positively expansive} if there is a metric $d$ and a constant $c>0$ such that for every two distinct $x, z\in X$ there is $n\in \Nz$ such that $d(T^n x, T^n z) \geq c$. We shall need the following reformulation of positive expansivity. Recall that a compact Hausdorff space is metrizable if and only if it is second countable.

\begin{lemma}
\label{lem:expansive}
  A continuous map $T\colon X\to X$ on a metrizable compact Hausdorff space $X$ is positively expansive if and only if there is a finite open cover $\{U_{i}\}^{m}_{i=1}$ of $X$ such that $T^{n}(x), T^{n}(x')\in U_{i_{n}}$ for some $i_{n}\leq m$, for all $n\geq 0$ implies $x= x'$.
\end{lemma}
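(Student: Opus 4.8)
The plan is to prove both implications by a diameter/Lebesgue-number argument, with compactness of $X$ doing the essential work in each direction.

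For the forward implication, suppose $T$ is positively expansive with respect to a metric $d$ and constant $c>0$. Since $(X,d)$ is compact, the open balls of radius $c/3$ form an open cover of $X$ admitting a finite subcover $\{U_{i}\}_{i=1}^{m}$; each $U_{i}$ then has $d$-diameter at most $2c/3<c$. I claim this cover works. If $x,x'\in X$ satisfy $T^{n}(x),T^{n}(x')\in U_{i_{n}}$ for suitable $i_{n}\leq m$ and all $n\geq 0$, then $d(T^{n}x,T^{n}x')<c$ for every $n\in\Nz$, which by the definition of positive expansivity (its contrapositive) forces $x=x'$.

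For the reverse implication, suppose $\{U_{i}\}_{i=1}^{m}$ is a finite open cover of $X$ with the stated separation property. Fix any metric $d$ compatible with the topology of $X$ (possible since $X$ is metrizable). By compactness of $X$ and the Lebesgue number lemma, there is $c>0$ such that every subset of $X$ of $d$-diameter less than $c$ is contained in some $U_{i}$. I will show $T$ is positively expansive for this $d$ and this $c$. Suppose toward a contradiction that $x\neq x'$ while $d(T^{n}x,T^{n}x')<c$ for all $n\geq 0$. Then for each $n$ the two-point set $\{T^{n}x,T^{n}x'\}$ has $d$-diameter less than $c$, so it lies in some $U_{i_{n}}$; by hypothesis this forces $x=x'$, a contradiction. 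Hence for distinct $x,x'$ there is $n\in\Nz$ with $d(T^{n}x,T^{n}x')\geq c$, as required.

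I do not anticipate a genuine obstacle here; the only points needing a little care are (i) in the forward direction, choosing the ball radius small enough that the covering sets have $d$-diameter \emph{strictly} below $c$, and (ii) in the reverse direction, the appeal to the Lebesgue number lemma, which is exactly where compactness of $X$ is invoked. One could also phrase the whole equivalence purely topologically, but since the paper's definition of positive expansivity is stated in terms of a metric, working with an arbitrary compatible metric and passing through diameters is the most economical route.
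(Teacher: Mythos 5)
Your proof is correct and follows essentially the same route as the paper: small metric balls (the paper uses radius $c/2$, you use $c/3$, both giving diameter strictly below $c$) for the forward direction, and the Lebesgue number of the cover for the reverse. No issues.
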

\setlength{\parindent}{0cm} \setlength{\parskip}{0cm}

\begin{proof}
    The ``if'' direction follows by choosing a metric $d$ and letting $c$ be the Lebesgue number of the cover $\{U_{i}\}^{m}_{i=1}$. The ``only if'' direction follows by choosing a finite open cover of the metric balls of radius $c/2$ relative to the metric $d$ and constant $c>0$ exhibiting positive expansivity of $T$.
\end{proof}

\begin{remark}
    We shall take the characterization of positive expansivity from Lemma~\ref{lem:expansive} as the definition in the case of non-metrizable compact Hausdorff spaces. The definition we use is equivalent to the property called ``weakly positively expansive'' in \cite[Definition~1]{RW07}.
\end{remark}
\setlength{\parindent}{0cm} \setlength{\parskip}{0.5cm}

Let us prove a general result about expansivity passing to covers.

\begin{proposition}
\label{prop:expansive}
    Let $T\colon X\to X$ be a continuous and positively expansive map on a compact Hausdorff space $X$ and let $\pi \colon \hat{Z} \to X$ be a surjective morphism from a compact topological graph $\hat{Z} = (\hat{r}, \hat{s} \colon \hat{Y} \to \hat{X})$ to $X$ (considered as a topological graph with $r = T$ and $s = \text{id}_{X}$). If $\pi_{Y}$ is locally injective, then the shift $\sigma \colon \hat{Z}^{-\infty}\to \hat{Z}^{-\infty}$ is positively expansive.
\end{proposition}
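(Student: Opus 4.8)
The plan is to exhibit a finite open cover of $\hat{Z}^{-\infty}$ witnessing positive expansivity of $\sigma$ in the sense of Lemma~\ref{lem:expansive}. First note that $\hat{Z}^{-\infty}$ is a closed subset of the compact Hausdorff space $\prod_{i<0}\hat{Y}$ (the path conditions $\hat{s}(y_i) = \hat{r}(y_{i+1})$ are closed, by continuity and Hausdorffness), hence is itself compact Hausdorff, so Lemma~\ref{lem:expansive} (extended to non-metrizable spaces as in the remark following it) is the relevant notion. The key observation is that $\pi_Y$ converts a left-infinite path into a forward $T$-orbit: for $\underline{y} = (\ldots, y_{-2}, y_{-1}) \in \hat{Z}^{-\infty}$, the graph-morphism relations $T\circ\pi_Y = \pi_X\circ\hat{r}$ and $\pi_Y = \pi_X\circ\hat{s}$ combined with $\hat{s}(y_{i}) = \hat{r}(y_{i+1})$ give $\pi_Y(y_{i}) = T(\pi_Y(y_{i+1}))$, so by induction $\pi_Y(y_{-1-n}) = T^{n}(\pi_Y(y_{-1}))$ for all $n\geq 0$. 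Since the coordinate of $\sigma^{n}(\underline{y})$ at position $-1$ is $y_{-1-n}$, applying $\pi_Y$ to the last coordinate of the iterates $\sigma^{n}(\underline{y})$ recovers the forward $T$-orbit of $\pi_Y(y_{-1})$.

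Now I fix the combinatorial data. Let $\{U_i\}_{i=1}^{m}$ be a finite open cover of $X$ as supplied by positive expansivity of $T$ (Lemma~\ref{lem:expansive}). Using local injectivity of $\pi_Y$ together with compactness of $\hat{Y}$, choose a finite open cover $\{W_k\}_{k=1}^{\ell}$ of $\hat{Y}$ such that $\pi_Y|_{W_k}$ is injective for each $k$. Since the coordinate map $\underline{y}\mapsto y_{-1}$ from $\hat{Z}^{-\infty}$ to $\hat{Y}$ is continuous, the sets $\tilde{U}_i = \{\underline{y} : \pi_Y(y_{-1}) \in U_i\}$ and $\tilde{W}_k = \{\underline{y} : y_{-1} \in W_k\}$ are open, and the families $\{\tilde{U}_i\}_i$ and $\{\tilde{W}_k\}_k$ each cover $\hat{Z}^{-\infty}$. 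I claim the finite open cover $\{\tilde{U}_i\cap\tilde{W}_k\}_{i,k}$ witnesses positive expansivity of $\sigma$.

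To check this, suppose $\underline{y},\underline{y}'\in\hat{Z}^{-\infty}$ satisfy $\sigma^{n}(\underline{y}), \sigma^{n}(\underline{y}') \in \tilde{U}_{i_n}\cap\tilde{W}_{k_n}$ for all $n\geq 0$, for suitable indices $i_n, k_n$. Fix $n\geq 0$. For every $p\geq 0$ we have $T^{p}(\pi_Y(y_{-1-n})) = \pi_Y(y_{-1-n-p})\in U_{i_{n+p}}$ and likewise $T^{p}(\pi_Y(y'_{-1-n}))\in U_{i_{n+p}}$, so the forward $T$-orbits of $\pi_Y(y_{-1-n})$ and $\pi_Y(y'_{-1-n})$ lie in a common member of $\{U_i\}$ at every step; positive expansivity of $T$ forces $\pi_Y(y_{-1-n}) = \pi_Y(y'_{-1-n})$. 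Since moreover $y_{-1-n}, y'_{-1-n}\in W_{k_n}$, on which $\pi_Y$ is injective, we conclude $y_{-1-n} = y'_{-1-n}$. As $n\geq 0$ was arbitrary, $\underline{y} = \underline{y}'$, which is the required property.

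I expect the argument to run essentially along these lines with no serious obstacle; the one point that needs care is arranging the cover so that expansivity of $T$ can be invoked at every shifted coordinate $n$, not merely at $n=0$ — this is why the cover must record the $W_{k_n}$-membership of the last coordinate of $\sigma^{n}(\underline{y})$, and it is precisely local injectivity of $\pi_Y$ on each $W_k$ that promotes the equality $\pi_Y(y_{-1-n}) = \pi_Y(y'_{-1-n})$ of orbit points downstairs to the equality $y_{-1-n} = y'_{-1-n}$ of path coordinates upstairs.
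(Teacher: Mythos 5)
Your proof is correct and follows essentially the same route as the paper: the cover $\{\tilde{U}_i\cap\tilde{W}_k\}$ you construct is exactly the paper's cover $W_{ij} = \hat{Z}^{-\infty}\cap\bigl(\prod_{i<-1}\hat{Y}\times(\pi_{\hat{Y}}^{-1}(U_i)\cap V_j)\bigr)$, and your verification (via the identity $\pi_Y(y_{-1-n}) = T^{n}(\pi_Y(y_{-1}))$ coming from the graph-morphism relations, plus injectivity of $\pi_Y$ on each piece) is precisely the check the paper leaves implicit.
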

\setlength{\parindent}{0cm} \setlength{\parskip}{0cm}

\begin{proof}
    Let $\{U_{i}\}^{m}_{i=1}$ be a finite cover of $X$ satisfying the positive expansivity characterization in Lemma~\ref{lem:expansive} (applied to $T$), and let $\{V_{j}\}^{n}_{j=1}$ be a finite open cover of $\hat{Y}$ such that $\pi_{\hat{Y}} \vert_{V_{j}}$ is injective, for all $1 \leq j \leq n$. Then $W_{ij} = \hat{Z}^{-\infty} \cap \big (\prod_{i < -1} \hat{Y} \times (\pi^{-1}_{\hat{Y}}(U_{i})\cap V_{j})\big)$ for $1 \leq i \leq m$, $1 \leq j \leq n$ form a finite open cover of $\hat{Z}^{-\infty}$ satisfying the positive expansivity characterization of Lemma \ref{lem:expansive} (for $\sigma$).
\end{proof}
\setlength{\parindent}{0cm} \setlength{\parskip}{0.5cm}

Our goal is to show that if $T$ is positively expansive and the minimal graph cover associated with weight $w =1$ terminates after finitely many steps, 
then the minimal dynamical cover of $T$ is again positively expansive.

Let $r, s \colon \hat{Y} \to \hat{X}$ be the minimal graph cover of $T \colon X\to X$ with weight $w =1$ in the sense of Section \ref{ss:DescCovers}. 
As in the $n$-th step of our construction, let $r,s \colon Y_n \to X_{n}$ be the graph, let $\lambda_{n-1,x}$ be the measures (for $x \in X_n$)  
and let $\pi_{Y_n} \colon Y_n \to Y = X$, $\pi_{X_n} \colon X_n \to X$ be the canonical projections. 

We prove the key step in our main result of this section.
\begin{lemma} 
\label{lem:PreimagesFarApart-check} If $T \colon X\to X$ is locally injective and $X$ is a compact Hausdorff space, then $\pi_{Y_{n}}$ and $\pi_{X_{n}}$ are locally injective for every $n\geq 0$.
\end{lemma}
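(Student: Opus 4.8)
The plan is to argue by induction on $n$, using the explicit descriptions of $X_n$ and $Y_n$ as closures of concrete images supplied by Propositions~\ref{prop:Xn=XxM} and~\ref{prop:Yn=YxMxM}. The base case $n=0$ is immediate: since $X$ is compact, $C_c(X)=C(X)$, so $X_0=Y_0=\Spec(C(X))\cong X$ and $\pi_{X_0},\pi_{Y_0}$ are identity maps. Throughout I will use two standing observations. First, $\cE_n$ and $\cF_n$ both contain the constant function $1$, so $X_n$ and $Y_n$ are compact; in particular the total mass functional $\mu\mapsto\mu(Y_{n-1})$ is weak$^*$-continuous on $\mathcal{M}(Y_{n-1})$. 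Second, local injectivity of $T$ together with compactness of $X$ forces each fibre $T^{-1}(x)$ to be finite, with cardinality bounded uniformly in $x$ (cover $X$ by finitely many open sets on which $T$ is injective).

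The main step is the following claim, to be proved under the inductive hypothesis that $\pi_{Y_{n-1}}\colon Y_{n-1}\to X$ is locally injective: \emph{for every $(x_0,\bar\lambda_0)\in X_n$ there are an open neighbourhood $V\ni x_0$ in $X$ and a weak$^*$-open neighbourhood $\mathcal{N}\ni\bar\lambda_0$ in $\mathcal{M}(Y_{n-1})$ such that, for each $x\in V$, at most one $\bar\lambda\in\mathcal{N}$ has $(x,\bar\lambda)\in X_n$.} To establish it, list $T^{-1}(x_0)=\{a_1,\dots,a_k\}$, choose pairwise disjoint open sets $W_j\ni a_j$ with $T|_{\overline{W_j}}$ injective and $\overline{W_j}\cap T^{-1}(x_0)=\{a_j\}$, and an open $V\ni x_0$ with $T^{-1}(V)\subseteq\bigsqcup_j W_j$. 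Identifying $\cF_{n-1}$ with $C_c(Y_{n-1})$, the measure $\lambda_{n-1,x_n}$ equals $\sum_{y\in T^{-1}(x)}\delta_{\iota_{n-1,Y}(y)}$; passing to subnets in the closure description of $X_n$ (using compactness of $Y_{n-1}$ and injectivity of $T$ on each $\overline{W_j}$) one finds $\bar\lambda_0=\sum_{j\in S}\delta_{c_j}$ with $S\subseteq\{1,\dots,k\}$ and points $c_j\in\pi_{Y_{n-1}}^{-1}(a_j)$ arising as limits of $\iota_{n-1,Y}(y_j(x_i))$. The inductive hypothesis makes each $c_j$ isolated in its fibre of $\pi_{Y_{n-1}}$, so one may choose neighbourhoods $O_j\ni c_j$ with $\pi_{Y_{n-1}}|_{\overline{O_j}}$ injective and $\pi_{Y_{n-1}}(\overline{O_j})\subseteq W_j$, and bumps $\phi_j\in C_c(O_j)$ with $\phi_j(c_j)=1$. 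Let $\mathcal{N}$ be cut out by the conditions $\abs{\int\phi_j\,d\mu-1}<\tfrac12$ for $j\in S$ and $\abs{\mu(Y_{n-1})-\abs{S}}<\tfrac12$; one checks $\bar\lambda_0\in\mathcal{N}$. Now take $(x,\bar\mu)\in X_n$ with $x\in V$, $\bar\mu\in\mathcal{N}$, and realise $\bar\mu$ as a weak$^*$-limit of $\sum_{j\in S_m}\delta_{\iota_{n-1,Y}(y_j(w_m))}$ along a net $w_m\to x$ in $V$: since $\abs{S_m}$ is an integer, the mass bound forces $\abs{S_m}=\abs{S}$; the $\phi_j$-bounds force $S\subseteq S_m$, hence $S_m=S$ and $\iota_{n-1,Y}(y_j(w_m))\in O_j$ eventually; finally injectivity of $\pi_{Y_{n-1}}$ on $\overline{O_j}$ and of $T$ on $\overline{W_j}$ identifies $\lim_m\iota_{n-1,Y}(y_j(w_m))$ with the unique point of $\overline{O_j}$ lying over the unique point of $\overline{W_j}\cap T^{-1}(x)$, a quantity depending only on $x$. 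Thus $\bar\mu$ is determined by $x$, proving the claim.

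The claim yields local injectivity of $\pi_{X_n}$ at once, since $(V\times\mathcal{N})\cap X_n$ is a neighbourhood of $(x_0,\bar\lambda_0)$ on which $\pi_{X_n}$ is injective. For $\pi_{Y_n}$, Proposition~\ref{prop:Yn=YxMxM} says that (with $Y=X$, $r=T$, $s=\id_X$) a point of $Y_n$ is a triple $(y,\bar\mu_r,\bar\mu_s)$ with $(T(y),\bar\mu_r)\in X_n$ and $(y,\bar\mu_s)\in X_n$. Applying the claim once at the vertex $y_0$ (to control $\bar\mu_s$) and once at the vertex $T(y_0)$ (to control $\bar\mu_r$, after pulling the neighbourhood back along $T$) produces a neighbourhood of a given $(y_0,\bar\mu_r^0,\bar\mu_s^0)$ in $Y_n$ on which the first coordinate $y$ determines $\bar\mu_s$ directly and $\bar\mu_r$ through $T(y)$; hence $\pi_{Y_n}$ is locally injective, and the induction closes.

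I expect the main obstacle to be the bookkeeping inside the claim. One must simultaneously control which ``branches'' $j$ survive into the limiting measure --- this is precisely where compactness of $Y_{n-1}$, hence weak$^*$-continuity of total mass, is indispensable --- and the rigidity that, along each surviving branch, the atom of $\bar\mu$ is pinned down by the base point, which has to be extracted from the merely local (not open) injectivity of $T$ and $\pi_{Y_{n-1}}$; this is why the sets $W_j$ and $O_j$ are taken with injectivity holding up to their closures, and why one argues along nets rather than with continuous selections.
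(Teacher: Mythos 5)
Your proof is correct and follows essentially the same route as the paper's: induction on $n$, the identification of the limit measures as sums of distinct Dirac masses via local injectivity of $T\circ\pi_{Y_{n-1}}$, finitely many disjoint injectivity patches with bump functions to show the measure component is locally determined by the base point, and the deduction for $\pi_{Y_n}$ from the triple description of $Y_n$. The only differences (splitting the patches into $W_j$ for $T$ and $O_j$ for $\pi_{Y_{n-1}}$ rather than working with a single family for the composite, and invoking the total-mass functional explicitly) are organizational, not conceptual.
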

\setlength{\parindent}{0cm} \setlength{\parskip}{0cm}

\begin{proof}
  We proceed by induction, noting that the case $n=0$ is trivial.
  Suppose the proposition is true for $n-1\geq 0$, and we first describe the measures $\lambda_{n-1,x}$, $x\in X_{n}$. For $x\in X_{0}\subseteq X_{n}$, we have $\lambda_{n-1, x} = \sum_{y\in \iota_{n-1}(T^{-1})(x))\subseteq (T\circ \pi_{Y_{n-1}})^{-1}(x)}\delta_{y}$. Any limit $x\in X_{n}$ is therefore of the form $\lambda_{n-1,x} = \sum_{y\in \text{supp}(\lambda_{n-1,x})}n_{y}\delta_{y}$ for some $n_{y}\in \mathbb{N}$ and with $\text{supp}(\lambda_{n-1,x})\subseteq (T \circ \pi_{Y_{n-1}})^{-1}(\pi_{X_n}(x))$. Local injectivity of $T\circ \pi_{Y_{n-1}}$ implies $n_{y} = 1$ for all $y\in \text{supp}(\lambda_{n-1,x}$).
\setlength{\parindent}{0cm} \setlength{\parskip}{0.5cm}
  
  We now prove that $\pi_{X_{n}}$ is locally injective. For $\lambda_{n-1,x}$, let $(\lambda_{n-1, x_{i}})_{i}$ and $(\lambda_{n-1, x'_{i}})_{i}$ be two nets weak$^{*}$ converging to $\lambda_{n-1,x}$ such that $(\pi_{X_{n}}(x_{i}) = \pi_{X_{n}}(x'_{i}))_{i}$ converges to $\pi_{X_{n}}(x)$. By local injectivity of $T\circ \pi_{Y_{n-1}}$ and compactness, there are pairwise disjoint open sets $\{U_{j}\}^{k}_{j=1}$, $\{V_{j}\}^{k}_{j=1}$ of $Y_{n-1}$ and an open neighbourhood $W\subseteq X$ of $\pi_{X_{n}}(x)$ such that $T\circ \pi_{Y_{n-1}}|_{U_{j}}$ is injective and $\overline{V_{j}}\subseteq U_{j}$ for all $1 \leq j \leq k$, and $(T\circ \pi_{Y_{n-1}})^{-1}(W)\subseteq \bigcup^{k}_{j=1} V_{j}$. Therefore, if we choose $\{0\leq \phi_{j}\}^{k}_{j=1}\subseteq C_{c}(Y_{n-1})$ such that $\phi_{j} = 1$ on $\overline{V_{j}}$ and $\phi_{j} = 0$ on $Y_{n-1} \setminus U_{j}$, then any two Radon measures $\lambda = \sum_{y \in S} \delta_y$ and $\lambda' = \sum_{y\in S'}\delta_{y}$ with $\supp(\lambda)\cup \supp(\lambda')\subseteq (T\circ \pi_{Y_{n-1}})^{-1}(x')$, for $x'\in W$ are equal if and only if $\lambda(\phi_{j}) = \lambda'(\phi_{j})$ for all $1 \leq j \leq k$. 
  
  It follows from the description $\lambda_{n-1, x} = \sum_{y\in \text{supp} (\lambda_{n-1,x})}\delta_{y}$ with $\text{supp}(\lambda_{n-1,x})\subseteq (T \circ \pi_{Y_{n-1}})^{-1}(\pi_{X_n}(x))$ that $\lambda_{n-1, x_{i}}(\phi_{j}) = \lambda_{n-1, x}(\phi_{j}) = \lambda_{n-1, x'_{i}}(\phi_{j})$ eventually, for all $1 \leq j \leq k$. 
  Therefore, $\lambda_{n-1, x_{i}} = \lambda_{n-1,x_{i}'}$ eventually, so $\pi_{X_{n}}$ is locally injectivity.
  Now, the fact that $\pi_{Y_{n}}$ is locally injective follows from local injectivity of $\pi_{X_{n}}$ and the description of $y\in Y_{n}$ as the triple $(\pi_{Y_{n}}(y), \lambda_{n-1, r(y)}, \lambda_{n-1,s(y)})$.
\end{proof}

\begin{remark}
 If $T \colon X\to X$ is sparsely critically saturated, let $Y'_{n}\subseteq Y_{n}$ and $X'_{n}\subseteq X_{n}$ denote the $n$-th step of the essential cover construction. If $T \colon X\to X$ is locally injective, then the maps $\pi_{Y'_{n}}$ and $\pi_{X'_{n}}$ are locally injective, for all $n\geq 0$, since they are restrictions of $\pi_{Y_{n}}$ and $\pi_{X_{n}}$.
\end{remark}

\begin{theorem}
\label{thm:hatTExpansive-check}
  Let $T\colon X\to X$ be a continuous and positively expansive map on a compact Hausdorff space $X$. If the minimal graph cover (or minimal essential graph cover) of $T \colon X\to X$ with weight $w=1$ terminates after finitely many steps, i.e. $Y_{N} = \plim_{n}Y_{n}$ for some $N\in \Nz$,
  then $T_{\min}\colon X_{\min} \to X_{\min}$ (or $T_{\ess} \colon X_{\ess}\to X_{\ess}$) is positively expansive.
\end{theorem}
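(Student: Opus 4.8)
The plan is to derive this from Proposition~\ref{prop:expansive} together with the identification of the dynamical covers as shifts on infinite path spaces of the graph covers (Corollaries~\ref{cor:cover_inf_paths} and~\ref{cor:inf_path_ess}), using that a positively expansive map is locally injective.

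First I would observe that since $T$ is positively expansive it is locally injective, so Lemma~\ref{lem:PreimagesFarApart-check} guarantees that each projection $\pi_{Y_n} \colon Y_n \to Y = X$ and $\pi_{X_n} \colon X_n \to X$ is locally injective; being the Gelfand dual of an inclusion $C_c(\cdot) \subseteq \mathcal{F}_n$ (resp.\ $\mathcal{E}_n$), it is moreover proper and continuous (cf.\ the proof of Lemma~\ref{cc}). The hypothesis that the construction terminates means $\hat Z = Z_M$ for some $M$, so $\hat Y = Y_M$ and $\hat X = X_M$; properness of $\pi_{Y_M}$ together with compactness of $X$ forces $\hat Y$, and likewise $\hat X$, to be compact, whence $\hat Z$ is a compact topological graph and $\pi_{\hat Y} = \pi_{Y_M}$ is locally injective. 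I can then apply Proposition~\ref{prop:expansive} to the surjective morphism $\pi \colon \hat Z \to X$, with $X$ viewed as the topological graph $r = T$, $s = \id_X$: the shift $\sigma \colon \hat Z^{-\infty} \to \hat Z^{-\infty}$ is positively expansive. Since $s = \id_X$ is proper, Corollary~\ref{cor:cover_inf_paths} identifies the minimal dynamical cover $T_{\min} \colon X_{\min} \to X_{\min}$, as a continuously measured and in particular topological dynamical system, with $\sigma$ on $\hat Z^{-\infty}$; as positive expansivity in the sense of Lemma~\ref{lem:expansive} is manifestly invariant under topological conjugacy (transport the finite open cover along the conjugating homeomorphism), $T_{\min}$ is positively expansive.

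For the minimal essential cover the argument runs identically: by the remark following Lemma~\ref{lem:PreimagesFarApart-check} the maps $\pi_{Y'_n}$ and $\pi_{X'_n}$ are locally injective, and if the minimal essential graph cover terminates then $\tilde Z = Z'_M$ with $\tilde Y = Y'_M \subseteq Y_M$ and $\tilde X = X'_M \subseteq X_M$ closed, hence compact. Proposition~\ref{prop:expansive} applied to the surjective morphism $\pi_{\ess} \colon \tilde Z \to X$ then shows the shift on $\tilde Z^{-\infty}$ is positively expansive, and Corollary~\ref{cor:inf_path_ess} identifies it with $T_{\ess} \colon X_{\ess} \to X_{\ess}$, which is therefore positively expansive.

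I do not expect a genuine obstacle: the mathematical content lives entirely in Lemma~\ref{lem:PreimagesFarApart-check} and Proposition~\ref{prop:expansive}, which are already available, and what remains is bookkeeping --- checking compactness of $\hat Z$ (so that $\hat Z^{-\infty}$ is compact and the open-cover formulation of positive expansivity applies), verifying the hypotheses of Corollaries~\ref{cor:cover_inf_paths} and~\ref{cor:inf_path_ess}, and recording the conjugacy-invariance of positive expansivity. The one point that merits a little care is that ``the construction terminates after finitely many steps'' really yields $\hat Z = Z_M$ for both the edge and vertex spaces: once the tower $(\mathcal{F}_n)$ stabilizes (which is what $Y_N = \plim_n Y_n$ encodes, via Gelfand duality), stabilization of $(\mathcal{E}_n)$ follows one step later, since $\mathcal{E}_{n+1}$ is built using only $\mathcal{F}_n$.
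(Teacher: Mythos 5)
Your proposal is correct and follows essentially the same route as the paper's proof: local injectivity of $\pi_{\hat{Y}}$ via Lemma~\ref{lem:PreimagesFarApart-check} and the termination hypothesis, then Proposition~\ref{prop:expansive} applied to $\hat{Z}^{-\infty}$, then the identification of $T_{\min}$ (resp.\ $T_{\ess}$) with the shift on the infinite path space via Corollary~\ref{cor:cover_inf_paths} (resp.\ Corollary~\ref{cor:inf_path_ess}). The extra bookkeeping you supply on compactness, properness, and the one-step lag between stabilization of $(\mathcal{F}_n)$ and $(\mathcal{E}_n)$ is sound and merely makes explicit what the paper leaves implicit.
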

\bproof
Expansivity implies local injectivity of $T \colon X\to X$, so Lemma \ref{lem:PreimagesFarApart-check} and the hypothesis imply $\pi_{\hat{Y}} \colon \hat{Y}\to X$ is locally injective. By Proposition \ref{prop:expansive}, $(\sigma, (\hat{Z})^{-\infty})$ is positively expansive. By Corollary \ref{cor:cover_inf_paths}, $(\sigma, (\hat{Z})^{-\infty})$ is conjugate to $(T_{\min}, X_{\min})$. A similar proof holds for the essential cover case, using Corollary \ref{cor:inf_path_ess}.
\eproof
\setlength{\parindent}{0cm} \setlength{\parskip}{0.5cm}

\bremark
Note that using the picture of the $n$-th step of the minimal essential cover construction as the essential algebra of the $n$-th step of the minimal cover construction, it is easy to see that the minimal essential cover terminating is a weaker condition than the minimal cover terminating.
\eremark

\bremark
    \label{rem:hatTExpansive_dyn_version}
A similar theorem holds when the dynamical cover construction of Section \ref{ss:meas_top_dyn_sys} terminates after finitely many steps.
\setlength{\parindent}{0.5cm} \setlength{\parskip}{0cm}

Alternatively, there is the following link between our finiteness condition and the condition that the minimal dynamical cover terminates after finitely many steps: 
for $m\in\mathbb{N}$ and $n\in\mathbb{N}$, let $\mathcal{F}_{m,n}$ denote the closed $^*$-sub-algebra generated by $(T^{k})^{*}\mathcal{F}_{m}$ for $k\leq n$, 
where $\mathcal{F}_{n}$ are the algebras for the $n$-th step of the minimal topological graph cover of $T \colon X\to X$ in Section \ref{measured_graphs}. 
Denote by $\mathcal{B}_{n}$ the algebras in the $n$-th step of the minimal dynamical cover appearing in Section \ref{ss:meas_top_dyn_sys}. By comparing generators of these algebras, we see that for $n\in\mathbb{N}$, there exists $n' \in \Nz$ with $n' \geq n$ such that $\mathcal{B}_{n}\subseteq \mathcal{F}_{n,n}\subseteq \mathcal{B}_{n'}$. 
So, if there is $K\in\mathbb{N}$ such that $\mathcal{B}_{K+1} = \mathcal{B}_{K}$, then there is $M\in\mathbb{N}$ and $N\in\mathbb{N}$ such that $\mathcal{F}_{M+1, N} = \mathcal{F}_{M,N}$. Notice that the Gelfand spectrum of $\mathcal{F}_{M,N}$ is the space $Z^{N}_{M}$ of length $N$ paths in the graph $Z_{M}$. Therefore, the minimal dynamical cover construction terminating after finitely many steps implies the length $N$ path spaces of the minimal graph cover construction terminate after finitely many steps. An application of Proposition \ref{prop:expansive} then implies $T_{\min}^{N}$ is positively expansive. Therefore, $T_{\min}$ is positively expansive.
\eremark
\setlength{\parindent}{0cm} \setlength{\parskip}{0.5cm}

\bremark
  In general, we cannot expect $T_{\min}$ to be positively expansive even if $T$ is positively expansive.
  A one-sided subshift (not necessarily of finite type) is always positively expansive, and the cover constructed in \cite{CarlsenPhD} is always a local homeomorphism on a totally disconnected compact Hausdorff space, and coincides with our cover (see Section \ref{sec:ExSubshifts}).
  If this cover were positively expansive, then it would be (conjugate to) a shift of finite type and $T$ would be a sofic shift (sofic shifts are homomorphic images of shifts of finite type \cite[Chapter 3]{Lind-Marcus2021}).
  However, there are uncountably many subshifts that are not sofic.
\eremark

\bcor
\label{cor:Prep-TDF}
Let $T\colon X\to X$ be a continuous and positively expansive map on a compact metrizable space $X$. Assume that $T$ is topologically transitive (i.e., there is a dense forward orbit under $T$), that $T$ with weight $w = 1$ is sparsely critically saturated, and that our essential cover construction terminates after finitely many steps. Then the minimal essential dynamical cover $T_{\ess} \colon X_{\ess} \to X_{\ess}$ is a  topologically transitive, positively expansive local homeomorphism on a compact metrizable space $X_{\ess}$.
\ecor
\setlength{\parindent}{0cm} \setlength{\parskip}{0cm}

The point is that the original transformation does not need to be open (see for example \cite{Hir,Ros}).

\bproof
This follows from Lemma~\ref{lem:full}, Remark~\ref{rem:TessTopTrans} and Theorem~\ref{thm:hatTExpansive-check}.
\eproof
\setlength{\parindent}{0cm} \setlength{\parskip}{0.5cm}

\bremark
\label{rem:TDF-FiniteSteps}
One consequence of Corollary~\ref{cor:Prep-TDF} is that for continuous and positively expansive transformations $T$ satisfying the conditions in Corollary~\ref{cor:Prep-TDF}, the thermodynamic formalism is valid, in the following sense (we take the axiomatic approach to the thermodynamic formalism from \cite{GKLM}): there is a Banach space $\cX$ of functions on $X$, a open set $\cU \subseteq \cX$ of potentials such that for every $A \in \cU$, the associated transfer operator $\crL_A(f)(x) = \sum_{y \in T^{-1}(x)} e^{A(y)} f(y)$ has a positive maximal eigenvalue $\lambda_A$ which is a simple eigenvalue with a spectral gap below it, a positive eigenfunction $h_A$ corresponding to $\lambda_A$ which is bounded away from $0$, and the dual operator $\crL_A^*$ has an eigenmeasure $\nu_A$ for the eigenvalue $\lambda_A$ so that $h_A \nu_A$ is an invariant measure for $T$. We refer to \cite[Section~2.2]{GKLM} for details. All this follows from the observation that validity of the thermodynamic formalism passes to factors, as explained in \cite[Section~2.4.2]{GKLM}, and that the thermodynamic formalism has already been established for open, topologically transitive, positively expansive continuous transformations on compact metric spaces (see for instance \cite[Section~4]{PU}).
\eremark

\bremark\label{rmk:finitely_presented}
    Here is another consequence: Given a positively expansive map $T\colon X\to X$ on a compact metrizable space $X$ for which our construction terminates after finitely many steps, the cover $T_{\min}$ is positively expansive and open, so by \cite[Theorem 1]{Sakai2003} $T_{\min}$ has the shadowing property. 
    This implies that $T_{\min}$ is finitely presented, so it is a factor of a shift of finite type \cite[Proposition 1.4.50]{Nekrashevych2022}.
    Since $T$ is positively expansive, it follows from \cite[Proposition 1.4.45]{Nekrashevych2022} that $T$ is finitely presented.
\eremark

\section{Examples from subshifts}

\label{sec:ExSubshifts}

In this section, we apply our construction of minimal and minimal essential covers to subshifts and compare with existing constructions such as Krieger and Fischer covers. We refer the reader to \cite{Lind-Marcus2021} for background on symbolic dynamics.
Let $\mathfrak{a} = \{1,\ldots,m\}$ be a finite set (viewed as a discrete space) of symbols. 
The space $\mathfrak{a}^\Nz$ is a Cantor space in the product topology and the shift $\sigma(x_{1}, x_{2},...) = (x_{2}, x_{3},...)$, defined for all $x = (x_{1}, x_{2}, ...) \in \mathfrak{a}^\Nz$, is a local homeomorphism.
The pair $(\sigma, \mathfrak{a}^\Nz)$ is the \emph{full one-sided $m$-shift}. 
A subspace $X$ of $\mathfrak{a}^\Nz$ is \emph{shift-invariant} if $\sigma(X) \subseteq X$ (we do not assume equality),
and if $X$ is closed and shift-invariant, then the restricted shift $\sigma_X \coloneqq \sigma|_X$ on $X$ is locally injective but not open in general. 
The pair $(\sigma_X, X)$ is called a \emph{one-sided subshift}. We denote by $L(X)$ the set of finite words in $\mfa$ which appear as subwords of elements in $X$. Moreover, $L_k(X)$ denotes the set of words in $L(X)$ of length $k$, and $L_{\leq k}(X)$ stands for the set of words in $L(X)$ of length at most $k$.

Let $(\sigma_X, X)$ be a one-sided subshift.
For $k\in \Nz$, the \emph{$k$-past} of an element $x\in X$ is the set $P_k(x) \coloneq \menge{ \nu\in L_{k}(X)}{\nu x\in X}$.
Let $P_{\leq k}(x) \coloneqq \bigcup_{j=1}^k P_j(x)$ and $P_{<\infty}(x) \coloneqq \bigcup_{j=1}^\infty P_j(x)$.
For two finite words $\alpha$ and $\beta$, we shall consider the closed (and generally not open) set $C(\beta,\alpha) \coloneq \menge{x\in X}{x\in Z(\alpha), \beta \sigma_X^{|\alpha|}(x) \in X}$ where $|\alpha|$ denotes the length (i.e. the number of symbols) of $\alpha$.

\subsection{The topological Krieger graph}
\label{ss:top_Krieg_graph}
Let $(\sigma_X, X)$ be a one-sided subshift.
If $\sigma_X$ is of finite type (i.e. a local homeomorphism), then it is a standard result that $\sigma_X$ is (conjugate to) the shift on the one-sided infinite path space of a finite directed graph \cite[Section 2.2]{Lind-Marcus2021}.
We now construct a labeled compact topological graph $K$ such that $(\sigma_X, X)$ is conjugate to the shift on the one-sided infinite path space of labels on $K$,
and another compact topological graph $\mathcal{K}$ which we identify with the minimal cover of $\sigma_X$ viewed as a topological graph with weight $w=1$.

For a subset $F\subseteq L(X)$, we use the notation $F|_{\leq k} \coloneqq F\cap L_{\leq k}(X)$.
The vertices of $K$ are given as 
\[
I \coloneqq \menge{F\subseteq L(X)}{\forall \, k> 0 \,, \exists \, x^{(k)} \in X \text{ such that } F|_{\leq k} = P_{\leq k}(x^{(k)})}
\]
and we equip this with the following topology:
A net $(F_i)_i\subseteq I$ converges to $F\in I$ if for all $k> 0$, we have $F_i|_{\leq k} = F|_{\leq k}$ eventually. 
Note that $I$ is equal to the closure of the image of the function $i\colon X \to I$ given by $x\mapsto P_{<\infty}(x)$. 
The set of edges is given as 
\[
J \coloneq \menge{ (F,a) \in P\times \mathfrak{a}}{a\in F}
\]
in the subspace topology of the product topology, and $J$ is equal to the closure of the image of the function $i \colon X\to J$ defined as $x \mapsto (P_{<\infty}(\sigma_X(x)),x_1)$.
The range and source maps are $r(F,a) = F$ and $s(F,a) = \menge{ \nu\in L(X)}{\nu a\in F}$. 
It is easily verified that this is well-defined, and that both range and source are continuous. 
Moreover, the range map $r$ is a local homeomorphism, so $K$ is continuously measured if weighted by $w=1$.
We shall refer to $K = (r,s \colon J\to I)$ as the \emph{topological Krieger graph} of $\sigma_X$.
Note also that the map $\cL\colon J \to \mathfrak{a}$ given by $\cL(F,a) = a$ defines a continuous labeling map. 
The subshift $(\sigma_X, X)$ is conjugate to the shift on the infinite paths of labels on $K^{-\infty}$, meaning that $\mathcal{L}^{-\infty} \colon (\sigma, K^{-\infty})\to (\sigma_{X}, X)$ is a factor map.
In this sense, the topological graph $K$ together with its labeling $\mathcal{L}$ represents the subshift $(\sigma_X,X)$.

\bremark
The pair of mappings $i\colon X \to J$, $i\colon X\to I$ define a graph morphism $i \colon (\sigma, X)\to K$. 
We refer to $i$ as the \emph{Krieger graph morphism}, and the image is usually known as the Krieger graph. 
As explained above, this is a countable and dense sub-graph of our topological Krieger graph $K$.
If $\sigma_X$ is sofic, then $K$ is a finite labeled graph and it coincides with the Krieger graph, see \cite[Section 3.2]{Lind-Marcus2021} or \cite{Krieger1984} where it is called the past set cover. 
We will see soon that $\mathcal{L}^{-\infty} \colon (\sigma, K^{-\infty})\to (\sigma_{X}, X)$ together with the section $i^{-\infty} \colon (\sigma_{X}, X)\to (\sigma, K^{-\infty})$ 
is isomorphic (as a cover) to the minimal dynamical cover of $\sigma_{X}$.
\eremark

\bremark
Matsumoto \cite{Matsumoto1997} (cf. \cite{Carlsen-Matsumoto2004}) studied the space $I$ (called $\Omega$ in \cite{Matsumoto1997}) in relation to $C^*$-algebras attached to general one-sided subshifts.
Based on this, Carlsen and Eilers \cite[Theorem 4.3]{Carlsen-Eilers2006} show that $I$ can be used to compute the $K$-theory for certain subshift $C^*$-algebras.
It would be interesting to see if the topological Krieger graph can be used to compute $K$-theory for $C^*$-algebras of our covers.
\eremark

\subsection{The augmented topological Krieger graph}
Let us construct an augmented graph $\mathcal{K}$ that we prove is a model for the minimal cover for $\sigma_X$ viewed as a topological graph. For each integer $k> 1$, we first construct a graph $\mathcal{K}_k$. 
The vertices and edges are given as
\begin{align}
\mathcal{I}_k &\coloneq \menge{ (P_{\leq k-1}(\alpha x) ,\alpha)}{\alpha\in L_{k-1}(X), \alpha x\in X}    \\
\mathcal{J}_k &\coloneq \menge{ (P_{\leq k}(\alpha x) ,a\alpha)}{\alpha\in L_{k-1}(X), a\in L_{1}(X),  a\alpha x\in X} 
\end{align}
and the range map is $r(P_{\leq k}(\alpha x) ,a\alpha) = (P_{\leq k-1}(\alpha x), \alpha)$ while the source map is defined as
$s(P_{\leq k}(\alpha x), a\alpha) = (P_{\leq k-1}(a\alpha x), (a \alpha)_{[1,k-1]})$.
Now $\mathcal{K}_k = (\mathcal{I}_k, \mathcal{J}_k, r,s)$ is a finite discrete graph.
There are surjective graph morphisms $\upsilon_{k+1} \colon \mathcal{K}_{k+1} \to \mathcal{K}_k$ given by 
$\upsilon_{\mathcal{I}_{k+1}}(P_{\leq k}(\alpha x),\alpha) = (P_{\leq k-1}(\alpha x), \alpha_{[1,k-1]})$ and $\upsilon_{\mathcal{J}_{k+1}}(P_{\leq k+1}(\alpha x),a \alpha) = (P_{\leq k}(\alpha x),(a \alpha)_{[1,k]})$.

We define the graph $\mathcal{K}$ to be the projective limit of $\mathcal{K}_k$ along the morphisms $(\upsilon_k)_k$. 
The vertices and edges may be identified with 
\begin{align}
  \mathcal{I} &= \menge{(F,x)\in I\times X}{\forall \, k\in \Nz \ \exists \, x^{(k)}\in Z(x_{[1,k]}) \text{ such that } F \vert_{\leq k} = P_{\leq k}(x^{(k)})} \\
    \mathcal{J} &= \menge{(G,by)\in I\times X}{(G,y)\in \mathcal{I}, b\in G}
\end{align}
in the subspace topology, and $r(G,by) = (G,y)$, $s(G,by) = (\menge{ \nu\in L(X)}{\nu b \in G}, by)$ are the range and source maps, respectively.
Then $\mathcal{K}$ is a compact topological graph whose range map is a local homeomorphism, so we weight $\mathcal{K}$ by $w=1$. 
We call $\mathcal{K}$ the \emph{augmented topological Krieger graph}.

There is a graph morphism $j\colon X \to \mathcal{K}$ given by $j_{\mathcal{I}}(x) = (P_{<\infty}(x), x)$ and $j_{\mathcal{J}}(x) = (P_{<\infty}(\sigma_X(x)), x)$, for all $x\in X$,
and this is injective with dense image. 
It is a section of the continuous graph morphism $\pi \colon \mathcal{K}\to X$ given by $\pi_{\mathcal{I}}(F,x) = x$ and $\pi_{\mathcal{J}}(G,x) = x$. 
Note that $j_{\mathcal{J}} \colon \sigma_{X}^{-1}(x)\to r^{-1}(j_{\mathcal{I}}(x))$ is a bijection, for all $x\in X$, so $j$ is a weighted graph morphism. 
Therefore, $\mathcal{K}$ together with $\pi$ and $j$ is a graph cover of $(\sigma_{X},X)$.

\begin{theorem}\label{thm:minimal-graph-cover-subshift}
    Let $(\sigma_X, X)$ be a one-sided subshift. 
    The minimal graph cover of $\sigma_X$ is isomorphic (as covers) to the graph $\mathcal{K}$ together with $\pi$ and $j$.
\end{theorem}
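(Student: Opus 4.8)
The plan is to verify that $(\mathcal{K}, \pi, j)$ satisfies the defining universal property of the minimal graph cover, or, what is cleaner, to identify $\mathcal{K}$ directly with the projective limit description of $\hat{Z}$ from Section~\ref{ss:DescCovers}. Recall that for the topological graph $Z = (\sigma_X, \id_X \colon X \to X)$ with weight $w=1$, the $r$-operator is $\Lambda(f)(x) = \sum_{y \in \sigma_X^{-1}(x)} f(y)$, and the cover is built from the ascending $^*$-algebras $\mathcal{F}_n$, $\mathcal{E}_n$ via $Y_n = \Spec(\mathcal{F}_n)$, $X_n = \Spec(\mathcal{E}_n)$, with $\hat{Z} = \plim_n Z_n$. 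The key is Propositions~\ref{prop:Xn=XxM} and~\ref{prop:Yn=YxMxM}, which describe $X_n$, $Y_n$ as closures of the images of explicit maps $x \mapsto (x, \lambda_{n-1,x_n})$ and $y \mapsto (y, \lambda_{r(y)_n}, \lambda_{s(y)_n})$ into products with spaces of Radon measures.

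First I would make the combinatorial identification at each finite level. Since $r = \sigma_X$ is a local homeomorphism on the Cantor space $X$ and $w = 1$, each measure $\lambda_{n-1,x_n}$ appearing in Proposition~\ref{prop:Xn=XxM} is a finite sum of point masses supported on $\sigma_X^{-1}$-fibres, so it is completely determined by a finite piece of ``past'' data. Concretely I would show by induction on $k$ that $\mathcal{E}_n$ (for a suitable $n = n(k)$, interleaving the two natural filtrations) is generated over $C_c(X)$ by the indicator-type functions $x \mapsto \mathbbm{1}[\alpha \in P_{\leq k}(x)]$, and hence that a character of $\mathcal{E}_n$ restricting to $\ev_x$ on $C_c(X)$ is exactly the extra datum of which words $\alpha \in L_{\leq k}(X)$ lie in ``$P_{\leq k}$ of a point near $x$'', i.e.\ a set $F|_{\leq k}$ with $F|_{\leq k} = P_{\leq k}(x^{(k)})$ for some $x^{(k)} \in Z(x_{[1,k]})$. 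This matches the definition of $\mathcal{I}_k$ (after recording also the finite word $x_{[1,k-1]}$, which is what the ``$\alpha$'' coordinate in $\mathcal{I}_k$ encodes and what forces the compatibility $F|_{\leq k} = P_{\leq k}(x^{(k)})$ with $x^{(k)} \in Z(x_{[1,k]})$). Doing the same for $\mathcal{F}_n$ using Proposition~\ref{prop:Yn=YxMxM} — now one carries both an $r$-side and an $s$-side past, which is precisely the pair $(G, by)$ with $b \in G$ and the source rule $s(G,by) = (\{\nu : \nu b \in G\}, by)$ — identifies $Y_{n(k)}$ with $\mathcal{J}_k$. I would check that under these identifications the bonding maps $X_{n(k+1)} \to X_{n(k)}$, $Y_{n(k+1)} \to Y_{n(k)}$ become the morphisms $\upsilon_{k+1}$ (truncation of the past), so that passing to projective limits gives a homeomorphism of graphs $\hat{Z} \cong \mathcal{K}$ respecting $r$ and $s$.

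Next I would check that this homeomorphism intertwines the structure maps: the projection $\hat\pi \colon \hat{Z} \to Z$ is dual to the inclusions $C_c(X) \subseteq \mathcal{F}_n$, $C_c(X) \subseteq \mathcal{E}_n$, and under the identification this is exactly ``forget the past'', i.e.\ $(F,x) \mapsto x$ and $(G,x) \mapsto x$, which is $\pi$; and the section $\iota \colon Z \to \hat{Z}$ sends $x$ to $\ev_x$, which under the identification records $P_{<\infty}(x)$ on the vertex side and $P_{<\infty}(\sigma_X(x))$ on the edge side — exactly $j$. Finally, since everything in sight is weighted by $w = 1$ and $j_{\mathcal{J}} \colon \sigma_X^{-1}(x) \to r^{-1}(j_\mathcal{I}(x))$ is a bijection, $j$ is a weighted graph morphism, so by the correspondence of Theorem~\ref{thm:correspondence} (and the universal property of the minimal cover, Theorem~\ref{thm:mincover}) the isomorphism of the underlying graphs is automatically an isomorphism of covers.

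The main obstacle is the inductive combinatorial step: showing precisely that the $^*$-algebra $\mathcal{E}_{n}$ ``stabilizes'' to see exactly $P_{\leq k}$-data and no more — i.e.\ that the Gelfand spectrum at the relevant finite stage is $\mathcal{I}_k$ on the nose and not something finer. This requires unwinding how $\Lambda$ and $s^*$ generate new functions: applying $\Lambda$ to $\mathbbm{1}[\alpha \in P_{\leq k}(\cdot)]$ and composing with $s^*$ must be shown to produce precisely the indicators $\mathbbm{1}[a\alpha \in P_{\leq k+1}(\cdot)]$ (using $\nu \in P_{\leq k}(\sigma_X x)$ together with $x_1 = a$ iff $a\nu \in P_{\leq k+1}(x)$), and one must argue no other functions with finer separating power are created. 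Care is also needed because the two filtrations ($\mathcal{F}_n$ and the $n$-fold iterate building $\mathcal{K}_k$) advance at different rates, so the identification is of the form $Y_{n(k)} \cong \mathcal{J}_k$ for an interleaving function $n(k)$, and one must check cofinality of the two projective systems so the limits agree. The compatibility-with-$Z(x_{[1,k]})$ clause in the definition of $\mathcal{I}$ is exactly the shadow of this bookkeeping and should be tracked carefully throughout.
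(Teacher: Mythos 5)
Your route is viable but genuinely different from, and substantially heavier than, the one the paper takes. You aim to identify each finite stage $X_n$, $Y_n$ of the projective-limit construction with the finite graphs $\mathcal{I}_k$, $\mathcal{J}_k$ (up to an interleaving of the two filtrations) and then pass to limits; the hard part, which you correctly flag as the main obstacle, is the ``and no more'' direction, i.e.\ that $\mathcal{E}_n$ contains nothing with finer separating power than the $P_{\leq k}$-indicators. The paper sidesteps this entirely: since $\mathcal{K}$ together with $\pi$ and $j$ has already been verified to be a cover of $(\sigma_X,X)$ in the paragraph preceding the theorem, the Gelfand correspondence (Theorem~\ref{thm:correspondence}) together with minimality automatically gives $\mathcal{R}_{\min} \subseteq (j_{\mathcal{I}})^*(C(\mathcal{I}))$ and $\mathcal{D}_{\min} \subseteq (j_{\mathcal{J}})^*(C(\mathcal{J}))$, exactly as in the proof of Theorem~\ref{thm:mincover}, so only the \emph{reverse} inclusions need to be checked. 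Those follow from a short computation: the pullback under $j_{\mathcal{I}}$ of the indicator of a vertex $(P_{\leq k}(\alpha x),\alpha)\in\mathcal{I}_k$ is the indicator of $Z(\alpha)\cap\bigcap_{\beta\in P_{\leq k}(\alpha x)}C(\beta,\varnothing)$, which lies in $\mathcal{R}_{\min}$ because $\Lambda^{|\beta|}(\chi_{Z(\beta)})=\chi_{C(\beta,\varnothing)}$ (and similarly for edges, inserting one $\sigma_X^*$). In short, the asymmetric use of the universal property converts the two-sided identification you propose into a single easy containment. Your approach, if completed, would buy an explicit finite-stage picture of $X_n$ versus $\mathcal{I}_k$ with the bonding maps, which is of independent interest --- a version of it is used later to show that soficity is equivalent to the construction terminating --- but for the theorem as stated it does more work than necessary, and the interleaving and cofinality bookkeeping would need real care: at every finite stage $X_n$ retains the full point of $X$ (the $C_c(X)$ generators) while $\mathcal{I}_k$ retains only a length-$(k-1)$ word, so the finite stages do not literally coincide and only the projective limits agree.
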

\setlength{\parindent}{0cm} \setlength{\parskip}{0cm}

\begin{proof}
Let $r, s \colon \hat{Y} \to \hat{X}$ denote the minimal graph cover of $\sigma_X$ we constructed in Section \ref{sec:app_min_cover}, 
and let $\mathcal{D}_{\text{min}}$ and $\mathcal{R}_{\text{min}}$ denote the domain and range for the minimal restriction of $(\sigma_{X},X)$, weighted by $w=1$.
By the universal property (see Theorem \ref{thm:mincover}) it suffices to show that 
$(j_{\mathcal{I}})^*(C(\mathcal{I})) \subseteq \mathcal{R}_{\text{min}} \simeq C(\hat{X})$ and $(j_{\mathcal{J}})^*(C(\mathcal{J})) \subseteq \mathcal{D}_{\text{min}}\simeq C(\hat{Y})$. 
We treat the cases of vertices and edges separately.
Note that $C(\mathcal{I})$ and $C(\mathcal{J})$ are the limits of $C(\mathcal{I}_{k})$ and $C(\mathcal{J}_{k})$, and each $\mathcal{K}_k$ is a finite discrete graph.
\setlength{\parindent}{0cm} \setlength{\parskip}{0.5cm}

For the case of vertices, let $(P_{\leq k}(\alpha x),\alpha) \in \mathcal{I}_k$ and $z\in X$, and observe that
\[
\chi_{(P_{\leq k}(\alpha x),\alpha)}( j_{\mathcal{I}}(z)) = 
\begin{cases}
    1 &\textrm{if } P_{\leq k}(\alpha x) = P_{\leq k}(z) \textrm{ and } \alpha = z_{[1,k-1]} \\
    0 &\textrm{otherwise.}
\end{cases}
\]
This equals the indicator over the set $Z(\alpha)\cap \bigcap_{\beta\in P_{\leq k}(\alpha x)} C(\beta, \varnothing)$
which is in $\mathcal{R}_{\text{min}}$ since $\Lambda^{|\beta|}(\chi_{Z(\beta)}) = \chi_{C(\beta,\varnothing)}$. 

For edges, we take $(P_{\leq k}(\alpha x),a\alpha) \in \mathcal{J}_k$ and $z\in X$ and observe that 
\[
\chi_{(P_{\leq k}(\alpha x),a\alpha)}( j_{\mathcal{J}}(z)) = 
\begin{cases}
    1 &\textrm{if } P_{\leq k}(\alpha x) = P_{\leq k}(\sigma_X(z)) \textrm{ and } a \alpha = z_{[1,k]} \\
    0 &\textrm{otherwise.}
\end{cases}
\]
This is equal to the indicator of $Z(a \alpha)\cap \sigma_X^{-1}( \bigcap_{\beta\in P_{\leq k}(\alpha x)} C(\beta, \varnothing))$ which is in $\mathcal{D}_{\text{min}}$.
\end{proof}
\setlength{\parindent}{0cm} \setlength{\parskip}{0.5cm}

There is a natural graph morphism $\mu\colon \mathcal{K} \to K$ given by $\mu_{\mathcal{I}}(F,x) = F$ for all $(F,x)\in  \mathcal{I}$, and $\mu_{\mathcal{J}}(G,by) = (G,b)$ for all $(G,by)\in \mathcal{J}$. 
Applying the paths functor yields an equivariant continuous map $\mu^{-\infty}\colon \mathcal{K}^{-\infty} \to K^{-\infty}$ given by applying $\mu$ to each edge in an infinite path.

\begin{lemma} \label{lem:R-conjugacy}
  The map $\mu^{-\infty}\colon \mathcal{K}^{-\infty} \to K^{-\infty}$ is a topological conjugacy and an isomorphism of dynamical covers of $(\sigma_{X}, X)$.
\end{lemma}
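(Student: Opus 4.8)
The plan is to exhibit an explicit inverse to $\mu^{-\infty}$ and check both maps are continuous, equivariant, and intertwine the covering data. The key observation is that an infinite path in $K^{-\infty}$ already records enough label information to reconstruct the $X$-coordinates of the corresponding path in $\mathcal{K}^{-\infty}$: given $\underline{e} = (\ldots, (F_{-2},a_{-2}), (F_{-1},a_{-1})) \in K^{-\infty}$, the compatibility $s(F_{-i-1},a_{-i-1}) = r(F_{-i},a_{-i})$ forces the labels $a_{-i}$ to form a consistent stream of symbols, and for each $n \geq 1$ the word $a_{-n} a_{-n+1} \cdots a_{-1}$ lies in $L(X)$ and extends to an element of $X$ whose $k$-past is controlled by $F_{-1}$. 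First I would define the candidate inverse $\nu^{-\infty} \colon K^{-\infty} \to \mathcal{K}^{-\infty}$ by sending $\underline{e}$ to the path whose $(-i)$-th edge is $(F_{-i}, a_{-i} a_{-i+1} \cdots a_{-1} y_{-i})$ for a suitable tail $y_{-i} \in X$; the point is that $(F_{-i}, y_{-i})$ must be the vertex in $\mathcal{I}$ determined by $F_{-i}$ together with the requirement that the edge have source compatible with the next edge, and one checks this is well-defined because the past-set data in $K$ pins down the $\mathcal{K}$-vertex uniquely once the forward label stream is fixed.

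**Key steps.** (i) Show $\mu^{-\infty}$ is injective: if two paths in $\mathcal{K}^{-\infty}$ have the same image, then at each coordinate the $I$-components and the single-symbol labels agree, and by compatibility of sources and ranges along the path the full words $a_{-n}\cdots a_{-1}$ agree, which by the description of $\mathcal{I}$ and $\mathcal{J}$ (where the second coordinate is determined by the word prefix) forces the paths to coincide. (ii) Show $\mu^{-\infty}$ is surjective by verifying that the construction of $\nu^{-\infty}$ above indeed lands in $\mathcal{K}^{-\infty}$ — the main content is checking the source-range compatibility $\hat s(\text{edge}_{-i-1}) = \hat r(\text{edge}_{-i})$ in $\mathcal{K}$, which reduces to the identity $s(F_{-i-1},a_{-i-1}) = r(F_{-i},a_{-i})$ in $K$ together with consistency of the chosen tails, and that $\mu^{-\infty} \circ \nu^{-\infty} = \id$ and $\nu^{-\infty} \circ \mu^{-\infty} = \id$. (iii) Continuity of both maps: $\mu^{-\infty}$ is continuous because it is built from the continuous graph morphism $\mu$ applied coordinatewise, and $\nu^{-\infty}$ is continuous because the topology on $\mathcal{K}$ (and hence on $\mathcal{K}^{-\infty}$) is generated by the $I$-coordinate, the finite-prefix information of the $X$-coordinate, and these are all determined continuously by finitely many coordinates of the $K^{-\infty}$-path; a compactness argument (both path spaces are compact Hausdorff) then gives that $\mu^{-\infty}$ is a homeomorphism once it is a continuous bijection. (iv) Equivariance: $\mu^{-\infty}$ commutes with the shift by construction, since applying $\mu$ coordinatewise commutes with deleting the last coordinate. (v) Compatibility with the cover structure: one must check $\mu^{-\infty} \circ j^{-\infty} = i^{-\infty}$ and that $\mu^{-\infty}$ carries the projection $\mathcal{K}^{-\infty} \to X$ to the projection $K^{-\infty} \to X$ (both being $\mathcal{L}^{-\infty}$ composed with the respective identifications), plus that the transfer operators / weights (both $w=1$) are intertwined — which is automatic since $\mu$ restricts to a bijection $r^{-1}(v) \to r^{-1}(\mu(v))$ on each range fibre, hence so does $\mu^{-\infty}$ on shift-preimages.

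**Main obstacle.** The delicate point is the surjectivity of $\mu^{-\infty}$, i.e. showing that every point of $K^{-\infty}$ lifts: one needs to produce, coherently across all coordinates $-i$, tails $y_{-i} \in X$ so that $(F_{-i}, a_{-i}\cdots a_{-1} y_{-i}) \in \mathcal{J}$ and the source of the $(-i-1)$-st edge equals the range of the $(-i)$-th. This is where the defining condition of $\mathcal{I}$ (that $F \vert_{\leq k} = P_{\leq k}(x^{(k)})$ for some $x^{(k)} \in Z(x_{[1,k]})$) does real work: it must be invoked to guarantee that the past-set label $F_{-i}$ is genuinely compatible with a choice of tail extending the accumulated word. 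I expect the cleanest route is to first prove the statement in the sofic (finite graph) case where everything is transparent, then pass to the general case by a projective-limit argument, since $\mathcal{K} = \varprojlim \mathcal{K}_k$ and $K$ arises as a corresponding limit, and $\mu$ is the limit of maps $\mathcal{K}_k \to K_k$ (or their finite truncations); the conjugacy then follows by taking limits of the finite-level statements, with compactness ensuring the limit map remains a homeomorphism.
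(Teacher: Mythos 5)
Your overall strategy (explicit inverse plus continuity, closed off by a compactness argument) is viable and close in spirit to the paper's, but your central reconstruction formula is wrong, and the error propagates into a misidentification of where the difficulty lies. In $\mathcal{K}^{-\infty}$ the source--range compatibility $s(G_{-i-1},b_{-i-1}y_{-i-1}) = r(G_{-i},b_{-i}y_{-i})$ forces $y_{-i} = b_{-i-1}y_{-i-1}$, so the $X$-coordinate of the edge at position $-i$ is the \emph{infinite} word $b_{-i}b_{-i-1}b_{-i-2}\cdots$ obtained by reading the labels going further into the past; there is no tail to choose. Your formula $(F_{-i}, a_{-i}a_{-i+1}\cdots a_{-1}y_{-i})$ concatenates in the opposite direction (towards the present) and appends an undetermined tail $y_{-i}$; correspondingly, the word that the compatibility relations actually place in $L(X)$ is $a_{-1}a_{-2}\cdots a_{-n}$, not $a_{-n}\cdots a_{-1}$. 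The paper's inverse $S$ simply sends $(\ldots,(F_2,a_2),(F_1,a_1))$ to $(\ldots,(F_2,\sigma_X(a)),(F_1,a))$ with $a = a_1a_2\cdots \in X$, which is manifestly continuous and choice-free. Hence your ``main obstacle'' --- coherently choosing tails --- is a phantom problem.

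The genuine obstacle, which your sketch only gestures at, is to show that the canonical lift actually lands in $\mathcal{K}^{-\infty}$, i.e.\ that each $(F_i,\sigma_X^{i-1}(a))$ belongs to $\mathcal{J}$. This requires, for every $k$, a point $x^{(k)}\in X$ that \emph{simultaneously} has the prescribed prefix $a_{i+1}\cdots a_{i+k}$ and the prescribed truncated past-set $F_i\vert_{\leq k}$; this joint condition does not follow from $F_i\in I$ alone but needs the iterated identities $F_i = \{\nu : \nu a_{i+1}\cdots a_{i+k}\in F_{i+k}\}$ and $a_{i+1}\cdots a_{i+k}\in F_{i+k}$ together with a witness for $F_{i+k}$ at a deeper level. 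This is exactly the content of the paper's density argument (that $i^{-\infty}(X)$ is dense in $K^{-\infty}$), after which the paper concludes by agreement of continuous maps on dense subsets; your alternative closing move (continuous bijection between compact Hausdorff spaces) is equally fine once this step is supplied, as is your injectivity argument. Your proposed fallback --- prove the sofic case first and pass to projective limits --- does not obviously apply: $K$ (unlike $\mathcal{K}$) is not presented as a projective limit of finite graphs, and a general subshift is not approximated by sofic ones in the sense such an argument would require.
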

\setlength{\parindent}{0cm} \setlength{\parskip}{0cm}

\begin{proof}
Note that $\mu\circ j = i$ and so functorality implies $\mu^{-\infty}\circ j^{-\infty} = i^{-\infty}$. 
It suffices to show the image of $i^{-\infty}$ is dense in $K^{-\infty}$ and construct a continuous function $S \colon K^{-\infty}\to \prod_{n<0}(P\times X)$ such that $S\circ i^{-\infty} = j^{-\infty}$.
We first construct $S$. 
Take $(\ldots, (F_2,a_2),(F_1,a_1))\in K^{-\infty}$ and observe that $a \coloneqq a_1a_2\ldots$ is in $X$. 
Define $S(\ldots, (F_2,a_2),(F_1,a_1)) = (\ldots,(F_2, \sigma_{X}(a)), (F_1,a))\in \prod_{n<0}(I\times X)$. 
Then, $S\circ i^{-\infty} = j^{-\infty}$ and $S$ is continuous.
\setlength{\parindent}{0cm} \setlength{\parskip}{0.5cm}

We prove that the image of $i^{-\infty}$ is dense. 
Let $\underline{q}= (...(F_{2},a_{2}), (F_{1},a_{1}))\in K^{-\infty}$. 
For every $n$ in $\mathbb{N}$, we have $\menge{b\in L(X)}{b a_{n+1}\in F_{n+1}} = F_{n}$. 
Therefore, $\menge{b\in L(X)}{b a_{k+1}a_{k}...a_{n+1}\in F_{n+1}} = F_{k}$ for all $k \leq n$. 
In particular, for every $n$ in $\mathbb{N}$, there is $x_{n}$ in $X$ such that $F_{n+1}|_{\leq 2n+1} = P_{\leq 2n+1}(x_{n})$ 
and hence $y_{n} = a_{1}...a_{n+1}x_{n}$ is an element in $X$ such that 
$P_{\leq n}(\sigma^{k}(y_{n})) = \menge{b\in L_{\leq n}(X)}{b a_{k+1}...a_{n+1}x_{n}\in X} = \menge{b\in L_{\leq n}(X)}{b a_{k+1}...a_{n+1}\in F_{n+1}} = (F_{k})_{\leq n},$ for all $k\leq n$. 
Therefore, $\{i^{-\infty}(y_{n})\}_{n\in\mathbb{N}}$ converges to $\underline{q}$. This proves density of $i^{-\infty}(X)$.
\end{proof}
\setlength{\parindent}{0cm} \setlength{\parskip}{0cm}

We can now identify the minimal dynamical cover of a subshift in terms of the topological Krieger graph.
This follows immediately from Theorem \ref{thm:minimal-graph-cover-subshift}, Corollary \ref{cor:cover_inf_paths} and Lemma \ref{lem:R-conjugacy}.

\begin{corollary} 
  Let $(\sigma_X,X)$ be a subshift. 
  The minimal dynamical cover of $\sigma$ is isomorphic (as a cover) to the shift on $\mathcal{K}^{-\infty}$
  and also the shift on the space of infinite paths on the topological Krieger graph $K$.
\end{corollary}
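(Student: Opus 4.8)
The plan is to concatenate the three cited results. Regard the subshift $(\sigma_X,X)$ as a topological graph with $Y = X$, $s = \id_X$ and $r = \sigma_X$, weighted by $w=1$; its source map $\id_X$ is proper, so the paths functor $(-)^{-\infty}$ from Section~\ref{ss:meas_top_dyn_sys} is available on it and on every graph arising below. First I would invoke Corollary~\ref{cor:cover_inf_paths}: the minimal dynamical cover of $\sigma_X$ is isomorphic, as a cover of $(\sigma_X,X)$, to the shift on the infinite path space $\hat Z^{-\infty}$ of the minimal graph cover $\hat Z$ of $\sigma_X$, with structure maps $\pi^{-\infty}$ and $\iota^{-\infty}$ obtained by applying $(-)^{-\infty}$ to the projection $\pi$ and the section $\iota$ of $\hat Z$. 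This reduces the statement to identifying $\hat Z^{-\infty}$.

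Next I would use Theorem~\ref{thm:minimal-graph-cover-subshift}, which identifies $\hat Z$ with the augmented topological Krieger graph $\mathcal K$ as covers of $(\sigma_X,X)$, the section being $j$ and the projection being $\pi\colon \mathcal K\to X$. Both $\hat Z$ and $\mathcal K$ are compact, so all their structure maps are proper and $(-)^{-\infty}$ is defined on them; since $(-)^{-\infty}$ is a functor, the cover isomorphism $\hat Z\cong \mathcal K$ yields an isomorphism of dynamical covers $\hat Z^{-\infty}\cong \mathcal K^{-\infty}$ carrying $\iota^{-\infty}$ to $j^{-\infty}$ and intertwining the two projections. Combined with the first step, the minimal dynamical cover of $\sigma_X$ is isomorphic as a cover to the shift on $\mathcal K^{-\infty}$, which is the first assertion.

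For the second assertion I would invoke Lemma~\ref{lem:R-conjugacy}: the graph morphism $\mu\colon \mathcal K\to K$ induces, via $(-)^{-\infty}$, a topological conjugacy $\mu^{-\infty}\colon \mathcal K^{-\infty}\to K^{-\infty}$ which is an isomorphism of dynamical covers of $(\sigma_X,X)$, and since $\mu\circ j = i$, functoriality gives $\mu^{-\infty}\circ j^{-\infty} = i^{-\infty}$, so this isomorphism is compatible with the sections and projections already in play. Composing with the preceding isomorphism shows that the minimal dynamical cover of $\sigma_X$ is also isomorphic as a cover to the shift on $K^{-\infty}$. The only work beyond quoting these results is the routine diagram chase verifying that being an \emph{isomorphism of covers} — dense image of the sections and the $w=1$ compatibility, on top of the equivariant homeomorphisms of underlying spaces — is transported along each step; I expect this bookkeeping to be the only (and minor) obstacle, as no new analytic input is required.
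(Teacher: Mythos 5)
Your proposal is correct and follows exactly the paper's own argument, which simply cites Theorem~\ref{thm:minimal-graph-cover-subshift}, Corollary~\ref{cor:cover_inf_paths} and Lemma~\ref{lem:R-conjugacy} and combines them in the same order you describe. The extra bookkeeping you flag (properness of the source maps so the paths functor applies, and compatibility of sections and projections under each isomorphism) is indeed routine and is left implicit in the paper as well.
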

\setlength{\parindent}{0cm} \setlength{\parskip}{0.5cm}

\bremark
In \cite[Chapter 2]{CarlsenPhD}, Carlsen defines a dynamical system $(\sigma_{\tilde{X}}, \tilde{X})$ from a subshift $(\sigma_X,X)$ in such a way that $\sigma_{\tilde{X}}$ factors onto $\sigma_X$
and $\sigma_{\tilde{X}}$ is a local homeomorphism.
Then $\sigma_{\tilde{X}}$ is called a \emph{cover} of $\sigma_X$, and this is further studied in \cite{Brix-Carlsen2020,Brix2023,He-Wei2024}
in which lifting results and connections to \'etale groupoids and $C^*$-algebras are discussed.
Using the universal property of our minimal covers (in a way similar to the proof of Theorem \ref{thm:minimal-graph-cover-subshift}), it can be shown that Carlsen's cover is isomorphic to our minimal dynamical cover, so our construction and results provide a conceptual explanation for these previous works.
\eremark

\subsection{The essential dynamical cover of a subshift}

In this section, we identify the minimal essential dynamical cover of a subshift $(\sigma_X,X)$.
We say a subshift $\sigma_X$ is \emph{regular} if $\sigma_X(X)$ is regular in $X$, in the sense that the interior of $\sigma_X(X)$ is dense in $\sigma_X(X)$.
Most subshifts of interest are regular (in particular, when $\sigma_X$ is surjective), but not all subshifts are regular. 
Below we see that this is exactly the condition we need to define the essential cover of $\sigma_X$.

\begin{lemma} \label{weakly_open}
  Let $(\sigma_X,X)$ be a subshift. Then the post-critical set (when $\sigma_{X}$ is weighted by $w=1$) is meagre. Moreover, $\sigma_{X}$ is weakly open if and only if it is regular.
\end{lemma}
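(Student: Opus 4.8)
The plan is to prove the two claims of Lemma~\ref{weakly_open} somewhat independently, and to begin by getting a concrete handle on the critical vertex set $C_\Lambda$ when $\sigma_X$ carries the weight $w=1$. First I would observe that for the transfer operator $\Lambda(f)(x) = \sum_{y \in \sigma_X^{-1}(x)} f(y)$ on a subshift, the relevant discontinuities are governed by how the $k$-pasts $P_k(x)$ vary: $\Lambda$ applied to an indicator $\chi_{Z(\beta)}$ is (up to the obvious bookkeeping) the indicator of $C(\beta,\varnothing)$, and such a function is continuous at $x$ precisely when membership of $\beta$ in the past of $x$ is locally constant. So I would show that $C_\Lambda$ is contained in the set of points $x$ where some past word is ``unstable'', i.e. where for some $\beta$ there is a sequence $x_i \to x$ with $\beta \in P_{<\infty}(x_i)$ but $\beta \notin P_{<\infty}(x)$ (or vice versa). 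Since $L(X)$ is countable and for each fixed word $\beta$ the set of such instability points is closed with empty interior (it is contained in the boundary of the clopen-in-$X$ ... actually only closed ... set $C(\beta,\varnothing)$, and its complement, whose union is dense), $C_\Lambda$ is meagre. This is the crux of the first assertion.

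Next, for $P_\Lambda = \bigcup_{n\geq 1}\sigma_X^n(C_\Lambda)$, I would invoke that $\sigma_X$ on a subshift is continuous and locally injective (indeed a restricted shift), hence Lemma~\ref{lem:meagre_to_meagre} applies: the image of a meagre set under $\sigma_X$ is meagre. Therefore each $\sigma_X^n(C_\Lambda)$ is meagre, and by the Baire category theorem (the full shift space $\mathfrak{a}^{\Nz}$, hence $X$, is a Baire space) the countable union $P_\Lambda$ is meagre. One must also check $r^{-1}(P_\Lambda) = \sigma_X^{-1}(P_\Lambda)$ is meagre; since $\sigma_X$ is locally injective, preimages of nowhere-dense sets are nowhere dense (local homeomorphism-like behaviour on the coordinates where $\sigma_X$ is injective, covered by second countability), so again a countable union argument gives meagreness. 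This proves the post-critical set is meagre.

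For the second assertion, ``weakly open iff regular'', I would argue both directions directly from the definitions. Recall $\sigma_X$ is weakly open iff $\mathrm{int}(\sigma_X(V)) \neq \emptyset$ for every nonempty open $V \subseteq X$; equivalently iff $\sigma_X^{-1}(U)$ is dense whenever $U$ is open dense in $X$. Assume $\sigma_X$ is regular, so $\mathrm{int}_{\sigma_X(X)}(\sigma_X(X))$ is dense in $\sigma_X(X)$. Take a nonempty open $V$; shrinking, assume $V = Z(\alpha) \cap X$ is a basic cylinder on which I can control things, and note $\sigma_X(V)$ is a relatively open subset of $\sigma_X(X)$ (the shift is open as a map into $\sigma_X(X)$ at the symbolic level, since it deletes the first coordinate), so $\sigma_X(V)$ meets the dense set $\mathrm{int}_X(\sigma_X(X)) \cap \sigma_X(X)$; hence $\sigma_X(V)$ contains a nonempty relatively open piece of an $X$-open set, i.e. $\mathrm{int}_X(\sigma_X(V)) \neq \emptyset$. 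Conversely, if $\sigma_X$ is weakly open, then taking $V = X$ gives $\mathrm{int}_X(\sigma_X(X)) \neq \emptyset$; applying weak openness to basic cylinders and using that the cylinders $Z(\beta) \cap \sigma_X(X)$ form a basis for $\sigma_X(X)$, one shows each such cylinder has nonempty $X$-interior, which is exactly density of $\mathrm{int}_X(\sigma_X(X))$ in $\sigma_X(X)$, i.e. regularity.

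\textbf{Main obstacle.} I expect the delicate point to be the meagreness of $C_\Lambda$: one has to be careful that ``$\Lambda(f)$ discontinuous at $x$ for \emph{some} $f \in C_c(Y)$'' is really captured by the countable family of past-word instabilities, rather than some subtler phenomenon, and that each single-word instability locus is genuinely nowhere dense rather than merely a proper closed set. The cleanest route is probably to reduce an arbitrary $f \in C(X)$ to uniform limits of finite linear combinations of cylinder indicators (Stone--Weierstrass on the Cantor set $\mathfrak{a}^{\Nz}$, restricted to $X$), show $\Lambda$ is continuous in the inductive-limit topology, and thereby confine the possible discontinuity points of $\Lambda(f)$, for \emph{any} $f$, to the union over $\beta \in L(X)$ of the (nowhere dense) boundary-type sets attached to $C(\beta,\varnothing)$. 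The openness-of-the-symbolic-shift claim used in the regularity direction is standard but deserves a one-line justification.
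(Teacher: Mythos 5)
The serious problem is in the direction ``regular $\Rightarrow$ weakly open''. Your argument rests on the claim that $\sigma_X(V)$ is a relatively open subset of $\sigma_X(X)$ for $V\subseteq X$ open, ``since the shift deletes the first coordinate''. This is false: for a cylinder one has $\sigma_X(Z(a)\cap X)=C(a,\varnothing)$, and the paper explicitly introduces the sets $C(\beta,\alpha)$ as ``closed (and generally not open)''. Concretely, in the even shift (where $\sigma_X$ is surjective, so relatively open means open) the point $0^\infty$ lies in $C(1,\varnothing)$ but is a limit of the points $0^{2n+1}10^\infty\notin C(1,\varnothing)$, so $C(1,\varnothing)=\sigma_X(Z(1)\cap X)$ is not open. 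The full shift is indeed open, but for a restriction one only gets $\sigma(V\cap X)\subseteq\sigma(V)\cap\sigma(X)$, not equality; the failure of openness of $\sigma_X$ is precisely the phenomenon this paper is built around. What is actually true — and what the paper proves — is that the set of points at which $\sigma_X$ fails to be open \emph{as a map into} $\sigma_X(X)$ is meagre; this is shown by bounding that set by the discontinuity sets of the maps $P_k\circ\sigma$, each of which is a pointwise limit of continuous maps $P^n_k\circ\sigma$ and hence has meagre discontinuity set. From this, $\sigma_X\colon X\to\sigma_X(X)$ is weakly open, and one composes with the inclusion $\sigma_X(X)\hookrightarrow X$, which is weakly open exactly when the subshift is regular. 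Your proof of this implication needs to be replaced by an argument of this Baire-category type; as written it establishes nothing. (Your converse direction, weakly open $\Rightarrow$ regular, is fine and is essentially the paper's one-line contrapositive.)

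The first assertion you handle correctly, by a route genuinely different from the paper's: you confine $C_\Lambda$ to the countable union of the nowhere dense boundaries of the sets $C(\beta,\varnothing)$ and push forward with Lemma~\ref{lem:meagre_to_meagre}, whereas the paper notes that a critical point lies in the discontinuity set of $P_1$, that $x$ discontinuous for $P_k$ forces $\sigma_X(x)$ discontinuous for $P_{k+1}$, and that each $P_k$ is a pointwise limit of the continuous maps $P^n_k$, so its discontinuity set is meagre; this avoids any push-forward of meagre sets. Both are valid. One side remark of yours is wrong, though: local injectivity alone does \emph{not} make preimages of nowhere dense sets nowhere dense — for a non-regular subshift, $\sigma_X^{-1}\bigl(\bd(\sigma_X(X))\bigr)$ contains a nonempty open set. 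Luckily the lemma claims only that $P_\Lambda$ is meagre, so that extra step is not needed.
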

\setlength{\parindent}{0cm} \setlength{\parskip}{0cm}

\begin{proof}
For a function $f \colon X\to Y$, we let $D(f) \coloneq \menge{x\in X}{f\text{ is not continuous at }x}$.
Recall the functions $P_{k} \colon X\to L_{k}(X)$ from Section \ref{ss:top_Krieg_graph}.
We regard $L_{k}(X)$ as a discrete topological space. 
Then $x$ is a critical point of $\sigma_{X}$ if and only if $x\in D(P_{1})$, and $x\in D(P_{k})$ implies $\sigma_{X}(x)\in D(P_{k+1})$. 
Therefore, to show the post-critical set is meagre, it suffices to show $D(P_{k})$ is meagre, for every $k\in\mathbb{N}$ (by the Baire category theorem).
\setlength{\parindent}{0.5cm} \setlength{\parskip}{0cm}

Choose a sequence $(U_n)_n$ of nested open and closed subset of $\mathfrak{a}^\Nz$ such that $X \subseteq U_n \subseteq U_{n+1}$ for all $n$ and $X = \bigcap_n U_n$.
As the full shift $\sigma$ is a local homeomorphism, 
the mappings $P_k^n\colon X \to \mathfrak{a}^k\subseteq L(\mathcal{A}^{\mathbb{N}})$ given by $P_k^n(x) = (\sigma^{-1}(x)\cap U_n)_{[1,k]}$ for all $x\in X$ are continuous. 
Since $P_k$ is the pointwise limit of the continuous functions $P_k^n$, it follows from the Baire category theorem that $D(P_{k})$ is meagre.
\setlength{\parindent}{0cm} \setlength{\parskip}{0.5cm}

We prove the second statement now. If $\text{int}(\sigma_{X}(X))$ is not dense in $\sigma_{X}(X)$, then the open dense set $U = \text{int}(\sigma_{X}(X))\cup X\setminus \sigma_{X}(X)$ does not have a dense pre-image $\sigma_{X}^{-1}(U)$. Hence, $\sigma_{X}$ is not weakly open.
\setlength{\parindent}{0.5cm} \setlength{\parskip}{0cm}

To prove the converse, consider $X_{0} \coloneq  \menge{x\in X}{\sigma_{X}(x)\notin \text{int}(\sigma_{X}(U)),\text{ for some neighbourhood }U\text{ of } x}$, where the interior is taken as a subset of $\sigma_{X}(X)$. 
Since $\sigma_{X} \colon X\to \sigma_{X}(X)$ is a quotient map, we may write 
\[
X_{0} = \menge{x\in X}{\Lambda(f)\circ \sigma\text{ is not continuous at }x\text{ for some positive function } f\in C(Z(x_{1}))}.
\]
Therefore, $X_{0}\subseteq D(j_{\mathcal{J}})$. 
Moreover, $D(j_{\mathcal{J}}) \subseteq\bigcup_{k\in\mathbb{N}}D(P_{k}\circ \sigma)$. Each set $D(P_{k}\circ \sigma)$ is meagre 
because $P_{k}\circ \sigma$ is the pointwise limit of the continuous maps $P_{k}^{n}\circ \sigma$. 
Therefore, $X_{0}$ is meagre, and it follows that $\sigma_{X} \colon X\to \sigma_{X}(X)$ is weakly open. 
Note that an inclusion $X\subseteq Y$ of a subspace $X$ is weakly open if and only if its interior is dense in $X$, so by the hypothesis, it follows that $\sigma_{X} \colon X\to X$ is weakly open.
\end{proof}
\setlength{\parindent}{0cm} \setlength{\parskip}{0.5cm}

Let $(\sigma_X,X)$ be a regular subshift and recall the Krieger graph morphism $i\colon X \to K$.
We define the \emph{topological Fischer graph} of $(\sigma_X,X)$ to be $K_\ess \coloneqq \overline{ i(R_{\Lambda})}$, where $R_{\Lambda}$ is the regular sub-graph of $\sigma_{X}$ weighted by $1$.
In particular, if $P_{\Lambda}$ is the postcritical set of $\sigma_{X}$, 
then $i_{I}\colon X\setminus P_{\Lambda} \to I_\ess$ and $i_{J}\colon X\setminus \sigma_X^{-1}(P_{\Lambda}) \to J_\ess$ are continuous inclusions with dense image.

\bremark
As in the case of the Krieger graph, we could define the augmented Fischer graph and show that it coincides with the essential graph cover of $\sigma_X$.
\eremark

The fact that the left-infinite path space of our topological Fischer graph is (isomorphic to) the essential dynamical cover of a regular subshift 
now follows from our construction and the identification of $K^{-\infty}$ with the minimal dynamical cover.

\begin{theorem} 
  Let $(\sigma_X,X)$ be a regular subshift.
  Then the minimal essential dynamical cover is isomorphic (as covers) to the shift on the left-infinite path space $K_\ess^{-\infty}$.
\end{theorem}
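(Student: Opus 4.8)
The plan is to reduce the statement to the identification of the minimal dynamical cover of $\sigma_X$ with the shift on $K^{-\infty}$ that we have already established, and then to recognise the essential-cover construction as cutting $K^{-\infty}$ down to exactly $K_\ess^{-\infty}$.

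First I would record that the minimal essential dynamical cover of $\sigma_X$ is in fact defined. By Lemma~\ref{weakly_open} the post-critical set $P_\Lambda$ of $\sigma_X$ (weighted by $w=1$) is meagre, and since $\sigma_X$ is regular it is weakly open, so $\sigma_X^{-1}$ (hence each $\sigma_X^{-n}$) carries meagre sets to meagre sets; therefore $G_\Lambda = \bigcup_{n\geq 0}\sigma_X^{-n}(P_\Lambda)$ is meagre, i.e. $\sigma_X$ is sparsely critically saturated, and the minimal essential dynamical cover exists with underlying space $X_\ess = \overline{\iota(X\setminus G_\Lambda)}\subseteq X_{\min}$ (cf. also Corollary~\ref{cor:loc_inj_meagre}).

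Next I would pass to the Krieger picture. By the corollary identifying the minimal dynamical cover of $\sigma_X$ with the shift on $K^{-\infty}$ — which rests on Theorem~\ref{thm:minimal-graph-cover-subshift}, Corollary~\ref{cor:cover_inf_paths} and Lemma~\ref{lem:R-conjugacy} — there is an isomorphism of covers taking $X_{\min}$ to $K^{-\infty}$, the projection to $\mathcal{L}^{-\infty}$, and the section $\iota$ to $i^{-\infty}$. Under this isomorphism the minimal essential dynamical cover becomes the shift on $\overline{i^{-\infty}(X\setminus G_\Lambda)}\subseteq K^{-\infty}$ with the restrictions of $\mathcal{L}^{-\infty}$ and $i^{-\infty}$ as structure maps, and since the structure maps of $K_\ess^{-\infty}$ are by definition the restrictions of those of $K^{-\infty}$, the theorem reduces to the set equality $\overline{i^{-\infty}(X\setminus G_\Lambda)} = K_\ess^{-\infty}$. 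The inclusion ``$\subseteq$'' is immediate: for $x\notin G_\Lambda$ one has $\sigma_X^n(x)\notin P_\Lambda$ for all $n\geq 0$, so every edge $i_J(\sigma_X^{n-1}x) = (P_{<\infty}(\sigma_X^n x),x_n)$ of $i^{-\infty}(x)$ lies in $i_J(X\setminus\sigma_X^{-1}(P_\Lambda))\subseteq J_\ess$ and every vertex of $i^{-\infty}(x)$ lies in $i_I(X\setminus P_\Lambda)\subseteq I_\ess$, so $i^{-\infty}(x)\in K_\ess^{-\infty}$; as $K_\ess^{-\infty}$ is closed in $K^{-\infty}$, so is $\overline{i^{-\infty}(X\setminus G_\Lambda)}$ contained in it.

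The reverse inclusion is the one real point, and it is precisely the density of $i^{-\infty}(X\setminus G_\Lambda)$ in $K_\ess^{-\infty}$. I would obtain it by rerunning the approximation argument in the proof of Lemma~\ref{lem:R-conjugacy}. Given $\underline q = (\ldots,(F_2,a_2),(F_1,a_1))\in K_\ess^{-\infty}$ with label $a = a_1a_2\cdots\in X$, that argument constructs, for each $m$, a point $y_m = a_1\cdots a_{m+1}x_m\in X$ with $i^{-\infty}(y_m)\to\underline q$, the only requirement on $x_m$ being that $P_{\leq 2m+1}(x_m) = F_{m+1}|_{\leq 2m+1}$. Now $\underline q\in K_\ess^{-\infty}$ forces $F_{m+1}\in I_\ess = \overline{i_I(X\setminus P_\Lambda)}$; since $i_I$ is continuous on $X\setminus P_\Lambda$ and $X\setminus G_\Lambda$ is dense in $X\setminus P_\Lambda$ (because $G_\Lambda$ is meagre), the set $i_I(X\setminus G_\Lambda)$ is dense in $I_\ess$, and therefore $x_m$ may be chosen inside $X\setminus G_\Lambda$ while still realizing the required finite piece of $F_{m+1}$. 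Finally, forward-invariance of $P_\Lambda$ upgrades $x_m\notin G_\Lambda$ to $y_m\notin G_\Lambda$: for $j\geq m+1$ one has $\sigma_X^j(y_m) = \sigma_X^{j-m-1}(x_m)\notin P_\Lambda$ since $x_m\notin G_\Lambda$, while for $0\leq j\leq m$, if $\sigma_X^j(y_m)\in P_\Lambda$ then applying $\sigma_X^{m+1-j}$ and $\sigma_X(P_\Lambda)\subseteq P_\Lambda$ would give $x_m = \sigma_X^{m+1}(y_m)\in P_\Lambda\subseteq G_\Lambda$, a contradiction. Hence $y_m\in X\setminus G_\Lambda$ and $i^{-\infty}(y_m)\to\underline q$, which proves the density, hence the equality, hence (both sides carrying the same restricted structure maps) the asserted isomorphism of covers. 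I expect this density step to be the only part requiring a genuine argument; the rest is formal manipulation on top of the graph-cover identifications already available.
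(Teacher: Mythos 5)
Your proof is correct, and it fills in details the paper leaves implicit: the paper's own ``proof'' is a one-sentence assertion that the result ``follows from our construction and the identification of $K^{-\infty}$ with the minimal dynamical cover.'' The route the paper has in mind is to identify the minimal essential \emph{graph} cover with the augmented Fischer graph $\overline{j(R_{\Lambda})}\subseteq\mathcal{K}$ (as flagged in the remark preceding the theorem), invoke Corollary~\ref{cor:inf_path_ess} to realize the minimal essential dynamical cover as the shift on its path space, and then restrict the conjugacy $\mu^{-\infty}$ of Lemma~\ref{lem:R-conjugacy} to land in $K_{\ess}^{-\infty}$. You instead stay entirely at the dynamical level: you transport $X_{\ess}=\overline{\iota(X\setminus G_{\Lambda})}$ across the already-established isomorphism $X_{\min}\cong K^{-\infty}$ and prove the set equality $\overline{i^{-\infty}(X\setminus G_{\Lambda})}=K_{\ess}^{-\infty}$ by hand. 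The two approaches rest on the same identifications, but yours makes explicit the one genuinely nontrivial point that the paper's route also silently needs, namely that $i^{-\infty}$ of a co-meagre fully invariant set is dense in all of $K_{\ess}^{-\infty}$ and not merely in its own closure. Your density argument is sound: the choice of $x_m\in X\setminus G_{\Lambda}$ realizing $F_{m+1}\vert_{\leq 2m+1}$ is legitimate because $i_I$ is continuous on $X\setminus P_{\Lambda}$ (Proposition~\ref{continuous_on_regular} transported through the continuous map $\mu$) and $X\setminus G_{\Lambda}$ is dense there, and the upgrade from $x_m\notin G_{\Lambda}$ to $y_m=a_1\cdots a_{m+1}x_m\notin G_{\Lambda}$ via forward invariance of $P_{\Lambda}$ is exactly right. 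One small presentational point: it is worth stating explicitly (as you implicitly use) that $K_{\ess}$ is an invariant sub-graph of $K$, i.e.\ $r(J_{\ess})\cup s(J_{\ess})\subseteq I_{\ess}$, so that $K_{\ess}^{-\infty}$ is a closed, shift-invariant subset of $K^{-\infty}$; this follows from $i$ being a graph morphism, invariance of $R_{\Lambda}$, and continuity of $r$ and $s$.
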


Let $(\sigma_X,X)$ be a subshift.
It is \emph{irreducible} if whenever $\lambda,\nu \in L(X)$, there is $\mu\in L(X)$ such that $\lambda\mu\nu\in L(X)$.
Recall that a word $\mu$ in $L(X)$ is \emph{synchronizing} if whenever $\nu$ and $\lambda$ are in $L(X)$ and $\nu \mu$ and $\mu\lambda$ are in $L(X)$, then $\nu \mu\lambda$ is in $L(X)$.
A point $x\in X$ is synchronizing if it contains a synchronizing word, and we let $X_\sync$ be the subset of $X$ of points that contain synchronizing words.
If $\sigma_X$ is irreducible and $X_\sync$ is nonempty, then it is open and dense.

Recall that a subshift is \emph{sofic} if it is a homomorphic image of a shift of finite type.
Sofic shifts always contain synchronizing words, and the (topological) Krieger graph $K$ is finite and discrete.
In particular, $I_{\ess} = i_{I}(X\setminus P_{\Lambda})$ and $J_{\ess} = i_{J}(X\setminus \sigma_X^{-1}(P_{\Lambda}))$, where $P_{\Lambda}$ is the post-critical set of $\sigma_{X}$ (weighted by $w=1$).
The \emph{Fischer graph} $F$ of an irreducible sofic subshift $(\sigma_X, X)$ is the unique minimal right-resolving graph presentation of $\sigma_X$ \cite[Section 3.4]{Lind-Marcus2021},
and it may be concretely defined as the sub-graph of $K$ whose vertices are represented by synchronizing points,
i.e., $F^0 = i_{I}(X_\sync) \subseteq I$ and $F^1 = r^{-1}(i_{I}(X_\sync))\cap s^{-1}(i_{J}(X_\sync)) \subseteq J$.

Let us relate synchronizing words to continuity points.

\begin{lemma}\label{lem:magic.word.is.cont.}
Let $\sigma_{X} \colon X\to X$ be a subshift and suppose $x$ is synchronizing. Then there is a neighbourhood $U$ of $x$ such that, for every $k$ in $\mathbb{N}$, $P_{k}|_{U}$ is continuous. 
Therefore, $X_{\sync}\subseteq X\setminus P_{\Lambda}$, $i_{I}$ is continuous at every $x\in X_\sync$ and $i_{J}$ is continuous at every $y\in \sigma^{-1}_{X}(X_\sync)$.
\end{lemma}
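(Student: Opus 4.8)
The claim is that a synchronizing point has a neighbourhood on which every $k$-past map $P_k$ is continuous, and that consequently $X_\sync$ avoids the post-critical set $P_\Lambda$ while the Krieger graph morphism maps are continuous there. First I would recall the definition of a synchronizing word: if $x$ is synchronizing, write $x = \mu x'$ with $\mu \in L(X)$ synchronizing, say $|\mu| = \ell$. The key combinatorial observation is that for a synchronizing word $\mu$, the $k$-past $P_k(z)$ of any point $z$ whose initial segment $z_{[1,\ell]} = \mu$ and whose past is compatible with $\mu$ is completely determined: more precisely, I would show that for $z \in Z(\mu)$, a word $\nu \in L_k(X)$ lies in $P_k(z)$ if and only if $\nu\mu \in L(X)$. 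One inclusion is immediate ($\nu\mu z' \in X$ forces $\nu\mu \in L(X)$); the converse uses the synchronizing property of $\mu$: if $\nu\mu \in L(X)$ and (since $z = \mu x' \in X$) $\mu x' \in X$, then synchronization glues these to give $\nu\mu x' = \nu z \in X$, so $\nu \in P_k(z)$.

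The neighbourhood I would take is $U = Z(\mu) \cap X$, the cylinder set of $\mu$ intersected with $X$, which is an open neighbourhood of $x$. The formula just established shows that for $z \in U$ and every $k$, $P_k(z) = \menge{\nu \in L_k(X)}{\nu\mu \in L(X)}$, which does not depend on $z$ at all. Since $L_k(X)$ is discrete and $P_k|_U$ is \emph{constant}, it is continuous on $U$. This is the heart of the argument, and I expect the only real subtlety is being careful about the finitely many indices $k < \ell$ versus $k \geq \ell$ — but the synchronization argument works uniformly, since for $\nu$ of any length, $\nu\mu \in L(X)$ together with $\mu x' \in X$ gives $\nu\mu x' \in X$, hence $\nu \in P_k(z)$.

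For the remaining assertions, recall from Section~\ref{ss:min_ess} that the critical vertices $C_\Lambda$ of $\sigma_X$ (with weight $w=1$) are exactly the points where some $P_k$ fails to be continuous, as developed in the proof of Lemma~\ref{weakly_open}; more precisely $D(P_1) = C_\Lambda$ and $D(P_k)$ controls the higher-order discontinuities, and the post-critical set $P_\Lambda$ is the forward orbit of $C_\Lambda$. Since continuity of all $P_k$ on a neighbourhood of $x$ in particular gives continuity of $P_1$ at $x$ and at every $\sigma_X^n(x)$ (as the orbit of a synchronizing point stays synchronizing — the tail $\sigma_X^n(\mu x')$ still contains a synchronizing word once $n < \ell$, and for $n \geq \ell$ the point $\sigma_X^n(x)$ is a tail of $x'$, which one checks is still synchronizing, or one simply notes synchronizing words propagate forward), we get $x \notin P_\Lambda$, i.e. $X_\sync \subseteq X \setminus P_\Lambda$. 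Finally, by Proposition~\ref{continuous_on_regular} (or directly: the Krieger graph morphism components are $i_I(x) = P_{<\infty}(x)$ and $i_J(x) = (P_{<\infty}(\sigma_X(x)), x_1)$, whose continuity at a point is governed precisely by continuity of all the $P_k$ there), continuity of every $P_k$ on $U$ yields continuity of $i_I$ at each $x \in X_\sync$ and of $i_J$ at each $y \in \sigma_X^{-1}(X_\sync)$, using that $\sigma_X(y) \in X_\sync$ and $y \mapsto y_1$ is locally constant.
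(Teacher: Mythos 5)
Your proof is correct and follows essentially the same route as the paper: the key point in both is that if $\mu$ is a synchronizing prefix of $x$ (and one checks, as you implicitly do, that any word of $L(X)$ extending a synchronizing word to the left is again synchronizing), then for every $z$ in the cylinder $Z(\mu)\cap X$ the synchronizing property forces $P_k(z) = \menge{\nu \in L_k(X)}{\nu\mu \in L(X)}$, so each $P_k$ is constant, hence continuous, on that neighbourhood. One small logical wobble: to conclude $X_\sync \subseteq X\setminus P_\Lambda$ you invoke continuity of $P_1$ along the \emph{forward} orbit of $x$, but $P_\Lambda$ is the forward orbit of the critical set, so what rules out $x\in P_\Lambda$ is the containment $P_\Lambda \subseteq \bigcup_k D(P_k)$ from the proof of Lemma~\ref{weakly_open} combined with $x\notin \bigcup_k D(P_k)$ --- which your first paragraph already establishes --- and not any property of the points $\sigma_X^n(x)$.
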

\setlength{\parindent}{0cm} \setlength{\parskip}{0cm}

\begin{proof}
We may write $x = \alpha m x'$, where $m$ is a synchronizing word. 
Then, for every $y$ in $Z(\alpha m)$, we have $P_{k}(y) = P_{k}(\alpha m)  \coloneq  \menge{f\in L_{k}(X)}{f \alpha m\in L(X)}$. 
The other conclusions follow from $P_{\Lambda}\subseteq \bigcup_{k}D(P_{k})$ (see the proof of Lemma \ref{weakly_open}) and $\mu\circ j = i$.
\end{proof}

For an irreducible sofic subshift, our topological Fischer graph coincides with the classical Fischer graph.

\begin{theorem} 
  Let $(\sigma_X,X)$ be an irreducible sofic subshift.
  Then $K_\ess$ is isomorphic to the Fischer graph of $\sigma_X$.
\end{theorem}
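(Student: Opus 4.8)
The plan is to identify both graphs as explicit subgraphs of the topological Krieger graph $K = (r, s \colon J \to I)$ and show that these subgraphs coincide. Recall that $K_\ess = \overline{i(R_\Lambda)}$, where $R_\Lambda = (r, s \colon Y \setminus r^{-1}(P_\Lambda) \to X \setminus P_\Lambda)$ is the regular subgraph (with $\sigma_X$ weighted by $w=1$), and the classical Fischer graph $F$ has vertex set $F^0 = i_I(X_\sync)$ and edge set $F^1 = r^{-1}(i_I(X_\sync)) \cap s^{-1}(i_J(X_\sync))$. Since $(\sigma_X, X)$ is irreducible sofic, the (topological) Krieger graph $K$ is finite and discrete, so $X_\sync$ is open and dense, $P_\Lambda$ is finite (being meagre in a space with no isolated points issues — actually just: meagre + $K$ finite discrete forces $P_\Lambda$ to have empty-interior closure, but one should argue it directly from finiteness), and all the "closures" are just the images themselves: $I_\ess = i_I(X \setminus P_\Lambda)$ and $J_\ess = i_J(X \setminus \sigma_X^{-1}(P_\Lambda))$, as already noted in the excerpt.

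The key step is therefore the set-theoretic identity, at the level of vertices, $i_I(X \setminus P_\Lambda) = i_I(X_\sync)$, and the corresponding identity for edges. One inclusion is Lemma~\ref{lem:magic.word.is.cont.}: $X_\sync \subseteq X \setminus P_\Lambda$, hence $i_I(X_\sync) \subseteq i_I(X \setminus P_\Lambda)$. For the reverse inclusion, I would argue that every vertex of $K$ lying outside $i_I(P_\Lambda)$ is represented by a synchronizing point. First, one shows $i_I(P_\Lambda) = I \setminus i_I(X_\sync)$, equivalently that the "synchronizing vertices" of the finite graph $K$ are precisely the ones not hit by the forward orbit of the discontinuity set of $P_1$. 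In a finite right-resolving presentation, a vertex $v = P_{<\infty}(x)$ is synchronizing (follower-separated and reached by a synchronizing word) exactly when it is "intrinsically synchronizing" in Lind--Marcus's sense; the point is to match this with "$x \notin P_\Lambda$". Concretely: if $x \notin P_\Lambda$, then (by Proposition~\ref{continuous_on_regular}) $i_I$ is continuous at $x$, which forces the past sets $P_k$ to be locally constant near $x$, and on a discrete finite graph this local constancy at $x$ combined with the graph being a presentation means the word $x_{[1,k]}$ is synchronizing for $k$ large — giving $x \in X_\sync$. Conversely any $x$ in the forward orbit of a discontinuity point of some $P_k$ fails to be synchronizing because two distinct pasts collapse along the orbit. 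I would spell this out using the description of discontinuity points $D(P_k)$ from the proof of Lemma~\ref{weakly_open} and the fact that on the finite graph $K$, the minimal right-resolving presentation is obtained from the Krieger (past-set) presentation by restricting to the synchronizing vertices, which is exactly \cite[Section~3.4]{Lind-Marcus2021}.

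Once the vertex identity $I_\ess = F^0$ is established, the edge identity follows almost formally: $J_\ess = i_J(X \setminus \sigma_X^{-1}(P_\Lambda)) = r^{-1}(I \setminus i_I(P_\Lambda)) \cap s^{-1}(\text{appropriate set})$, and since $s(r^{-1}(X \setminus P_\Lambda)) \subseteq X \setminus P_\Lambda$ by backwards invariance of the regular subgraph, an edge survives in $K_\ess$ iff both its range and source vertices are synchronizing, which is precisely the defining condition $F^1 = r^{-1}(i_I(X_\sync)) \cap s^{-1}(i_J(X_\sync))$. The range and source maps of $K_\ess$ and $F$ are restrictions of those of $K$, so they agree, and the identity map on the underlying vertex and edge sets is the desired graph isomorphism (it is automatically a homeomorphism since both graphs are finite discrete). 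I expect the main obstacle to be the precise matching in the second step — cleanly proving $i_I(P_\Lambda) = I \setminus i_I(X_\sync)$, i.e.\ that ``not in the forward orbit of a discontinuity point of the past maps'' coincides with ``synchronizing'' — since this requires carefully unwinding both the dynamical definition of $P_\Lambda$ (via Lemma~\ref{weakly_open}'s description of $D(P_k)$) and the combinatorial characterization of the Fischer cover for sofic shifts, and checking they are the same in the finite-discrete setting.
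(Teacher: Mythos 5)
Your proposal is correct in outline, but it reaches the conclusion by a genuinely different route than the paper. The paper never proves the pointwise identity you aim for; instead it sandwiches the Fischer edge set via
$i_{J}(X_{\sync}\cap \sigma_{X}^{-1}(X_{\sync}))\subseteq r^{-1}(i_{I}(X_{\sync}))\cap s^{-1}(i_{I}(X_{\sync}))\subseteq i_{J}(\sigma_{X}^{-1}(X_{\sync}))$
(the right-hand identification $i_{J}(\sigma_X^{-1}(X_\sync)) = r^{-1}(i_I(X_\sync))$ coming from $j$ being a weighted morphism), shows that the leftmost set is already dense in $J_{\ess}$ (irreducibility gives surjectivity, hence weak openness via Lemma~\ref{weakly_open}, and Lemma~\ref{lem:magic.word.is.cont.} puts the open dense set $X_\sync\cap\sigma_X^{-1}(X_\sync)$ inside $X\setminus\sigma_X^{-1}(P_\Lambda)$), and then invokes finiteness of $K$ to convert density into equality. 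This sidesteps entirely the combinatorial question of whether every non-post-critical point is synchronizing. Your route instead attacks that question head-on: $X_\sync\subseteq X\setminus P_\Lambda$ is Lemma~\ref{lem:magic.word.is.cont.}, and for the converse you argue that continuity of the past maps at $x\notin P_\Lambda$ (Proposition~\ref{continuous_on_regular}) forces a prefix of $x$ to be synchronizing. If completed, your argument yields the stronger set identity $X\setminus P_\Lambda = X_\sync$, which is more informative (and does not obviously use irreducibility), whereas the paper's argument is shorter and leans on soft topology plus finiteness.

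Two points in your sketch need shoring up. First, local constancy of each $P_k$ near $x$ only gives, for each $k$, a cylinder on which the $k$-past is constant; to produce a single synchronizing word you must invoke the sofic stabilization (there is $N$ such that $P_{\leq N}(y)=P_{\leq N}(y')$ forces $P_k(y)=P_k(y')$ for all $k$, as recorded in the paper's proposition on termination of the cover construction), so that one cylinder $Z(\mu)$ works for all $k$ simultaneously; from constancy of all pasts on $Z(\mu)$ one then checks directly that $\mu$ is synchronizing. Second, the identity you propose to prove, $i_I(P_\Lambda)=I\setminus i_I(X_\sync)$, is stronger than what you need and could in principle fail if a post-critical point shared its past set with a synchronizing point; what your argument actually delivers, and what suffices, is $X\setminus P_\Lambda = X_\sync$, from which $I_\ess = i_I(X_\sync)$ and (using backwards invariance of $X\setminus P_\Lambda$ together with the bijections $i_J\colon\sigma_X^{-1}(x)\to r^{-1}(i_I(x))$) the edge identity both follow. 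With those repairs your plan goes through.
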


\begin{proof}
The edges of the Fischer graph are $r^{-1}(i_{I}(X_{\sync}))\cap s^{-1}(i_{I}(X_{\sync}))$. 
Note that $s\circ i_{J} = i_{I}$ implies $i_{J}(X_{\sync})\subseteq s^{-1}(i_{I}(X_{\sync}))$ and $j$ being a (weighted) morphism implies $i_{J}(\sigma_{X}^{-1}(X_{\sync}))= r^{-1}(i_{I}(X_{\sync}))$. 
Therefore, 
\begin{equation}\label{containment}
    i_{J}(X_{\sync}\cap \sigma_{X}^{-1}(X_{\sync}))\subseteq s^{-1}(i_{I}(X_{\sync}))\cap r^{-1}(i_{I}(X_{\sync}))\subseteq i_{J}(\sigma_{X}^{-1}(X_{\sync})).
\end{equation}
\setlength{\parindent}{0cm} \setlength{\parskip}{0.5cm}

The map $\sigma_{X}$ is irreducible, and hence surjective, so  Lemma \ref{weakly_open} implies $\sigma_{X}$ is weakly open. 
As $X_\sync$ is open and dense, Lemma \ref{lem:magic.word.is.cont.} implies that $\sigma_{X}^{-1}(X_{\sync})\cap X_{\sync}$ is an open dense set contained in $X\setminus \sigma_{X}^{-1}(P_{\Lambda})$. 
It follows from density of $X\setminus \sigma_{X}^{-1}(P_{\Lambda})$ and continuity of $i_{J}$ that each containment 
\[
  i_{J}(X_{\sync}\cap \sigma_{X}^{-1}(X_{\sync}))\subseteq i_{Q}(\sigma_{X}^{-1}(X_{\sync})) \subseteq \overline{i_{J}(X\setminus \sigma_{X}^{-1}(P_{\Lambda}))} = J_{\ess}
\]
is dense in the following set. 
Since $K$ is finite, we must have $J_{\ess} = i_{J}(\sigma_{X}^{-1}(X_{\sync})) = i_{J}(X_{\sync}\cap \sigma_{X}^{-1}(X_{\sync}))$. 
Therefore, \eqref{containment} implies $J_{\ess}\subseteq r^{-1}(i_{I}(X_{\sync}))\cap s^{-1}(i_{I}(X_{\sync}))\subseteq J_{\ess}$.
A similar argument shows $I_{\ess} = i_{I}(X_{\sync})$. 
Therefore, $K_{\ess}$ is the Fischer graph.
\end{proof}
\setlength{\parindent}{0cm} \setlength{\parskip}{0.5cm}

In Section \ref{sec:T-expansive} we gave a condition on the minimal graph cover construction that guaranteed expansivity of the minimal dynamical cover. Let us prove that it is necessary in the case of subshifts.
\setlength{\parindent}{0cm} \setlength{\parskip}{0cm}

\begin{proposition}
    Let $\sigma_{X} \colon X\to X$ be a subshift. Then the following are equivalent.

    \begin{enumerate}
        \item The minimal graph cover terminates after finitely many steps.
        \item The minimal dynamical cover is expanding.
        \item $\sigma_{X}$ is sofic.
    \end{enumerate}

Moreover, if $\sigma_{X}$ is regular then we may replace ``minimal graph cover'' with ``minimal essential graph cover'' in the above statements.
\end{proposition}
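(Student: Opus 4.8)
The plan is to establish the cycle of implications $(1)\Rightarrow(2)\Rightarrow(3)\Rightarrow(1)$, with the regular case handled by the same argument after substituting ``essential'' throughout. The implication $(1)\Rightarrow(2)$ is essentially free: it is a direct application of Theorem~\ref{thm:hatTExpansive-check}, since a subshift $\sigma_X$ is always positively expansive (for the standard metric, distinct points disagree in some coordinate, which a single cylinder cover detects) and our construction with weight $w=1$ applies. The implication $(2)\Rightarrow(3)$ is the content already sketched in the remark preceding Corollary~\ref{cor:Prep-TDF}: by Corollary (the identification of the minimal dynamical cover of a subshift with the shift on $\mathcal{K}^{-\infty}$), the cover is a shift on the infinite path space of a topological graph whose underlying spaces $\mathcal{I}$, $\mathcal{J}$ are totally disconnected compact metrizable spaces; if this shift is positively expansive and open (openness follows from Lemma~\ref{lem:full} since the cover's range map is a local homeomorphism), then it is conjugate to a shift of finite type, and since $\sigma_X$ is a factor of this cover via $\mathcal{L}^{-\infty}$, it follows that $\sigma_X$ is sofic.

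The implication $(3)\Rightarrow(1)$ is where the real work lies, and it is the main obstacle. The plan is to use the explicit description of the minimal graph cover as the projective limit of the finite discrete graphs $\mathcal{K}_k$ from the construction of the augmented topological Krieger graph $\mathcal{K}$ (Theorem~\ref{thm:minimal-graph-cover-subshift}). When $\sigma_X$ is sofic, the Krieger graph $K$ is finite and discrete --- meaning the set $I$ of past sets, equivalently the collection of sets $\{P_{\leq k}(x) : x \in X\}$, stabilizes: there is $N$ such that $P_{\leq k}(x)$ is determined by $P_{\leq N}(x)$ for all $k \geq N$ and all $x$. The key step is to show that this stabilization of past sets forces the morphisms $\upsilon_{k+1}\colon \mathcal{K}_{k+1}\to\mathcal{K}_k$ to become isomorphisms for $k$ large. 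Concretely, a vertex of $\mathcal{K}_k$ is a pair $(P_{\leq k-1}(\alpha x),\alpha)$ with $\alpha\in L_{k-1}(X)$; once $k-1 \geq N$, the data $P_{\leq k-1}(\alpha x)$ is redundant given $P_{\leq N}$ of the relevant point, but one must be careful: the word $\alpha$ itself grows with $k$. So the argument should instead bound the \emph{number} of distinct vertices and edges: show $|\mathcal{I}_k|$ and $|\mathcal{J}_k|$ are eventually constant, and that the bonding maps are eventually bijective, hence the projective limit equals $\mathcal{K}_N$ for some $N$.

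More carefully, I would argue as follows. Soficity gives a finite right-resolving presentation, or equivalently the Krieger cover $K$ is finite; let $N$ be a ``memory'' such that membership of a word $\beta$ in $P_{<\infty}(x)$ for $|\beta|$ arbitrary is governed by $P_{\leq N}(x)$, and such that the follower-set structure of synchronizing blocks has length at most $N$. Then for $k > N$, a vertex $(P_{\leq k-1}(\alpha x),\alpha)\in\mathcal{I}_k$ is recoverable from its image $(P_{\leq k-2}(\alpha x),\alpha_{[1,k-2]})$ under $\upsilon_k$ together with the soficity constraints: the extra coordinate $\alpha_{k-1}$ is forced because, given the state $P_{\leq N}$ reached after reading $\alpha_{[1,k-2]}$, the legal extensions are determined, and there is a unique lift consistent with $\alpha x\in X$. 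This makes $\upsilon_k$ injective hence bijective on vertices for $k > N$; the edge case $\mathcal{J}_k$ is identical with one more symbol. It follows that $\mathcal{K} = \varprojlim \mathcal{K}_k = \mathcal{K}_N$, i.e.\ the minimal graph cover terminates after finitely many steps. The main obstacle is pinning down the precise finite $N$ and verifying the uniqueness of lifts rigorously; I expect one wants to phrase this using the $k$-past functions $P_k\colon X\to L_k(X)$ from \S\ref{ss:top_Krieg_graph} and the observation (from the proof of Lemma~\ref{weakly_open}) that for sofic $X$ these factor through a fixed finite quotient. For the regular case, one replaces $\mathcal{K}$ by $K_\ess$ and the same stabilization argument applies since $K_\ess$ is a sub-graph of the finite graph $K$, and termination of the essential graph cover is implied by (indeed weaker than) termination of the full graph cover, as noted in the remark after Theorem~\ref{thm:hatTExpansive-check}.
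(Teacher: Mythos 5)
Your implications $(1)\Rightarrow(2)$ and $(2)\Rightarrow(3)$ are correct and follow the paper's route (Theorem~\ref{thm:hatTExpansive-check}, plus the fact that an open, positively expansive local homeomorphism of a compact zero-dimensional metric space is conjugate to a shift of finite type, of which $\sigma_X$ is a factor). The gap is in $(3)\Rightarrow(1)$. Condition (1) concerns the tower $Z_n=(Y_n,X_n)$ with $X_n=\Spec(\mathcal{E}_n)$, $Y_n=\Spec(\mathcal{F}_n)$ from Section~\ref{ss:DescCovers} --- the tower referenced in Theorem~\ref{thm:hatTExpansive-check} --- not the tower of finite discrete graphs $\mathcal{K}_k$ used to present the augmented Krieger graph. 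These are genuinely different towers with the same projective limit, and the stabilization you aim to prove, namely that the bonding maps $\upsilon_{k+1}\colon\mathcal{K}_{k+1}\to\mathcal{K}_k$ are eventually bijective so that $\mathcal{K}=\mathcal{K}_N$, is false already for the full $2$-shift: there every past set equals $L_{\leq k-1}(X)$, so $\mathcal{I}_k$ is in bijection with $L_{k-1}(X)=\{0,1\}^{k-1}$ and $\upsilon_{k+1}$ is everywhere $2$-to-$1$; hence $\mathcal{I}\cong X$ is a Cantor set and equals no $\mathcal{K}_N$, even though the full shift is sofic and its cover construction terminates at step $0$. The ``unique lift'' you invoke does not exist: the extra symbol of $\alpha$ is a free choice, not forced by the sofic state, so $\upsilon_{k+1}$ is never eventually injective unless the subshift is finite.

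The paper's argument succeeds where yours does not because it uses the parametrization of Proposition~\ref{prop:Xn=XxM}: $X_n$ is the closure of the image of $x\mapsto(P_{\leq n}(x),x)$, so the \emph{full point} $x$ (not a finite truncation) is carried along at every finite stage, and the bonding map $X_{k+1}\to X_k$ is the identity on the $X$-coordinate and the truncation $F\mapsto F|_{\leq k}$ on the past-set coordinate. Eventual injectivity of these maps is then precisely the statement that $P_{\leq N}(x)=P_{\leq N}(y)$ forces $P_j(x)=P_j(y)$ for all $j\geq N$, which is the characterization of soficity you already isolated (one argues similarly for $Y_n$ and checks that injectivity persists on closures). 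If you replace the $\mathcal{K}_k$-tower by the $X_n,Y_n$-tower in your $(3)\Rightarrow(1)$ step, the argument goes through as you intended. Your treatment of the regular/essential addendum is acceptable once this is repaired, since termination of the essential tower is weaker than termination of the full tower and Theorem~\ref{thm:hatTExpansive-check} covers the essential case, which closes the cycle of implications.
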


\begin{proof}
    $(1)\Rarr (2)$ follows from Theorem \ref{thm:hatTExpansive-check}. 
    $(2) \Rarr (3)$: the dynamical cover is an expanding local homeomorphism (hence conjugate to a shift of finite type), so $\sigma_X$ is sofic. 
    $(3) \Rarr (1)$: note that the spectrum $X_{n}$ of $\mathcal{E}_{n}$ can be represented as the closure of the image of the map $x\mapsto (P_{\leq n}(x), x)$, 
    while the spectrum $Y_{n}$ of $\mathcal{F}_{n}$ is the closure of the image of $x\mapsto (P_{\leq n}(\sigma(x)), x, P_{\leq n}(x))$, 
    with the projections $Y_{n+1}\to Y_{n}$, $X_{n+1}\to X_{n}$ given by the projections $F\subseteq L_{\leq k+1}(X)\to F|_{\leq k}\subseteq L_{\leq k}(X)$ and the identity $X\to X$ on the relevant factors. 
    As soficity is equivalent to the existence of $N\in\mathbb{N}$ such that $P_{\leq N}(x) = P_{\leq N}(y)$ implies $P_{k}(x) = P_{k}(y)$ for any $k\geq N$, $x,y\in X$, it follows from this and the above description of the cover that the maps $X_{k+1}\to X_{k}$ and $Y_{k+1}\to Y_{k}$ are bijections, for all $k\geq N$.
\end{proof}
\setlength{\parindent}{0cm} \setlength{\parskip}{0.5cm}

\begin{example}\label{exm:even}
  Let $(\sigma_X,X)$ be the even shift (only an even number of zeros are allowed between any two ones).
  This is irreducible, and $0^\infty$ is the unique critical point and it is not synchronizing.
  The (topological) Fischer graph has two vertices, unique edges between them both labeled with zero, and a single loop labeled with one.
\end{example}

\begin{example}\label{ex:Sturmian}
  If $(\sigma_X,X)$ is a one-sided Sturmian subshift, then $\sigma_X$ admits a unique branching element $x\in X$ for which $|\sigma_X^{-1}(x)| = 2$.
  The infinite path space on the topological Fischer cover is conjugate to the two-sided (i.e. bi-infinite) Sturmian subshift,
  and the infinite path space on the topological Krieger cover is conjugate to $K_\ess^{-\infty}$ and a discrete copy of the orbit of the branch point $x$ which is dense (see \cite{Brix2023}).
\end{example}

\bremark
  In \cite{Jonoska1996} (see also \cite[Example 3.3.21]{Lind-Marcus2021}), 
  Jonoska exhibits an example of two non-isomorphic labelled graphs that are both minimal and right-resolving and that present the same (reducible) shift of finite type.
  This explains why the Fischer graph defined as \emph{the} minimal right-resolving graph presentation is only defined for \emph{irreducible} sofic subshifts.
  Our topological Fischer graph is uniquely determined, it is discrete for sofic subshifts, and it is trivially labeled (every edge has a unique label) for shifts of finite type,
  so it cannot in general be minimal (in the sense of having the fewest number of vertices) when the subshift is reducible.
\eremark

\subsection{Natural extensions as minimal essential covers of almost one-to-one dynamical systems}

We can generalize Example~\ref{ex:Sturmian} in the following way. 
Let $T \colon X\to X$ be a proper dynamical system. 
Define $X^{\infty} \coloneq \menge{(x_{1}, x_{2},...)\in \prod_{i>0}X}{x_{i} = T(x_{i+1})\text{ }\forall i\in\mathbb{N}}$ and $\tau(x_{1},x_{2},...) \coloneq (T(x_{1}), x_{1},...)$. 
Then $\tau$ is a homeomorphism, known as the \textit{natural extension of $T$}. 
Let us characterize when the essential dynamical cover can be identified with the natural extension in the locally injective case.
\begin{theorem}\label{thm:almost_one_to_one}
    Let $T \colon X\to X$ be a locally injective, proper and finite degree map defined on a locally compact Hausdorff space $X$, weighted by $w = 1$. Then the following are equivalent.
\setlength{\parindent}{0cm} \setlength{\parskip}{0cm}

    \begin{enumerate}
        \item $X_{0} = \menge{x\in X}{|T^{-n}(T^{m}(x))| = 1\text{ }\forall n,m\in\mathbb{N}}$ is dense in $X$.
        \item $T$ is sparsely critically saturated and the minimal essential dynamical cover $T_{\ess}$ is a homeomorphism.
        \item $T$ is sparsely critically saturated and $T_{\ess}$ is conjugate to $\tau$.
    \end{enumerate}
\end{theorem}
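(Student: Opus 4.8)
The plan is to prove the cycle of implications $(1) \Rightarrow (2) \Rightarrow (3) \Rightarrow (1)$, leaning heavily on the machinery already developed for locally injective dynamical systems in Section~\ref{s:SingleTrafo}. The first step, $(1) \Rightarrow (2)$, is where most of the work lies. Assuming $X_0$ is dense, I would first observe that $X_0$ is exactly the set of points whose full grand orbit under $T$ consists of singleton fibres; since $X_0 \subseteq \menge{x\in X}{|T^{-n}(T^n x)|=1}$ and the latter sits inside the set where the critical point set $C$ is avoided along the forward orbit, I would deduce via Lemma~\ref{lem:meagre_to_meagre} and the Baire category theorem (exactly as in the proof of Corollary~\ref{cor:a1to1}) that the critical point set $C$ is meagre, hence $T$ is sparsely critically saturated by Corollary~\ref{cor:loc_inj_meagre}. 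Then $T_{\ess}$ exists and is a local homeomorphism on a second countable locally compact Hausdorff space by Theorem~\ref{thm:up_loc_inj}. To see $T_{\ess}$ is a homeomorphism, note that since $w=1$, being a homeomorphism is equivalent to $(T_{\ess})^{-1}(\tilde x)$ being a singleton for every $\tilde x$; using the identification of the essential cover with the projective limit / infinite path space picture (Corollary~\ref{cor:inf_path_ess} together with the description in Section~\ref{ss:DescCovers}), a point of $X_{\ess}$ is a coherent sequence of past-data, and the fibre of $T_{\ess}$ over it is governed by the fibre structure of $T$ along the dense invariant set $X_{00}\subseteq X_0$ where everything is single-valued; density forces the fibre to be a singleton everywhere.

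For $(2) \Rightarrow (3)$: given that $T_{\ess}$ is a local homeomorphism that is also bijective, it is a homeomorphism of $X_{\ess}$, so $T_{\ess}^{-1}$ is continuous. The natural extension $\tau$ of $T$ is characterized by a universal property among invertible systems mapping onto $(T,X)$, or more concretely I would build the conjugacy directly: the map $X_{\ess} \to X^{\infty}$, $\tilde x \mapsto (\pi_{\ess}(\tilde x), \pi_{\ess}(T_{\ess}^{-1}\tilde x), \pi_{\ess}(T_{\ess}^{-2}\tilde x),\ldots)$ is continuous, equivariant (intertwining $T_{\ess}$ with $\tau$), and I would check it is a bijection. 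Injectivity: two points of $X_{\ess}$ with the same image project to the same point of $X$ under every $T_{\ess}^{-n}$, and since $\pi_{\ess}$ restricts to a bijection on a dense co-meagre invariant set and $X_{\ess}$ is a cover with dense image of that set, this forces equality. Surjectivity: given $(x_1,x_2,\ldots)\in X^{\infty}$, approximate it using the dense invariant set $X_{00}$ and use properness/compactness of fibres (here the finite degree hypothesis and properness of $T$ are used) to extract a limit in $X_{\ess}$; its image is $(x_1,x_2,\ldots)$. Continuity of the inverse follows since $X_{\ess}$ is second countable locally compact Hausdorff and the map is a continuous proper bijection, hence a homeomorphism. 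This gives the conjugacy $T_{\ess} \cong \tau$.

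For $(3) \Rightarrow (1)$: if $T_{\ess}$ is conjugate to $\tau$, then in particular $T_{\ess}$ is a homeomorphism, so every fibre $(T_{\ess})^{-n}((T_{\ess})^m \tilde x)$ is a singleton. Pushing forward along $\pi_{\ess}$, which restricts to a bijection on the co-meagre invariant set $\crX = \pi_{\ess}^{-1}$-image, and using that $\pi_{\ess}$ restricts to bijections on fibres $(T_{\ess})^{-1}(\tilde x)\to T^{-1}(\pi_{\ess}(\tilde x))$ (the weighted-morphism property from Theorem~\ref{thm:up_loc_inj}), one gets that on the co-meagre set $\pi_{\ess}(\crX)$ every point $x$ satisfies $|T^{-n}(T^m x)|=1$ for all $n,m$; hence $X_0$ contains a co-meagre set and in particular is dense.

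\textbf{Main obstacle.} The genuinely delicate point is the surjectivity in $(2)\Rightarrow(3)$ (equivalently, that $X_{\ess}$ really is \emph{all} of the natural extension rather than a proper invariant subsystem) and, dually, the fibre-singleton claim in $(1)\Rightarrow(2)$. Both require carefully matching the abstract projective-limit description of $X_{\ess}$ from Section~\ref{ss:DescCovers}/Corollary~\ref{cor:inf_path_ess} with the concrete inverse-sequence description of $X^{\infty}$, and controlling limits of approximating nets from the dense set $X_{00}$ using properness and the finite-degree assumption. I expect the cleanest route is to invoke the universal property of the minimal essential cover (Theorem~\ref{thm:universal_ess_dyn}) applied to $\tau$ — once one checks that $\tau$ together with the coordinate projection $X^{\infty}\to X$ is an essential dynamical cover of $T$ (which uses density of $X_0$), the unique morphism $\tau \to T_{\ess}$ it provides, combined with the morphism $T_{\ess}\to \tau$ built above, yields mutually inverse conjugacies by the uniqueness clauses — sidestepping the hands-on limit argument entirely.
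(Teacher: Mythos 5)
Your proposal is essentially the paper's argument. In particular your closing ``cleanest route'' --- verifying that $\tau$ together with the coordinate projection $X^{\infty}\to X$ is an essential dynamical cover of $T$ (using density of $X_0$ plus a diagonalization to see that the relevant invariant subset of $X^{\infty}$ is dense), invoking Theorem~\ref{thm:universal_ess_dyn} to obtain $X^{\infty}\to X_{\ess}$, and pairing it with $\tilde x \mapsto (\pi(\tilde x), \pi(T_{\ess}^{-1}\tilde x),\ldots)$, the two being mutually inverse because both restrict to the identity on $X_0$ --- is exactly how the paper proves $(1)\Rightarrow(3)$; and your $(1)\Rightarrow(2)$ and $(3)\Rightarrow(1)$ match the paper's arguments in substance. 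Two points need repair. First, your route to sparse critical saturation goes through Lemma~\ref{lem:meagre_to_meagre}, Corollary~\ref{cor:a1to1}, Corollary~\ref{cor:loc_inj_meagre} and Theorem~\ref{thm:up_loc_inj}, all of which assume $X$ second countable, whereas Theorem~\ref{thm:almost_one_to_one} assumes only that $X$ is locally compact Hausdorff. The paper avoids this by arguing directly: $T$ is surjective (its image is closed by properness and contains the dense set $X_0$, since $|T^{-1}(x)|=1$ for $x\in X_0$ forces $x\in T(X)$), each set $\{x\in X: |T^{-n}(T^{m}(x))|=1\}$ is open by local injectivity, hence $X_0$ is a dense $G_\delta$ disjoint from $G_\Lambda$, so $G_\Lambda$ is meagre. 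Second, because you arrange the implications as a cycle $(1)\Rightarrow(2)\Rightarrow(3)\Rightarrow(1)$, your use of density of $X_0$ inside the proof of $(2)\Rightarrow(3)$ is not yet licensed at that stage; you should either first record $(2)\Rightarrow(1)$ (immediate from $\iota(T^{-n}(T^{m}(x))) = T_{\ess}^{-n}(T_{\ess}^{m}(\iota(x)))$ for $x\in X\setminus G_\Lambda$, as the paper does) or reorder the implications as the paper does, proving $(1)\Rightarrow(2)$ and $(1)\Rightarrow(3)$ separately with $(3)\Rightarrow(2)$ trivial. With these repairs the argument is sound.
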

\setlength{\parindent}{0cm} \setlength{\parskip}{0cm}

\begin{proof}
$(1) \Rarr (2)$: note that $T$ is surjective, as the image of a proper map is closed. 
Since $T$ is  surjective and locally injective, it follows that if $x\in X_{0}$, then there is a neighbourhood $U$ of $x$ such that $|T^{-1}(\tilde{x})| = 1$ for all $\tilde{x}\in U$. 
Therefore, $\Lambda(f)$ is continuous at $x$, for all $f\in C_{c}(X)$, and $X_{0}\cap C_{\Lambda}=\emptyset$, where $C_{\Lambda}$ is the critical values of $T$ weighted by $w=1$. 
Since $X_{0}$ is fully invariant, we also have that $X_{0}\cap G_{\Lambda} = \emptyset$, where $G_{\Lambda}$ is the saturation of $C_{\Lambda}$. 
By a similar argument, we have that $X^{m,n}_{0} = \menge{x\in X}{|T^{-n}(T^{m}(x))| = 1}$ is open. 
The Baire category theorem now implies that $X_{0} = \bigcap_{m,n\in\mathbb{N}}X^{m,n}_{0}$ is a dense $G_{\delta}$ set contained in $X\setminus G_{\Lambda}$, so $T$ is sparsely critically saturated.
In order to prove that $T_{\ess}$ is a homeomorphism, it suffices to show $|T_{\ess}^{-1}(x)|  = 1$. 
But this follows from the fact that $|T_{\ess}^{-1}(x)| = 1$ for all $x\in \iota(X_{0})$, density of $\iota(X_{0})$ in $X_{\ess}$, and continuity of $x\mapsto |T_{\ess}^{-1}(x)|$.
\setlength{\parindent}{0cm} \setlength{\parskip}{0.5cm}

$(1) \Rarr (3)$: 
consider the factor map $\mu \colon X^{\infty}\to X$ given by $\underline{x} = (x_{1},x_{2},...)\mapsto \mu(\underline{x}) = x_{1}$.
The set $\crX =\menge{\underline{x}\in X^{\infty}}{x_{1}\in X_{0}}$ is fully invariant and $G_{\delta}$, and since $X_{0}$ is fully invariant, a diagonalization argument shows that it is dense. 
Moreover, $\mu$ restricted to $\crX$ is a bijection onto $X_{0}$. It follows that $(\tau, X^{\infty})$ and $\mu$ form an essential dynamical cover of $T$ weighted by $1$. Therefore, by the universal property of essential dynamical covers, there is a continuous equivariant surjection $X^\infty\to X_{\ess}$ that is the identity on $X_{0}$ (once suitably identified). The map $X_{\ess}\to X^{\infty}$ via $\tilde{x}\mapsto (\pi(\tilde{x}), \pi(T_{\ess}^{-1}(\tilde{x})),...)$ is the inverse, since it is also the identity on $X_{0}$.

   $(3) \Rarr (2)$ is trivial, and $(2) \Rarr (1)$ follows from the fact that $\iota(T^{-n}(T^{m}(x))) = T_{\ess}^{-n}(T_{\ess}^{m}(\iota(x)))$ for all $x\in X\setminus G_{\Lambda}$.
\end{proof}
\setlength{\parindent}{0cm} \setlength{\parskip}{0cm}

In particular, we recover the description of Example~\ref{ex:Sturmian}.
This also applies to a large class of minimal subshifts with finitely many branch points (see e.g. \cite{He-Wei2024}).
\setlength{\parindent}{0cm} \setlength{\parskip}{0.5cm}

\begin{remark}
If $X$ is a second countable locally compact Hausdorff space, and $T \colon X\to X$ is locally injective, then using Lemma~\ref{lem:meagre_to_meagre} and the fact that almost one-to-one maps are weakly open, we see that the set $X_{0}$ in Theorem \ref{thm:almost_one_to_one} is dense if and only if $T \colon X\to X$ is almost one-to-one.
\end{remark}

\begin{remark}
If we consider the $C^*$-algebra of the Deaconu--Renault groupoid of $T_{\min}$ for a dynamical system satisfying the hypothesis and one of the three (equivalent) conditions in Theorem \ref{thm:almost_one_to_one}, 
then, since $X_{\ess}$ is a fully invariant sub-space of $X_{\min}$, it will always have a quotient isomorphic to the crossed product of $C(X^{\infty})$ with the integer action of $\mathbb{Z}$ induced by the natural extension of $T$. Moreover, the corresponding ideal will be Morita equivalent to a countable direct sum of copies of the compact operators if $X\setminus X_{0}$ is countably discrete.
\end{remark}

\section{Examples from piecewise invertible maps}

\label{sec:PiecewInv}

\subsection{The tent map}
\label{ss:TentMap}

Let $0 < \mu \leq 2$ and consider the tent map $T \colon [0,1] \to [0,1], \, x \ma \mu \min(x,1-x)$, i.e.,
\begin{equation*}
  T(x)
  =
  \bfa
  \mu x & \falls x \in [0,\frac{1}{2}],\\
  \mu (1-x) & \falls x \in [\frac{1}{2},1].
  \efa
\end{equation*}
Consider the weight $w = 1$. We now set out to describe the minimal cover $T_{\min} \colon X_{\min} \to X_{\min}$ and the minimal essential cover $T_{\ess} \colon X_{\ess} \to X_{\ess}$. In the following, we will write $O(x)$ for the grand orbit of a point $x \in X = [0,1]$, i.e., $O(x) \coloneq \bigcup_{\alpha=0}^{\infty} \bigcup_{\beta=0}^{\infty} T^{-\alpha}(T^{\beta}(x))$.

The case $\mu=2$ is special because this is the only case where $T$ is already an open map. In that case, $X_{\min} = [0,1] \amalg O(\frac{1}{2})$, where $[0,1]$ carries the usual topology, $O(\frac{1}{2})$ is equipped with the discrete topology, and $\amalg$ stands for topological disjoint union. Moreover, on $[0,1]$, $T_{\min}$ is given by $T$, and on $O(\frac{1}{2})$, $T_{\min}$ is given by the restriction of $T$. The new weight takes the constant value $1$ on $([0,1] \setminus \gekl{1}) \amalg O(\frac{1}{2})$ and the value $2$ on $1 \in [0,1]$. We obtain the minimal essential cover from the minimal cover by deleting the extra $O(\frac{1}{2})$-part. In other words, the minimal essential cover agrees with the original tent map, the only difference is that the weight changes its value at the point $1 \in [0,1]$ from its original value $1$ to the new value $2$.

Let us now treat the remaining case where $0 < \mu < 2$. Following the construction described in Section \ref{ss:meas_top_dyn_sys}, we first describe the spaces $X_n = \Spec(\mathcal{B}_{n})$. Each of these spaces is constructed from the unit interval $[0,1]$ by introducing a set of break points $B_n \subseteq [0,1]$ and a set of isolated points $D_n \subseteq [0,1]$. This means that 
\begin{equation*}
  X_n = \Big( ([0,1] \setminus B_n) \cup \menge{x_-, x_+}{x \in B_n} \Big) \amalg D_n,
\end{equation*}
where $D_n$ is given the discrete topology and $\amalg$ stands for topological disjoint union. To explain the topology on $X_n \setminus D_n$, first let $\pi_n \colon X_n \to [0,1]$ be the canonical projection map (which sends both $x_-$ and $x_+$ to $x$ and is the identity on $[0,1] \setminus B_n$ and $D_n$). Given a sequence $(x_i)$ in $X_n \setminus D_n$, $(x_i)$ converges to $x \in [0,1] \setminus B_n$ if and only if $(\pi_n(x_i))$ converges to $\pi_n(x)$; $(x_i)$ converges to $x_-$ for $x \in B_n$ if and only if $(\pi_n(x_i))$ converges to $\pi_n(x)$ and $\pi_n(x_i) < \pi_n(x)$ eventually; and $(x_i)$ converges to $x_+$ for $x \in B_n$ if and only if $(\pi_n(x_i))$ converges to $\pi_n(x)$ and $\pi_n(x_i) > \pi_n(x)$ eventually. Now the sets $B_n$ and $D_n$ are given recursively as follows: $B_1 = \gekl{\frac{\mu}{2}} = D_1$, $B_{n+1} = B_n \cup T(B_n) \cup T^{-1}(B_n \setminus \gekl{\frac{\mu}{2}})$ and $D_{n+1} = D_n \cup T(D_n) \cup T^{-1}(D_n)$.

The space $X_{\min}$ is given by $\plim X_n$, which is constructed in a similar way as the $X_n$ from the unit interval $[0,1]$ by introducing a set of break points $B$ and a set of isolated points $D$. Here $B = \bigcup_{n=1}^{\infty} B_n$ and $D = O(\frac{1}{2})$, i.e., 
\begin{equation*}
  \hat{X} = \Big( ([0,1] \setminus B) \cup \menge{x_-, x_+}{x \in B} \Big) \amalg D.
\end{equation*}
The map $T_{\min}$ coincides with (the restriction of) $T$ on $D$. For $x \in [0,1] \setminus B$, $T_{\min}(x) = T(x) \in [0,1] \setminus B$ unless $x = \frac{1}{2}$ and $\frac{1}{2} \notin B$, in which case $T_{\min}(\frac{1}{2}) = (\frac{\mu}{2})_-$. On $\menge{x_-, x_+}{x \in B}$, $T_{\min}$ is given by
\begin{equation*}
  T_{\min}(x_-)
  =
  \bfa
  (T(x))_- & \falls x \in B, \, x < \frac{1}{2},\\
  (T(x))_+ & \falls x \in B, \, x > \frac{1}{2};
  \efa
\end{equation*}
\begin{equation*}
  T_{\min}(x_+)
  =
  \bfa
  (T(x))_+ & \falls x \in B, \, x < \frac{1}{2},\\
  (T(x))_- & \falls x \in B, \, x > \frac{1}{2};
  \efa
\end{equation*}
and, if $\frac{1}{2} \in B$, $T_{\min}((\frac{1}{2})_-) = T_{\min}((\frac{1}{2})_+) = (\frac{\mu}{2})_-$.
Moreover, the new weight $\hat{w}$ takes the constant value $1$ on all points of $X_{\min}$ except at $(\frac{\mu}{2})_-$, where $\hat{w}((\frac{\mu}{2})_-) = 2$.

The minimal essential cover is constructed from the minimal cover by deleting $D$, i.e., $X_{\ess} = X_{\min} \setminus D$ and $T_{\ess} = T_{\min} \vert_{X_{\ess}}$.

With these descriptions of our covers, we can collect some immediate observations.
\bcor
The cover construction terminates after finitely many steps in the sense of Remark~\ref{rem:hatTExpansive_dyn_version} if and only if $\mu = 1$, in which case $X_{\min} = X_1$. 
In that case, $T_{\min}$ is a local homeomorphism.
We have $B = O(\frac{1}{2})$ if and only if $\frac{1}{2}$ lies in its forward orbit ${\rm O}^+(\frac{1}{2}) \coloneq \menge{T^{\beta}(\frac{1}{2})}{\beta \geq 1}$, and this statement is also equivalent to $\frac{1}{2} \in B$.
\ecor

\bremark
\label{rem:LocalHomeoCover}
The analysis of the example of the tent map motivates the construction of another cover by introducing break points at all points in $O(\frac{1}{2})$. Indeed, let
\begin{equation*}
  \ti{X} \coloneq ([0,1] \setminus O(\frac{1}{2})) \cup \{ x_-, x_+ \colon x \in O(\frac{1}{2})\},
\end{equation*}
with topology defined similarly as for (the non-discrete parts of) $X_n$ and $X_{\min}$. 
Moreover, let us introduce the map $\ti{T} \colon \ti{X} \to \ti{X}$ as follows: 
for $x \in [0,1] \setminus O(\frac{1}{2})$, $\ti{T}(x) \coloneq T(x) \in [0,1] \setminus O(\frac{1}{2})$. On $\menge{x_-, x_+}{x \in O(\frac{1}{2})}$, $\ti{T}$ is given by
\begin{equation*}
  \ti{T}(x_-)
  \coloneq
  \bfa
  (T(x))_- & \falls x \in O(\frac{1}{2}), \, x < \frac{1}{2},\\
  (T(x))_+ & \falls x \in O(\frac{1}{2}), \, x > \frac{1}{2};
  \efa
\end{equation*}
\begin{equation*}
  \ti{T}(x_+)
  \coloneq
  \bfa
  (T(x))_+ & \falls x \in O(\frac{1}{2}), \, x < \frac{1}{2},\\
  (T(x))_- & \falls x \in O(\frac{1}{2}), \, x > \frac{1}{2};
  \efa
\end{equation*}
and $\ti{T}((\frac{1}{2})_-) \coloneq \ti{T}((\frac{1}{2})_+) \coloneq (\frac{\mu}{2})_-$.
It is straightforward to see that $\ti{T} \colon \ti{X} \to \ti{X}$ is an essential cover of $T \colon X \to X$, and that $\ti{T}$ is always a local homeomorphism.
Moreover, if $1 < \mu \leq 2$, then $\ti{T}$ is positively expansive if the forward orbit ${\rm O}^+(\frac{1}{2})$ is finite or, more generally, if there exists $\gamma \geq 0$ such that ${\rm O}^+(\frac{1}{2}) \subseteq \bigcup_{\alpha = 0}^{\infty} \bigcup_{\beta = 0}^{\gamma} T^{- \alpha}(T^{\beta}(\frac{1}{2}))$.
\eremark

\subsection{General piecewise invertible maps}
\label{ss:PiecewInv}

The construction in Remark~\ref{rem:LocalHomeoCover} motivates a cover construction for general piecewise invertible maps, which we set out to explain now.

First, it is convenient to introduce the notions of regular open and regular closed partitions (these have been used before to construct covers, see for example \cite{Kul, KS}). Let $X$ be a compact Hausdorff space. A subset $C \subseteq X$ is called regular closed if $C$ is closed and satisfies $C = \overline{\inte(C)}$. A subset $U \subseteq X$ is called regular open if $U$ is open and $U = \inte(\overline{U})$. There is a one-to-one correspondence between regular closed and regular open subsets under which a regular closed subset $C$ corresponds to the regular open subset $\inte(C)$ (and the inverse sends $U$ to $\overline{U}$). A finite collection $\gekl{C_i}$ of subsets is called a regular closed partition of $X$ if all $C_i$ are non-empty and regular closed, $X = \bigcup_i C_i$ and for all $C, C' \in \gekl{C_i}$ with $C \neq C'$, we have $C \cap C' \subseteq \bd(C) \cap \bd(C')$. The latter condition is equivalent to asking that $\gekl{\inte(C_i)}$ are pairwise disjoint. A finite collection $\gekl{U_i}$ of subsets is called a regular open partition of $X$ if the $U_i$ are non-empty and regular open, $X = \bigcup_i \overline{U_i}$ and $\gekl{U_i}$ are pairwise disjoint. There is a one-to-one correspondence between regular closed and regular open partitions sending a regular closed partition $\gekl{C_i}$ to $\gekl{\inte(C_i)}$ and a regular open partition $\gekl{U_i}$ to $\{ \overline{U_i} \}$.
\setlength{\parindent}{0.5cm} \setlength{\parskip}{0cm}

Given regular open partitions $\cU = \gekl{U_i}_{i \in I}$ and $\cV = \gekl{V_j}_{j \in J}$, the common refinement $\cU \vee \cV$ is given by $\cU \vee \cV \coloneq \menge{U_i \cap V_j}{i \in I, \, j \in J, \, U_i \cap V_j \neq \emptyset}$. Moreover, we say that a regular open partition $\cV = \gekl{V_j}_{j \in J}$ is finer than (or refines) another regular open partition $\cU = \gekl{U_i}_{i \in I}$ if for all $V_j \in \cV$ there exists (a necessarily unique) $U_i \in \cU$ with $V_j \subseteq U_i$. In that case, we write $\cU \preceq \cV$. We define a similar refinement relation for regular closed partitions. In other words, given two regular closed partitions $\cC$ and $\cD$ with corresponding regular open partitions $\cU$ and $\cV$, we have $\cC \preceq \cD$ if and only if $\cU \preceq \cV$.
\setlength{\parindent}{0cm} \setlength{\parskip}{0.5cm}

With these preparations, let us now introduce the transformations of interest. Let $X$ be a compact Hausdorff space. A continuous map $T \colon X \to X$ is called \emph{piecewise invertible} if there exists a regular open partition $\cU = \gekl{U_i}_{i \in I}$ with corresponding regular closed partition $\cC = \gekl{C_i}_{i \in I}$ such that $T$ restricts to a homeomorphism $T_i \colon C_i \to T(C_i), \, x \ma T(x)$. Moreover, we require that $T(U_i) = \inte(T(C_i))$. In particular, $T(C_i)$ is regular closed for all $i \in I$. In addition, we call such a piecewise invertible map $T$ \emph{expansive} if there exists a metric $d$ on $X$ and a constant $c > 1$ such that for all $i \in I$ and $x, x' \in C_i$, we have $d(T(x),T(x')) \geq c d(x,x')$. Note that in general, a piecewise invertible map need not be open, and even if it is expansive in our sense, it might not be positively expansive in the usual sense. The tent map from \S~\ref{ss:TentMap} is an example of a piecewise invertible map, and it is actually expansive in our sense. For similar notions of (expansive) piecewise invertible maps, the reader may consult \cite{Hof,Tsu} and the references therein.

Let us now construct covers for piecewise invertible maps, motivated by the construction in Remark~\ref{rem:LocalHomeoCover}. Let $T \colon X \to X$ be a piecewise invertible map with a fixed regular open partition $\{U_i\}_{i\in I}$. We first need to introduce image and pre-image of regular open partitions under $T$. Let $\cV = \gekl{V_j}_{j \in J}$ be a regular open partition of $X$. Fix $i \in I$ and let $\cT_i \coloneq \menge{T(U_i \cap V_j)}{j \in J, \, U_i \cap V_j \neq \emptyset} \cup \gekl{X \setminus T(C_i)}$. It is straightforward to see that $\cT_i$ is a regular open partition of $X$. Now we define $T(\cV) \coloneq \bigvee_{i \in I} \cT_i$. Note that $T(\cU) = \bigvee_{i \in I} \gekl{T(U_i), \, X \setminus T(C_i)}$. Moreover, set $T^{-1}(\cV) \coloneq \menge{T_i^{-1}(O)}{O \in \cV \vee T(\cU), \, i \in I, \, O \subseteq T(U_i)}$. Note that for every $W \in T^{-1}(\cV)$, there exists a unique $V_j \in \cV$ with $T(W) \subseteq V_j$.

Now define the regular open partition $\cU_{m,n} \coloneq \bigvee_{0 \leq \alpha \leq m, \, 0 \leq \beta \leq n} T^{- \alpha}(T^{\beta}(\cU))$. Let $\cC_{m,n}$ be the corresponding regular closed partition. If $m' \geq m$ and $n' \geq n$, then $\cU_{m,n} \preceq \cU_{m',n'}$, so that we obtain a map $\cU_{m',n'} \to \cU_{m,n}$ sending $U'$ the the unique $U \in \cU_{m,n}$ with $U' \subseteq U$. With the partial order $(m',n') \geq (m,n)$ if $m' \geq m$ and $n' \geq n$, we get an inverse system and hence can form the limit $\bar{X} \coloneq \plim \, \cU_{m,n}$. Elements $\bmU \in \bar{X}$ are of the form $\bmU = (U_{m,n})$ with $U_{m,n} \in \cU_{m,n}$. Define 
\begin{equation*}
  X' \coloneq \menge{(x,\bmU) \in X \times \bar{X}}{\bmU = (U_{m,n}), \, x \in \overline{U_{m,n}} \ \forall \ m, n}.
\end{equation*}
Denote by $\pi$ the canonical projection $X' \to X, \, (x,\bmU) \ma x$. Moreover, define $\bar{T} \colon \bar{X} \to \bar{X}$ by $\bar{T}((U_{m,n})) = \bmV = (V_{m,n})$, where $V_{m,n}$ is the unique element of $\cU_{m,n}$ with $T(U_{m+1,n}) \subseteq V_{m,n}$. In addition, we define $T' \colon X' \to X', \, (x, \bmU) \ma (Tx, \bar{T}(\bmU))$.

It is straightforward to see that $T'$ is a cover of $T$ (a cover map is given by $\pi$). 

\bprop
\label{prop:tiTLocalHomeo}
Let $T$ be a piecewise invertible map. Then the map $T'$ is a local homeomorphism.
\eprop
\setlength{\parindent}{0cm} \setlength{\parskip}{0cm}

\bproof
Let $\mu, \nu\in\Nz$ with $\mu \geq 1$. Take $U \in \cU_{\mu, \nu}$. Let
\begin{equation*}
  \Vz \coloneq \menge{V \in \cU_{\mu - 1, \nu + 1}}{\exists \ W \in \cU_{\mu, \nu + 1} \text{ with } W \subseteq U, \, T(W) \subseteq V}.
\end{equation*}
Set $Z_U \coloneq \menge{(x,(U_{m,n})) \in X'}{U_{\mu,\nu} = U}$ and $Z_{\Vz} \coloneq \menge{(y,(V_{m,n})) \in X'}{V_{\mu - 1, \nu + 1} \in \Vz}$. From the definition of $T'$ and $Z_{\mathbb{V}}$, it is clear that $T'(Z_{U})\subseteq Z_{\mathbb{V}}$. We construct a continuous inverse to $T' \colon Z_{U}\to Z_{\mathbb{V}}$. Let $i \in I$ be the unique index such that $U_{\mu,\nu} \subseteq U_i$. Then, by definition of $\mathbb{V}$, we have $T(U_{i})\cap V_{\mu-1, \nu + 1}\neq\emptyset$ for $(y,(V_{m,n}))\in Z_{\mathbb{V}}$. Since each $V_{m,n}$ for $m\geq 0, n\geq 1$ lies in a refinement of $T(\mathcal{U})$, we have that $V_{m,n} \subseteq T(U_i)$. Now it is straightforward to check that the following map
\begin{equation*}
  Z_{\Vz} \to X', \, (y,(V_{m,n})) \ma (T_i^{-1}(y), (W_{m,n})),
\end{equation*}
where $W_{m,n} \in \cU_{m,n}$ is uniquely determined by the requirement that $T_i^{-1}(V_{m-1,n+1}) \subseteq W_{m,n}$ for all $m, n \in \Nz$ with $m \geq 1$, is continuous, maps $Z_{\mathbb{V}}$ into $Z_{U}$ and is the inverse of the restriction of $T'$ to $Z_U$.
\eproof 
\setlength{\parindent}{0cm} \setlength{\parskip}{0.5cm}

\blemma
\label{lem:tiT=barT}
If $T$ is an expansive piecewise invertible map, then the canonical projection $X' \to \bar{X}, \, (x,\bmU) \ma \bmU$ is a homeomorphism. In particular, $T'$ and $\bar{T}$ are conjugate.
\elemma
\setlength{\parindent}{0cm} \setlength{\parskip}{0cm}

\bproof
It suffices to prove that the map is injective. So assume that $(x,\bmU)$ and $(x',\bmU)$ are elements of $X'$. That means that for every $\beta \in \Nz$ there exists $i_{\beta} \in I$ such that $T^{\beta}(x)$ and $T^{\beta}(x')$ both lie in $\overline{U_{i_{\beta}}}$. Then it follows that $d(T^{\beta}(x),T^{\beta}(x')) \geq c^{\beta} d(x,x')$ for all $\beta \in \Nz$. But $X$ is compact, so that $\sup_{\beta} c^{\beta} d(x,x') \leq \sup_{\beta} d(T^{\beta}(x),T^{\beta}(x')) < \infty$. It follows that $d(x,x') = 0$, i.e., $x = x'$.
\eproof
\setlength{\parindent}{0cm} \setlength{\parskip}{0.5cm}

\bprop
\label{prop:PiecewInv-SFT}
Let $T$ be an expansive piecewise invertible map. Assume that there exists $\gamma \in \Nz$ such that for all $m,n \in \Nz$ there exists $\alpha_{m,n} \in \Nz$ such that $\cU_{m,n} \preceq \bigvee_{0 \leq \alpha \leq \alpha_{m,n}, \, 0 \leq \beta \leq \gamma} T^{- \alpha}(T^{\beta}(\cU))$. Then $\bar{T}$ is conjugate to a subshift. Thus $T'$ is conjugate to a shift of finite type, which is at the same time a cover of $T$.
\eprop
\setlength{\parindent}{0cm} \setlength{\parskip}{0cm}

\bproof
Let $\cV \coloneq \bigvee_{0 \leq \beta \leq \gamma} T^{\beta}(\cU)$ and $\pi_{\cV}$ the canonical projection $\bar{X} \onto \cV$. Then the map
\begin{equation*}
  \bar{X} \to \prod_{\alpha = 0}^{\infty} \cV, \, \bmU \ma (\pi_{\cV}(\bar{T}^{\alpha}(\bmU))_{\alpha}
\end{equation*}
is a continuous map which is injective by our assumption. Therefore, we obtain an identification of $\bar{X}$ with the image of this map. Under this identification, it is clear that the map $\bar{T}$ corresponds to the one-sided shift. In other words, this shows that $\bar{T}$ is conjugate to a subshift. 
\setlength{\parindent}{0cm} \setlength{\parskip}{0.5cm}

Now Lemma~\ref{lem:tiT=barT} implies that $T'$ is conjugate to $\bar{T}$, so that $T'$ is conjugate to a subshift, too. Since it is also a local homeomorphism by Proposition~\ref{prop:tiTLocalHomeo}, it follows that $T'$ is conjugate to a shift of finite type.
\eproof
\setlength{\parindent}{0cm} \setlength{\parskip}{0.5cm}

\begin{lemma}
    \label{lem:ess_cover_ex}
If $T$ is an expansive piecewise invertible map, then $T'$ is an essential cover of $T$ (weighted by $w =1$).
\end{lemma}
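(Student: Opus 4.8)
The plan is to verify the definition of an essential dynamical cover from \S~\ref{ss:ess_dyn_cover} directly: we must produce a fully invariant co-meagre set $\crX \subseteq X$ so that $\pi^{-1}(\crX)$ is co-meagre in $X'$ and $\pi$ restricts to a bijection $\pi^{-1}(\crX)\to\crX$, after checking the ambient data. Since $\bar X = \plim \cU_{m,n}$ is an inverse limit of finite discrete spaces it is compact, and $X'$ is closed in $X\times\bar X$, hence compact Hausdorff; so $\pi$ (a restriction of a coordinate projection) is automatically continuous, proper and closed, and it is equivariant by the definition of $T'$. Surjectivity of $\pi$ follows from a König-type argument: for $x \in X$ the sets $S_{m,n}(x) := \menge{U\in\cU_{m,n}}{x\in\overline U}$ are non-empty and finite and form an inverse system along the cofinal sequence $(k,k)_k$ in $\Nz^2$, so $\plim S_{m,n}(x)\neq\emptyset$ produces $\bmU$ with $(x,\bmU)\in X'$. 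Finally, $T'$ is a local homeomorphism by Proposition~\ref{prop:tiTLocalHomeo}, and a local homeomorphism on a compact Hausdorff space carries a continuous and full weight-$1$ transfer operator on $C(X') = C_c(X')$, so $(T',w=1)$ is a continuously measured dynamical system (compare Example~\ref{example:local_injections}).

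The heart of the matter is the choice of $\crX$. Set $\partial\cU_{m,n} := X\setminus\bigcup_{U\in\cU_{m,n}}U$; since $\cU_{m,n}$ is a regular open partition, $\bigcup_U U$ is dense and open, so $\partial\cU_{m,n}$ is closed and nowhere dense and $N := \bigcup_{m,n\in\Nz}\partial\cU_{m,n}$ is meagre. Next I would record that both $T$ and $T^{-1}$ preserve meagreness: for meagre $M\subseteq X$ and each piece $C_i$, the intersection $M\cap C_i$ is meagre in the subspace $C_i$ because $U_i = \inte(C_i)$ is dense in the regular closed set $C_i$; applying the homeomorphism $T_i\colon C_i\to T(C_i)$ and using that a meagre subset of a closed subspace is meagre in $X$, one gets $T(M\cap C_i)$ meagre in $X$, and summing the finitely many pieces shows $T(M) = \bigcup_i T(M\cap C_i)$ is meagre (the argument for $T^{-1}$ is symmetric, using that each $T(C_i)$ is regular closed). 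Hence $G := \bigcup_{\alpha,\beta\geq 0} T^{-\beta}(T^{\alpha}(N))$ is a countable union of meagre sets, so meagre. As $G$ is exactly the set of points whose grand orbit meets $N$, and the grand orbit of $T(x)$ coincides with that of $x$, $G$ is fully invariant; thus $\crX := X\setminus G$ is fully invariant and co-meagre.

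It remains to check the two fibre conditions. Since $N\subseteq G$, for every $x\in\crX$ and every $(m,n)$ the point $x$ lies in the closure of precisely one member of $\cU_{m,n}$ (namely the unique $U\in\cU_{m,n}$ with $x\in U$), so $\pi^{-1}(x)$ is a singleton; together with surjectivity of $\pi$ this shows $\pi\colon\pi^{-1}(\crX)\to\crX$ is a bijection. For co-meagreness of $\pi^{-1}(\crX) = X'\setminus\pi^{-1}(G)$ I would prove that $\pi$ is weakly open. By Lemma~\ref{lem:tiT=barT} the projection $X'\to\bar X$ is a homeomorphism, so the cylinder sets $\menge{(x,\bmU)\in X'}{U_{m,n} = U}$, $U\in\cU_{m,n}$, form a basis of $X'$; and for $x\in U$ the uniqueness of the level-$(m,n)$ coordinate forces any $(x,\bmU)\in X'$ to satisfy $U_{m,n} = U$, so $U\subseteq\pi(\menge{(x,\bmU)\in X'}{U_{m,n} = U})$ and every basic open set has image with non-empty interior. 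Hence $\pi$ is weakly open, and as $X$ and $X'$ are Baire spaces this gives that $\pi^{-1}$ of a co-meagre set is co-meagre; applied to $\crX$ it shows $\pi^{-1}(\crX)$ is co-meagre in $X'$. Assembling these pieces, $(T',w=1)$ together with $\pi$ and $\crX$ satisfies the definition of an essential dynamical cover.

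I expect the main obstacle to be the category bookkeeping — specifically the verification that $T$ and $T^{-1}$ send meagre sets to meagre sets, which genuinely uses the regular open/regular closed hypotheses on the partition, and the weak openness of $\pi$, which relies on the identification $X'\cong\bar X$ from Lemma~\ref{lem:tiT=barT} together with surjectivity of $\pi$. Everything else reduces to compactness and elementary manipulations with projective limits.
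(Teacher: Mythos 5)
Your argument is correct and follows essentially the same route as the paper: both identify the meagre set of partition-boundary points, saturate it under forward and backward images of $T$ (using that piecewise invertibility sends meagre sets to meagre sets in both directions) to obtain a fully invariant co-meagre set on which $\pi$ is injective, and then conclude by pulling back along a weakly open $\pi$. The only divergence is in the last step --- the paper gets co-meagreness of the preimage from density of $\pi^{-1}(G')$ together with continuity of the section $\iota$ (in the spirit of Proposition~\ref{pi_i_weakly_open}), whereas you verify weak openness of $\pi$ directly from the cylinder basis supplied by Lemma~\ref{lem:tiT=barT} --- and your write-up usefully fills in the meagreness-preservation and surjectivity details that the paper only asserts.
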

\setlength{\parindent}{0cm} \setlength{\parskip}{0cm}

\begin{proof}
    Recall that the covering map $\pi \colon X'\to X$ is defined by $(x,\bmU)\mapsto x$. By expansivity, the diameters of the elements in $\mathcal{U}_{m,n}$ shrink to zero uniformly as $m,n$ tend to infinity. Hence, for $x$ in the dense $G_{\delta}$ set $G' \coloneq \bigcap_{m,n}\bigcup_{U\in \mathcal{U}_{m,n}}U$, there is a unique pre-image element $\iota(x) \coloneq (x,\bmU)\in\pi^{-1}(x)$, and it is straightforward to see that $\pi^{-1}(G')$ is dense. Let $P' \coloneq X\setminus G'$. Since $T$ is piecewise invertible, it maps meagre sets to meagre sets, and is weakly open. Therefore, $P \coloneq \bigcup_{k\geq 0, l\geq 0} T^{-k}(T^{l}(P'))$ is a meagre and fully invariant subset containing $P'$. Hence $G \coloneq X\setminus P$ is a co-meagre and fully invariant subset contained in $G'$, so by density of $\pi^{-1}(G')$ and continuity of $\iota \colon G'\to X'$ (Lemma \ref{continuity_points}), it follows that $\pi^{-1}(G)$ is also co-meagre and fully invariant. Therefore, $T'$ is an essential cover for $T$.
\end{proof}

By the universal property of the minimal essential cover (Theorem \ref{thm:universal_ess_dyn}) and Remark \ref{rem:redundant_dyn_measure}, it follows that the minimal essential cover of $T$ is a factor of $T'$.
The corollary below follows immediately from Lemma \ref{lem:ess_cover_ex} and Remark \ref{rem:TessTopTrans}.

\bcor
\label{cor:TopTrans}
If $T$ is an expansive piecewise invertible map that is topologically transitive, then $T'$ is topologically transitive.
\ecor
\setlength{\parindent}{0cm} \setlength{\parskip}{0.5cm}

\bcor
\label{cor:PiecewInvNiceCover}
Let $T$ be an expansive piecewise invertible map which is topologically transitive. Assume that there exists $\gamma \in \Nz$ such that for all $m,n \in \Nz$ there exists $\alpha_{m,n} \in \Nz$ such that $\cU_{m,n} \preceq \bigvee_{0 \leq \alpha \leq \alpha_{m,n}, \, 0 \leq \beta \leq \gamma} T^{- \alpha}(T^{\beta}(\cU))$. Then our cover $T'$ of $T$ is (conjugate to) a topologically transitive shift of finite type.
\ecor

\bremark
As in Remark~\ref{rem:TDF-FiniteSteps}, we conclude that for piecewise invertible maps (and their minimal essential covers) satisfying the assumptions in Corollary~\ref{cor:PiecewInvNiceCover}, the thermodynamic formalism holds.
\eremark

\subsection{Williams pre-solenoids}
We present an example class of dynamical systems constructed from substitutions that are used to define the solenoids of \cite{RWil}.
We call them \textit{Williams pre-solenoids} and show that the minimal essential dynamical cover (in the sense of Section \ref{ss:ess_dyn_cover}, for the weight $w=1$) of a Williams pre-solenoid is equal to a naturally defined shift of finite type from the associated substitution. As we will explain, Williams pre-solenoids are particular examples of expansive piecewise invertible maps in the sense of Section \ref{ss:PiecewInv}.

Let $\mathcal{A}$ be a finite set and denote by $\mathcal{A}^{*} = \coprod_{n\in\mathbb{N}}\mathcal{A}^{n}$ the collection of words in $\mathcal{A}$. Given a word $\alpha = \alpha_{1}...\alpha_{n}\in\mathcal{A}^{n}$, we denote by $|\alpha| = n$ its length and $\alpha_{i}$ its $i^{th}$ entry, $i\leq n$. A \textit{substitution} is a function $S\colon\mathcal{A}\to \mathcal{A}^{*}$. We can extend the definition of the substitution to a function $S\colon\mathcal{A}^{*}\to\mathcal{A}^{*}$ by the formula $S(\alpha_{1}...\alpha_{n}) = S(\alpha_{1})...S(\alpha_{n})$. The Williams pre-solenoid of a substitution $S$ is constructed as follows.
\setlength{\parindent}{0.5cm} \setlength{\parskip}{0cm}

Consider the wedge sum $X_{\mathcal{A}} = \bigvee_{\alpha\in \mathcal{A}}\mathbb{R}/\mathbb{Z}\times\{\alpha\}$ of $\mathcal{A}$ copies of the circle, glued together at the origins $(0,\alpha)$, $\alpha\in\mathcal{A}$. Define a function $T_{S} \colon X_{\mathcal{A}} \to X_{\mathcal{A}}$ by
\begin{equation*}
  T_{S}(t,\alpha)
  \coloneq (|S(\alpha)|t, S(\alpha)_{i})\text{ whenever } t \in \big[ \frac{i-1}{|S(\alpha)|}, \frac{i}{|S(\alpha)|}\big].
\end{equation*}
Since sufficiently small sub-intervals of each individual circle are mapped homeomorphically onto their images under $T_S$, it follows that $T_S$ is a piecewise invertible map in the sense of Section \ref{ss:PiecewInv}. Such a system has a natural cover by a shift of finite type. Let $E^{1}_{S} \coloneq \{(\alpha,i)\in \mathcal{A}\times\mathbb{N} \colon i\leq |S(\alpha)|)$
and $E^{0}_{S} = \mathcal{A}$. We define range and source maps $r,s \colon E^{1}_{S} \to E^{0}_{S}$ by $r(\alpha,i) = \alpha$ and $s(\alpha,i) =  S(\alpha)_{i}$. The adjacency matrix of this graph is the well-known \textit{substitution matrix} of a substitution. Let $(\sigma, E_S^{\infty})$ be the corresponding shift of finite type. Let $\pi \colon E_{S}^{\infty} \to X_{\mathcal{A}}$ be defined for $x = ((\alpha_{1}, i_{1})(\alpha_{2}, i_{2})...)\in E^{\infty}_{S}$ as 
\begin{equation*}
\pi(x) \coloneq  \big(\sum^{\infty}_{n=1}\frac{i_{n}-1}{|S(\alpha_{1}...\alpha_{k})|}, \alpha_{1}\big). 
\end{equation*}
It is easy to see that $\sigma\circ\pi = \pi\circ T_{S}$. Since $|\pi^{-1}(x)| = 1$ except possibly at the (invariant set of) rational points and $\pi$ is weakly open, it follows that $\pi$ is an essential cover of $T_{S}$. Moreover, it can be seen as the cover constructed from the regular closed partition consisting of intervals $\{( \big[ \frac{i-1}{|S(\alpha)}, \frac{i}{|S(\alpha)} \big], \alpha)\}_{\alpha\in\mathcal{A}, i\leq |S(\alpha)|}$. Let us sketch the argument showing $\pi$ is in fact the minimal essential cover of $T_{S}$ when the substitution satisfies the following (mild) condition: 
for every $\alpha\in\mathcal{A}$, there are $x_{1},x_{2},y_{1},y_{2}\in\mathcal{A}$ such that $x_{1}\neq x_{2}$, $y_{1}\neq y_{2}$ and the words $x_{1}\alpha$, $x_{2}\alpha$, $\alpha y_{1}$, $\alpha y_{2}$ appear as subwords in the words $\bigcup_{k\in\mathbb{N}}S^{k}(\mathcal{A})$ with their first and last letters deleted.
\setlength{\parindent}{0cm} \setlength{\parskip}{0.5cm}

Let $\fX = X_{\mathcal{A}}\setminus (\bigvee_{\alpha\in \mathcal{A}}\mathbb{Q}/\mathbb{Z}\times\{\alpha\})$ and $i = \pi^{-1} \colon  \fX \to E_{S}^{\infty}$. 
Then, $i^{*}$ maps $C(E^{\infty}_{S})$ into $\ell^{\infty}_{\ess}(X_{\mathcal{A}})$, the collection of bounded functions on $X_{\mathcal{A}}$ modulo the functions that are zero on a co-meagre set (in this case, the set of irrational points). 
The minimal essential cover is now the Gelfand spectrum of the image $\mathcal{B}_{\ess}$ of the minimal cover $\mathcal{B} \subseteq \ell^{\infty}(X_{\mathcal{A}})$ mapped into $\ell^{\infty}_{\ess}(X_{\mathcal{A}})$. To show $\pi$ is isomorphic to the minimal essential cover, it suffices, by the universal property of the minimal essential cover, to show $i^{*}(C(E^{\infty}_{S}))\subseteq \mathcal{B}_{\ess}$. Since $i^{*}$ is a $^*$-homomorphism, it suffices to show $i^{*}(\chi_{e})\in \mathcal{B}_{\ess}$ for every characteristic function $\chi_{e}$ supported on the cylinder set of $e$ in $E_{S}^{n}$. Since $\chi_{e} = \chi_{e_{1}}\cdot (\chi_{e_{2}}\circ\sigma) \cdot \dotso \cdot (\chi_{e_{n}}\circ\sigma^{n-1})$, we have $i^{*}(\chi_{e}) = i^{*}(\chi_{e_{1}})\cdot (i^{*}(\chi_{e_{2}})\circ T_{S}) \cdot \dotso \cdot (i^{*}(\chi_{e_{n}})\circ T_{S}^{n-1})$, so it suffices to show $i^{*}(\chi_{(\alpha,i)})\in \mathcal{B}_{\ess}$ for all $(\alpha,i)\in E^{1}_{S}$.
\setlength{\parindent}{0.5cm} \setlength{\parskip}{0cm}

We have $i^{*}(\chi_{(\alpha,i)}) = \chi_{ \big( \big[ \frac{i-1}{|S(\alpha)|},\frac{i}{|S(\alpha)|} \big],\alpha \big) }$. By the assumed condition, there are $k_{1}, k_{2}\in\mathbb{N}$ and $\alpha_{1},\alpha_{2},y_{1},y_{2}\in\mathcal{A}$ such that $y_{1}\neq y_{2}$, and $\alpha y_{1}$, $\alpha y_{2}$ appear in the words $S^{k_{1}}(\alpha_{1})$, $S^{k_{2}}(\alpha_{2})$ with their first and last letters deleted. Therefore, for any $\delta < 1$, there is an $\varepsilon > 0$ and $q_{1},q_{2}\neq 0$ in $\mathbb{Q}/\mathbb{Z}$ such that 
$((q_{1}-\varepsilon, q_{1}+\varepsilon),\alpha_{1})$, $(q_{2}-\varepsilon, q_{2} +\varepsilon),\alpha_{2})$ are open neighbourhoods in $X_{\mathcal{A}}$ and $T_{S}^{k_{1}}(((q_{1}-\varepsilon, q_{1}+\varepsilon),\alpha_{1}))\cap T^{k_{2}}_{S}((q_{2}-\varepsilon, q_{2} +\varepsilon),\alpha_{2})) = ([0,\delta),\alpha)$. So, for any $\delta' < \delta < 1$ we may choose $\phi_{1},\phi_{2}$ in $C(X_{\mathcal{A}})$ such that $f_{\alpha} = \Lambda^{k_{1}}(\phi_{1})\Lambda^{k_{2}}(\phi_{2})\leq 1$, has open support contained in $([0,\delta),\alpha)$, is equal to $1$ on $([0,\delta'],\alpha)$ and is continuous as a function on $([0,\delta),\alpha)$.

Similarly, by the assumed condition, for any $0 < \varepsilon <\varepsilon' < 1$ we may choose $\psi_{1},\psi_{2}$ in $C(X_{\mathcal{A}})$ and $l_{1}, l_{2}$ such that $g_{\alpha} = \Lambda^{l_{1}}(\psi_{1})\Lambda^{l_{2}}(\psi_{2})\leq 1$, has open support contained in $((\varepsilon,1],\alpha)$ and is equal to $1$ on $([\varepsilon',1],\alpha)$, and is continuous as a function on $([\varepsilon,1),\alpha)$.

By choosing $\delta,\delta',\varepsilon,\varepsilon'$ appropriately, $f_{\alpha} + g_{\alpha} = h\chi_{(\mathbb{R}/\mathbb{Z}\times\{\alpha\})}$ for some invertible $h\in C(X_{\mathcal{A}})$, hence $\chi_{(\mathbb{R}/\mathbb{Z}\times\{\alpha\})}\in \mathcal{B}_{\ess}$. Now, for $i\leq |S(\alpha)|$, and any $0 < \delta' < \delta < 1$, $(f_{\alpha_{i}}\circ T_{S} )\chi_{(\mathbb{R}/\mathbb{Z},\alpha)} = \sum_{j \colon  \, S(\alpha)_{j} = \alpha_{i}}f_{j}$, where for each $j$, $\supp(f_{j})\subseteq \big( \big[ \frac{j-1}{|S(\alpha)|}, \frac{j-1 + \delta}{|S(\alpha)|} \big),\alpha \big)$, $f_{j}\leq 1$, $f_{j} = 1$ on $\big[ \frac{j-1}{|S(\alpha)|}, \frac{j-1 + \delta'}{|S(\alpha)|} \big]$, and is continuous as a function on $\big( \big[ \frac{j-1}{|S(\alpha)|}, \frac{j-1 + \delta}{|S(\alpha)|} \big),\alpha \big)$. In particular, $f_{i}$ is in $\mathcal{B}_{\ess}$. Similarly, for any $0 < \varepsilon <\varepsilon' < 1$, there is $g_{i}$ in $\mathcal{B}_{\ess}$ satisfying $\text{supp}(g_{i})\subseteq ([\frac{i-\varepsilon}{|S(\alpha)|}, \frac{i}{|S(\alpha)|}],\alpha)$, $g_{i}\leq 1$, $g_{i} = 1$ on $\big[ \frac{i-\varepsilon'}{|S(\alpha)|}, \frac{i}{|S(\alpha)|} \big]$, and is continuous as a function on $\big( \frac{i-\varepsilon}{|S(\alpha)|}, \frac{i}{|S(\alpha)|} \big]$. Then, by choosing $\delta,\delta',\varepsilon,\varepsilon'$ appropriately, we have 
$h^{-1}(f_{i} + g_{i}) =\chi_{ \big( \big[ \frac{i-1}{|S(\alpha)|},\frac{i}{|S(\alpha)|} \big],\alpha \big) }$ for some invertible $h\in C(X_{\mathcal{A}})$. This finishes the proof that $\pi$ is isomorphic to the minimal essential cover.
\setlength{\parindent}{0cm} \setlength{\parskip}{0.5cm}

\bremark
These examples show that a dynamical system need not be positively expansive, even if its minimal or minimal essential cover is positively expansive. Non-sofic subshifts provide examples where the original system is positively expansive but our covers are not. So in general there is no relationship between positive expansivity of the minimal or minimal essential cover and positive expansivity of the original system.
\eremark

\section{Examples from semi-\'etale groupoids}

In this section, we consider a locally compact Hausdorff groupoid $G$ with unit space $G^{(0)} = X$ and range and source maps $r,s\colon G\to X$.
We assume that $G$ is semi-\'etale in the sense that $r$ (and hence also $s$) is locally injective.
We view $G$ as a topological graph $r,s\colon G \to X$ with the canonical weight $w=1$, system of measures $\lambda_x = \sum_{g \in r^{-1}(x)} \delta_g$ and transfer operator $\Lambda$.
We study its minimal (graph) cover $\hat{r},\hat{s}\colon \hat{G} \to \hat{X}$ as introduced in Section \ref{sec:app_min_cover} and observe that the process of constructing this minimal cover as explained in \S~\ref{ss:DescCovers} terminates after a single step, and moreover admits a canonical {\'e}tale groupoid structure.

First we need the following terminology: 
every element $g \in G$ induces a bijection $G^{s(g)} = r^{-1}(s(g)) \to G^{r(g)} = r^{-1}(r(g)), \, \gamma \ma g \gamma$. 
Now, given a measure $\mu$ with $\supp (\mu) \subseteq G^{s(g)}$, we let $g.\mu$ denote the pushforward of $\mu$ with respect to this bijection (so $\supp (g.\mu) \subseteq G^{r(g)}$).

\blemma
\label{lem:gtogANDlrgtomu}
If $g_i$ converges to $g$ in $G$ and $\lambda_{r(g_i)}$ converges to $\mu$ in $\cM(G)$, then $\lambda_{s(g_i)}$ converges to $g^{-1}. \mu$.
\elemma
\setlength{\parindent}{0cm} \setlength{\parskip}{0cm}

\bproof
Since $r$ is locally injective, the limit points of the measures $\lambda_{x}$, $x\in X$, are of the form $\mu = \sum_{y\in S\subseteq r^{-1}(x)}\delta_{y}$. Therefore, to prove the lemma it suffices to show $g^{-1} \gekl{\text{limit points of } r^{-1}(r(g_i))} = \gekl{\text{limit points of } r^{-1}(s(g_i))}$. If $y$ is a limit point of $r^{-1}(r(g_i))$ then there exists a net $(y_i)$ with $r(y_i) = r(g_i)$ such that $y = \lim y_i$. This implies that $g^{-1}y = \lim g_i^{-1} y_i$ is the limit point of $g_i^{-1} r^{-1}(r(g_i)) = r^{-1}(s(g_i))$. Conversely, if $y$ is a limit point of $r^{-1}(s(g_i))$, then there exists a net $(y_i)$ such that $y = \lim y_i$ and $r(y_i) = s(g_i)$. Hence $gy = \lim g_i y_i$ is a limit point of $g_i r^{-1}(s(g_i)) = r^{-1}(r(g_i))$. 
\eproof
\setlength{\parindent}{0cm} \setlength{\parskip}{0.5cm}

\begin{lemma} \label{lem:terminates-single-step}
  The cover construction of $r,s\colon G\to X$ terminates after a single step, i.e. $\hat{G} = G_1$ and $\hat{X} = X_1$.
\end{lemma}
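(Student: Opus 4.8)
The claim is that for a semi-étale groupoid $G$, viewed as a weighted topological graph $(r,s \colon G \to X)$ with $w=1$, the inductive construction of the minimal cover stabilizes immediately: $\hat Y = Y_1$ and $\hat X = X_1$. By the descriptions in Propositions~\ref{prop:Xn=XxM} and \ref{prop:Yn=YxMxM}, $X_{n}$ is the closure of the image of $x \ma (x, \lambda_{n-1,x_n})$ inside $X \times \mathcal{M}(Y_{n-1})$, and $Y_n$ is the closure of the image of $y \ma (y, \lambda_{r(y)_n}, \lambda_{s(y)_n})$ in $Y \times \mathcal{M}(Y_{n-1})^2$. Since $\hat X = \plim_n X_n$ and $\hat Y = \plim_n Y_n$, it suffices to show that the maps $X_{n+1} \to X_n$ and $Y_{n+1} \to Y_n$ are homeomorphisms for all $n \geq 1$, and for this it is enough (by induction) to show that $X_2 \to X_1$ and $Y_2 \to Y_1$ are bijections, i.e. that the measures $\lambda_{1,x}$ on $Y_1$ attached to points $x \in X_2$ are already determined by the data recorded in $X_1$. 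First I would recall that, because $r$ is locally injective, every limit measure $\lambda_{n-1,x}$ has the form $\sum_{y \in S} \delta_y$ with $S \subseteq r^{-1}(\pi_{X_n}(x))$ — this is exactly the observation used in Lemma~\ref{lem:PreimagesFarApart-check} and Lemma~\ref{lem:gtogANDlrgtomu} — so the content is to pin down the support $S$.

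The heart of the argument is that the groupoid multiplication lets one reconstruct $\lambda_{1,x}$ from $\lambda_{0,x}$ (equivalently, from the $X_1$-coordinate). Concretely: take $x \in X_1$, realized as $x = \lim_i x_i$ with $x_i \in X$ and $\lambda_{0,x} = \lim_i \lambda_{x_i} = \sum_{g \in S} \delta_g$ for some $S \subseteq r^{-1}(\pi_{X_1}(x))$. I want to show that for each $g \in S$, the measures $\lambda_{0,s(g)}$ are already determined — indeed by Lemma~\ref{lem:gtogANDlrgtomu}, passing to a subnet so that $\lambda_{0,r(g_i)}$ converges (to $\lambda_{0,x}$, since $g_i := $ the element of $r^{-1}(x_i)$ near $g$ satisfies $r(g_i) = x_i \to \pi_{X_1}(x)$), we get $\lambda_{0,s(g_i)} \to g^{-1}.\lambda_{0,x}$, whose support is $g^{-1} S \cap r^{-1}(s(g))$. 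The point is that this limit depends only on $g$ and on $\lambda_{0,x}$, not on the approximating net. Then $\lambda_{1,x}$, which by definition of $\mathcal F_1$-vs-$\mathcal E_1$ records the evaluation of $\Lambda$ on functions built from $C_c(G)$, $C_c(G)r^*\Lambda(C_c(X))$ and $C_c(G)s^*\Lambda(C_c(X))$, is reconstructed from the tuple $(x, \lambda_{0,x}, (\lambda_{0,s(g)})_{g \in S})$; and each $\lambda_{0,s(g)}$ is forced. Hence the $X_1$-coordinate of a point of $X_2$ determines its $X_2$-coordinate, giving injectivity of $X_2 \to X_1$; surjectivity is automatic since $X$ is dense in both. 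The same bookkeeping with the extra source-measure slot handles $Y_2 \to Y_1$, using $\hat s$ and the fact that $s = r \circ \mathrm{inv}$ so the $s$-coordinate measures are obtained from the $r$-coordinate measures by pushing forward along the inversion-induced bijections.

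To run the induction cleanly I would phrase the key lemma as: \emph{for every $n \geq 1$, the map sending a character of $\mathcal E_{n+1}$ to its restriction on $\mathcal E_n$ is injective}, proved by showing $\Lambda(\mathcal F_n) \subseteq \overline{{}^*\text{-alg}}(C_c(X), \Lambda(\mathcal F_{n-1}))$ after restricting along $r^*$, which amounts to the measure-determinacy statement above and the identity $s = r\circ(\ )^{-1}$. Equivalently — and this is probably the slickest route — one shows directly that $\mathcal E_1$ is already closed under $f \ma $ the operations defining $\mathcal E_2$, i.e. that $\Lambda(\mathcal F_1) \subseteq \mathcal E_1$, where $\mathcal F_1$ is the ${}^*$-algebra generated by $C_c(G)$, $C_c(G)r^*\Lambda(C_c(X))$, $C_c(G)s^*\Lambda(C_c(X))$. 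Since $\Lambda$ is multiplicative on each local section of $r$ (the $w=1$, locally injective case), it suffices to compute $\Lambda$ on a generator; for $g = a \cdot (r^*\Lambda b)$ with $a \in C_c(G)$, $b \in C_c(X)$ one has $\Lambda(g) = \Lambda(a)\cdot\Lambda(b)$ by the transfer-operator identity \eqref{transfer_equation}, which lies in $C_c(X)\Lambda(\mathcal F_0) \subseteq \mathcal E_1$; the genuinely new case is $g = a\cdot(s^*\Lambda b)$, and here I would use the groupoid structure to rewrite $s^*\Lambda(b)$ in terms of $r^*$ of a function pushed through multiplication. This last rewriting — making precise that "$\Lambda$ applied along $s$ equals $\Lambda$ applied along $r$ after conjugating by groupoid elements" in a way that stays inside the ${}^*$-algebra $\mathcal E_1$ rather than merely inside $\ell^\infty_c$ — is the main obstacle; Lemma~\ref{lem:gtogANDlrgtomu} is precisely the pointwise/measure-theoretic shadow of this identity, so the proof should consist of (i) invoking that lemma to control limit points, (ii) translating it into the statement $\hat s^* \hat{\bm\lambda}(\mathcal F_0) \subseteq \mathcal E_1$, and (iii) concluding by induction that all higher $\mathcal E_n, \mathcal F_n$ collapse onto $\mathcal E_1, \mathcal F_1$.
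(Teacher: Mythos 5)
Your proposal is correct and is essentially the paper's argument: the paper likewise defines the auxiliary algebra $\cF'$ generated by $C_c(G)$ and $C_c(G)\,r^*\Lambda(C_c(G))$ only, uses Lemma~\ref{lem:gtogANDlrgtomu} to see that the source-measure coordinate is redundant (so that $s$ factors through the spectrum of $\cF'$ via $(g,\mu)\mapsto (s(g),g^{-1}.\mu)$, giving $C_c(G)\,s^*\Lambda(C_c(G))\subseteq\cF'$ and hence $\cF_1=\cF'$), and then the module identity $\Lambda(a\,r^*(h))=\Lambda(a)h$ collapses all higher $\cF_n,\cE_n$. Your first, space-level formulation via Propositions~\ref{prop:Xn=XxM} and \ref{prop:Yn=YxMxM} is just the Gelfand-dual phrasing of this, and your "slickest route" at the end is the paper's proof verbatim in spirit.
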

\setlength{\parindent}{0cm} \setlength{\parskip}{0cm}

\begin{proof}
  Recall the terminology from Section \ref{measured_graphs}.
  Let $\cF'$ be the closure of ${}^*\text{-}{\rm alg}\big( C_c(G), C_c(G) r^* \Lambda(C_c(G)) \big)\subseteq \ell^{\infty}_{c}(G)$ in the inductive limit topology of $\ell_c^\infty(G)$ and let $G'$ be the spectrum of $\mathcal{F}'$.
  Clearly, $\mathcal{F}' \subseteq \mathcal{F}_1$. 
  Let $X_1$ be the spectrum of $\mathcal{E}_1$. 
  From the description in Section \ref{ss:DescCovers}, we identify $G_1$ with the closure of $\menge{(g,\lambda_{r(g)}, \lambda_{s(g)})}{ g\in G }$ in $G\times M(G)\times M(G)$,
  and a similar method shows $G'$ can be identified with the closure of $\menge{ (g,\lambda_{r(g)})}{ g\in G }$ in $G\times M(G)$.
  The range and source maps extend to $r_1,s_1\colon G_1 \to X_1$, and by construction it is clear that $r_1$ factors through to a continuous map $r'\colon G' \to X_1$.
  Similarly, $s_1$ factors through to a continuous map $s'\colon G' \to X_1$ given by $s'(g,\mu) = (s(g), g^{-1}.\mu)$, for all $(g,\mu) \in G'$ 
  (Lemma \ref{lem:gtogANDlrgtomu} ensures that this is well-defined).
  It follows from Gelfand duality that $C_c(G) s^*\Lambda(C_c(G)) \subseteq \mathcal{F}'$, so $\mathcal{F}_1 = \mathcal{F}'$.
  It follows that $\mathcal{F} = \mathcal{F}'$ and $\mathcal{E} = \mathcal{E}_1$, so $\hat{G} = G'$ and $\hat{X} = X_1$.
\end{proof}
\setlength{\parindent}{0cm} \setlength{\parskip}{0.5cm}

\begin{proposition} 
  Let $G$ be a locally compact Hausdorff and semi-\'etale groupoid.
  The cover $\hat{r},\hat{s}\colon \hat{G} \to \hat{X}$ admits an \'etale groupoid structure which extends the given groupoid structure on $G$.
\end{proposition}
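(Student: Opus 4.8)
The plan is to exploit the concrete description of $\hat G$ from Lemma~\ref{lem:terminates-single-step}, namely
\[
  \hat G = \overline{\menge{(g,\lambda_{r(g)})}{g \in G}} \subseteq G \times \cM(G), \quad \hat X = \overline{\menge{(x,\lambda_x)}{x \in X}} \subseteq X \times \cM(G),
\]
with $\hat r(g,\mu) = (r(g),\mu)$ and $\hat s(g,\mu) = (s(g), g^{-1}.\mu)$, and simply transport the groupoid operations along these coordinates. First I would define the set of composable pairs: $(g_1,\mu_1)$ and $(g_2,\mu_2)$ are composable precisely when $\hat s(g_1,\mu_1) = \hat r(g_2,\mu_2)$, i.e. $s(g_1) = r(g_2)$ (so that $g_1 g_2$ is defined in $G$) and $g_1^{-1}.\mu_1 = \mu_2$. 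Then I would set
\[
  (g_1,\mu_1)\cdot(g_2,\mu_2) \coloneq (g_1 g_2, \mu_1), \qquad (g,\mu)^{-1} \coloneq (g^{-1}, g^{-1}.\mu),
\]
noting that $g^{-1}.\mu$ has support in $r^{-1}(s(g))$ by Lemma~\ref{lem:gtogANDlrgtomu} so the inverse lands in the right fibre, and that the unit space is $\hat X = \menge{(x,\mu) \in \hat G}{x \in X}$ (equivalently the image of $\hat r$ restricted to units). One checks directly that these operations restrict to the given ones on the dense copy of $G$ inside $\hat G$ (where $\mu = \lambda_{r(g)}$ forces $g^{-1}.\mu = \lambda_{s(g)}$, consistent with $\hat s$), and that the range/source maps for this structure coincide with $\hat r$ and $\hat s$.

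The verification of the groupoid axioms (associativity of multiplication where defined, the unit and inverse laws) is then a routine calculation in the first coordinate, using that $G$ is a groupoid, together with the identity $(g_1 g_2)^{-1}.\mu = g_2^{-1}.(g_1^{-1}.\mu)$ for the inverse law and for well-definedness of composition of three elements. Continuity is the crux: I would show that multiplication $\hat G^{(2)} \to \hat G$, $((g_1,\mu_1),(g_2,\mu_2)) \ma (g_1 g_2,\mu_1)$, and inversion $(g,\mu)\ma(g^{-1},g^{-1}.\mu)$ are continuous. For multiplication the first coordinate is continuous since $G$ is a topological groupoid and the second is just a projection, so this is immediate once one knows $\hat G^{(2)}$ carries the subspace topology from $\hat G \times \hat G$. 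For inversion, continuity of $g \ma g^{-1}$ is clear; continuity of $(g,\mu)\ma g^{-1}.\mu$ on $\hat G$ is exactly the content of Lemma~\ref{lem:gtogANDlrgtomu} (phrased for nets), since any point of $\hat G$ is a limit of points $(g_i,\lambda_{r(g_i)})$. Being a closed subset of $G \times \cM(G)$ with $G$ locally compact Hausdorff and $\cM(G)$ (the relevant space of Radon measures of uniformly bounded mass on compact sets) locally compact Hausdorff in the weak$^*$ topology, $\hat G$ is locally compact Hausdorff, so we obtain a locally compact Hausdorff groupoid.

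It remains to see that $\hat G$ is \'etale and that the groupoid structure is uniquely determined by the requirement that it extends that of $G$. \'Etaleness means $\hat r$ (equivalently $\hat s$) is a local homeomorphism; but this is exactly Example~\ref{example:local_injections} applied to the weighted topological graph $(G,w=1)$ with locally injective $r$, which tells us the range map of the minimal cover is a local homeomorphism — so I would simply cite that, perhaps recording the concrete reason that $\hat r(g,\mu)=(r(g),\mu)$ is locally injective because near $(g,\mu)$ the measure coordinate $\mu$ together with a neighbourhood on which $r$ is injective pins down $g$. For uniqueness: since $G$ sits densely in $\hat G$ and in $\hat G^{(2)}$ (density of $\iota$, combined with the fact that the composable pairs coming from $G$ are dense in $\hat G^{(2)}$, which follows from openness of $\hat r,\hat s$ and the description of the fibres), any continuous extension of the multiplication and inversion on $G$ to $\hat G$ is forced, by continuity and the Hausdorff property, to agree with the one constructed above. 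The main obstacle I anticipate is the bookkeeping around well-definedness and continuity of the action $(g,\mu) \ma g^{-1}.\mu$ on limit points — precisely ensuring that a limit measure $\mu$ is always supported in $r^{-1}(r(g))$ so that $g^{-1}.\mu$ makes sense, and that the map is jointly continuous — but Lemma~\ref{lem:gtogANDlrgtomu} was evidently designed to handle exactly this, so the argument should go through cleanly.
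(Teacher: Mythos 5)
Your proposal is correct and follows essentially the same route as the paper: both use the concrete description of $\hat{G}$ and $\hat{X}$ inside $G \times \cM(G)$ from Lemma~\ref{lem:terminates-single-step}, define the product $(g,\mu)(h,\nu) = (gh,\mu)$ and inverse $(g,\mu)^{-1} = (g^{-1}, g^{-1}.\mu)$ in exactly the same way, rely on Lemma~\ref{lem:gtogANDlrgtomu} for the well-definedness and continuity of $(g,\mu)\mapsto g^{-1}.\mu$, and cite Example~\ref{example:local_injections} for \'etaleness. The additional detail you supply on the groupoid axioms, continuity, and uniqueness by density is exactly what the paper leaves implicit.
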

\setlength{\parindent}{0cm} \setlength{\parskip}{0cm}

\begin{proof}
  We saw in the proof of Lemma \ref{lem:terminates-single-step} that $\hat{G}$ is (homeomorphic to) the closure of $\menge{ (g,\lambda_{r(g)})}{ g\in G }$ in $G\times M(G)$
  and $\hat{X}$ is (homeomorphic to) the closure of $\menge{ (x,\lambda_x) }{ x\in X }$ in $X\times M(G)$.
  For $(g,\mu) \in \hat{G}$ we have $\supp(\mu) \subseteq r^{-1}(r(g))$ and for $(x,\mu)\in \hat{X}$ we have $\supp(\mu) \subseteq r^{-1}(x)$.
  The range and source maps are $\hat{r}(g,\mu) = (r(g), \mu)$ and $\hat{s}(g,\mu) = (s(g), g^{-1}\mu)$ for all $(g,\mu)\in \hat{G}$.
  The product of $(g,\mu)$ and $(h,\nu)$ is defined precisely if $g$ and $h$ are composable and $g^{-1}\mu = \nu$ in which case $(g,\mu)(h,\nu) = (gh, \mu)$,
  and inversion is given by $(g,\mu)^{-1} = (g^{-1}, g^{-1}\mu)$.
\setlength{\parindent}{0.5cm} \setlength{\parskip}{0cm}

  As explained in Example \ref{example:local_injections}, $\hat{r}$ is a local homeomorphism. Therefore, $\hat{G}$ is an \'etale groupoid.
\end{proof}
\setlength{\parindent}{0cm} \setlength{\parskip}{0.5cm}

\begin{remark}[Universality and lifting properties]\label{rmk:universal_property}
    Recall that a groupoid homomorphism $\pi \colon G\to H$ is called \emph{$r$-bijective} if $\pi \colon G^{x}\to H^{\pi(x)}$ is a bijection, for all $x\in G^{(0)}$. This is equivalent to $\pi$ being a morphism as a weighted topological graph when weighted by $w=1$. It is easy to see $\iota \colon G\to \hat{G}$ is a groupoid homomorphism; under our identifications it is given by $g\mapsto (g,\lambda_{r(g)})$. Moreover, since it is a measured morphism, it is also $r$-bijective.
\setlength{\parindent}{0.5cm} \setlength{\parskip}{0cm}

    The minimal cover of a semi-\'etale groupoid has the following universal property: given an \'etale groupoid $\dot{G}$ that covers $G$ in the sense that there is a continuous homomorphism $\dot{\pi} \colon \dot{G}\to G$ and an $r$-bijective homomorphism $\dot{\iota} \colon G\to \dot{G}$ with dense image such that $\dot{\pi}\circ \dot{\iota} = \text{id}_{G}$, there is a unique continuous and $r$-bijective homomorphism $\phi \colon \dot{G}\to \hat{G}$ such that $\phi \circ \dot{\iota} = \iota$ and $\pi\circ \phi = \dot{\pi}$.

    Moreover, the cover satisfies the following lifting property: If $\varphi \colon G\to H$ is a continuous and $r$-bijective homomorphism between semi-\'etale groupoids, then there is a unique continuous and $r$-bijective groupoid homomorphism $\hat{\varphi} \colon \hat{G}\to \hat{H}$ satisfying $\hat{\varphi} \circ \iota_G = \iota_H \circ\varphi$ and hence $\varphi \circ \pi_{\hat{G}} = \pi_{\hat{H}} \circ \varphi$.

    These properties follow from the usual universal property and lifting property of the minimal topological graph cover since the identification of compactly supported functions on the covers as functions in $\ell^{\infty}_{c}(G)$ preserves convolution.
\end{remark}
\setlength{\parindent}{0cm} \setlength{\parskip}{0.5cm}

Next, we identify the {\'e}tale cover groupoid $\hat{G}$ as the transformation groupoid of a canonical action of $G$ on $\hat{X}$.
We refer the reader to \cite[Chapter 2.1]{Wil} for details about groupoid actions.

\begin{proposition} 
  There is an action of $G$ on $\hat{X}$ given by $g.(x,\mu) \coloneqq (r(g), g.\mu)$, for all $g\in G$ and $(x,\mu)\in \hat{X}$ with $s(g) = x$.
  Moveover, there is an isomorphism of topological groupoids $\hat{G}\to G\ltimes \hat{X}$ given by $(g,\mu) \mapsto (g, (s(g),g^{-1}.\mu))$, for all $(g,\mu)\in \hat{G}$.
\end{proposition}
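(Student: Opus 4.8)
The plan is to verify, in order, that the formula $g.(x,\mu) \coloneqq (r(g), g.\mu)$ defines a genuine continuous action of $G$ on $\hat X$ along the anchor map $\hat X \to X$ sending $(x,\mu) \mapsto x$, and then to check that the explicit map $\Phi \colon \hat G \to G \ltimes \hat X$, $(g,\mu) \mapsto (g,(s(g),g^{-1}.\mu))$, is a homeomorphism and a groupoid homomorphism with inverse $(g,(x,\mu)) \mapsto (g, g.\mu)$.

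First I would check that the action is well defined: for $(x,\mu) \in \hat X$ with $s(g) = x$, we have $\supp(\mu) \subseteq r^{-1}(x) = r^{-1}(s(g)) = G^{s(g)}$, so $g.\mu$ makes sense and $\supp(g.\mu) \subseteq G^{r(g)} = r^{-1}(r(g))$; I then need $(r(g), g.\mu) \in \hat X$, which follows because $\hat X$ is the closure of $\menge{(x,\lambda_x)}{x\in X}$ in $X \times \cM(G)$ and, by Lemma~\ref{lem:gtogANDlrgtomu} (or rather its symmetric counterpart applied along a net $g_i \to g$, $\lambda_{s(g_i)} \to \mu$, whence $\lambda_{r(g_i)} \to g.\mu$), limit points are preserved under left translation. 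The action axioms $s(g).(x,\mu) = (x,\mu)$ (using $s(g) = x$ and that $e_x.\mu = \mu$) and $g.(h.(y,\nu)) = (gh).(y,\nu)$ (using $g.(h.\nu) = (gh).\nu$, the cocycle identity for the translation bijections) are then formal. Continuity of $(g,(x,\mu)) \mapsto g.(x,\mu)$ on the fibre product $G \tensor*[_s]{\times}{_{\pr_1}} \hat X$ reduces to continuity of $(g,\mu)\mapsto g.\mu$, which one checks by pairing against $f \in C_c(G)$: $\int f \, \mathrm{d}(g.\mu) = \int (f\circ L_g)\,\mathrm{d}\mu$ where $L_g(\gamma) = g\gamma$, and one uses joint continuity of multiplication in $G$ together with the fact (from semi-\'etaleness) that near any point the measures $\mu$ are uniformly locally finite sums of point masses supported on the discrete fibres.

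Next I would form the transformation groupoid $G \ltimes \hat X$, whose underlying space is $G \tensor*[_s]{\times}{_{\pr_1}} \hat X = \menge{(g,(x,\mu))}{s(g) = x}$, with $r(g,(x,\mu)) = g.(x,\mu) = (r(g),g.\mu)$, $s(g,(x,\mu)) = (x,\mu)$, product $(g,(x,\mu))(h,(y,\nu)) = (gh,(y,\nu))$ defined when $h.(y,\nu) = (x,\mu)$, i.e.\ $r(h) = x$ and $h.\nu = \mu$, and inverse $(g,(x,\mu))^{-1} = (g^{-1}, g.\mu)$. I then check that $\Phi(g,\mu) = (g,(s(g), g^{-1}.\mu))$ lands in this fibre product (indeed $s(g)$ is the first coordinate, so the anchor condition holds), is a bijection with inverse $(g,(x,\mu)) \mapsto (g, g.\mu)$ (note $\supp(g.\mu)\subseteq G^{r(g)} = r^{-1}(r(g))$ and $(r(g),g.\mu)\in\hat X$, and $g^{-1}.(g.\mu) = \mu$), and is bicontinuous since both $\Phi$ and $\Phi^{-1}$ are built from continuous maps ($g\mapsto s(g)$, $(g,\mu)\mapsto g^{\mp1}.\mu$). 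Finally I verify $\Phi$ respects range, source, product and inversion: e.g.\ for the product, given composable $(g,\mu)$, $(h,\nu)$ in $\hat G$ (so $s(g) = r(h)$ as groupoid elements and $g^{-1}\mu = \nu$ in the sense of the proof of the previous proposition — more precisely $g^{-1}.\mu = \nu$), one has $\Phi(g,\mu)\Phi(h,\nu) = (g,(s(g),g^{-1}.\mu))(h,(s(h),h^{-1}.\nu))$, which is defined because $h.(s(h),h^{-1}.\nu) = (r(h), \nu)$ and $r(h) = s(g)$, $\nu = g^{-1}.\mu$, and equals $(gh,(s(h),h^{-1}.\nu)) = (gh, (s(gh), (gh)^{-1}.\mu)) = \Phi(gh,\mu) = \Phi((g,\mu)(h,\nu))$, using $h^{-1}.\nu = h^{-1}.(g^{-1}.\mu) = (gh)^{-1}.\mu$; the checks for $r$, $s$, and inversion are analogous and routine.

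The only real subtlety — the ``hard part'' — is the bookkeeping around the translation operation $\mu \mapsto g.\mu$ and its interaction with the topology on $\hat X$ (and $\hat G$), i.e.\ making sure that $g.\mu \in \cM(G)$ has the right support and genuinely gives a point of $\hat X$, and that $g \mapsto g.\mu$ is continuous; this is precisely where Lemma~\ref{lem:gtogANDlrgtomu} and its mirror image do the work, together with the description of $\hat G$ and $\hat X$ as closures in $G\times\cM(G)$ and $X\times\cM(G)$ obtained in the proof of Lemma~\ref{lem:terminates-single-step}. Everything else is a direct unwinding of definitions, so I would present the proof by first recording the well-definedness and continuity of the action (citing Lemma~\ref{lem:gtogANDlrgtomu}), then writing down $\Phi$ and $\Phi^{-1}$ and noting they are mutually inverse continuous maps, and finally checking compatibility with the groupoid operations in one or two displayed computations.
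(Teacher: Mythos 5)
Your overall architecture is the same as the paper's: define the action along the anchor $(x,\mu)\mapsto x$, form the transformation groupoid, and exhibit $\Phi(g,\mu)=(g,(s(g),g^{-1}.\mu))$ together with the candidate inverse $(g,(x,\mu))\mapsto(g,g.\mu)$. The algebraic verifications (action axioms, compatibility of $\Phi$ with range, source, product and inversion) are indeed routine, and your continuity remarks are fine. The one step that is not routine is the one you dispose of in a single parenthesis: that $g.(x,\mu)=(r(g),g.\mu)$ actually lies in $\hat X$ — equivalently that $(g,g.\mu)\in\hat G$, equivalently that $\Phi$ is surjective. Your justification invokes the mirror image of Lemma~\ref{lem:gtogANDlrgtomu} ``along a net $g_i\to g$ with $\lambda_{s(g_i)}\to\mu$'', but the existence of such a net is precisely what has to be proved. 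Membership of $(x,\mu)$ in $\hat X$ only provides a net $x_i\to x=s(g)$ in $X$ with $\lambda_{x_i}\to\mu$; to apply the lemma you must lift it to a net $g_i\to g$ in $G$ with $s(g_i)=x_i$, and since $s$ is merely locally injective (not open), such a lift need not exist. It does exist when $g^{-1}\in\supp(\mu)$ (pick $h_i\in r^{-1}(x_i)$ with $h_i\to g^{-1}$ and set $g_i=h_i^{-1}$), but when $g^{-1}\notin\supp(\mu)$ it can genuinely fail.

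This is not a removable technicality. Consider the group bundle $G$ over $X=\{0\}\cup\{\pm 1/n : n\geq 1\}$ whose fibre is $\mathbb{Z}/2=\{x,a_x\}$ over $x=0$ and over each $x=1/n$, and trivial over each $-1/n$, topologized as the disjoint union of $X$ with the convergent sequence $a_{1/n}\to a_0$. This is a compact Hausdorff semi-\'etale groupoid with continuous operations; $(0,\delta_0)\in\hat X$ (approach $0$ through $-1/n$, where $\lambda_{-1/n}=\delta_{-1/n}$), and $s(a_0)=0$, yet $a_0.\delta_0=\delta_{a_0}$ and $(0,\delta_{a_0})\notin\hat X$, since every limit of $\lambda_{x_i}$ with $x_i\to 0$ dominates $\delta_0$. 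Correspondingly $\hat G$ has a single point over $a_0$ (every net in $G$ converging to $a_0$ has $\lambda_{r(\cdot)}\to\delta_0+\delta_{a_0}$), while $G\ltimes\hat X$ would have two. So the well-definedness of the action and the surjectivity of $\Phi$ are the actual mathematical content of the proposition, and they require either an additional argument exploiting more structure of $G$ or an additional hypothesis. The paper's own proof also dismisses these checks as straightforward, so you have inherited rather than created the gap — but your citation of Lemma~\ref{lem:gtogANDlrgtomu} does not close it, and you should flag explicitly that the needed approximating net is an assumption, not a consequence, of $(s(g),\mu)\in\hat X$.
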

\setlength{\parindent}{0cm} \setlength{\parskip}{0cm}

\begin{proof}
  We verify that $G$ acts on $\hat{X}$.
  The map $\rho\colon \hat{X}\to X$ given by $\rho(x,\mu) = x$ defines a continuous anchor map.
  Let $G\ast \hat{X} \coloneq \menge{(g, (x,\mu))\in G\times \hat{X}}{ s(g) = \rho(x,\mu) = x }$ and define $g.(x,\mu) \coloneq (r(g), g.\mu)$ for all $(g, (x,\mu))\in G\ast \hat{X}$.
  It is straightforward to verify that $\rho(x,\mu).(x,\mu) = (x,\mu)$, for all $(x,\mu)\in \hat{X}$,
  and that when $g_1$ and $g_2$ are composable in $G$ and $(g_2,(x,\mu))\in G\ast \hat{X}$,
  then $(g_1,g_2.(x,\mu))\in G\ast \hat{X}$ and $(g_1g_2).(x,\mu) = g_1.(g_2.(x,\mu))$.

  The transformation groupoid of the action of $G$ on $\hat{X}$ is of the form
  \[
    G\ltimes \hat{X} = \menge{ (g, (x,\nu))\in G\times \hat{X} }{ s(g) = x }
  \]
  with unit space $\{(x,(x,\mu))\in X\times \hat{X}\}$ that we identify with $\hat{X}$.
  The range and source maps are $r(g,(s(g),\nu)) = (r(g), g.\nu)$ and $s(g,(s(g),\nu)) = (s(g),\nu)$,
  and $(g_1,(s(g_1),\nu))$ and $(g_2,(s(g_2), \mu))$ are composable precisely if $g_1$ and $g_2$ are composable and $g_2.\mu=\nu$ in which case
  $(g_1,(s(g_1),g_2\mu))(g_2,(s(g_2), \mu)) = (g_1g_2, (s(g_2),\mu))$. 
  It is straighforward to verify that the map $\hat{G} \to G\ltimes \hat{X}$ given by $(g,\mu) \mapsto (g, (s(g),g^{-1}.\mu))$ for all $(g,\mu)\in \hat{G}$
  defines an isomorphism of topological groupoids.
  The inverse is given by $(g, (s(g),\nu)) \mapsto (g, g.\nu)$, for all $(g, (s(g),\nu))\in G\ltimes \hat{X}$.
\end{proof}

\begin{remark}
    The minimal essential cover of $G$ (when $G$ is post-critically sparse) is also an {\'e}tale groupoid since $\tilde{X} = \overline{\iota(X\setminus P_{\Lambda})}$ satisfies $s(r^{-1}(\tilde{X}))\subseteq \tilde{X}$. Moreover, $\tilde{G}$ is isomorphic to $G\ltimes \tilde{X}$.
\end{remark}
\setlength{\parindent}{0cm} \setlength{\parskip}{0.5cm}

We derive some structural similarities between the groupoid $G$ and its minimal cover.
Recall that a topological groupoid $G$ is \emph{effective} if the unit space is the interior of the isotropy of the groupoid. By \cite[Theorem~15.1]{Ke}, every second countable semi-\'etale groupoid $G$ has a Borel Haar system given by the counting measure $\lambda^{x}$ on $G^{x}$, $x\in X$. In this case, it follows from \cite[Proposition~2.4]{Ren2} that $G$ is \textit{Borel amenable} if and only if there is a \textit{Borel approximate invariant density}, which is a sequence of Borel functions $(g_{n})$ satisfying the conditions in \cite[Definition~2.3]{Ren2}. If $G$ is additionally \'etale, then $(g_{n})$ can be chosen to be continuous with additional convergence properties (see \cite[Theorem~2.14]{Ren2}). With these properties, $(g_{n})$ is called a \textit{topological approximate invariant density} \cite[Definition~2.7]{Ren2}.
\begin{corollary} 
  Let $G$ be a locally compact Hausdorff and semi-\'etale groupoid, and let $\hat{G}$ be the \'etale cover groupoid.
\setlength{\parindent}{0cm} \setlength{\parskip}{0cm}

  \begin{enumerate}[label=(\arabic*)]
 \item  If $G$ is second countable, then $G$ is Borel amenable if and only if $\hat{G}$ is Borel amenable.\label{i:amenable}
    \item If $\hat{G}$ is topologically transitive (or minimal), then $G$ is topologically transitive (or minimal). \label{i:transitive}
    \item $G$ is principal if and only if $\hat{G}$ is principal. \label{i:principal}
    \item If $\hat{G}$ is effective, then $G$ is effective.  \label{i:effective}
  \end{enumerate}
\end{corollary}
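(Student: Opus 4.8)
The plan is to handle the four assertions in turn, each time transporting the property between $G$ and $\hat G$ along two maps: the canonical cover projection $\pi \colon \hat G \to G$ (continuous, proper, surjective, and --- as one reads off from $\hat G \cong G \ltimes \hat X$ --- an $r$-bijective groupoid homomorphism) and the morphism $\iota \colon G \to \hat G$ from Remark~\ref{rmk:universal_property} (an injective $r$-bijective groupoid homomorphism with dense image and $\pi \circ \iota = \id_G$). I will freely use the explicit models $\hat G = \overline{\menge{(g,\lambda_{r(g)})}{g \in G}} \subseteq G \times \cM(G)$, $\hat X = \overline{\menge{(x,\lambda_x)}{x \in X}}$, and $\hat G \cong G \ltimes \hat X$. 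For \ref{i:transitive}, the point is that $\pi_{\hat X} \colon \hat X \to X$ is a continuous surjection carrying $\hat G$-orbits \emph{onto} $G$-orbits: the inclusion $\pi_{\hat X}(\hat r(\hat s^{-1}(\hat x))) \subseteq r(s^{-1}(\pi_{\hat X}(\hat x)))$ is immediate from $\pi$ being a homomorphism, and for the reverse inclusion, given $\gamma \in G$ with $s(\gamma) = \pi_{\hat X}(\hat x)$, $r$-bijectivity lifts $\gamma^{-1}$ to the unique $\hat\eta \in \hat G^{\hat x}$ with $\pi(\hat\eta) = \gamma^{-1}$, and then $\hat r(\hat\eta^{-1})$ lies over $r(\gamma)$. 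Hence a dense $\hat G$-orbit has dense image in $X$, so $G$ is topologically transitive if $\hat G$ is; choosing a point of $\pi_{\hat X}^{-1}(x)$ for each $x$ gives minimality.

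For \ref{i:principal}, I would compute isotropy groups directly in the model $\hat G = \overline{\menge{(g,\lambda_{r(g)})}{g \in G}}$. The key observation is that every $(g,\mu) \in \hat G$ satisfies $g \in \supp(\mu) \subseteq r^{-1}(r(g))$: writing $(g,\mu) = \lim_i (g_i, \lambda_{r(g_i)})$, the point $g = \lim_i g_i$ is a limit of elements $g_i \in r^{-1}(r(g_i))$, and such limit points are exactly the atoms of $\mu$ by local injectivity of $r$ (as in the proof of Lemma~\ref{lem:gtogANDlrgtomu}). Thus $(g,\mu) \in \mathrm{Iso}(\hat G)$, i.e.\ $\hat r(g,\mu) = (r(g),\mu) = (s(g), g^{-1}.\mu) = \hat s(g,\mu)$, forces $r(g) = s(g)$, so $g$ lies in the isotropy group of $G$; if $G$ is principal this forces $g \in X$ and hence $(g,\mu) = (x,\mu) = \hat r(g,\mu) \in \hat X$, proving $\hat G$ principal. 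Conversely, if $\gamma$ is a non-unit with $r(\gamma) = s(\gamma) = x$, then left translation by $\gamma^{-1}$ permutes $r^{-1}(x)$, so $\gamma^{-1}.\lambda_x = \lambda_x$ and $\iota(\gamma) = (\gamma,\lambda_x) \in \mathrm{Iso}(\hat G)$; it is not a unit of $\hat G$ since $\pi$ maps units to units and $\pi(\iota(\gamma)) = \gamma$. So $\hat G$ is principal if and only if $G$ is.

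For \ref{i:effective}, recall that $X = G^{(0)}$ is clopen in the semi-\'etale Hausdorff groupoid $G$ (and $\hat X$ in $\hat G$), so if $G$ is not effective then $W \coloneq \inte(\mathrm{Iso}(G)) \setminus X$ is a non-empty open subset of $G$ contained in $\mathrm{Iso}(G)$ and disjoint from $X$. Now $\pi^{-1}(W)$ is open and non-empty, $\iota(G) \cap \pi^{-1}(W) = \iota(W)$ is dense in $\pi^{-1}(W)$ (as $\iota(G)$ is dense in $\hat G$), and $\iota(W) \subseteq \mathrm{Iso}(\hat G)$ by the computation in \ref{i:principal}; since $\mathrm{Iso}(\hat G)$ is closed ($\hat G$ is Hausdorff), we get $\pi^{-1}(W) \subseteq \overline{\iota(W)} \subseteq \mathrm{Iso}(\hat G)$, hence $\pi^{-1}(W) \subseteq \inte(\mathrm{Iso}(\hat G))$. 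As $\pi(\hat X) = X$ is disjoint from $W$, the open set $\pi^{-1}(W)$ is disjoint from $\hat X$, so $\inte(\mathrm{Iso}(\hat G)) \neq \hat X$ and $\hat G$ is not effective; \ref{i:effective} follows by contraposition.

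Finally, for \ref{i:amenable} I would transport Renault's Borel approximate invariant densities. First, when $G$ is second countable so is $\hat G$: $r$ is of locally finite degree (being locally injective), so the measures $\lambda_x$ lie in a second countable (metrizable) subspace of $\cM(G)$, whence $\hat X \subseteq X \times \cM(G)$ and $\hat G \subseteq G \times \cM(G)$ are second countable. By \cite[Theorem~15.1]{Ke} the counting measures form a Borel Haar system on $G$ and on $\hat G$, and in particular $x \mapsto \lambda_x$, hence the section $\iota$, is Borel, while $\pi$ is continuous. If $G$ is Borel amenable, pick a Borel approximate invariant density $(g_n)$ (which exists by \cite[Proposition~2.4]{Ren2}); then $(g_n \circ \pi)$ is one for $\hat G$, because $\pi$ is an $r$-bijective homomorphism and therefore restricts to measure-preserving bijections $(\hat G^{\hat x}, \lambda^{\hat x}) \to (G^{\pi_{\hat X}(\hat x)}, \lambda^{\pi_{\hat X}(\hat x)})$ intertwining multiplication and inversion, so the conditions of \cite[Definition~2.3]{Ren2} are inherited (alternatively, $G \ltimes \hat X$ is Borel amenable whenever $G$ is). Conversely, if $(\hat g_n)$ is a Borel approximate invariant density for $\hat G$, then $(\hat g_n \circ \iota)$ is one for $G$, using that $\iota$ is an $r$-bijective homomorphism with $\iota_* \lambda^x = \lambda^{\iota_X(x)}$ and that the defining conditions hold over \emph{all} units of $\hat G$, in particular over $\iota_X(X)$. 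The main obstacle is precisely the measurability bookkeeping in this last part --- ensuring $\iota$ is Borel (this is where \cite[Theorem~15.1]{Ke} enters) and checking that the fiberwise conditions of \cite[Definition~2.3]{Ren2} genuinely survive pullback along an $r$-bijective homomorphism; parts \ref{i:transitive}, \ref{i:principal} and \ref{i:effective} are formal manipulations of the explicit models.
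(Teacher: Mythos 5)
Your proof is correct and follows essentially the same route as the paper: each property is transported along the $r$-bijective morphisms $\pi \colon \hat G \to G$ and $\iota \colon G \to \hat G$, using the explicit models of $\hat G$ inside $G \times \cM(G)$ and as $G \ltimes \hat X$. The only (harmless) deviations are in \ref{i:effective}, where you use density of $\iota(W)$ in $\pi^{-1}(W)$ together with closedness of the isotropy bundle in place of the paper's direct claim that the preimage of a pure-isotropy open set is pure isotropy, and in the converse direction of \ref{i:amenable}, where you compose a Borel approximate invariant density directly with the Borel section $\iota$ rather than first upgrading it to a topological one on the \'etale groupoid $\hat G$ via Renault's theorem --- both variants ultimately rest on the same fact that the transfer operator preserves bounded Borel functions with compact support.
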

\setlength{\parindent}{0cm} \setlength{\parskip}{0cm}

\begin{proof}
    \labelcref{i:amenable}: The proof is similar to the proof of \cite[Proposition~2.4]{ES17}. 
    Note that $\pi \colon \hat{G}^{x}\to G^{\pi(x)}$ is a bijection for all $x\in \hat{X}$.
    If $G$ is Borel amenable, then there exists a Borel approximate invariant density $(h_{n})$ on $G$, and it follows that $(h_{n}\circ\pi)$ is a Borel approximate invariant density on $\hat{G}$. Conversely, since $G$ is second countable, so is $\hat{G}$, so that if $\hat{G}$ is Borel amenable, then there is a topological approximate invariant density $(g_{n})\subseteq C_{c}(\hat{G})$. Since $\iota \colon G^{x}\to \hat{G}^{\iota(x)}$ is a bijection for all $x\in X$, it follows that $(g_{n}\circ\iota)$ is a Borel approximate invariant density on $G$, because our transfer operator preserves bounded Borel functions with compact support.
\setlength{\parindent}{0cm} \setlength{\parskip}{0.5cm}
    
   \labelcref{i:transitive}: Suppose $\hat{G}$ is topologically transitive and choose $(x,\mu)\in \hat{X}$ such that the orbit $G.(x,\mu)$ is dense in $\hat{X}$.
  Then $G.x$ is dense in $X$, so $G$ is topologically transitive.
  If $\hat{G}$ is minimal, then this argument works for every unit in $\hat{X}$, so it follows that $G$ is minimal.
  
  \labelcref{i:principal}: Suppose first that $G$ is principal and take an isotropy element $(g, (x,\mu))\in \hat{G}$.
  Then $g$ is an isotropy element in $G$, so that $g = x$, and therefore $(g, (x,\mu))$ is a unit in $\hat{G}$.
  Conversely, suppose $\hat{G}$ is principal and take an isotropy element $g\in G$.
  Then $(g, (s(g),\lambda_{s(g)}))$ is an isotropy element in $\hat{G}$, hence the element is a unit, so $g = s(g)$.

  \labelcref{i:effective}: 
  Suppose $\hat{G}$ is effective, and take an element $g$ in $G$ that is in the interior of the isotropy. This means that there is an open set $U$ of $G$ which is pure isotropy and contains $g$.
  The canonical map $\pi\colon \hat{G} \to G$ given by $(g, (s(g),\mu)) \mapsto g$ is continuous, 
  and the preimage $\pi^{-1}(U)$ is open and pure isotropy and it contains $(g, (s(g),\lambda_{s(g)}))$.
  As $\hat{G}$ is effective, we see that $(g, (s(g),\lambda_{s(g)}))$ is a unit, so it follows that $g$ is a unit.
  Therefore, $G$ is effective.
\end{proof}
\setlength{\parindent}{0cm} \setlength{\parskip}{0.5cm}

\bremark
We have a canonical isomorphism of topological groupoids $G_{T_{\min}} \cong \hat{G}_T$, i.e., given a locally injective continuous transformation $T$ with discrete fibres, the {\'e}tale groupoid cover of its Deaconu--Renault groupoid coincides with the Deaconu--Renault groupoid of the minimal cover $T_{\min}$ as constructed in Section~\ref{s:SingleTrafo}.
\eremark

\begin{example}
  The even shift of Example \ref{exm:even} defines a semi-\'etale groupoid that is effective, but whose \'etale cover groupoid is not effective; the image of the unique critical point $0^{\infty}$ under $\iota$ is an isolated point in $\hat{X}$.
  The Sturmian shifts define semi-\'etale groupoids that are minimal but whose \'etale cover groupoids are not minimal \cite{Brix2023}.
\end{example}

In the following, we compare the reduced groupoid $C^*$-algebra of $\hat{G}$ with Thomsen's construction in \cite{Tho} of a convolution $C^*$-algebra attached to locally compact Hausdorff groupoids with locally injective range and source maps. First let us recall Thomsen's construction. Let $\cB$ be the $^*$-algebra generated by $C_c(G)$ in the algebra of bounded, compactly supported functions $G \to \Cz$ with respect to the usual involution $f^*(\gamma) = \overline{f(\gamma^{-1})}$ and convolution $(f * g)(\gamma) = \sum_{\alpha \beta = \gamma} f(\alpha) g(\beta)$. (Note that $f * g$ is no longer continuous in general.) Thomsen constructs the $C^*$-algebra $B$ as the closure of the image of $\cB$ under the left regular representation coming from $G$ (see \cite{Tho} for details). 

Let $\tilde{\cF} \coloneq {}^*\text{-}\alg(C_c(G), C_c(G) r^*\Lambda(C_c(G))) \subseteq C^*_r(\hat{G})$ and
$\tilde{\cE} \coloneq {}^*\text{-}\alg(C_c(X), \Lambda(C_c(G))) \subseteq C^*_r(\hat{G})$.
The proposition below now provides a conceptual explanation for the results in \cite[Section 2.2 and 2.3]{Tho}.

\bprop
\label{prop:B=C*G}
We have $B \subseteq C^*_r(\hat{G})$, and the following are true: 
\setlength{\parindent}{0cm} \setlength{\parskip}{0cm}

\begin{enumerate}
\item[(i)] $\Lambda(C_c(G)) \subseteq \cB$.
\item[(ii)] $\tilde{\cE} \subseteq \cB$.
\item[(iii)] $f r^*(g) \in \cB$ for all $f \in C_c(G)$ and $g \in {}^*\text{-}\alg(\Lambda(C_c(G)) \subseteq \tilde{\cE}$.
\item[(iv)] $\tilde{\cF} = \lspan(\menge{f r^*(g)}{f \in C_c(G), \, g \in {}^*\text{-}\alg(\Lambda(C_c(G))})$.
\end{enumerate}
Consequently, $\cB \supseteq \tilde{\cF}$, so $B = C^*_r(\hat{G})$.
\eprop
\setlength{\parindent}{0cm} \setlength{\parskip}{0cm}

\bproof
The inclusion $B \subseteq C^*_r(\hat{G})$ holds by construction because $X$ is dense in the unit space of $\hat{G}$, so that for the reduced C*-norm, it suffices to induce representations from points in $X$.
\setlength{\parindent}{0cm} \setlength{\parskip}{0.5cm}

Let us prove (i): Given $f \in C_c(U)$, where $U$ is such that $r_{U} = r \colon U\to r(U)$ and $s_{U} = s \colon U\to s(U)$ are bijective, choose $V \subseteq U$ open such that $\supp(f) \subseteq V \subseteq \overline{V} \subseteq U$. Take $g \in C_c(U^{-1})$ such that $g = 1$ on $V^{-1}$. Then, given $\gamma \in G$, either $(f * g)(\gamma) = 0 = \Lambda(f)(\gamma)$, or we have
\begin{align*}
 (f * g)(\gamma) &= f(r_U^{-1}(r(\gamma))) g(s_{U^{-1}}^{-1}(s(\gamma)))\\
 &= f(r_U^{-1}(\gamma)) g(s_{U^{-1}}^{-1}(\gamma))
 = f(r_V^{-1}(\gamma)) g(s_{V^{-1}}^{-1}(\gamma)) = f(r_V^{-1}(\gamma)) = \Lambda(f)(\gamma).
\end{align*}
For the second equality, observe that the value is zero unless $\gamma \in X$. For the third equality, note that we only get a non-zero value if $\gamma \in r(V)$. For the fourth equality, we used that $g = 1$ on $V^{-1}$. The argument is now complete because $f * g \in \cB$.

(ii) holds because $X$ is open in $G$ and on $C_c(X)$, convolution coincides with taking pointwise products.

(iii) is true because $(f r^*(g))(\gamma) = f(\gamma) g(r(\gamma)) = (f * g)(\gamma)$ for all $\gamma \in G$, $f \in C_c(G)$ and $g \in {}^*\text{-}\alg(\Lambda(C_c(G)))$.

(iv) holds because given $f, f' \in C_c(G)$ and $g, g' \in {}^*\text{-}\alg(\Lambda(C_c(G)))$, we have
\[
 f r^*(g) f' r^*(g') = (f f') r^*(g g').
\]
This implies that $\lspan(\menge{f r^*(g)}{f \in C_c(G), \, g \in {}^*\text{-}\alg(\Lambda(C_c(G)))})$ is closed under pointwise multiplication, which in turn implies the claim.

We conclude that $\cB \supseteq \tilde{\cF}$, and hence $B \supseteq C^*_r(\hat{G})$ because $\tilde{\cF}$ is dense in $C^*_r(\hat{G})$ in the C*-norm of $C^*_r(\hat{G})$.
\eproof
\setlength{\parindent}{0cm} \setlength{\parskip}{0.5cm}

\bremark
In the special case of Deaconu--Renault type groupoids arising from single transformations, Proposition~\ref{prop:B=C*G} is due to Thomsen in the sense that Thomsen already constructed {\'e}tale groupoid models for convolution C*-algebras arising from locally injective maps (see \cite{Tho2011}).
\eremark


\begin{thebibliography}{99}



\bibitem{Amenable-book} C. \textsc{Anantharaman-Delaroche} and J. \textsc{Renault}, 
\emph{Amenable Groupoids}, 
Monogr. Enseign. Math., \emph{36}, Geneva (2000).

\bibitem{Bal} V. \textsc{Baladi}, 
\emph{Positive transfer operators and decay of correlations}, 
Adv. Ser. Nonlinear Dynam. 16, World Scientific Publishing Co., Inc., River Edge, NJ, 2000.

\bibitem{BOT06} A. \textsc{Blokh}, L. \textsc{Oversteegen} and E. D. \textsc{Tymchatyn},
\emph{On almost one-to-one maps}, 
Trans. Amer. Math. Soc. \emph{358} (2006), no. 11, 5003--5014.

\bibitem{Brix2023} K. A. \textsc{Brix}, 
\emph{Sturmian subshifts and their $C^*$-algebras},
J. Operator Theory \emph{90} (2023), 523--544.

\bibitem{Brix-Carlsen2020} K. A. \textsc{Brix} and T. M. \textsc{Carlsen}, 
\emph{C*-algebras, groupoids and covers of shift spaces}, 
Trans. Amer. Math. Soc. Ser. B \emph{7} (2020), 134--185.

\bibitem{CarlsenPhD} T. M. \textsc{Carlsen},
\emph{Operator algebraic applications in symbolic dynamics},
PhD-thesis (2004), University of Copenhagen.

\bibitem{Carlsen-Eilers2006} T. M. \textsc{Carlsen} and S. \textsc{Eilers}, 
\emph{Ordered $K$-groups associated to substitutional dynamics}, 
J. Funct. Anal. \emph{238} (2006), no. 1, 99--117.

\bibitem{Carlsen-Matsumoto2004} T. M. \textsc{Carlsen} and K. \textsc{Matsumoto}, 
\emph{Some remarks on the $C^*$-algebras associated with subshifts}, 
Math. Scand. \emph{95} (2004), 145--160.

\bibitem{Cohn} D. L. \textsc{Cohn}, 
\emph{Measure Theory}, 
Second edition, Birkhäuser Adv. Texts, Springer, New York, 2013.
 
\bibitem{Cuntz-Krieger1980} J. \textsc{Cuntz} and W. \textsc{Krieger},
\emph{A class of $C^*$-algebras and topological Markov chains}, 
Invent. Math. \emph{56} (1980), 251--268.

\bibitem{ES17} R. \textsc{Exel} and C. \textsc{Starling} 
\emph{Amenable actions of inverse semigroups}, 
Ergodic Theory Dynam. Systems \emph{37} (2017), no.2, 481--489.

\bibitem{Fischer1975} R. \textsc{Fischer},
\emph{Sofic systems and graphs},
Monatsh. Math. \emph{80} (1975), no. 3, 179--186.

\bibitem{Fol} G. B. \textsc{Folland}, 
\emph{Real analysis. Modern techniques and their applications. Second Edition}, 
Pure Appl. Math. (N. Y.), Wiley-Intersci. Publ., John Wiley \& Sons, Inc., New York, 1999.

\bibitem{GPS} T. \textsc{Giordano}, I. F. \textsc{Putnam} and C. F. \textsc{Skau},
\emph{Topological orbit equivalence and $C^*$-crossed products},
J. reine angew. Math. \textbf{469} (1995), 51--111.

\bibitem{GKLM} P. \textsc{Giulietti}, B. \textsc{Kloeckner}, A. O. \textsc{Lopes} and D. \textsc{Marcon}, 
\emph{The calculus of thermodynamical formalism}, 
J. Eur. Math. Soc. (JEMS) \emph{20} (2018), no.10, 2357--2412.

\bibitem{He-Wei2024} Z. \textsc{He} and S. \textsc{Wei},
\emph{A note on the nuclear dimension of Cuntz--Pimsner $C^*$-algebras associated with minimal shift spaces},
Can. J. Math. \emph{76} (2024), No. 1, 104--125. 

\bibitem{Hir} K. \textsc{Hiraide}, 
\emph{Positively expansive open maps of Peano spaces}, 
Topol. Appl. \emph{37} (1990), no.3, 213--220.

\bibitem{Hof} F. \textsc{Hofbauer}, 
\emph{Piecewise invertible dynamical systems}, 
Probab. Theory Relat. Fields \emph{72} (1986), no.3, 359--386.

\bibitem{Joh} B. E. \textsc{Johnson}, 
\emph{An Introduction to the Theory of Centralizers}, 
Proc. London Math. Soc. (3) \emph{14} (1964), 299--320.

\bibitem{Jonoska1996} N. \textsc{Jonoska},
\emph{Sofic shifts with synchronizing presentations},
Theor. Comput. Sci. \emph{158} (1996), No. 1-2, 81--115 .

\bibitem{Katsura2004} T. \textsc{Katsura},
\emph{A class of $C^*$-algebras generalizing both graph algebras and homeomorphism $C^*$-algebras. I. Fundamental results},
Trans. Amer. Math. Soc. \emph{356} (2004), no. 11, 4287--4322.

\bibitem{Ke} A. \textsc{Kechris}, 
\emph{Classical descriptive set theory}, 
Grad texts. Math., Springer-Verlag, New York, 1995.

\bibitem{KS} D. \textsc{Kerr} and G. \textsc{Szab{\'o}}, 
\emph{Almost finiteness and the small boundary property}, 
Comm. Math. Phys. \emph{374} (2020), no.1, 1--31.

\bibitem{Krieger1984} W. \textsc{Krieger},
\emph{On sofic systems I}, 
Isr. J. Math. \emph{48} (1984), no. 4, 305--330.

\bibitem{Kul} J. \textsc{Kulesza}, 
\emph{Zero-dimensional covers of finite-dimensional dynamical systems}, 
Ergodic Theory Dynam. Systems \emph{15} (1995), no.5, 939--950.

\bibitem{Lind-Marcus2021} D. \textsc{Lind} and B. \textsc{Marcus}, 
\emph{An introduction to symbolic dynamics and coding}, 
Second edition, CUP, Cambridge, 2021.

\bibitem{Matsumoto1997} K. \textsc{Matsumoto}, 
\emph{On $C^*$-algebras associated with subshifts}, 
Internat. J. Math. \emph{8} (1997), 357--374.

\bibitem{Matsumoto1999} K. \textsc{Matsumoto},
\emph{Presentations of subshifts and their topological conjugacy invariants},
Doc. Math. \emph{4} (1999), 285--340.

\bibitem{Muhly-Tomforde2005} P. S. \textsc{Muhly} and M. \textsc{Tomforde}, 
\emph{Topological Quivers}, 
Int. J. Math. \emph{16} (2005), no.7, 693--755.

\bibitem{Nekrashevych2022} V. \textsc{Nekrashevych},
\emph{Groups and topological dyamicals}, 
Grad. Stud. Math., 223, American Mathematical Society, Providence, RI, 2022. 

\bibitem{Ped} G. K. \textsc{Pedersen}, 
\emph{$C^*$-algebras and their automorphism groups. Second edition}, 
Pure Appl. Math. (Amst.), Academic Press, London, 2018.

\bibitem{PP} W. \textsc{Parry} and M. \textsc{Pollicott}, 
\emph{Zeta functions and the periodic orbit structure of hyperbolic dynamics}, 
Ast{\'e}risque (1990), 187--188.

\bibitem{Pim} M. \textsc{Pimsner}, 
\emph{A class of $C^*$-algebras generalizing both Cuntz--Krieger algebras and crossed products by $\Zz$}, 
Free probability theory (Waterloo, ON, 1995), 189--212, Fields Inst. Commun. 12, American Mathematical Society, Providence, RI, 1997.

\bibitem{Put} I. F. \textsc{Putnam}, 
\emph{Lifting factor maps to resolving maps}, 
Isr. J. Math. \emph{146} (2005), 253--280.

\bibitem{PU} F. \textsc{Przytycki} and M. \textsc{Urba{\'n}ski}, 
\emph{Conformal fractals: ergodic theory methods}, 
London Math. Soc. Lecture Note Ser. 371, Cambridge University Press, Cambridge, 2010.

\bibitem{Ren} J. \textsc{Renault}, 
\emph{A groupoid approach to $C^*$-algebras}, 
Lecture Notes in Math. 793, Springer, Berlin, 1980.

\bibitem{Ren2} J. \textsc{Renault}, 
\emph{Topological amenability is a Borel property}, 
Math. Scand. \emph{117} (2015), no.1, 5--30.

\bibitem{RW07} D. \textsc{Richeson} and J. \textsc{Wiseman}, 
\emph{Positively expansive dynamical systems,} 
Topol. Appl. \emph{154} (2007), 604--613.

\bibitem{Ros} I. \textsc{Rosenholtz}, 
\emph{Local expansions, derivatives, and fixed points}, 
Fund. Math. \emph{91} (1976), no.1, 1--4.

\bibitem{Rue} D. \textsc{Ruelle}, 
\emph{Thermodynamic formalism. The mathematical structures of equilibrium statistical mechanics}, 
Second edition, CUP, Cambridge, 2004.

\bibitem{Sakai2003} K. \textsc{Sakai},
\emph{Various shadowing properties for positively expansive maps},
Topol. Appl. \emph{131} (2003) 15--31.

\bibitem{Tho} K. \textsc{Thomsen}, 
\emph{Semi-{\'e}tale groupoids and applications}, 
Ann. Inst. Fourier (Grenoble) \emph{60} (2010), no.3, 759--800.

\bibitem{Tho2011} K. \textsc{Thomsen}, 
\emph{On the $C^*$-algebra of a locally injective surjection and its KMS states}, 
Comm. Math. Phys. \emph{302} (2011), no.2, 403--423.

\bibitem{Tsu} M. \textsc{Tsujii}, 
\emph{Absolutely continuous invariant measures for expanding piecewise linear maps}, 
Invent. Math. \emph{143} (2001), no.2, 349--373.

\bibitem{Wal82} P. \textsc{Walters}, 
\emph{An introduction to ergodic theory}, 
Grad. Texts in Math. 79, Springer-Verlag, New York-Berlin, 1982.

\bibitem{Wie} S. \textsc{Wieler}, 
\emph{Smale spaces via inverse limits}, 
Ergodic Theory Dynam. Systems \emph{34} (2014), no. 6, 2066--2092.

\bibitem{Wil} D. P. \textsc{Williams}, 
\emph{A tool kit for groupoid $C^*$-algebras}, 
Math. Surveys Monogr., 241, Amer. Math. Soc., Providence, RI, 2019.

\bibitem{RWil} R. F. \textsc{Williams}, 
\emph{Expanding Attractors}, 
Publ. Math. Inst. Hautes Etudes Sci. \emph{43} (1974), 169--203.

\end{thebibliography}
\end{document}